\newtheorem{theorem}{Theorem}%[section]
\newtheorem{corollary}[theorem]{Corollary}
\newtheorem{proposition}[theorem]{Proposition}
\newtheorem{lemma}[theorem]{Lemma}
\newtheorem{definition}[theorem]{Definition}
\newtheorem{example}[theorem]{Example}
\newcommand{\csum}{{\rm csum\,}}
\newcommand{\negg}{{\rm neg\,}}
\newcommand{\cpkk}{{\rm cpk\,}}
\newcommand{\cdasc}{{\rm cdasc\,}}
\newcommand{\cddes}{{\rm cddes\,}}
\newcommand{\crun}{{\rm crun\,}}
\newcommand{\Asc}{{\rm Asc\,}}
\newcommand{\cda}{{\rm cda\,}}
\newcommand{\CDD}{{\rm CDD\,}}
\newcommand{\ap}{{\rm ap\,}}
\newcommand{\lap}{{\rm lap\,}}
\newcommand{\lpk}{{\rm lpk\,}}
\newcommand{\single}{{\rm single\,}}
\newcommand{\des}{{\rm des\,}}
\newcommand{\exc}{{\rm exc\,}}
\newcommand{\wexc}{{\rm wexc\,}}
\newcommand{\fexc}{{\rm fexc\,}}
\newcommand{\aexc}{{\rm aexc\,}}
\newcommand{\drop}{{\rm drop\,}}
\newcommand{\we}{{\rm wexc\,}}
\newcommand{\cyc}{{\rm cyc\,}}
\newcommand{\fix}{{\rm fix\,}}
\newcommand{\mcc}{{\mathcal C}}
\newcommand{\cc}{{\mathcal C}}
\newcommand{\md}{\mathcal{D}}
\newcommand{\mdn}{\mathcal{D}_n}
\newcommand{\D}{\mathfrak{D}}
\newcommand{\msn}{\mathcal{S}_n}
\newcommand{\ms}{\mathcal{S}}
\newcommand{\lrf}[1]{\lfloor #1\rfloor}
\newcommand{\mbn}{{\mathcal S}^B_n}
\newcommand{\mb}{{\mathcal S}^B}
\newcommand{\mq}{\mathcal{Q}}
\newcommand{\mqn}{\mathcal{Q}_n}
\newcommand{\z}{ \mathbb{Z}}
\title{Excedance-type polynomials and gamma-positivity}
\author[S.-M.~Ma]{Shi-Mei Ma}
\address{School of Mathematics and Statistics,
        Northeastern University at Qinhuangdao,
         Hebei 066000, P.R. China}
\email{shimeimapapers@163.com (S.-M. Ma)}
\author[J.~Ma]{Jun Ma}
\address{Department of mathematics, Shanghai Jiao Tong University, Shanghai, P.R. China}
\email{majun904@sjtu.edu.cn(J.~Ma)}
\author{Jean Yeh}
\address{Department of Mathematics, National Kaohsiung Normal University, Kaohsiung 82444, Taiwan}
\email{chunchenyeh@nknu.edu.tw}
\author[Y.-N. Yeh]{Yeong-Nan Yeh}
\address{Institute of Mathematics,
        Academia Sinica, Taipei, Taiwan}
\email{mayeh@math.sinica.edu.tw (Y.-N. Yeh)}
\subjclass[2010]{Primary 05A05; Secondary 05A20}
\begin{document}

\maketitle
\begin{abstract}
The object of this paper is to give a systematic treatment of excedance-type polynomials.
We first give a sufficient condition for a sequence of polynomials to have
alternatingly increasing property, and then we present a systematic study of the joint distribution of
excedances, fixed points and cycles of permutations and derangements, signed or not, colored or not.
Let $p\in [0,1]$ and $q\in [0,1]$ be two given real numbers.
We prove that the $\cyc$ $q$-Eulerian polynomials of permutations are bi-$\gamma$-positive, and
the $\fix$ and $\cyc$ $(p,q)$-Eulerian polynomials of permutations are alternatingly increasing, and so they are unimodal with modes in the middle, where $\fix$ and $\cyc$ are the fixed point and cycle statistics. When $p=1$ and $q=1/2$,
we find a combinatorial interpretation of the bi-$\gamma$-coefficients of the $(p,q)$-Eulerian polynomials.
We then study excedance and flag excedance statistics of signed permutations and colored permutations. In particular, we establish the relationships
between the $(p,q)$-Eulerian polynomials and some multivariate Eulerian polynomials.
Our results unify and generalize a variety of recent results.
\bigskip

\noindent{\sl Keywords}: Eulerian polynomials; Gamma-positivity; Excedances; Fixed points; Cycles
\end{abstract}
\date{\today}
%\date{\today}
%%%%%%%%%%%%%%%%%%%%%%%%%%%%%%%%%%%%%%%%%%%%%%%%%%%%%%%%%%%%%%%%%%%%%%%%%%
%%%%%%%%%%%%%%%%%%%%%%%%%%%%%%%%%%%%%%%%%%%%%%%%%%%%%%%%%%%%%%%%%%%%%%%%%%
%%%%%%%%%%%%%%%%%%%%%%%%%%%%%%%%%%%%%%%%%%%%%%%%%%%%%%%%%%%%%%%%%%%%%%%%%%
%%%%%%%%%%%%%%%%%%%%%%%%%%%%%%%%%%%%%%%%%%%%%%%%%%%%%%%%%%%%%%%%%%%%%%%%%%
%%%%%%%%%%%%%%%%%%%%%%%%%%%%%%%%%%%%%%%%%%%%%%%%%%%%%%%%%%%%%%%%%%%%%%%%%%
%%%%%%%%%%%%%%%%%%%%%%%%%%%%%%%%%%%%%%%%%%%%%%%%%%%%%%%%%%%%%%%%%%%%%%%%%%
%%%%%%%%%%%%%%%%%%%%%%%%%%%%%%%%%%%%%%%%%%%
\section{Introduction}
%%%%%%%%%%%%%%%%%%%%%%%%%%%%%%%%%%%%%%%%%%%
%%%%%%%%%%%%%%%%%%%%%%%%%%%%%%%%%%%%%%%%%%%%%%%%%%%%%%%%%%%%%%%%%%%%%%%%%%%%%%%%%
%%%%%%%%%%%%%%%%%%%%%%%%%%%%%%%%%%%%%%%%%%%%%%%%%%%%%%%%%%%%%%%%%%%%%%%%%%%%%%%%%
%%%%%%%%%%%%%%%%%%%%%%%%%%%%%%%%%%%%%%%%%%%%%%%%%
%%%%%%%%%%%%%%%%%%%%%%%%%%%5%%%%
%%%%%%%%%%%%%%%%%%%%%%%%%%%%%%%%%
%%%%%%%%%%%%%%%%%%%%%%%%%%%%%%%%%%%%%%%%%%%%%%%%%%%%
Motivated by the recent papers~\cite{Athanasiadis20,Beck2019,Branden18} and~\cite{Solus19},
this paper is concerned with unimodal polynomials with the modes in the middle.
The aim of this paper is twofold. The first is to develop techniques for the unimodality problems. The second is to
give a systematic study of the joint distribution of
excedance, fixed point and cycle statistics of permutations and derangements, signed or not, colored or not.
Our results unify and generalize some recent results of Athanasiadis~\cite{Athanasiadis14,Athanasiadis20}, Bagno and Garber~\cite{Bagno04}, Br\"and\'{e}n~\cite{Branden08,Branden18},
Chen et al.~\cite{Chen09}, Chow~\cite{Chow08,Chow09}, Chow and Mansour~\cite{Chow10}, Foata and Han~\cite{Foata11}, Juhnke-Kubitzke et al.~\cite{Sieg19},
Mongelli~\cite{Mongelli13}, Petersen~\cite{Petersen07}, Shin and Zeng~\cite{Zeng12,Zeng16}.

Let $f(x)=\sum_{i=0}^nf_ix^i$ be a polynomial with nonnegative coefficients.
We say that $f(x)$ is {\it unimodal} if $f_0\leqslant f_1\leqslant \cdots\leqslant f_k\geqslant f_{k+1}\geqslant\cdots \geqslant f_n$ for some $k$, where the index $k$ is called the {\it mode} of $f(x)$. The polynomials $f(x)$ is said to be {\it spiral} if
$$f_n\leqslant f_0\leqslant f_{n-1}\leqslant f_1\leqslant \cdots\leqslant f_{\lrf{n/2}}.$$
If $f(x)$ is symmetric with the centre of symmetry $\lrf{n/2}$, i.e., $f_i=f_{n-i}$ for all indices $0\leqslant i\leqslant n$,
then it can be expanded as
$$f(x)=\sum_{k=0}^{\lrf{{n}/{2}}}\gamma_kx^k(1+x)^{n-2k}.$$
Following Gal~\cite{Gal05}, the polynomial $f(x)$ is said to be {\it $\gamma$-positive}
if $\gamma_k\geqslant 0$ for all $0\leqslant k\leqslant \lrf{{n}/{2}}$, and the coefficients $\gamma_k$ are called the $\gamma$-coefficients of $f(x)$.
Following~\cite[Definition 2.9]{Schepers13}, the polynomial $f(x)$ is {\it alternatingly increasing} if
$$f_0\leqslant f_n\leqslant f_1\leqslant f_{n-1}\leqslant\cdots \leqslant f_{\lrf{{(n+1)}/{2}}}.$$
It is clear that if $f(x)$ is spiral and $\deg f(x)=n$, then $x^nf(1/x)$ is alternatingly increasing, and vice versa.
Both $\gamma$-positivity and alternatingly increasing property imply unimodality.
In the past decades, unimodal polynomials arise often in combinatorics and geometry,
see~\cite{Athanasiadis17,Lin15} and references therein.
It should be noted that the definition of
alternatingly increasing property first appeared in the work of Beck and Stapledon~\cite{Beck2010}. Very recently, Beck, Jochemko and McCullough~\cite{Beck2019} and Solus~\cite{Solus19} studied the
alternatingly increasing property of several $h^*$-polynomials as well as several refined Eulerian polynomials.
This paper is motivated by empirical evidence which suggests that
some multivariate Eulerian polynomials are unimodal with modes in the middle.
%As pointed out by Brenti~\cite{Brenti9401},
%to prove the unimodality of a polynomial can sometimes be a very difficult task requiring the use of intricate combinatorial constructions or of refined mathematical tools.
%, and study the $\gamma$-positivity and bi-$\gamma$-positivity of some polynomials, which implies that these polynomials are unimodal with modes in the middle.
%In the following,
%we collect
%several definitions, notation, and results that will be used in the sequel.

We first recall an elementary result.
\begin{proposition}[{\cite{Beck2010,Branden18}}]\label{prop01}
Let $f(x)$ be a polynomial of degree $n$.
There is a unique symmetric decomposition $f(x)= a(x)+xb(x)$, where \begin{equation}\label{ax-bx-prop01}
a(x)=\frac{f(x)-x^{n+1}f(1/x)}{1-x},~b(x)=\frac{x^nf(1/x)-f(x)}{1-x}.
\end{equation}
\end{proposition}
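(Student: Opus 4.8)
The plan is to establish existence by directly verifying that the two polynomials $a(x)$ and $b(x)$ defined in~\eqref{ax-bx-prop01} have all the required properties, and then to establish uniqueness by a short algebraic argument exploiting the palindromy (reciprocity) relations. Throughout I would write $f^*(x):=x^nf(1/x)$ for the reciprocal polynomial of $f$, so that the formulas become $a(x)=(f(x)-xf^*(x))/(1-x)$ and $b(x)=(f^*(x)-f(x))/(1-x)$.

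For existence, the first point is that $a(x)$ and $b(x)$ are genuine polynomials: each numerator vanishes at $x=1$ because $f(1)=f^*(1)$, hence is divisible by $1-x$; tracking degrees, $\deg\bigl(f(x)-xf^*(x)\bigr)\leqslant n+1$ gives $\deg a\leqslant n$, while $\deg\bigl(f^*(x)-f(x)\bigr)\leqslant n$ gives $\deg b\leqslant n-1$. Next, a one-line cancellation shows the decomposition holds:
$$a(x)+xb(x)=\frac{\bigl(f(x)-xf^*(x)\bigr)+x\bigl(f^*(x)-f(x)\bigr)}{1-x}=\frac{f(x)(1-x)}{1-x}=f(x).$$
Finally, to see the decomposition is \emph{symmetric}, i.e.\ $x^na(1/x)=a(x)$ and $x^{n-1}b(1/x)=b(x)$, I would substitute $x\mapsto 1/x$ in each defining formula and clear denominators by multiplying through by the appropriate power of $x$ ($x^{n+1}$ for $a$, $x^{n}$ for $b$); after simplification the right-hand sides return to $a(x)$ and $b(x)$ exactly.

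For uniqueness, suppose $f=a_1+xb_1=a_2+xb_2$ with each $a_i$ symmetric with centre $n/2$ (so $x^na_i(1/x)=a_i(x)$) and each $b_i$ symmetric with centre $(n-1)/2$ (so $x^{n-1}b_i(1/x)=b_i(x)$). Put $c=a_1-a_2$ and $d=b_2-b_1$, so that $c(x)=x\,d(x)$ while $x^nc(1/x)=c(x)$ and $x^{n-1}d(1/x)=d(x)$. Substituting $x\mapsto 1/x$ into $c(x)=x\,d(x)$ and multiplying by $x^n$ gives $c(x)=d(x)$; combined with $c(x)=x\,d(x)$ this yields $d(x)=x\,d(x)$, forcing $d\equiv 0$ and hence $c\equiv 0$. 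Therefore $a_1=a_2$ and $b_1=b_2$.

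Every step is a routine manipulation of polynomial identities, so there is no genuine obstacle; the only place requiring a little care is the bookkeeping of degrees and of the powers of $x$ in the reciprocity relations — in particular, keeping straight that $a$ is symmetric about $n/2$ whereas $b$ is symmetric about $(n-1)/2$, which is precisely what makes the shifted term $xb(x)$ align correctly inside a degree-$n$ palindrome.
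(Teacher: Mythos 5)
Your proof is correct and complete: the verification that $a$ and $b$ are polynomials (numerators vanish at $x=1$), the degree bookkeeping, the cancellation $a(x)+xb(x)=f(x)$, the reciprocity checks $x^na(1/x)=a(x)$ and $x^{n-1}b(1/x)=b(x)$, and the uniqueness argument via $c(x)=xd(x)$ together with $c(x)=d(x)$ all go through exactly as you describe. The paper itself gives no proof of Proposition~\ref{prop01} --- it is quoted from Beck--Stapledon and Br\"and\'en--Solus --- and your direct verification is precisely the standard argument in those sources, so there is nothing to contrast.
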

By using~\eqref{ax-bx-prop01}, it is easy to verify that if $f(0)\neq0$, then $\deg a(x)=n$ and $\deg b(x)\leqslant n-1$.
%if $f(0)=0$, then $\deg a(x)\leqslant n-1$ and $\deg b(x)=n-1$.
We call the ordered pair of polynomials $(a(x),b(x))$ the {\it symmetric decomposition} of $f(x)$, since $a(x)$ and $b(x)$ are both symmetric.

\begin{definition}\label{def01}
Let $(a(x),b(x))$ be the {\it symmetric decomposition} of a polynomial $f(x)$.
If $a(x)$ and $b(x)$ are both $\gamma$-positive, then $f(x)$ is said to be {\it bi-$\gamma$-positive}.
The $\gamma$-coefficients of $a(x)$ and $b(x)$ are called the bi-$\gamma$-coefficients of $f(x)$.
\end{definition}
%\begin{definition}\label{def01}
%Let $f(x)=\sum_{i=0}^nf_ix^i$. Assume that $f(x)=$ can be expanded as
%$$f(x)=\sum_{i=0}^{\lrf{{n}/{2}}}\xi_ix^i(1+x)^{n-2i}+\sum_{j=1}^{\lrf{{(n+1)}/{2}}}\eta_jx^j(1+x)^{n+1-2j}.$$
%Then $f(x)$ is said to be {\it bi-$\gamma$-positive}
%if $\xi_i\geqslant 0$ for all $0\leqslant i\leqslant \lrf{\frac{n}{2}}$ and $\eta_j\geqslant 0$ for all $1\leqslant j\leqslant \lrf{\frac{n+1}{2}}$.
%\end{definition}
As pointed out by Br\"and\'en and Solus~\cite{Branden18},
the polynomial $f(x)$ is alternatingly increasing if and only if the pair of polynomials in its symmetric decomposition are both unimodal
and have only nonnegative coefficients. Therefore, bi-$\gamma$-positivity implies alternatingly increasing property.
If $f(x)$ is $\gamma$-positive, then $f(x)$ is also bi-$\gamma$-positive but not vice versa. We now provide a connection between
$\gamma$-positivity and bi-$\gamma$-positivity.
\begin{proposition}
If $f(x)$ is $\gamma$-positive and $f(0)=0$, then $f'(x)$ is bi-$\gamma$-positive.
\end{proposition}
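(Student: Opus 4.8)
The plan is to reduce the claim to the $\gamma$-expansion of $f$ and then differentiate term by term. Since $f$ is $\gamma$-positive and $f(0)=0$, the constant term of its $\gamma$-expansion vanishes, so $f(x)=x\,g(x)$ where
\[
g(x)=\sum_{j\geqslant 0}\eta_j\,x^{j}(1+x)^{d-2j},\qquad \eta_j\geqslant 0,\quad \eta_0=g(0)\neq 0,\quad d=\deg g;
\]
that is, $g$ is itself $\gamma$-positive, the sum running over $0\leqslant j\leqslant\lrf{d/2}$. Then $f'(x)=g(x)+x\,g'(x)$, and $\deg f'=\deg g=d$, so the symmetric decomposition of $f'$ is to be taken with respect to the reference degree $d$.

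First I would compute that decomposition term by term. Using
\[
x\,\frac{d}{dx}\bigl[x^{j}(1+x)^{d-2j}\bigr]=j\,x^{j}(1+x)^{d-2j}+(d-2j)\,x^{j+1}(1+x)^{d-2j-1},
\]
and summing against the weights $\eta_j$, one finds $f'(x)=A(x)+x\,B(x)$ with
\[
A(x)=\sum_{j\geqslant 0}(j+1)\,\eta_j\,x^{j}(1+x)^{d-2j},\qquad
B(x)=\sum_{j\geqslant 0}(d-2j)\,\eta_j\,x^{j}(1+x)^{(d-1)-2j}.
\]
Each summand of $A(x)$ is symmetric with centre $d/2$ and each summand of $B(x)$ is symmetric with centre $(d-1)/2$, so $A$ and $B$ are themselves symmetric with those centres, and $\deg A=d=\deg f'$ because $\eta_0\neq 0$. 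Hence, by the uniqueness asserted in Proposition~\ref{prop01}, $(A(x),B(x))$ is precisely the symmetric decomposition of $f'(x)$.

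It then suffices to note that $A(x)$ is $\gamma$-positive, its $\gamma$-coefficients being $(j+1)\eta_j\geqslant 0$, and that $B(x)$ is $\gamma$-positive, its $\gamma$-coefficients being $(d-2j)\eta_j\geqslant 0$ (here $d-2j\geqslant 0$ since $0\leqslant j\leqslant\lrf{d/2}$, and the summand drops out when $d=2j$). Therefore $f'(x)$ is bi-$\gamma$-positive. The derivative identity above is a one-line computation, so the point that really needs care is the bookkeeping of reference degrees: one must confirm that $f/x$ is genuinely $\gamma$-positive of degree $d$ (equivalently $\eta_0\neq 0$) and that Proposition~\ref{prop01} is applied with reference degree $\deg f'=d$ rather than some larger value, since otherwise the pair $(A,B)$ exhibited above need not be the symmetric decomposition of $f'$ and the conclusion could fail.
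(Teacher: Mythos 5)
Your computation is the paper's proof almost verbatim: the paper writes $f(x)=\sum_{k\geqslant 1}\gamma_kx^k(1+x)^{n-2k}$, differentiates term by term and reindexes, arriving at exactly your $A$ and $B$ (with $\eta_j=\gamma_{j+1}$ and $d=n-2$); the factorization $f=xg$ is only a repackaging. The one point where you go beyond the paper, the bookkeeping of reference degrees, is exactly where you assert something that does not follow from the hypotheses: $\eta_0=g(0)=\gamma_1$ need not be nonzero. Take $f(x)=x^2(1+x)^2$, which is $\gamma$-positive (centre of symmetry $3$) with $f(0)=0$; then $g(x)=x(1+x)^2$ vanishes at $0$, $\deg f'=3<d=4$, and your pair is $(A,B)=\bigl(2x(1+x)^2,\,2x(1+x)\bigr)$. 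This is the symmetric decomposition of $f'$ only relative to the reference degree $4$; relative to $\deg f'=3$, which is the normalization of Proposition~\ref{prop01}, the decomposition of $f'(x)=2x+6x^2+4x^3$ is $\bigl(-2x-2x^2,\;4(1+x)^2\bigr)$, whose first component is not even nonnegative. So the subtlety you flag is real, but it cannot be disposed of by declaring $\eta_0\neq 0$; without that (unavailable) hypothesis the statement is simply false under the strict degree-of-$f'$ normalization.

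That said, this is a defect in the statement's normalization rather than in your computation. The paper's own proof is silent on the issue and implicitly reads ``symmetric decomposition'' relative to the centre inherited from the $\gamma$-expansion of $f$ (reference degree $n-2$ when $f$ is centred at $n/2$), under which your pair $(A,B)$ is the intended one and the proof is complete with no claim about $\eta_0$ needed. If you want an airtight argument you should either state the conclusion relative to that reference degree, or add the hypothesis $\gamma_1\neq 0$.
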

\begin{proof}
Assume that $f(x)=\sum_{k=1}^{\lrf{{n}/{2}}}\gamma_kx^k(1+x)^{n-2k}$, where $\gamma_k\geqslant0$ for all $1\leqslant k\leqslant \lrf{n/2}$.
Then we have
\begin{align*}
f'(x)&=\sum_{k=1}^{\lrf{{n}/{2}}}k\gamma_kx^{k-1}(1+x)^{n-2k}+\sum_{k=1}^{\lrf{{(n-1)}/{2}}}(n-2k)\gamma_kx^{k}(1+x)^{n-2k-1}\\
%&=\sum_{i=0}^{\lrf{{(n-2)}/{2}}}(i+1)\gamma_{i+1}x^{i}(1+x)^{n-2i-2}+\sum_{k=1}^{\lrf{{(n-1)}/{2}}}(n-2k)\gamma_kx^{k}(1+x)^{n-2k-1}\\
&=\sum_{i=0}^{\lrf{{(n-2)}/{2}}}(i+1)\gamma_{i+1}x^{i}(1+x)^{n-2i-2}+x\sum_{j=0}^{\lrf{{(n-3)}/{2}}}(n-2j-2)\gamma_{j+1}x^{j}(1+x)^{n-2j-3}.
\end{align*}
Therefore, $f'(x)$ is bi-$\gamma$-positive.
\end{proof}

The following simple result will be used
repeatedly in our discussion.
\begin{lemma}\label{lemma01}
Let $f(x)=\sum_{i=0}^nf_ix^i$ and $g(x)=\sum_{j=0}^mg_jx^j$.
If $f(x)$ is $\gamma$-positive and $g(x)$ is bi-$\gamma$-positive, then $f(x)g(x)$ is bi-$\gamma$-positive. In particular,
the product of two $\gamma$-positive polynomials is also $\gamma$-positive.
\end{lemma}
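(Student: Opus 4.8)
The plan is to reduce the lemma to one algebraic fact: for two $\gamma$-positive polynomials $u(x)=\sum_i u_ix^i(1+x)^{n-2i}$ (of degree $n$) and $v(x)=\sum_j v_jx^j(1+x)^{m-2j}$ (of degree $m$), one has
$$u(x)v(x)=\sum_{i,j}u_iv_j\,x^{i+j}(1+x)^{(n+m)-2(i+j)},$$
and since $i+j\le\lrf{n/2}+\lrf{m/2}\le\lrf{(n+m)/2}$ while every exponent $(n+m)-2(i+j)$ is nonnegative, the right-hand side is again a bona fide $\gamma$-expansion; if the $u_i$ and $v_j$ are all nonnegative, so are the resulting $\gamma$-coefficients of $uv$. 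This is precisely the final clause of the lemma, and it is the only computational input I will need.

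For the main statement, let $(a(x),b(x))$ be the symmetric decomposition of $g(x)$, so that $g(x)=a(x)+xb(x)$ with $a,b$ given by \eqref{ax-bx-prop01} (with $m=\deg g$ in place of $n$), and with both $a$ and $b$ $\gamma$-positive by hypothesis. Set $N=\deg\bigl(f(x)g(x)\bigr)=n+m$. Applying the formulas \eqref{ax-bx-prop01} to the polynomial $f(x)g(x)$, with $N$ in place of $n$, and using the symmetry $x^nf(1/x)=f(x)$ of $f$ to rewrite $x^{N+1}f(1/x)g(1/x)=x^{m+1}f(x)g(1/x)$, I obtain
$$\frac{f(x)g(x)-x^{N+1}f(1/x)g(1/x)}{1-x}=f(x)\cdot\frac{g(x)-x^{m+1}g(1/x)}{1-x}=f(x)a(x),$$
and, in the same way, the second component of the decomposition equals $f(x)b(x)$. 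Hence the symmetric decomposition of $f(x)g(x)$ is $\bigl(f(x)a(x),\,f(x)b(x)\bigr)$. Since $f$ and $a$ are both $\gamma$-positive, the algebraic fact above shows $f(x)a(x)$ is $\gamma$-positive; likewise $f(x)b(x)$ is $\gamma$-positive. Therefore $f(x)g(x)$ is bi-$\gamma$-positive.

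I do not expect a genuine obstacle. The two points that deserve a line of care are: (i) the inequality $i+j\le\lrf{(n+m)/2}$, which is what makes the product expansion an honest $\gamma$-expansion rather than a formal identity with an exponent out of range; and (ii) checking that the two polynomials produced by \eqref{ax-bx-prop01} for $f(x)g(x)$ are exactly $f(x)a(x)$ and $f(x)b(x)$, which, as displayed above, follows from a single substitution of $x^nf(1/x)=f(x)$ and needs no appeal to uniqueness. A small bookkeeping remark is that a $\gamma$-positive polynomial of degree $n$ has $f(0)\ne0$ — its leading $\gamma$-coefficient is $\gamma_0=f_n=f_0$ — so $n=\deg f$ and the symmetry of $f$ are used consistently throughout.
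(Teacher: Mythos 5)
Your proof is correct and rests on the same computational core as the paper's: the product of two $\gamma$-expansions is again a $\gamma$-expansion, applied to $f\cdot a$ and $f\cdot b$ where $g=a+xb$. The only organizational difference is that you verify via the explicit formulas \eqref{ax-bx-prop01} that $(f(x)a(x),\,f(x)b(x))$ is the symmetric decomposition of $f(x)g(x)$, whereas the paper simply writes down the resulting expansion and relies implicitly on the uniqueness in Proposition~\ref{prop01}; both are fine.
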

\begin{proof}
Assume that $f(x)=\sum_{k=0}^{\lrf{{n}/{2}}}\gamma_kx^k(1+x)^{n-2k}$ and
\begin{align*}
g(x)&=\sum_{i=0}^{\lrf{{m}/{2}}}\xi_ix^i(1+x)^{m-2i}+\sum_{j=1}^{\lrf{{(m+1)}/{2}}}\eta_jx^j(1+x)^{m+1-2j},
\end{align*}
where $\gamma_k,\xi_i$ and $\eta_j$ are all nonnegative numbers.
Then $f(x)g(x)$ can be expanded as
$$f(x)g(x)=\sum_{s=0}^{\lrf{(n+m)/2}}\alpha_sx^{s}(1+x)^{n+m-2s}+\sum_{t=1}^{\lrf{(n+m+1)/2}}\beta_{t}x^{t}(1+x)^{n+m+1-2t},$$
where $\alpha_s=\sum_{k+i=s}\gamma_k\xi_i$ and $\beta_{t}=\sum_{k+j=t}\gamma_k\eta_j$, which yields the desired result.
\end{proof}

We now recall a definition.
\begin{definition}[{\cite[Definition~4]{Ma19}}]
Let $p(x,y)$ be a bivariate polynomial. Suppose $p(x,y)$ can be expanded as
\begin{equation}\label{defpartial}
p(x,y)=\sum_{i=0}^ny^i\sum_{j=0}^{\lrf{(n-i)/2}}\mu_{n,i,j}x^j(1+x)^{n-i-2j}.
\end{equation}
If $\mu_{n,i,j}\geqslant 0$ for all $0\leqslant i\leqslant n$ and $0\leqslant j\leqslant \lrf{(n-i)/2}$, then we say that $p(x,y)$ is a partial $\gamma$-positive polynomial.
The numbers $\mu_{n,i,j}$ are called the partial $\gamma$-coefficients of $p(x,y)$.
\end{definition}

It should be noted that partial $\gamma$-positive polynomials frequently appear in combinatorics and geometry, see~\cite{Athanasiadis17,Hwang20,Zeng2020,Ma19} for instance.
We can now conclude the first main result of this paper.
\begin{theorem}\label{bigammpartial}
Suppose the polynomial $p(x,y)$ has the expression~\eqref{defpartial} and $\deg p(x,1)=n-1$, where $n$ is a positive integer.
If $p(x,y)$ is partial $\gamma$-positive, $p(x,1)$ is bi-$\gamma$-positive and $0\leqslant y\leqslant 1$ is a given real number, then
$p(x,y)$ is alternatingly increasing.
\end{theorem}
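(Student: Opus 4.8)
I would make the symmetric decomposition $p(x,y)=a_y(x)+xb_y(x)$ completely explicit and verify that, for each fixed $y\in[0,1]$, both $a_y$ and $b_y$ are symmetric, unimodal, and have nonnegative coefficients; by the characterization of the alternatingly increasing property due to Br\"and\'en and Solus recalled above, this is exactly what is needed. Write $p(x,y)=\sum_{i=0}^{n}y^{i}q_i(x)$ with $q_i(x)=\sum_{j}\mu_{n,i,j}x^{j}(1+x)^{n-i-2j}$; each $q_i$ is then $\gamma$-positive, hence symmetric about $(n-i)/2$ and unimodal, and it satisfies the palindromy relation $q_i(1/x)=x^{-(n-i)}q_i(x)$. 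Since $\deg p(x,1)=n-1$, the coefficient of $x^{n}$ in $p(x,1)$, which equals $\mu_{n,0,0}$, vanishes; hence $q_0(0)=0$, and inspecting the coefficient of $x^{n-1}$ shows that $p(x,y)$ has degree $n-1$ in $x$ for every $y\in(0,1]$ (the case $y=0$ is immediate, as $p(x,0)=q_0(x)$ is $\gamma$-positive).

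Next I would substitute into the formulas of Proposition~\ref{prop01}, applied with degree $n-1$. From $x^{n}p(1/x,y)=\sum_i y^{i}x^{i}q_i(x)$ and $x^{n-1}p(1/x,y)=\sum_i y^{i}x^{i-1}q_i(x)$, a short simplification of the geometric sums gives
\[
a_y(x)=\sum_{i=1}^{n}y^{i}q_i(x)\,(1+x+\cdots+x^{i-1}),\qquad
b_y(x)=\frac{q_0(x)}{x}-\sum_{i=2}^{n}y^{i}q_i(x)\,(1+x+\cdots+x^{i-2}),
\]
where $q_0(x)/x$ is a polynomial precisely because $q_0(0)=0$. For $a_y$ the conclusion is now easy: each $1+x+\cdots+x^{i-1}$ is symmetric and unimodal, so by the classical fact that a product of symmetric unimodal polynomials with nonnegative coefficients is again symmetric and unimodal, every summand $q_i(x)(1+x+\cdots+x^{i-1})$ is symmetric about $(n-1)/2$ and unimodal; since $0\le y\le 1$, the polynomial $a_y$ is a nonnegative linear combination of symmetric unimodal polynomials all centred at $(n-1)/2$, hence is itself symmetric and unimodal with nonnegative coefficients.

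For $b_y$ I would use the hypothesis on $p(x,1)$. Putting $y=1$ in the formula above yields the second component $b_1(x)$ of the symmetric decomposition of $p(x,1)$, which by bi-$\gamma$-positivity is $\gamma$-positive, hence symmetric about $(n-2)/2$ and unimodal with nonnegative coefficients. Then
\[
b_y(x)-b_1(x)=\sum_{i=2}^{n}(1-y^{i})\,q_i(x)\,(1+x+\cdots+x^{i-2}),
\]
and since $1-y^{i}\ge 0$ and each $q_i(x)(1+x+\cdots+x^{i-2})$ is symmetric about $(n-2)/2$ and unimodal (same argument as before), the difference $b_y-b_1$ is symmetric about $(n-2)/2$ and unimodal. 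Adding back $b_1$, which has the same centre, shows $b_y$ is symmetric and unimodal with nonnegative coefficients. Thus $(a_y,b_y)$ is the symmetric decomposition of $p(x,y)$ with both parts unimodal and nonnegative, which is equivalent to $p(x,y)$ being alternatingly increasing.

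The main obstacle will be the bookkeeping in deriving the closed forms for $a_y$ and $b_y$: one must track the boundary indices $i=0,1$ when simplifying $(1-x^{i})/(1-x)$ and $(x^{i-1}-1)/(1-x)$, and notice that $\mu_{n,0,0}=0$ is exactly the condition that turns $b_y$ into a polynomial. The key conceptual point, which I expect to be the crux, is that one should not attempt to prove $b_y$ is $\gamma$-positive---in general it is not, consistent with the theorem asserting only the alternatingly increasing property---but should instead compare $b_y$ coefficientwise with the $\gamma$-positive polynomial $b_1$ furnished by the hypothesis.
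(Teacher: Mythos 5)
Your proof is correct, and it takes a genuinely different route from the paper's. The paper never forms the symmetric decomposition of $p(x,y)$: it expands each block $\mu_{n,i,j}x^j(1+x)^{n-i-2j}$ into coefficients $S_{n,i,j,\ell}$ and verifies the two interlacing chains $p_{n,n-\ell}(y)\leqslant p_{n,\ell}(y)$ and $p_{n,\ell}(y)\leqslant p_{n,n-1-\ell}(y)$ directly, using symmetry and unimodality of each block for the first chain, and for the second chain isolating the $y$-free part $P_{n,\ell}$ from $Q_{n,\ell}(y)=\sum_{i\geqslant1}y^i(\cdots)$ and arguing $P_{n,\ell}-Q_{n,\ell}(y)\geqslant P_{n,\ell}-Q_{n,\ell}(1)\geqslant0$ via the hypothesis at $y=1$. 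You instead compute $a_y(x)=\sum_{i\geqslant1}y^iq_i(x)(1+\cdots+x^{i-1})$ and $b_y(x)=q_0(x)/x-\sum_{i\geqslant2}y^iq_i(x)(1+\cdots+x^{i-2})$ in closed form and invoke the Br\"and\'en--Solus criterion; your identity $b_y=b_1+\sum_{i\geqslant2}(1-y^i)q_i(x)(1+\cdots+x^{i-2})$ is the exact structural counterpart of the paper's $P_{n,\ell}-Q_{n,\ell}(y)$ manipulation, so the two essential ingredients (positivity of the $\gamma$-blocks, plus the $y=1$ hypothesis combined with monotonicity in $y\in[0,1]$) are the same. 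What your version buys is the explicit symmetric decomposition of $p(x,y)$ for all $y$ (of independent interest; the paper only obtains such formulas later, in special cases like $A_n^{(k)}(x)$, by grammar computations), together with the observation that $\mu_{n,0,0}=0$ is forced by the degree hypothesis; what the paper's version buys is that it never divides by $1-x$ and needs no degree bookkeeping, working purely with coefficient inequalities. The only points worth tightening in a write-up are to state explicitly that all summands in $a_y$ (resp.\ in $b_y-b_1$) are centred at $(n-1)/2$ (resp.\ $(n-2)/2$), so that the standard closure of symmetric unimodality under products and under nonnegative combinations with a common centre applies, and to note that the $y=0$ case is in fact covered by the same formulas rather than needing separate treatment.
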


Let $[n]=\{1,2,\ldots,n\}$.
Let $\msn$ be the set of all permutations of $[n]$ and
let $\pi=\pi(1)\pi(2)\cdots\pi(n)\in\msn$. We say that $i$ is a {\it descent} (resp.~{\it excedance}, {\it drop}, {\it fixed point})
if $\pi(i)>\pi(i+1)$ (resp.~$\pi(i)>i$,~$\pi(i)<i$,~$\pi(i)=i$).
Let $\des(\pi)$, $\exc(\pi)$, $\drop(\pi)$, $\fix(\pi)$ and $\cyc(\pi)$ be the number of descents, excedances, drops, fixed points and cycles of $\pi$, respectively.
It is well known that descents, excedances and drops are equidistributed over $\msn$, and their common enumerative polynomial is the classical Eulerian polynomial:
$$A_n(x)=\sum_{\pi\in\msn}x^{\des(\pi)}=\sum_{\pi\in\msn}x^{\exc(\pi)}=\sum_{\pi\in\msn}x^{\drop(\pi)}.$$

An index $i\in[n]$ is called a {\it double descent} of $\pi\in\msn$ if $\pi(i-1)>\pi(i)>\pi(i+1)$, where $\pi(0)=\pi(n+1)=0$.
Foata and Sch\"utzenberger~\cite{Foata70} found the following notable result.
\begin{proposition}[\cite{Foata70}]\label{Foata70}
One has
$$A_n(x)=\sum_{i=0}^{\lrf{(n-1)/2}}\gamma_{n,i}x^i(1+x)^{n-1-2i},$$
where $\gamma_{n,i}$ is the number of permutations $\pi\in \msn$ which have no double descents and $\des(\pi)=i$.
\end{proposition}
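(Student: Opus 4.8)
The plan is to prove this $\gamma$-expansion via a group action on $\msn$, the \emph{modified Foata--Strehl action} (valley-hopping), which partitions $\msn$ into orbits on each of which the descent generating polynomial is a single term $x^i(1+x)^{n-1-2i}$. An alternative would be to match a recurrence for $\gamma_{n,i}$ (from inserting $n$ into a permutation of $[n-1]$) against the standard recurrence for $A_n(x)$, but the bijective action gives the combinatorial meaning of $\gamma_{n,i}$ directly.

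Using the convention $\pi(0)=\pi(n+1)=0$, I would classify each letter $\pi(k)$ as a valley, peak, double ascent, or double descent according to the relative order of $\pi(k)$ and its two neighbours. For each value $x\in[n]$ I define an involution $\varphi_x$ on $\msn$: if $x$ occupies a valley or a peak, set $\varphi_x(\pi)=\pi$; if $x$ is a double ascent, write $\pi=\cdots b\,x\,c_1\cdots c_r\,d\cdots$ where $c_1,\dots,c_r$ is the maximal block of letters exceeding $x$ immediately to the right of $x$ (so $b<x$, each $c_j>x$, and $d<x$), and let $\varphi_x$ \emph{hop} $x$ across this block to obtain $\cdots b\,c_1\cdots c_r\,x\,d\cdots$, in which $x$ has become a double descent; symmetrically hop a double descent to the left. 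First I would check that each $\varphi_x$ is a well-defined involution and that the maps $\{\varphi_x\}_{x\in[n]}$ pairwise commute, so that they generate an action of $\mathbb{Z}_2^{[n]}$ under which $\msn$ splits into orbits.

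The crux is the bookkeeping on descents along an orbit. I would verify the local computation that hopping a double ascent to a double descent leaves every descent outside the affected block unchanged and raises $\des$ by exactly one (the single descent $c_r>d$ is replaced by the two descents $c_r>x$ and $x>d$), and that it alters none of the letters that are valleys or peaks. Consequently the set of valley letters and the set of peak letters are orbit invariants, while each double-ascent/double-descent letter is an independent ``free'' coordinate that may be toggled; if an orbit has $f$ free letters, toggling all subsets realizes every intermediate value of $\des$, so the orbit contributes $x^{i_0}(1+x)^{f}$ to $A_n(x)$, where $i_0$ is the descent number of the orbit member in which no free letter is a double descent.

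To finish I would identify the orbit invariants with the exponents in the claimed expansion. The distinguished representative $\hat\pi$, the unique orbit member with no double descents, has $v$ valleys and $v+1$ peaks under the boundary convention, so $f=n-1-2v$; moreover, since a double-descent-free permutation has every descending run equal to a single step, a short count gives $\des(\hat\pi)=v$. Hence $i_0=v$ and $f=n-1-2i_0$, so each orbit contributes $x^{i}(1+x)^{n-1-2i}$ with $i=\des(\hat\pi)$. Grouping orbits by $i$, and using that each orbit contains exactly one double-descent-free permutation, yields $A_n(x)=\sum_i\gamma_{n,i}x^i(1+x)^{n-1-2i}$ with $\gamma_{n,i}$ the number of double-descent-free $\pi\in\msn$ with $\des(\pi)=i$. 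I expect the main obstacle to be the careful verification that the hops commute and that the valley/peak sets are genuine orbit invariants; once these are in place, the descent bookkeeping and the identity $\des(\hat\pi)=v$ are routine, provided the boundary convention $\pi(0)=\pi(n+1)=0$ is held fixed throughout.
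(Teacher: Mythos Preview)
Your valley-hopping argument is correct and is exactly the standard modern proof of this classical result (this is the modified Foata--Strehl action as formalized in Br\"and\'en~\cite{Branden08}, which the paper also cites). One small wording issue: your sentence ``every descending run equals a single step'' is a slightly roundabout justification for $\des(\hat\pi)=v$; cleaner is to note that in $\hat\pi$ every descent position $i\in[n-1]$ forces $\pi(i)$ to be a peak, position $n$ is necessarily a peak, and peaks $=v+1$, whence $\des(\hat\pi)=v$.

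As for comparison with the paper: the paper does \emph{not} give its own proof of this proposition. It is quoted as a known result of Foata and Sch\"utzenberger and used as background. That said, the paper does deploy the same circle of ideas elsewhere: in proving Proposition~\ref{prop14} it introduces a cycle analogue $\varphi'_x$ of precisely the action you describe (hopping cycle double ascents/descents across blocks) to interpret the partial $\gamma$-coefficients $\gamma_{n,i,j}(q)$. So your approach is fully in the spirit of the paper's methods; you are supplying the linear (one-line) version of an action whose cyclic version the authors actually develop and use later. The commutativity of the $\varphi_x$, which you rightly flag as the delicate point, is handled in~\cite{Branden08}; the paper itself sidesteps this in its cycle setting by only ever applying a single hop at a time and matching a recurrence, rather than invoking the full group action.
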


An element $\pi\in\msn$ is called a {\it derangement} if $\fix(\pi)=0$. Let $\mdn$ be the set of all derangements in $\msn$.
The {\it derangement polynomials} are defined by $d_n(x)=\sum_{\pi\in\mdn}x^{\exc(\pi)}$.
It is well known that the generating function of $d_n(x)$ is given as follows (see~\cite[Proposition~6]{Brenti90}):
\begin{equation}\label{dxz-EGF}
d(x;z)=\sum_{n=0}^{\infty}d_n(x)\frac{z^n}{n!}=\frac{1-x}{e^{xz}-xe^{z}}.
\end{equation}
The cardinality of a set $A$ will be
denoted by $\#A$.
Let $\cda(\pi)=\#\{i:\pi^{-1}(i)<i<\pi(i)\}$ be the number of {\it cycle double ascents} of $\pi$.
By using the theory of continued fractions, Shin and Zeng~\cite[Theorem~11]{Zeng12} obtained the following result.
\begin{proposition}[\cite{Zeng12}]\label{Zeng12}
Let $\md_{n,k}=\{\pi\in \msn: \fix(\pi)=0,~\cda(\pi)=0,~\exc(\pi)=k\}$. Then
\begin{equation}\label{dnxq-def}
d_n(x,q)=\sum_{\pi\in\mdn}x^{\exc(\pi)}q^{\cyc(\pi)}=\sum_{k=1}^{\lrf{n/2}}\sum_{\pi\in \md_{n,k}}q^{\cyc(\pi)}x^{k}(1+x)^{n-2k}.
\end{equation}
\end{proposition}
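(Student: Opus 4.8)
\emph{Proof proposal.} The plan is to establish~\eqref{dnxq-def} by a combinatorial group action on $\mdn$ --- a cycle analogue of the Foata--Strehl valley-hopping action --- that preserves both $\cyc$ and $\fix$ and redistributes the excedance statistic within each orbit exactly as $x^{k}(1+x)^{n-2k}$, in the same spirit as the classical $\gamma$-expansion of $A_n(x)$ recorded in Proposition~\ref{Foata70}.

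First I would set up the cycle classification of positions. For $\pi\in\msn$ and a non-fixed point $i$, compare $i$ with its cyclic neighbours $\pi^{-1}(i)$ and $\pi(i)$: call $i$ a \emph{cycle peak} if $\pi^{-1}(i)<i>\pi(i)$, a \emph{cycle valley} if $\pi^{-1}(i)>i<\pi(i)$, a \emph{cycle double ascent} if $\pi^{-1}(i)<i<\pi(i)$, and a \emph{cycle double descent} if $\pi^{-1}(i)>i>\pi(i)$. Excedances of $\pi$ are precisely the cycle valleys together with the cycle double ascents, drops are precisely the cycle peaks together with the cycle double descents, and the number of cycle peaks always equals the number of cycle valleys (each cycle of length $\geqslant 2$ alternates). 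Hence if $\pi\in\mdn$ has $k$ cycle valleys, then it has exactly $n-2k$ positions that are cycle double ascents or cycle double descents, and $\exc(\pi)=k+\cda(\pi)$; moreover $k\geqslant 1$ for $n\geqslant 1$, since the least element of every cycle of length $\geqslant 2$ is a cycle valley.

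Next I would define, for each position $i$ that is a cycle double ascent or cycle double descent of $\pi$, an involution $\varphi_i$ on $\msn$ obtained by deleting $i$ from its cycle and reinserting it on the opposite side of the relevant slope, so that $i$ switches between cycle-double-ascent and cycle-double-descent type. The three things to check are: (i) $\varphi_i$ changes $\cda$ (hence $\exc$) by exactly $\pm1$ and leaves the type of every other position unchanged; (ii) $\varphi_i$ merely rearranges a single cycle, so it preserves the cycle type and in particular $\cyc$ and $\fix$, whence $\varphi_i(\mdn)=\mdn$; (iii) the $\varphi_i$ pairwise commute and each is fixed-point-free on $\msn$. The cleanest way to verify (i)--(iii) is to read each nontrivial cycle of $\pi$ as a linear word anchored at its largest entry and to recognize $\varphi_i$, restricted to that word, as the usual Foata--Strehl move at the interior letter $i$, whose properties are standard; since a hop never carries $i$ past the cycle maximum, it remains a permutation of a single cycle, and the commutation and involution properties are inherited from the linear case. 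I expect this verification to be the main obstacle; the remainder is bookkeeping.

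Finally, for a fixed $\pi\in\mdn$ with $k$ cycle valleys, the group $\langle\varphi_i\rangle\cong\mathbb{Z}_2^{\,n-2k}$ acts freely on the orbit of $\pi$, the set of hoppable positions and the value $k$ are orbit invariants, and $\cyc$ is constant on the orbit. Each orbit contains a unique element in which every hoppable position is a cycle double descent, i.e.\ with $\cda=0$; by the first step this representative has $\exc=k$, so it lies in $\md_{n,k}$. Summing $x^{\exc}$ over the orbit gives $x^{k}(1+x)^{n-2k}$, since each of the $n-2k$ hoppable positions independently contributes a factor $x$ (cycle double ascent) or $1$ (cycle double descent). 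Multiplying by the common value $q^{\cyc(\pi)}$ on the orbit and summing over all orbits of $\mdn$ yields~\eqref{dnxq-def}. (An alternative, closer to Shin and Zeng's original argument, would instead derive the Jacobi continued fraction for $\sum_{n\geqslant 0}d_n(x,q)z^n$ and invoke a known $\gamma$-positivity criterion for such continued fractions; the group-action route above is the one I would prefer, since it produces the combinatorial model $\md_{n,k}$ directly.)
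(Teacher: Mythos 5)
Your argument is correct, but it is not the route the paper takes: the paper simply quotes Proposition~\ref{Zeng12} from Shin and Zeng, whose original proof goes through continued fractions, and then reproves a refinement of it (Proposition~\ref{prop14}, equivalently the $i=0$ case of~\eqref{eq:proof03}) by a completely different mechanism, namely the context-free grammar $G_2$ together with a combinatorial verification of the recurrence~\eqref{Enij-recu} by inserting the entry $n+1$; in that verification the modified Foata--Strehl map appears only as a one-step bijection (case $(c_5)$, matching a cycle double descent with the unique cycle double ascent of its image) rather than as a full group action. Your proposal runs the cyclic valley-hopping action to completion: the $\mathbb{Z}_2^{\,n-2k}$ orbits partition $\mdn$, each orbit carries a constant value of $\cyc$ and contributes $q^{\cyc}x^k(1+x)^{n-2k}$, and the canonical orbit representative with $\cda=0$ lands in $\md_{n,k}$. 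This is a legitimate and arguably more transparent proof of~\eqref{dnxq-def}: it explains directly why $q^{\cyc}$ factors through the $\gamma$-expansion (the action never changes the cycle partition), whereas the paper's grammar approach buys the stronger trivariate statement~\eqref{eq:proof03} (tracking $\fix$ as well) and the generating function~\eqref{Csxz-explicit} with little extra work. Two small points of care: your item (iii) should say that $\varphi_i$ is fixed-point-free only on those $\pi$ for which $i$ is hoppable (on the others it is the identity, as in the paper's $\varphi'_x$), and the commutativity/involutivity claims rest on the observation --- which you state but should actually verify --- that a hop of $i$ leaves the peak/valley/double-ascent/double-descent type of every other element unchanged; both verifications are routine and agree with the properties the paper asserts for its single-step map.
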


Let $\pm[n]=[n]\cup\{\overline{1},\ldots,\overline{n}\}$, where $\overline{i}=-i$.
Let $\mbn$ be the {\it hyperoctahedral group} of rank $n$.
Elements of $\mbn$ are permutations of $\pm[n]$ with the property that $\sigma(\overline{i})=-\sigma(i)$ for all $i\in [n]$.
Let $\sigma=\sigma(1)\sigma(2)\cdots \sigma(n)\in\mbn$.
An {\it excedance} (resp.~{\it fixed point}) of $\sigma$ is an index $i\in [n]$ such that $\sigma(|\sigma(i)|)>\sigma(i)$ (resp.~$\sigma(i)=i$).
Let $\exc(\sigma)$ (resp.~$\fix(\sigma)$) denote the number of excedances (resp.~fixed points) of $\sigma$.
Let $\mdn^B=\{\sigma\in \mbn: \fix(\sigma)=0\}$ be the set of all derangements in $\mbn$.
The {\it type $B$ derangement polynomials} $d_n^B(x)$ are defined by
$$d_n^B(x)=\sum_{\sigma\in \mdn^B}x^{\exc(\sigma)},$$
which has been extensively studied, see~\cite{Chen09,Chow09} and references therein.
According to~\cite[Theorem 3.2]{Chow09}, the generating function of $d_n^B(x)$ is given as follows:
\begin{equation}\label{dxzB-EGF}
\sum_{n=0}^\infty d_n^B(x)\frac{z^n}{n!}=\frac{(1-x)\mathrm{e}^z}{\mathrm{e}^{2xz}-x\mathrm{e}^{2z}}.
\end{equation}
Chen, Tang and Zhao~\cite[Theorem 4.6]{Chen09}) studied the polynomials $x^nd_n^B(1/x)$ and proved the following remarkable result.
\begin{proposition}[\cite{Chen09}]\label{Chen09}
For $n\geqslant 1$, the polynomials $x^nd_n^B(1/x)$ are spiral. Equivalently, the polynomials $d_n^B(x)$ are alternatingly increasing.
\end{proposition}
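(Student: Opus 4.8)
The plan is to identify $d_n^B(x)$, up to a harmless rescaling, with the fixed-point refined Eulerian polynomial of $\msn$ evaluated at a point of $[0,1]$, and then to invoke Theorem~\ref{bigammpartial}. Write
$$A_n(x,p)=\sum_{\pi\in\msn}x^{\exc(\pi)}p^{\fix(\pi)}.$$
Splitting a permutation of $[n]$ into its set of fixed points and a derangement of the remaining letters gives $A_n(x,p)=\sum_{m=0}^n\binom nm p^m d_{n-m}(x)$, so by~\eqref{dxz-EGF},
$$\sum_{n\ge0}A_n(x,p)\frac{z^n}{n!}=e^{pz}d(x;z)=\frac{(1-x)e^{pz}}{e^{xz}-xe^z}.$$
Replacing $z$ by $2z$ and putting $p=\tfrac12$ turns the right-hand side into $\frac{(1-x)e^{z}}{e^{2xz}-xe^{2z}}$, which by~\eqref{dxzB-EGF} equals $\sum_{n\ge0}d_n^B(x)\frac{z^n}{n!}$. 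Comparing coefficients, $d_n^B(x)=2^nA_n\!\left(x,\tfrac12\right)$; since multiplication by the positive constant $2^n$ preserves the alternatingly increasing property, it suffices to prove that $A_n\!\left(x,\tfrac12\right)$ is alternatingly increasing.

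For this I would check the hypotheses of Theorem~\ref{bigammpartial} for the bivariate polynomial $p(x,y)=A_n(x,y)$, with $y$ in the role of $p$. Substituting the $\gamma$-expansion of $d_{n-m}(x)$ supplied by Proposition~\ref{Zeng12} (case $q=1$) into the formula above yields
$$A_n(x,y)=\sum_{m=0}^n y^m\sum_{j}\binom nm\#\md_{n-m,j}\,x^j(1+x)^{\,n-m-2j},$$
which is exactly the template~\eqref{defpartial} with nonnegative partial $\gamma$-coefficients $\mu_{n,m,j}=\binom nm\#\md_{n-m,j}$; hence $A_n(x,y)$ is partial $\gamma$-positive. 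Moreover $A_n(x,1)=A_n(x)$ is $\gamma$-positive by Proposition~\ref{Foata70}, hence bi-$\gamma$-positive, and $\deg A_n(x)=n-1$. Theorem~\ref{bigammpartial} then applies with the real number $y=\tfrac12\in[0,1]$ and gives that $A_n\!\left(x,\tfrac12\right)$, and therefore $d_n^B(x)$, is alternatingly increasing; by the equivalence recalled in the introduction, this is the same as the assertion that $x^{\deg d_n^B}d_n^B(1/x)$ is spiral.

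The one real idea here is the identity $d_n^B(x)=2^nA_n\!\left(x,\tfrac12\right)$: recognizing from the generating functions that the type $B$ derangement polynomial is a rescaled $p$-Eulerian polynomial specialized at $p=\tfrac12$. Once this is in place, the statement is a direct consequence of Theorem~\ref{bigammpartial} together with the classical $\gamma$-positivity results of Propositions~\ref{Foata70} and~\ref{Zeng12}, and all that remains is bookkeeping: checking that $A_n(x,y)$ genuinely fits the template~\eqref{defpartial} for the parameter $n$ (the identity permutation contributes the legitimate term $y^n$, i.e.\ $\mu_{n,n,0}=1$ and $\mu_{n,n,j}=0$ for $j\ge1$), and that $\mu_{n,0,0}=\#\md_{n,0}=0$, which is precisely what forces $\deg A_n(x,1)=n-1$ as Theorem~\ref{bigammpartial} requires. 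The only subtlety in the ``spiral'' phrasing is the degree convention, since $d_n^B(x)$ has degree $n-1$ rather than $n$.
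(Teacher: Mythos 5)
Your argument is correct, and there is no circularity: Theorem~\ref{bigammpartial} is proved in Section~\ref{section002} independently of Proposition~\ref{Chen09}, and the two auxiliary facts you invoke (Propositions~\ref{Foata70} and~\ref{Zeng12}) are classical results imported from the literature. The identity $d_n^B(x)=2^nA_n(x,1/2,1)$ is exactly the one the paper records after~\eqref{Anxpq-EGF}, and your expansion $A_n(x,y)=\sum_m\binom{n}{m}y^m d_{n-m}(x)$ together with the $q=1$ case of~\eqref{dnxq-def} does put $A_n(x,y)$ in the template~\eqref{defpartial} with nonnegative coefficients $\mu_{n,m,j}=\binom{n}{m}\#\md_{n-m,j}$, while $A_n(x,1)=A_n(x)$ is $\gamma$-positive of degree $n-1$; so Theorem~\ref{bigammpartial} applies at $y=1/2$. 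The comparison with the paper is slightly unusual: the paper offers no proof of this proposition at all --- it is quoted from Chen--Tang--Zhao and then \emph{used} to deduce that $A_n(x,1/2,1)$ is alternatingly increasing --- but the paper's Theorem~\ref{thm001}(ii) subsumes the statement and is proved by precisely the route you take, namely Theorem~\ref{bigammpartial} applied to a partial $\gamma$-expansion of $A_n(x,p,q)$. The difference is in how the hypotheses are certified: the paper needs the context-free-grammar computations~\eqref{eq:proof03} and~\eqref{recusystem} because it must handle the extra cycle variable $q\in[0,1]$, whereas at $q=1$ you can fall back on the classical $\gamma$-positivity results, which makes your argument shorter but strictly less general. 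Your closing remark about the degree convention is well taken: since $\deg d_n^B(x)=n-1$, the spiral reformulation should really read $x^{n-1}d_n^B(1/x)$ under this paper's definitions (one checks, e.g., $d_3^B(x)=1+20x+8x^2$, for which $x^2d_3^B(1/x)=8+20x+x^2$ is spiral but $x^3d_3^B(1/x)$ has vanishing constant term); this is a normalization artifact inherited from the cited source and does not affect the substance of what you proved.
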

As discussed in Sections~\ref{section004} and~\ref{section005}, many different refinements and generalizations of Propositions~\ref{Foata70},~\ref{Zeng12} and~\ref{Chen09} have been studied. The reader is referred to~\cite{Gessel20,Zeng2020,Lin15,Ma19,Zhuang17} for more details. In particular, Gessel and Zhuang~\cite{Gessel20} studied several permutation statistics jointly with the number of fixed points and jointly with cycle type.

Let us define the {\it $(p,q)$-Eulerian polynomials} $A_n(x,p,q)$ by
$$A_n(x,p,q)=\sum_{\pi\in\msn}x^{\exc(\pi)}p^{\fix(\pi)}q^{\cyc(\pi)}.$$
Let $A_n(x,q)=A_n(x,1,q)$ be the $q$-Eulerian polynomials.
Brenti~\cite{Brenti00} showed that some of the crucial properties of Eulerian polynomials have nice $q$-analogues for the polynomials $A_{n}(x,q)$.
Following~\cite[Proposition~7.2]{Brenti00}, the $q$-Eulerian polynomials $A_n(x,q)$ satisfy the recurrence
\begin{equation}\label{anxq-rr}
A_{n+1}(x,q)=(nx+q)A_{n}(x,q)+x(1-x)\frac{\mathrm{d}}{\mathrm{d} x}A_{n}(x,q),
\end{equation}
with the initial conditions $A_{0}(x;q)=1,A_{1}(x;q)=q$ and $A_{2}(x;q)=q(x+q)$.
%They satisfy the recurrence relation
%\begin{equation}\label{anxq-rr}
%A_{n+1}(x,q)=(nx+q)A_{n}(x,q)+x(1-x)\frac{d}{d x}A_{n}(x,q),
%\end{equation}
%with the initial conditions $A_{0}(x,q)=1,A_{1}(x,q)=q$ and $A_{2}(x,q)=q(x+q)$ (see~\cite[Proposition~7.2]{Brenti00}).
According to~\cite[Proposition~7.3]{Brenti00}, we have
\begin{equation*}\label{Anxq-egf01}
\sum_{n=0}^\infty A_n(x,q)\frac{z^n}{n!}=\left(\frac{(1-x)\mathrm{e}^z}{\mathrm{e}^{{xz}}-x\mathrm{e}^z}\right)^q.
\end{equation*}
Using the {\it exponential formula}, Ksavrelof and Zeng~\cite{Zeng02} found that
\begin{equation}\label{Anxpq-EGF}
\sum_{n=0}^\infty A_n(x,p,q)\frac{z^n}{n!}=\left(\frac{(1-x)\mathrm{e}^{pz}}{\mathrm{e}^{xz}-x\mathrm{e}^{z}}\right)^q
\end{equation}
Below are the polynomials $A_n(x,p,q)$ for $n\leqslant 4$:
\begin{align*}
A_1(x,p,q)&=pq,~
A_2(x,p,q)=p^2q^2+qx,~
A_3(x,p,q)=p^3q^3+(q+3pq^2)x+qx^2,\\
A_4(x,p,q)&=p^4q^4+(q+4pq^2+6p^2q^3)x+(4q+3q^2+4pq^2)x^2+qx^3.
\end{align*}
%The reader is referred to~\cite{Ma2008,Ma16,Zeng2020,Zeng16} for some recent results related to the joint distribution of excedances, fixed points and cycles.
%
From Proposition~\ref{Zeng12}, we see that if $q>0$ is a given real number, then $A_n(x,0,q)$ are $\gamma$-positive.
Comparing~\eqref{dxzB-EGF} with~\eqref{Anxpq-EGF}, we get
$2^nA_n(x,1/2,1)=d_n^B(x)$. It follows from Proposition~\ref{Chen09} that $A_n(x,1/2,1)$ are alternatingly increasing.
As a unified generalization of Propositions~\ref{Foata70},~\ref{Zeng12} and~\ref{Chen09},
we can now present the second main result of this paper.
\begin{theorem}\label{thm001}
Let $p\in[0,1]$ and $q\in[0,1]$ be two given real numbers, i.e., $0\leqslant p\leqslant 1$ and $0\leqslant q\leqslant 1$.
Then we have the following results:

{\it $(i)$} For $n\geqslant 1$, the polynomials $A_n(x,q)$ are bi-$\gamma$-positive;
%
%{\it $(ii)$} Let $(a_n(x,q),b_n(x,q))$ be the symmetric decomposition of $A_n(x,q)$. Then $a_n(x,q)$ and $b_n(x,q)$ have only real and non-positive zeros which are
%interlacing;

{\it $(ii)$} The polynomials $A_n(x,p,q)$ are alternatingly increasing.
\end{theorem}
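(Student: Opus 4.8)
The plan is to derive part~$(ii)$ from part~$(i)$ by means of Theorem~\ref{bigammpartial}, so that the substance of the argument is confined to part~$(i)$. Comparing \eqref{Anxpq-EGF} with its specialization at $p=0$ (the exponential generating function of the derangement polynomials $d_m(x,q)$) yields the fixed-point decomposition
$$A_n(x,p,q)=\sum_{k=0}^{n}\binom{n}{k}(pq)^k\,d_{n-k}(x,q).$$
By Proposition~\ref{Zeng12}, for $q\geqslant0$ each $d_m(x,q)$ is a nonnegative combination of the polynomials $x^j(1+x)^{m-2j}$, and inserting this presents $A_n(x,p,q)$, regarded as a polynomial in $x$ with auxiliary variable $p$, exactly in the shape~\eqref{defpartial} with nonnegative partial $\gamma$-coefficients; hence $A_n(x,p,q)$ is partial $\gamma$-positive. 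For $q>0$ one has $\deg A_n(x,1,q)=n-1$ (the coefficient of $x^{n-1}$ equals $q$, contributed by the long cycle), and $A_n(x,1,q)=A_n(x,q)$ is bi-$\gamma$-positive by part~$(i)$, so Theorem~\ref{bigammpartial} applies with $0\leqslant p\leqslant1$ and shows that $A_n(x,p,q)$ is alternatingly increasing; the case $q=0$ is trivial, as $A_n(x,p,0)=0$.

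For part~$(i)$ we may assume $q\in(0,1]$. Let $(a_n(x),b_n(x))$ be the symmetric decomposition of $A_n(x,q)$ and put $\widetilde A_n(x)=x^{n-1}A_n(1/x,q)$, so that $A_n=a_n+xb_n$ and $\widetilde A_n=a_n+b_n$, where $a_n$ is symmetric of degree $n-1$ and $b_n$ symmetric of degree at most $n-2$. Applying $x\mapsto1/x$ to \eqref{anxq-rr} produces the companion recurrence $\widetilde A_{n+1}(x)=[1+(n-1+q)x]\,\widetilde A_n(x)+x(1-x)\widetilde A_n'(x)$; feeding \eqref{anxq-rr} and this companion into the two identities above and separating symmetric parts leads to the coupled recurrences
\begin{align*}
a_{n+1}(x)&=[q+(n-1+q)x]\,a_n(x)+x(1-x)a_n'(x)+qx\,b_n(x),\\
b_{n+1}(x)&=(1-q)\,a_n(x)+[1+(n-1)x]\,b_n(x)+x(1-x)b_n'(x),
\end{align*}
with $a_1(x)=q$, $b_1(x)=0$.

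The engine of the induction is the one-term identity, valid for the operator $\Theta_{m,c}\colon f\mapsto[c+(m+c)x]f+x(1-x)f'$,
$$\Theta_{m,c}\bigl(x^k(1+x)^{m-2k}\bigr)=(c+k)\,x^k(1+x)^{m+1-2k}+(2m-4k)\,x^{k+1}(1+x)^{m-1-2k}.$$
For $c\geqslant0$ and $0\leqslant k\leqslant\lrf{m/2}$ both coefficients on the right are nonnegative (then $2m-4k\geqslant0$, the second term vanishing when this is $0$), so $\Theta_{m,c}$ sends a polynomial $\sum_k\gamma_kx^k(1+x)^{m-2k}$ with $\gamma_k\geqslant0$ to one of the form $\sum_k\gamma'_kx^k(1+x)^{m+1-2k}$ with $\gamma'_k\geqslant0$; similarly $f\mapsto xf$ sends such a polynomial to $\sum_k\gamma_kx^{k+1}(1+x)^{(m+2)-2(k+1)}$. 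Since $a_n$ and $b_n$ are of this type with parameters $n-1$ and $n-2$, we induct on $n$: the base case $n=1$ is clear, and in the step the recurrences read $a_{n+1}=\Theta_{n-1,q}(a_n)+q\,x\,b_n$ and $b_{n+1}=(1-q)\,a_n+\Theta_{n-2,1}(b_n)$, so $a_{n+1}$ is a sum of two polynomials of the form $\sum\gamma_kx^k(1+x)^{n-2k}$ with nonnegative coefficients (using $q\geqslant0$) and $b_{n+1}$ a sum of two of the form $\sum\gamma_kx^k(1+x)^{n-1-2k}$ with nonnegative coefficients (using $1-q\geqslant0$). Hence $A_{n+1}(x,q)$ is bi-$\gamma$-positive. (As a check, $q=1$ forces $b_n\equiv0$, and the $a_n$-recurrence becomes the Eulerian recurrence, recovering the classical $\gamma$-positivity of $A_n(x)$.)

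The step I expect to be the main obstacle is the derivation of the two coupled recurrences: one must manipulate \eqref{anxq-rr} together with its reversal and keep careful track of which contributions belong to the symmetric summand $a_n$ and which to $b_n$. It is also worth pinpointing where $q\leqslant1$ is needed — solely through the factor $1-q$ multiplying $a_n$ in the $b$-recurrence — and that the restriction is sharp, since $b_2(x)=q(1-q)<0$ for $q>1$, so $A_2(x,q)$ is then not bi-$\gamma$-positive.
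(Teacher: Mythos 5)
Your proposal is correct, and its overall architecture coincides with the paper's: part $(i)$ is proved by showing that the two summands of the symmetric decomposition of $A_n(x,q)$ satisfy a coupled recurrence system that manifestly preserves $\gamma$-positivity when $0\leqslant q\leqslant 1$, and part $(ii)$ follows by combining partial $\gamma$-positivity of $A_n(x,p,q)$ with part $(i)$ via Theorem~\ref{bigammpartial}. The differences lie in how the two key ingredients are obtained. The paper derives everything through context-free grammars and changes of grammar: the system \eqref{recusystem} for the coefficients $A^{\pm}_{n,i;k}$ of $A_n^{(k)}(x)=k^nA_n(x,1/k)$, and the recurrence \eqref{Enij-recu} for the partial $\gamma$-coefficients. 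You instead derive the polynomial recurrences for $(a_n,b_n)$ directly from \eqref{anxq-rr} and its reversal (your system is exactly the paper's system for $(a_n^{(k)},b_n^{(k)})$ after setting $q=1/k$ and dividing by $k^{n+1}$, so the verification via your operator identity for $\Theta_{m,c}$ is sound), and you obtain partial $\gamma$-positivity from the convolution $A_n(x,p,q)=\sum_k\binom{n}{k}(pq)^kd_{n-k}(x,q)$ together with the cited expansion of Proposition~\ref{Zeng12}, rather than reproving it. Your route buys two things: it is more elementary, and it treats every real $q\in[0,1]$ uniformly, whereas the paper's reduction to $A_n^{(k)}(x)$ with $k$ a positive integer literally covers only $q\in\{1,\tfrac12,\tfrac13,\dots\}$ (its recurrences do extend to real $k\geqslant1$, but this is not said). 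What the paper's grammar route buys is the combinatorial byproduct: the same computation yields the interpretation of the partial $\gamma$-coefficients $\gamma_{n,i,j}(q)$ in Proposition~\ref{prop14} and the generating function \eqref{Csxz-explicit}, which your shortcut does not produce. Your closing observations — that $q\leqslant1$ enters only through the factor $1-q$ in the $b$-recurrence, and that $b_2(x)=q(1-q)$ shows sharpness — are accurate and worth keeping.
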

The proofs of Theorems~\ref{bigammpartial} and~\ref{thm001} will be given in the next section. By combining a modified Foata-Strehl group action, we give a combinatorial interpretation of the partial $\gamma$-coefficients of $A_n(x,p,q)$.
In Section~\ref{bi-gamma-coefficients}, we give a combinatorial interpretation for the symmetric decomposition of $k^nA_n(x,1/k)$ as well as the bi-$\gamma$-coefficients of $2^nA_n(x,1/2)$, where $k$ is a fixed positive integer.
In Sections~\ref{section004} and~\ref{section005}, we study excedance and flag excedance statistics of signed and colored permutations.
In particular, we give a unified approach to the relationships between the $A_n(x,p,q)$ and some
multivariate Eulerian
polynomials, signed or colored, which can be summarized as follows:
\begin{align*}
B_n(x,y,s,t,p,q)&=(1+p)^ny^nA_n\left(\frac{x}{y},\frac{t+sp}{(1+p)y},q\right),\\
B_n^{(A)}(x,y,s,t,p,q)&=(1+p)^ny^nA_n\left(\frac{x+py}{(1+p)y},\frac{t+sp}{(1+p)y},q\right),\\
A_{n,r}(x,y,s,t,p,q)&=[r]_p^ny^nA_n\left(\frac{x}{y},\frac{t+sp[r-1]_p}{[r]_py},q\right),\\
A_{n}^{(r)}(x,y,s,t,p,q)&=[r]_p^ny^nA_n\left(\frac{x+p[r-1]_py}{[r]_py},\frac{t+sp[r-1]_p}{[r]_py},q\right),
\end{align*}
where $[r]_p=1+p+\cdots+p^{r-1}$ for a fixed positive integer $r$ and $[0]_p=0$.
%A variety of known formulas are recovered as special cases of our results.

In the following discussion, we always write permutation, signed or not, colored or not, by using its standard cycle form,
in which each cycle has its smallest (in absolute value) element first and the cycles are written in increasing order of the absolute value of their first elements.
%%%%%%%%%%%%%%%%%%%%%%%%%%%%%%%%%%%%%%%%%%%%%%%%%%%%%%%%%%%%%%%%%%%%%%%%%%%%%%%%%
%%%%%%%%%%%%%%%%%%%%%%%%%%%%%%%%%%%%%%%%%%%%%%%%%
\section{Proof of Theorems~\ref{bigammpartial} and~\ref{thm001}}\label{section002}
%%%%%%%%%%%%%%%%%%%%%%%%%%%%%%%%%%%%%%%%%%%
%%%%%%%%%%%%%%%%%%%%%%%%%%%%%%%%%%%%%%%%%%%%%%%%%%%%%%%%%%%%%%%%%%%%%%%%%%%%%%%%%
%%%%%%%%%%%%%%%%%%%%%%%%%%%%%%%%%%%%%%%%%%%%%%%%%%%%%%%%%%%%%%%%%%%%%%%%%%%%%%%%%%
%%%%%%%%%%%%%%%%%%%%%%%%%%%%%%%%%%%%%%%%%%%%%%%%%%%%%%%%%%%%%%%%%%%%%%%%%%%%%%%%%%
%%%%%%%%%%%%%%%%%%%%%%%%%%%%%%%%%%%%%%%%%%%%%%%%%%
%%%%%%%%%%%%%%%%%%%%%%%%%%%%%%%%%%%%%%%%%%%%%%%%%%%%%%%%%%%%%%%%%%%%%%%%%%%%%%%%%%
%\begin{equation*}
%p(x,y)=\sum_{i=0}^ny^i\sum_{j=0}^{\lrf{(n-i)/2}}\mu_{n,i,j}x^j(1+x)^{n-i-2j}.
%\end{equation*}
%
\noindent{\bf A proof
Theorem~\ref{bigammpartial}:}
\begin{proof}
For $0\leqslant i\leqslant n$ and $0\leqslant j\leqslant \lrf{(n-i)/2}$, let
$$\mu_{n,i,j}x^j(1+x)^{n-i-2j}=\sum_{\ell=j}^{n-i-j}S_{n,i,j,\ell}x^{\ell}.$$
Since the polynomials $\sum_{\ell=j}^{n-i-j}S_{n,i,j,\ell}x^{\ell}$ are symmetric and unimodal, we have
\begin{equation}\label{eq:proof40}
\left\{
  \begin{array}{ll}
    S_{n,i,j,\ell}=S_{n,i,j,n-i-\ell}, & \hbox{if $j\leqslant \ell\leqslant n-i-j$;} \\
     S_{n,i,j,\ell}\leqslant  S_{n,i,j,\ell+k}, & \hbox{if $j\leqslant \ell<\ell+k\leqslant \lrf{(n-i)/2}$;} \\
      S_{n,i,j,\ell}\geqslant  S_{n,i,j,\ell+k}, & \hbox{if $\lrf{(n-i)/2}\leqslant \ell<\ell+k\leqslant n-i-j$.}
  \end{array}
\right.\end{equation}
For $n\geqslant 1$, assume that $p(x,y)=\sum_{\ell=0}^{n-1}p_{n,\ell}(y)x^{\ell}$
where
$$p_{n,\ell}(y)=\sum_{i=0}^ny^i\sum_{j=0}^{\lrf{(n-i)/2}}S_{n,i,j,\ell}.$$

For $0\leqslant \ell\leqslant \lrf{n/2}$, we have
\begin{align*}
p_{n,\ell}(y)-p_{n,n-\ell}(y)&=\sum_{i=0}^ny^i\sum_{j=0}^{\lrf{(n-i)/2}}\left(S_{n,i,j,\ell}-S_{n,i,j,n-\ell}\right)\\
&=\sum_{i=0}^ny^i\sum_{j=0}^{\lrf{(n-i)/2}}\left(S_{n,i,j,\ell}-S_{n,i,j,\ell-i}\right).
\end{align*}
From~\eqref{eq:proof40}, we have $S_{n,i,j,\ell}-S_{n,i,j,\ell-i}\geqslant 0$. Hence $p_{n,\ell}(y)\geqslant p_{n,n-\ell}(y)$ when $y\geqslant 0$.

For $0\leqslant \ell\leqslant \lrf{n/2}-1$, we have
\begin{align*}
p_{n,n-1-\ell}(y)-p_{n,\ell}(y)&=\sum_{i=0}^ny^i\sum_{j=0}^{\lrf{(n-i)/2}}\left(S_{n,i,j,n-1-\ell}-S_{n,i,j,\ell}\right)\\
&=\sum_{i=0}^ny^i\sum_{j=0}^{\lrf{(n-i)/2}}\left(S_{n,i,j,\ell+1-i}-S_{n,i,j,\ell}\right),
\end{align*}
which can be rewritten as
$p_{n,n-1-\ell}(y)-p_{n,\ell}(y)=P_{n,\ell}-Q_{n,\ell}(y)$,
where
\begin{align*}
P_{n,\ell}&=\sum_{j=0}^{\lrf{n/2}}\left(S_{n,0,j,\ell+1}-S_{n,0,j,\ell}\right),~
Q_{n,\ell}(y)=\sum_{i=1}^ny^i\sum_{j=0}^{\lrf{(n-i)/2}}\left(S_{n,i,j,\ell}-S_{n,i,j,\ell+1-i}\right).
\end{align*}
It follows from~\eqref{eq:proof40} that $S_{n,0,j,\ell+1}-S_{n,0,j,\ell}\geqslant 0$ and $S_{n,i,j,\ell}-S_{n,i,j,\ell+1-i}\geqslant 0$ for $i\geqslant 1$.
Since $p(x,1)=\sum_{\ell=0}^{n-1}p_{n,\ell}(1)x^{\ell}$ is bi-$\gamma$-positive, the polynomial $p(x,1)$ is alternatingly increasing, which implies that
\begin{equation}\label{an1q}
p_{n,n-1-\ell}(1)-p_{n,\ell}(1)=P_{n,\ell}-Q_{n,\ell}(1)\geqslant 0.
\end{equation}
Therefore, if $0\leqslant y\leqslant 1$, then $P_{n,\ell}-Q_{n,\ell}(y)\geqslant P_{n,\ell}-Q_{n,\ell}(1)\geqslant 0$.
%$$a_{n,n-1-\ell}(p,q)-a_{n,\ell}(p,q)=P_{n,\ell}(q)-Q_{n,\ell}(p,q)\geqslant P_{n,\ell}(q)-Q_{n,\ell}(1,q)\geqslant 0.$$
In conclusion, when $0\leqslant y\leqslant 1$, we get $p_{n,\ell}(y)\geqslant p_{n,n-\ell}(y)$ for $0\leqslant \ell\leqslant \lrf{n/2}$ and $p_{n,n-1-\ell}(y)\geqslant p_{n,\ell}(y)$ for $0\leqslant \ell\leqslant \lrf{n/2}-1$. This completes the proof.
\end{proof}

In the rest of this section, we shall prove Theorem~\ref{thm001} by using the theory of context-free grammars.
For an alphabet $V$, let $\mathbb{Q}[[V]]$ be the rational commutative ring of formal power series in
monomials formed from letters in $V$. A {\it context-free grammar} over
$V$ is a function $G: V\rightarrow \mathbb{Q}[[V]]$ that replaces a letter in $V$ by an element of $\mathbb{Q}[[V]]$ (see~\cite{Chen93,Ma19}).
The formal derivative $D_G$ is a linear operator defined with respect to a grammar $G$. Following~\cite{Chen17},
a {\it grammatical labeling} is an assignment of the underlying elements of a combinatorial structure
with variables, which is consistent with the substitution rules of a grammar.
%In other words, $D_G$ is the unique derivation satisfying $D_G(x)=G(x)$ for $x\in V$.
For any two formal functions $u$ and $v$, we have
$D_G(u+v)=D_G(u)+D_G(v),~D_G(uv)=D_G(u)v+uD_G(v)$.
For a constant $c$, we have $D_G(c)=0$.
%The {\it Leibniz rule} is as follows:
%\begin{equation*}\label{Dnuv}
%D_G^n(uv)=\sum_{k=0}^n\binom{n}{k}D_G^k(u)D_G^{n-k}(v).
%\end{equation*}
\begin{example}
If $G=\{x\rightarrow xy, y\rightarrow xy\}$, then $D_G(x)=xy,D_G(y)=xy,D_G^2(x)=xy(x+y)$.
\end{example}

Recall that $\exc(\pi)+\drop(\pi)+\fix(\pi)=n$ for $\pi\in\msn$.
The following lemma is fundamental.
\begin{lemma}\label{lemma001exc}
Let $G=\{I\rightarrow Ipq, p\rightarrow xy, x\rightarrow xy, y\rightarrow xy\}$.
Then
$$D_G^n(I)=I\sum_{\pi\in\msn}x^{\exc(\pi)}y^{\drop(\pi)}p^{\fix(\pi)}q^{\cyc(\pi)}.$$
\end{lemma}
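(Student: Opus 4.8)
\textbf{Proof plan for Lemma~\ref{lemma001exc}.}
The plan is to set up a grammatical labeling of permutations written in standard cycle form and to prove the identity by induction on $n$, showing that the formal derivative $D_G$ acting on the weighted sum over $\msn$ corresponds to inserting the new element $n+1$ into a permutation of $[n]$ in all possible ways. First I would fix the labeling: to a permutation $\pi\in\msn$ in standard cycle form, assign the weight $x^{\exc(\pi)}y^{\drop(\pi)}p^{\fix(\pi)}q^{\cyc(\pi)}$, and regard the single ``seed'' $I$ as the empty permutation (or $n=0$ case), so $D_G^0(I)=I$. The base case $n=1$ is the one-cycle $(1)$, a fixed point forming one cycle, with weight $pq$, matching $D_G(I)=Ipq$.

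The heart of the argument is the inductive step: assuming $D_G^n(I)=I\sum_{\pi\in\msn}x^{\exc(\pi)}y^{\drop(\pi)}p^{\fix(\pi)}q^{\cyc(\pi)}$, I would apply $D_G$ term-by-term and match each resulting monomial with the insertion of $n+1$ into $\pi$. There are three types of insertion, mirroring the three substitution rules (besides $I\to Ipq$, which accounts for inserting $n+1$ as a new fixed-point cycle, i.e. multiplying by $pq$ and bumping $\fix$ and $\cyc$ each by one). Inserting $n+1$ immediately after some position in an existing cycle either (a) creates a new excedance without destroying one, (b) creates a new drop without destroying one, or (c) converts an existing fixed point $a$ (the cycle $(a)$) into a $2$-cycle $(a\ n{+}1)$, which destroys one fixed point, creates one excedance and leaves the cycle count unchanged. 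The key bookkeeping observation is that each letter in the current monomial records exactly one ``slot'': every $x$ (an excedance $i$, so $\pi^{-1}(i)<i<\pi(i)$ arrangement at that spot) admits one insertion of $n+1$ that keeps it an excedance and creates one more, contributing $x\cdot y$ replacement --- wait, more precisely the productions $x\to xy$, $y\to xy$, $p\to xy$ each encode: ``the old statistic is preserved (the $x$, $y$, or... ) and one new excedance-drop-adjacent element is created.'' I would verify carefully that summing over all positions $i\in[n]$ where $n+1$ can be inserted into $\pi$, together with the new-cycle insertion, produces precisely $D_G$ applied to the monomial of $\pi$, using $\exc+\drop+\fix=n$ to count the $n$ non-trivial insertion slots as $\exc$-many $x$'s plus $\drop$-many $y$'s plus $\fix$-many $p$'s.

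The main obstacle, and the step deserving the most care, is the bijective analysis of what happens to the statistics $\exc$, $\drop$, $\fix$ when $n+1$ is spliced into the standard cycle form at each of the $n+1$ possible places --- in particular checking that inserting $n+1$ directly after $a$ in a cycle does not change whether any \emph{other} index is an excedance or a drop (it does not, since $n+1$ is larger than everything and the relative order of all other entries is unchanged, so only the two ``incident'' comparisons at the splice point are affected), and that inserting into a fixed-point cycle $(a)$ is the unique way to consume a $p$. Once this local analysis is pinned down, the correspondence ``apply $D_G$'' $\longleftrightarrow$ ``insert $n+1$'' is exact and the induction closes. I would then note that setting $p=x$, $y=1$, $q=1$ recovers (part of) the classical interpretation, and more importantly that this lemma is the engine for the grammar-based proof of Theorem~\ref{thm001}.
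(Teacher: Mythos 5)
Your proposal follows essentially the same route as the paper: a grammatical labeling of the standard cycle form (label $x$ at excedances, $y$ at drops, $p$ at fixed points, $q$ per cycle, $I$ as the seed), followed by induction on $n$ in which the four ways of inserting $n+1$ (as a new singleton cycle, or immediately after an excedance, a drop, or a fixed point) are matched bijectively with the four substitution rules, using $\exc+\drop+\fix=n$ to account for all slots. The argument is correct; just note that inserting $n+1$ after a fixed point creates one excedance \emph{and} one drop (hence $p\to xy$), and that an excedance simply means $\pi(i)>i$, not the cycle-double-ascent condition in your parenthetical.
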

\begin{proof}
Let $\pi\in \msn$. We introduce a grammatical labeling of $\pi$:
\begin{itemize}
  \item [\rm ($L_1$)]If $i$ is an excedance, then put a superscript label $x$ right after $i$;
  \item [\rm ($L_2$)]If $i$ is a drop, then put a superscript label $y$ right after $i$;
 \item [\rm ($L_3$)]If $i$ is a fixed point, then put a superscript label $p$ right after $i$;
 \item [\rm ($L_4$)]Put a superscript label $I$ right after $\pi$ and put a subscript label $q$ right after each cycle.
\end{itemize}
With this labeling, the weight of $\pi$ is
defined as the product of its labels, that is,
$$w(\pi)=Ix^{\exc(\pi)}y^{\drop(\pi)}p^{\fix(\pi)}q^{\cyc(\pi)}.$$
For example, let $\pi=(1,4,3)(2,6)(5)$. The grammatical labeling of $\pi$ is given below
$$(1^x4^y3^y)_q(2^x6^y)_q(5^p)^I_q.$$
If we insert $7$ after $5$, the resulting permutation is
$(1^x4^y3^y)_q(2^x6^y)_q(5^x7^y)^I_q$.
If the inserted $7$ forms a new cycle, the resulting permutation is
$(1^x4^y3^y)_q(2^x6^y)_q(5^p)_q(7^p)^I_q$.
If we insert $7$ after $1$ or $4$, then we respectively get
$(1^x7^y4^y3^y)_q(2^x6^y)_q(5^p)^I_q$ and $(1^x4^x7^y3^y)_q(2^x6^y)_q(5^p)^I_q$.
In each case, the insertion of $7$ corresponds to one substitution rule in $G$.
By induction, it is routine to verify that the action of $D_G$
on the set of weights of permutations in $\msn$ gives the set of weights of permutations in $\ms_{n+1}$. This completes the proof.
\end{proof}

In the rest part of this section, we always let $k$ be a fixed positive integer.
Following~\cite{Savage12}, the {\it $1/k$-Eulerian polynomials} $A_{n}^{(k)}(x)$ are defined as follows:
\begin{equation}\label{Ankx-def01}
\sum_{n=0}^\infty A_{n}^{(k)}(x)\frac{z^n}{n!}=\left(\frac{1-x}{\mathrm{e}^{kz(x-1)}-x} \right)^{\frac{1}{k}},
\end{equation}
Savage and Viswanathan~\cite[Section~1.5]{Savage12} proved that
$A_{n}^{(k)}(x)$ is the {\it $\rm s$-Eulerian polynomial} of the {\it$\rm s$-inversion sequence} $(1,k+1,2k+1,\ldots,(n-1)k+1)$.
By comparing~\eqref{Anxpq-EGF} with~\eqref{Ankx-def01}, one get
\begin{equation}\label{Ankx-def02}
A_{n}^{(k)}(x)=k^nA_n(x,1,1/k)=k^nA_n(x,1/k)=\sum_{\pi\in\msn}x^{\exc(\pi)}k^{n-\cyc(\pi)}.
\end{equation}
Very recently, a bijective proof of~\eqref{Ankx-def02} was provided in~\cite{Chao19}.

For $n\geqslant 1$, we let $A_n^{(k)}(x)=\sum_{j=0}^{n-1}A_{n,j;k}x^j$.
The first few $A_n^{(k)}(x)$ are
$A_1^{(k)}(x)=1,~A_2^{(k)}(x)=1+kx,~A_3^{(k)}(x)=1+3kx+k^2x(1+x)$.
Comparing~\eqref{Ankx-def02} with~\eqref{anxq-rr}, we immediately get that
\begin{equation}\label{Anj-recu}
A_{n+1,j;k}=(1+kj)A_{n,j;k}+k(n-j+1)A_{n,j-1;k},
\end{equation}
with the initial conditions $A_{1,0;k}=1$ and $A_{1,i;k}=0$ for $i\neq0$.
Now we give a grammatical description of the numbers $A_{n,j;k}$.
\begin{lemma}\label{marked-per}
If
$G_0=\{I\rightarrow Iy,~x\rightarrow kxy,~y\rightarrow kxy\}$,
then for $n\geqslant 1$, we have
\begin{equation}\label{DnI-Stirl01}
D_{G_0}^n(I)=I\sum_{j=0}^{n-1}A_{n,j;k}x^jy^{n-j}.
\end{equation}
\end{lemma}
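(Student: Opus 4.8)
The plan is to prove Lemma~\ref{marked-per} by a grammatical-labeling argument, exactly parallel to the proof of Lemma~\ref{lemma001exc}, combined with the recurrence~\eqref{Anj-recu}. The starting point is the combinatorial model coming from~\eqref{Ankx-def02}: the coefficient $A_{n,j;k}$ counts permutations $\pi\in\msn$ with $\exc(\pi)=j$, each weighted by $k^{n-\cyc(\pi)}$. Equivalently, one may work with ``$k$-colored'' permutations: a permutation of $[n]$ together with a coloring of the $n-\cyc(\pi)$ non-cycle-opening letters by one of $k$ colors; the number of such colored structures with $j$ excedances is $A_{n,j;k}$. The aim is to show $D_{G_0}^n(I)=I\sum_{\pi}x^{\exc(\pi)}y^{\drop(\pi)+\fix(\pi)}\cdot(\text{color factor})$, and then collect $\drop+\fix = n-\exc$ to get the stated form $I\sum_j A_{n,j;k}x^jy^{n-j}$.

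First I would introduce the grammatical labeling of a (colored) permutation $\pi\in\msn$ written in standard cycle form: put a superscript $x$ after each excedance position, a superscript $y$ after each drop position and after each fixed point, put the label $I$ at the very end, and—this is the new ingredient compared to Lemma~\ref{lemma001exc}—record that each of the $n-\cyc(\pi)$ non-opening letters carries a factor $k$ (absorbed into the rules $x\to kxy$, $y\to kxy$). The weight of $\pi$ is the product of its labels, namely $w(\pi)=I\,x^{\exc(\pi)}y^{\drop(\pi)+\fix(\pi)}k^{n-\cyc(\pi)}$; summing over $\pi\in\msn$ gives $I\sum_{j=0}^{n-1}A_{n,j;k}x^jy^{n-j}$ by~\eqref{Ankx-def02}. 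The base case $n=1$ is immediate: the unique permutation $(1)$ has label $p$-type fixed point giving weight $Iy$, matching $D_{G_0}(I)=Iy$.

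The inductive step is the heart of the argument: I would check that applying $D_{G_0}$ to $w(\pi)$ for $\pi\in\msn$ reproduces exactly the sum of weights of all (colored) permutations in $\ms_{n+1}$ obtained by inserting the new largest element $n+1$. Inserting $n+1$ immediately after an existing letter $i$ in a cycle: if $i$ was an excedance (label $x$), the insertion keeps $i$ an excedance and makes $n+1$ a drop, and $n+1$ is a non-opening letter hence colored—this is precisely the rule $x\to kxy$; the symmetric case with a drop letter $y\to kxy$ is analogous; inserting $n+1$ after a fixed point $i$ turns $i$ into an excedance and $n+1$ into a drop, again $y\to kxy$ (the fixed-point letter was labeled $y$). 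Inserting $n+1$ as its own new $1$-cycle $(n+1)$ creates a new fixed point, contributes a factor $y$, and adds no color since it is cycle-opening—this is the rule $I\to Iy$ acting on the terminal label. One must verify the bookkeeping is a bijection: every permutation in $\ms_{n+1}$ arises uniquely this way, and the color choices correspond bijectively to the $k$ in the grammar rules. Because this matches~\eqref{Anj-recu} term by term—$(1+kj)$ from the position already carrying a fixed-point $y$ or the terminal $I$, versus $k(n-j+1)$ from a $y$- or $x$-insertion that creates a new drop while promoting some letter—the induction closes.

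The main obstacle is purely the careful case analysis of how insertion of $n+1$ after each label type affects the excedance/drop/fixed-point status of the touched letter and the color count, and confirming that the multiset of produced weights is exactly $D_{G_0}(w(\pi))$ with multiplicity. This is routine but must be done cleanly; once it is checked, Lemma~\ref{marked-per} follows by induction on $n$, and as a sanity check one recovers $A_2^{(k)}(x)=1+kx$ and $A_3^{(k)}(x)=1+3kx+k^2x(1+x)$ from $D_{G_0}^2(I)$ and $D_{G_0}^3(I)$.
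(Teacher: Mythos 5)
Your proof is correct in substance, but it takes a genuinely different route from the paper's. The paper proves Lemma~\ref{marked-per} by a purely algebraic induction: it applies $D_{G_0}$ to the assumed expansion $I\sum_j A_{n,j;k}x^jy^{n-j}$, obtaining $\sum_{j}A_{n,j;k}I\bigl(x^jy^{n-j+1}+kjx^jy^{n-j+1}+k(n-j)x^{j+1}y^{n-j}\bigr)$, and observes that the coefficients of $x^jy^{n+1-j}$ satisfy exactly the recurrence~\eqref{Anj-recu}, which was already established by comparing~\eqref{Ankx-def02} with~\eqref{anxq-rr}. You instead build a grammatical labeling in the style of Lemma~\ref{lemma001exc} (excedances labelled $x$, drops and fixed points both labelled $y$, a factor $k$ attached to each of the $n-\cyc(\pi)$ non-opening letters) and verify that insertion of $n+1$ realizes each substitution rule; your case analysis of the four insertion types is accurate, and the identity $\exc+\drop+\fix=n$ correctly converts the weight $Ix^{\exc(\pi)}y^{\drop(\pi)+\fix(\pi)}k^{n-\cyc(\pi)}$ into the stated form via~\eqref{Ankx-def02}. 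Your approach buys a self-contained combinatorial explanation of where the grammar comes from (and does not need the $q$-Eulerian recurrence~\eqref{anxq-rr}), at the cost of more bookkeeping; the paper's buys brevity by leaning on the previously derived recurrence. One small slip in your closing sentence: the term $kj$ in $(1+kj)A_{n,j;k}$ comes from insertions after the $j$ letters labelled $x$ (excedances), not from fixed-point positions, and the term $k(n-j+1)A_{n,j-1;k}$ comes only from insertions after $y$-labelled letters (the $n-j+1$ drops and fixed points of a permutation with $j-1$ excedances); this misattribution does not affect the validity of the case analysis that precedes it.
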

\begin{proof}
Note that $D_{G_0}(I)=Iy,D_{G_0}^2(I)=I(y^2+kxy)$.
Hence the result holds for $n=1,2$.
Now assume that~\eqref{DnI-Stirl01} holds for some $n$, where $n\geqslant 2$.
Note that
\begin{align*}
D_{G_0}^{n+1}(I)&=\sum_{j}A_{n,j;k}I\left(x^jy^{n-j+1}+kjx^jy^{n-j+1}+k(n-j)x^{j+1}y^{n-j}\right).
\end{align*}
Taking coefficients of $x^jy^{n-j+1}$ on both sides of the above expression yields
the recurrence relation~\eqref{Anj-recu}. The proof follows by induction.
\end{proof}

Following~\cite{Ma19}, a {\it change of grammar} is a substitution method in which the original grammar is replaced with functions of other grammars, which has proved to be useful in handling
combinatorial expansions. In~\cite{Ma19}, the change of grammar method was defined
and used to prove the $\gamma$-positivity of some descent-type polynomials. In this paper, we shall use the change of grammar technique to prove some of the main results.

\noindent{\bf A proof
Theorem~\ref{thm001}:}
\begin{proof}
The proof is divided into two parts.

{{$\bf {(i)}$}} Let $q\in [0,1]$ be a given real number and let $k$ be a given positive integer.
From~\eqref{Ankx-def02}, we see that $A_n^{(k)}(x)=k^nA_n(x,1/k)$. To prove the bi-$\gamma$-positivity of $A_n(x,q)$, it suffices to prove that the polynomials $A_n^{(k)}(x)$ are bi-$\gamma$-positive.
Consider a change of the grammar given in Lemma~\ref{marked-per}.
Note that
$$D_{G_0}(I)=Iy,~
D_{G_0}(Iy)=Iy(x+y)+(k-1)Ixy,$$
$$D_{G_0}(x+y)=2kxy,~
D_{G_0}(xy)=kxy(x+y).$$
Set $J=Iy,u=x+y$ and $v=xy$. Then
$$D_{G_0}(I)=J,~D_{G_0}(J)=Ju+(k-1)Iv,~D_{G_0}(u)=2kv,~D_{G_0}(v)=kuv.$$
Let $G_1=\{I\rightarrow J,J\rightarrow Ju+(k-1)Iv, u\rightarrow 2kv, v\rightarrow kuv\}$.
By induction, it is routine to verify that there are nonnegative integers such that
\begin{align}\label{DnI-stir}
D_{G_1}^n(I)&=J\sum_{i=0}^{\lrf{(n-1)/2}}A^+_{n,i;k}v^iu^{n-1-2i}+Iv\sum_{i=0}^{\lrf{(n-2)/2}} A^-_{n,i;k}v^iu^{n-2-2i}.
\end{align}
In particular, $D_{G_1}(I)=J,D_{G_1}^2(I)=Ju+(k-1)Iv$.
%$$D(I)=J,D^2(I)=Ju+(k-1)Iv,D^3(I)=J(u^2+(3k-1)v)+I(k^2-1)uv.$$
%Hence $N^+_{1,0;k}=N^+_{2,0;k}=1,N^-_{1,0;k}=0$ and $N^-_{2,0;k}=k-1$.
Note that
\begin{align*}
D_{G_1}^{n+1}(I)
%&=D\left(J\sum_{i} N^+_{n,i;k}v^iu^{n-1-2i}+Iv\sum_{i} N^-_{n,i;k}v^iu^{n-2-2i}\right)\\
&=(Ju+(k-1)Iv)\sum_{i} A^+_{n,i;k}v^iu^{n-1-2i}+Jv\sum_{i} A^-_{n,i;k}v^iu^{n-2-2i}\\
&J\sum_{i} A^+_{n,i;k} \left(kiv^iu^{n-2i}+2k(n-1-2i)v^{i+1}u^{n-2-2i}\right)+\\
&I\sum_{i} A^-_{n,i;k}\left(k(i+1)v^{i+1}u^{n-1-2i}+2k(n-2-2i)v^{i+2}u^{n-3-2i}\right).
\end{align*}
Taking coefficients of $Jv^iu^{n-2i}$ and $Iv^{i+1}u^{n-1-2i}$ on both sides yields the recurrence system
\begin{equation}\label{recusystem}
\left\{
  \begin{array}{ll}
    A^+_{n+1,i;k}=(1+ki)A^+_{n,i;k}+2k(n-2i+1)A^+_{n,i-1;k}+A^-_{n,i-1;k}, & \\
   A^-_{n+1,i;k}=k(i+1)A^-_{n,i;k}+2k(n-2i)A^-_{n,i-1;k}+(k-1)A^+_{n,i;k}, &
  \end{array}
\right.
\end{equation}
with $A_{1,0;k}^+=1$, $A_{1,i;k}^+=0$ for $i\neq0$ and $A_{1,i;k}^-=0$ for any $i$.
Clearly, $A^+_{n,i;k}$ and $A^-_{n,i;k}$ are both nonnegative when $k\geqslant1$.
Taking $J=Iy,u=x+y$ and $v=xy$ in~\eqref{DnI-stir}, and then comparing~\eqref{DnI-Stirl01} with~\eqref{DnI-stir}, we obtain
\begin{equation}\label{Ankx-decom}
A_n^{(k)}(x)=a_n^{(k)}(x)+xb_n^{(k)}(x),
\end{equation}
where
\begin{equation}\label{ankxbnkx}
\left\{
  \begin{array}{ll}
    a_n^{(k)}(x)=\sum_{i\geqslant0}A^+_{n,i;k}x^i(1+x)^{n-1-2i}, & \\
    b_n^{(k)}(x)=\sum_{i\geqslant0}A^-_{n,i;k}x^i(1+x)^{n-2-2i}. &
  \end{array}
\right.
\end{equation}
%
%\begin{equation}\label{ankxbnkx}
%a_n^{(k)}(x)=\sum_{i=0}^{\lrf{{(n-1)}/{2}}}A^+_{n,i;k}x^i(1+x)^{n-1-2i},~b_n^{(k)}(x)=\sum_{i=0}^{\lrf{{(n-2)}/{2}}}A^-_{n,i;k}x^i(1+x)^{n-2-2i}.
%\end{equation}
%$N_{n;k}(x)=N_{n;k}^+(x)+xN_{n;k}^-(x)$, where
%\begin{align*}
%N^+_{n;k}(x)&=\sum_{i=0}^{\lrf{{(n-1)}/{2}}}N^+_{n,i;k}x^i(1+x)^{n-1-2i},
%N_n^-(x)=\sum_{i=0}^{\lrf{{n-2}/{2}}}N^-_{n,i;k}x^i(1+x)^{n-2-2i}.
%\end{align*}
By using~\eqref{recusystem}, it is routine to derive the following recurrence system:
\begin{equation*}\label{Stirling-recu4}
\left\{
  \begin{array}{ll}
a_{n+1}^{(k)}(x)&=(1+x+k(n-1)x)a_{n}^{(k)}(x)+kx(1-x)\frac{\mathrm{d}}{\mathrm{d}x}a_{n}^{(k)}(x)+xb_{n}^{(k)}(x),\\
b_{n+1}^{(k)}(x)&=k(1+(n-1)x)b_{n}^{(k)}(x)+kx(1-x)\frac{\mathrm{d}}{\mathrm{d}x}b_{n}^{(k)}(x)+(k-1)a_{n}^{(k)}(x),
  \end{array}
\right.
\end{equation*}
with $a_{1}^{(k)}(x)=1$ and $b_{1}^{(k)}(x)=0$.
%Let $(a_n(x,q),b_n(x,q))$ be the symmetric decomposition of $A_n(x,q)$.
%It follows from~\eqref{Ankx-decom} that
%\begin{equation}\label{ankxbnkx01}
%a_n^{(k)}(x)=k^na_n(x,1/k),~b_n^{(k)}(x)=k^nb_n(x,1/k).
%\end{equation}
In conclusion, the polynomials $A_n^{(k)}(x)$ are bi-$\gamma$-positive, and so $A_n(x,q)$ are bi-$\gamma$-positive if $q\in[0,1]$.
This completes the proof of the first statement.

{ $\bf{(ii)}$} %We first derive an expansion of $A_n(x,p,q)$.
We first consider a change of the grammar given in Lemma~\ref{lemma001exc}. Setting $u=xy$ and $v=x+y$, we get
$$D_G(I)=Ipq,~D_G(p)=u,~D_G(u)=uv,~D_G(v)=2u.$$ Let
$G_2=\{I\rightarrow Ipq, p\rightarrow u, u\rightarrow uv,  v\rightarrow 2u\}$.
By induction, it is routine to verify that
\begin{equation}\label{eq:proof1}
D_{G_2}^n(I)=I\sum_{i=0}^np^i\sum_{j=0}^{\lrf{(n-i)/2}}\gamma_{n,i,j}(q)u^jv^{n-i-2j}.
\end{equation}
In particular, $D_{G_2}(I)=Ipq,D_{G_2}^2(I)=I(p^2q^2+qu),D_{G_2}^3(I)=I(p^3q^3+3pq^2u+quv)$.
Therefore,
\begin{align*}
&D_{G_2}^{n+1}(I)=D_{G_2}\left(I\sum_{i,j}\gamma_{n,i,j}(q)p^iu^jv^{n-i-2j}\right)\\
&=I\sum_{i,j}\gamma_{n,i,j}(q)\left(qp^{i+1}u^{j}v^{n-i-2j}+ip^{i-1}u^{j+1}v^{n-i-2j}+jp^iu^jv^{n+1-i-2j}\right)+\\
&I\sum_{i,j}\gamma_{n,i,j}(q)(2n-2i-4j)p^{i}u^{j+1}v^{n-1-i-2j},
\end{align*}
which yields that the numbers $\gamma_{n,i,j}(q)$ satisfy the recurrence relation
\begin{equation}\label{Enij-recu}
\gamma_{n+1,i,j}(q)=q\gamma_{n,i-1,j}(q)+(i+1)\gamma_{n,i+1,j-1}(q)+j\gamma_{n,i,j}(q)+(2n-2i-4j+4)\gamma_{n,i,j-1}(q),
\end{equation}
with the initial conditions $\gamma_{1,1,0}(q)=q$ and $\gamma_{1,i,j}(q)=0$ for $(i,j)\neq(1,0)$. From~\eqref{Enij-recu}, we see that if $q\geqslant0$, then $\gamma_{n,i,j}(q)\geqslant0$.
Moreover, upon taking $u=xy$ and $v=x+y$ in~\eqref{eq:proof1}, we get the following expansion:
\begin{equation*}\label{eq:proof2}
D_{G}^n(I)=I\sum_{i=0}^np^i\sum_{j=0}^{\lrf{(n-i)/2}}\gamma_{n,i,j}(q)(xy)^j(x+y)^{n-i-2j}.
\end{equation*}
Setting $I=y=1$, we obtain $D_{G}^n(I)\mid_{I=y=1}=A_n(x,p,q)$ and so
\begin{equation}\label{eq:proof03}
A_n(x,p,q)=\sum_{i=0}^np^i\sum_{j=0}^{\lrf{(n-i)/2}}\gamma_{n,i,j}(q)x^j(1+x)^{n-i-2j}.
\end{equation}
Therefore, if $q\geqslant 0$ is a given real number, then $A_n(x,p,q)$ is partial $\gamma$-positive.
From the first part of the proof, we see that when $0\leqslant q\leqslant 1$, the polynomial
$A_n(x,1,q)$ is bi-$\gamma$-positive. In conclusion, if $0\leqslant p\leqslant 1$ and $0\leqslant q\leqslant 1$ are both given real numbers, it follows from Theorem~\ref{bigammpartial} that
the polynomial $A_n(x,p,q)$ is alternatingly increasing.
This completes the proof.
\end{proof}

As a generalization of Proposition~\ref{Zeng12}, we now give the following result.
\begin{proposition}\label{prop14}
Let $\gamma_{n,i,j}(q)$ be the polynomials defined by the recurrence relation~\eqref{Enij-recu}.
Then
\begin{equation}\label{bnij-combin}
\gamma_{n,i,j}(q)=\sum_{\pi\in \ms_{n,i,j}}q^{\cyc(\pi)}.
\end{equation}
where $\ms_{n,i,j}=\{\pi\in\msn: \cda(\pi)=0,~\fix(\pi)=i,~\exc(\pi)=j\}$.
Therefore, the number $\gamma_{n,i,j}(1)$ counts permutations in $\msn$ with no cycle double ascents, $i$ fixed pints and $j$ excedances.
\end{proposition}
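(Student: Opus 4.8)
The plan is to combine the partial $\gamma$-expansion~\eqref{eq:proof03} with a cycle version of the modified Foata--Strehl (valley-hopping) group action on $\msn$. Since, for each fixed $i$, the polynomials $x^j(1+x)^{n-i-2j}$ with $0\leqslant j\leqslant\lrf{(n-i)/2}$ are linearly independent over $\mathbb{Q}[q]$, it suffices (after collecting powers of $p$) to establish the identity
$$A_n(x,p,q)=\sum_{i=0}^np^i\sum_{j=0}^{\lrf{(n-i)/2}}\Bigl(\sum_{\pi\in\ms_{n,i,j}}q^{\cyc(\pi)}\Bigr)x^j(1+x)^{n-i-2j}$$
and then to compare it with~\eqref{eq:proof03}; specialising $q=1$ will give the final assertion of the proposition.

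First I would classify, for $\pi\in\msn$ and $m\in[n]$, the value $m$ as a cycle valley, cycle peak, cycle double ascent, cycle double descent, or fixed point according to the relative order of the triple $\bigl(\pi^{-1}(m),m,\pi(m)\bigr)$. Then $\exc(\pi)$ equals the number of cycle valleys plus the number of cycle double ascents, and in each cycle of length at least $2$ the numbers of cycle valleys and cycle peaks agree; consequently a permutation $\sigma$ with $\cda(\sigma)=0$, $\fix(\sigma)=i$ and $\exc(\sigma)=j$ has exactly $i$ fixed points, $j$ cycle valleys, $j$ cycle peaks, and $n-i-2j$ cycle double descents. For each value $m$ that is a cycle double ascent or a cycle double descent of $\pi$, define $\varphi_m$ by the standard valley-hopping reflection of the cyclic word containing $m$, which toggles $m$ between being a cycle double ascent and a cycle double descent. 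The key lemma to prove is that $\varphi_m$ is a well-defined involution of $\msn$ that fixes $\fix$ and $\cyc$, changes $\exc$ by exactly one, and preserves the type of every value $m'\neq m$; it follows that the maps $\varphi_m$ commute and generate a free action of $(\mathbb{Z}/2\mathbb{Z})^{r(\pi)}$ on $\msn$, where $r(\pi)$ is the number of values that are cycle double ascents or cycle double descents of $\pi$.

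Granting this lemma, each orbit contains a unique element $\sigma$ with $\cda(\sigma)=0$, namely the one obtained by turning every cycle double ascent into a cycle double descent. If $\sigma\in\ms_{n,i,j}$, then $r(\sigma)=n-i-2j$, so the orbit of $\sigma$ consists of $2^{\,n-i-2j}$ permutations, one for each subset $S$ of the cycle double descents of $\sigma$ that one toggles to cycle double ascents; the permutation indexed by $S$ has $\exc=j+|S|$, $\fix=i$, and the same number of cycles as $\sigma$. Hence
$$\sum_{\pi\in\mathrm{orbit}(\sigma)}x^{\exc(\pi)}p^{\fix(\pi)}q^{\cyc(\pi)}=p^iq^{\cyc(\sigma)}x^j\sum_{S}x^{|S|}=p^iq^{\cyc(\sigma)}x^j(1+x)^{n-i-2j},$$
and summing this over all orbits (equivalently, over all $\sigma$ with $\cda(\sigma)=0$) produces the displayed identity. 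Comparing with~\eqref{eq:proof03} and invoking the stated linear independence then yields $\gamma_{n,i,j}(q)=\sum_{\pi\in\ms_{n,i,j}}q^{\cyc(\pi)}$.

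The step I expect to be the main obstacle is the well-definedness lemma for $\varphi_m$: one must specify precisely which cyclic factor is reflected so that the operation is a genuine involution, neither merges nor splits cycles, does not move the minimum element of a cycle (so that $\cyc$ and $\fix$ are preserved), and does not disturb the local type of the neighbours of $m$ (so that the $\varphi_m$ commute and the orbit structure is as described). Everything afterwards --- the count of statistics on the canonical representative, the evaluation of the orbit sum, and the linear independence of the $\gamma$-basis --- is routine bookkeeping. As an alternative that bypasses the group action, one can instead verify directly that $\sum_{\pi\in\ms_{n,i,j}}q^{\cyc(\pi)}$ satisfies the recurrence~\eqref{Enij-recu} together with its initial conditions, by analysing how the insertion of the largest letter into a permutation of $[n]$ with no cycle double ascents affects $\exc$, $\fix$ and $\cyc$.
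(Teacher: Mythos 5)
Your proposal is correct, but it reaches the identity by a genuinely different route from the paper. The paper proves \eqref{bnij-combin} by showing that the numbers $\sum_{\pi\in \ms_{n,i,j}}q^{\cyc(\pi)}$ satisfy the recurrence \eqref{Enij-recu} with the right initial conditions: it classifies the ways of inserting the letter $n+1$ into a permutation with no cycle double ascents into five cases, and the only place the valley-hopping map enters is in the fifth case, where a \emph{single} application of $\varphi'_{x}$ to a cycle double descent $x$ (creating a unique cycle double ascent, before which $n+1$ is then inserted) explains why the coefficient $2n-2i-4j+4$ is twice the number of available slots. This is essentially the "alternative" you sketch in your last sentence. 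Your main argument instead runs the full cyclic valley-hopping group action: you decompose $\msn$ into $(\mathbb{Z}/2\mathbb{Z})^{r(\pi)}$-orbits, evaluate $\sum x^{\exc}p^{\fix}q^{\cyc}$ on each orbit as $p^iq^{\cyc(\sigma)}x^j(1+x)^{n-i-2j}$, and then match against \eqref{eq:proof03} by linear independence (which holds, since for fixed $i$ the polynomials $x^j(1+x)^{n-i-2j}$ have distinct degrees $n-i-j$). The bookkeeping you state is right: with $c_{i+1}=c_1$ in the canonical cycle form, $\exc$ counts cycle valleys plus cycle double ascents, valleys and peaks are equinumerous in each nontrivial cycle, and each $\varphi_m$ toggles only the type of $m$ while preserving $\fix$ and $\cyc$; the well-definedness issue you flag is resolved exactly by the paper's choice of target position (the smallest $j>k$ with $c_j>c_k>c_{j+1}$, which exists because the cycle word must descend back to its minimum). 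What your approach buys is a self-contained combinatorial proof of the partial $\gamma$-expansion \eqref{eq:proof03} with the coefficients \eqref{bnij-combin} built in, independent of the grammar computation; what the paper's approach buys is economy, since \eqref{Enij-recu} is already available from the grammar and only one valley hop per insertion is needed rather than the full orbit lemma.
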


Let $(c_1,c_2,\ldots,c_i)$ be a cycle of $\pi$. Then $c_1=\min\{c_1,\ldots,c_i\}$. Set $c_{i+1}=c_1$. Then $c_j$ is called
\begin{itemize}
  \item a {\it cycle double ascent} in the cycle if $c_{j-1}<c_j<c_{j+1}$, where $2\leqslant j\leqslant i-1$;
  \item a {\it cycle double descent} in the cycle if $c_{j-1}>c_j>c_{j+1}$, where $2<j\leqslant i$;
 \item a {\it cycle peak} in the cycle if $c_{j-1}<c_j>c_{j+1}$, where $2\leqslant j \leqslant i$;
 \item a {\it cycle valley} in the cycle if $c_{j-1}>c_j<c_{j+1}$, where $2<j\leqslant i-1$.
\end{itemize}
We define an action $\varphi_{x}$ on $\msn$ as follows.
Let $c=(c_1,c_2,\ldots,c_i)$ be a cycle of $\pi\in \msn$ with at least two elements.
Consider the following three cases:
\begin{itemize}\item  If $c_k$ is a cycle double ascent in $c$,
then $\varphi_{c_k}(\pi)$ is obtained by deleting $c_k$ and then inserting $c_k$ between $c_j$ and $c_{j+1}$,
where $j$ is the smallest index satisfying $k< j\leqslant i$ and $c_j>c_k>c_{j+1}$;
\item If $c_k$ is a cycle double descent in $c$, then $\varphi_{c_k}(\pi)$ is obtained by deleting $c_k$ and then inserting $c_k$ between $c_j$ and $c_{j+1}$, where $j$ is the largest index satisfying $1\leqslant j<k$ and $c_j<c_k<c_{j+1}$;
\item If $c_k$ is neither a cycle double ascent nor a cycle double descent in $c$, then $c_k$ is a cycle peak or a cycle valley.
In this case, we let $\varphi_{c_k}(\pi)=\pi$.
\end{itemize}

Following~\cite{Branden08}, we now define a {\it modified Foata-Strehl group action} $\varphi'_x$ on $\msn$ by
$$\varphi'_x(\pi)=\left\{\begin{array}{lll}
\varphi_x(\pi),&\text{ if $x$ is a cycle double ascent or a cycle double descent;}\\
\pi,&\text{if $x$ is a cycle peak or a cycle valley.}\\
\end{array}\right.$$

Define
$$\CDD(\pi)=\{x \mid x\text{~is a cycle double descent of $\pi$}\},$$
$$\ms_{n,i,j,k}^1=\{\pi\in\msn: \cda(\pi)=0,~\fix(\pi)=i,~\exc(\pi)=j,~\cyc(\pi)=k\},$$
$${\ms}_{n,i,j,k}^2=\{\pi\in\msn: \cda(\pi)=1,~\fix(\pi)=i,~\exc(\pi)=j,~\cyc(\pi)=k\}.$$
For $\pi\in\ms_{n,i,j,k}^1$ and $x\in \CDD(\pi)$, it should be noted that $\exc(\pi)$ equals the number of cycle peaks of $\pi$, $\varphi_{x}'(\pi)\in{\ms}_{n,i,j+1,k}^2$ and $x$ is the unique cycle double ascent of $\varphi_{x}'(\pi)$.
Conversely, for $\pi\in{\ms}_{n,i,j+1,k}^2$, let $x$ be the unique cycle double ascent of $\pi$.
Note that $\varphi_{x}'(\pi)\in \ms_{n,i,j,k}^1$ and $x$ becomes a cycle double descent in $\varphi'_{x}(\pi)$. This implies that $$|{\ms}_{n,i,j+1,k}^2|=(n-i-2j)|\ms_{n,i,j,k}^1|,$$
where $n-i-2j$ is the number of cycle double descents of permutations in $\ms_{n,i,j,k}^1$.

\begin{example}
Let $\pi=(1,10,6,5,7,3,2,8)(4,9)\in\ms_{10,0,4,2}^1$. We have $\CDD(\pi)=\{3,6\}$. Then
\begin{align*}
\varphi_{3}'(\pi)&=(1,3,10,6,5,7,2,8)(4,9),~
\varphi_{6}'(\pi)=(1,6,10,5,7,3,2,8)(4,9),
\end{align*}
and $\varphi_{3}'(\pi),~\varphi_{6}'(\pi)\in\ms_{10,0,5,2}^2$.
\end{example}

\noindent{\bf A proof of Proposition~\ref{prop14}:}
\begin{proof}
In order to get a permutation enumerated by $\gamma_{n+1,i,j}(q)$, we distinguish five cases:
\begin{enumerate}
\item [($c_1$)] If $\pi\in \ms_{n,i-1,j}$, then we need append $(n+1)$ to $\pi$ as a new cycle. This accounts for the term $q\gamma_{n,i-1,j}(q)$;
\item [($c_2$)] If $\pi\in \ms_{n,i+1,j-1}$, then we should insert the entry $n+1$ right after a fixed point. This accounts for the term $(1+i)\gamma_{n,i+1,j-1}(q)$;
 \item [($c_3$)] If $\pi\in \ms_{n,i,j}$, then we should insert the entry $n+1$ right after an excedance.
This accounts for the term $j\gamma_{n,i,j}(q)$;
 \item [($c_4$)] If $\pi\in \ms_{n,i,j-1,k}^1$, then there are $n-i-2(j-1)$ positions could be inserted the entry $n+1$, since
we cannot insert the entry $n+1$ immediately before or right after each cycle peak, and we cannot insert the entry $n+1$ right after a fixed point.
This accounts for the term $(n-i-2j+2)\gamma_{n,i,j-1}(q)$;
 \item [($c_5$)]  If $\pi\in \ms_{n,i,j-1,k}^1$, let $x$ be one cycle double descent of $\pi$. Note that
$\varphi'_{x}(\pi)\in \ms_{n,i,j,k}^2$ and $x$ becomes the unique cycle double ascent. We should insert the entry $n+1$ into $\varphi'_{x}(\pi)$ immediately before $x$.
This accounts for the term $(n-i-2j+2)\gamma_{n,i,j-1}(q)$.
 \end{enumerate}
Thus~\eqref{Enij-recu} holds. This completes the proof.
\end{proof}

Let $\gamma_{n,i,j}(q)$ be the polynomials defined by the recurrence relation~\eqref{Enij-recu}.
Define
\begin{equation*}\label{eq:proof3}
\gamma=\gamma(x,p,q;z)=1+\sum_{n=1}^\infty\sum_{i=0}^np^i\sum_{j=0}^{\lrf{(n-i)/2}}\gamma_{n,i,j}(q)x^j\frac{z^n}{n!}.
\end{equation*}
\begin{proposition}
The generating function $\gamma(x,p,q;z)$ is given by
\begin{equation}\label{Csxz-explicit}
\gamma(x,p,q;z)=e^{z\left(p-\frac{1}{2}\right)q}\left(\frac{\sqrt{1-4x}}{\sqrt{1-4x}\cosh\left(\frac{z}{2}\sqrt{1-4x}\right)-\sinh\left(\frac{z}{2}\sqrt{1-4x}\right)}\right)^q.
\end{equation}
%\begin{equation}\label{Csxz-explicit}
%\gamma(x,p,q;z)=e^{z\left(p-\frac{1}{2}\right)q}\left(\frac{\sqrt{1-4x}}
%{\sqrt{1-4x}\cosh\left(\frac{z}{2}\sqrt{1-4x}\right)-\sinh\left(\frac{z}{2}\sqrt{1-4x}\right)\right)}\right)^q.
%\end{equation}
\end{proposition}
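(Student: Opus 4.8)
The plan is to translate the grammar $G_2$ into an autonomous system of ordinary differential equations and integrate it in closed form. By the standard correspondence between context-free grammars and differential operators (the one already used throughout this section), if we regard $I,p,u,v$ as functions of $z$ satisfying
$$I'=Ipq,\qquad p'=u,\qquad u'=uv,\qquad v'=2u,$$
with initial data $I(0)=1$, $p(0)=p$, $u(0)=x$, $v(0)=1$, then the coefficient of $z^n/n!$ in the power series $I(z)$ equals $D_{G_2}^n(I)$ evaluated under the substitution $I\mapsto1$, $u\mapsto x$, $v\mapsto1$. Comparing with the expansion~\eqref{eq:proof1}, that evaluation is exactly $\sum_{i}p^i\sum_{j}\gamma_{n,i,j}(q)x^j$, so $\gamma(x,p,q;z)=I(z)$, and it suffices to solve the system.

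The first step is to extract two first integrals. Since $\frac{d}{dz}(v^2-4u)=2vv'-4u'=4uv-4uv=0$, the quantity $v^2-4u$ is constant, hence equal to $v(0)^2-4u(0)=1-4x$; writing $a=\sqrt{1-4x}$ this gives $u=(v^2-a^2)/4$. Since $v'=2u=2p'$, the quantity $v-2p$ is constant, so $p(z)=p+\tfrac12(v(z)-1)$, where $p$ on the right is the constant $p(0)$. Substituting $u=(v^2-a^2)/4$ into $v'=2u$ leaves the separable scalar equation $v'=\tfrac12(v^2-a^2)$ with $v(0)=1$, which I would integrate by partial fractions and rewrite in hyperbolic form as
$$v(z)=a\,\frac{\cosh(az/2)-a\sinh(az/2)}{a\cosh(az/2)-\sinh(az/2)}.$$

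To finish, set $\Delta(z)=a\cosh(az/2)-\sinh(az/2)$. A short computation using $\cosh^2-\sinh^2=1$ gives $v(z)^2-a^2=4a^2x/\Delta(z)^2$, hence $u(z)=a^2x/\Delta(z)^2$. For $I(z)$ one integrates $I'/I=pq$; the key observation is that $u'/u=v$, so $v=(\log u)'$ and $\int_0^z v(s)\,ds=\log\bigl(u(z)/x\bigr)$. Combined with $p(z)=p+\tfrac12(v(z)-1)$ this gives $\int_0^z p(s)\,ds=z(p-\tfrac12)+\tfrac12\log\bigl(u(z)/x\bigr)$, so
$$I(z)=\exp\Bigl(q\int_0^z p(s)\,ds\Bigr)=e^{z(p-1/2)q}\Bigl(\frac{u(z)}{x}\Bigr)^{q/2}=e^{z(p-1/2)q}\Bigl(\frac{a}{\Delta(z)}\Bigr)^{q},$$
which is precisely~\eqref{Csxz-explicit}. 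The routine-but-delicate part is the hyperbolic bookkeeping used to solve $v'=\tfrac12(v^2-a^2)$ and to simplify $v^2-a^2$; the one point that genuinely has to be spotted, and the only place the argument could break down, is the identity $v=(\log u)'$, which lets one integrate $I'/I=pq$ without having to evaluate $\int_0^z v(s)^2\,ds$ directly.
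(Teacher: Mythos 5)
Your proof is correct, but it takes a genuinely different route from the paper. The paper rewrites the recurrence~\eqref{Enij-recu} as a first-order linear PDE in $z$, $p$ and $x$ (equation~\eqref{recu09}) and then simply \emph{verifies} that the stated closed form satisfies that PDE together with the initial conditions $\widehat{\gamma}(x,p,q;0)=\widehat{\gamma}(x,p,0;z)=1$; this is short but presupposes that one has already guessed the answer. You instead exploit the grammar $G_2$ directly: since $D_{G_2}$ is a derivation, the exponential $\sum_n \frac{z^n}{n!}D_{G_2}^n$ composed with the evaluation $I\mapsto 1$, $p\mapsto p$, $u\mapsto x$, $v\mapsto 1$ is an algebra homomorphism, so the images of $I,p,u,v$ satisfy the autonomous system $I'=Ipq$, $p'=u$, $u'=uv$, $v'=2u$, and by~\eqref{eq:proof1} the image of $I$ is exactly $\gamma(x,p,q;z)$. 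Your two first integrals $v^2-4u=1-4x$ and $v-2p=1-2p$ reduce everything to the single separable equation $v'=\tfrac12(v^2-a^2)$, and the identity $v=(\log u)'$ lets you integrate $I'/I=pq$ in closed form; I have checked that your formula for $v(z)$ satisfies the equation and the initial condition, that $u=a^2x/\Delta^2$, and that the final expression $e^{z(p-1/2)q}(a/\Delta)^q$ agrees with~\eqref{Csxz-explicit} (note that $\Delta$ is odd in $a$, so $a/\Delta$ is a genuine power series in $a^2=1-4x$ and no branch issues arise). The payoff of your approach is that it \emph{derives} the generating function rather than verifying a guess, and it replaces the check of a three-variable PDE by elementary ODE manipulations; the cost is that you must justify the grammar-to-ODE correspondence, which the paper sidesteps. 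Both arguments are sound.
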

\begin{proof}
By rewriting~\eqref{Enij-recu} in terms of the generating function $\gamma=\gamma(x,p,q;z)$, we have
\begin{equation}\label{recu09}
\frac{\partial \gamma}{\partial z}=pq\gamma+x(1-2p)\frac{\partial \gamma}{\partial p}+x(1-4x)\frac{\partial \gamma}{\partial x}+2xz\frac{\partial \gamma}{\partial z}.
\end{equation}
It is routine to check that the generating function
$$
\widehat{\gamma}(x,p,q;z)=e^{z\left(p-\frac{1}{2}\right)q}\left(\frac{\sqrt{1-4x}}{\sqrt{1-4x}\cosh\left(\frac{z}{2}\sqrt{1-4x}\right)-\sinh\left(\frac{z}{2}\sqrt{1-4x}\right)}\right)^q$$
satisfies~\eqref{recu09}. Also, this generating function gives $\widehat{\gamma}(x,p,q;0)=\widehat{\gamma}(x,p,0;z)=1$.
Hence $\widehat{\gamma}(x,p,q;z)=\gamma(x,p,q;z)$. This completes the proof.
\end{proof}

The Springer numbers are introduced by Springer~\cite{Springer71} in the study of irreducible root
system of type $B_n$.
Let $s_n$ denote the $n$-th Springer number. Springer~\cite{Springer71} derived that
\begin{equation}\label{sn}
\sum_{n=0}^{\infty}s_n\frac{z^n}{n!}=\frac{1}{\cos(z)-\sin(z)}.
\end{equation}
Arnold~\cite{Arnold} found a combinatorial interpretation of the Springer numbers in terms of snakes.
The reader is referred to~\cite[A001586]{Sloane} for more combinatorial interpretations of $s_n$.
It follows from~\eqref{Csxz-explicit} that
\begin{equation}\label{122z}
\gamma(1/2,1,1;2z)=\frac{\mathrm{e}^z}{\cos(z)-\sin(z)}.
\end{equation}
Let $\mcc_n$ be the set of permutations in $\msn$ with no cycle double ascents.
Combining~\eqref{sn} and~\eqref{122z}, one can immediately get the following.
\begin{corollary}
For $n\geqslant 1$, we have $$\sum_{\pi\in\mcc_n}2^{n-\exc(\pi)}=\sum_{i=0}^n\binom{n}{i}s_i.$$
\end{corollary}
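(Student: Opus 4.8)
The plan is to extract the corollary from the already-established generating function identity \eqref{122z} by comparing exponential generating functions coefficient-by-coefficient. First I would unwind the left-hand side of \eqref{122z} combinatorially. By the definition of $\gamma(x,p,q;z)$ together with the combinatorial interpretation in Proposition~\ref{prop14}, setting $p=q=1$ gives
\begin{equation*}
\gamma(x,1,1;z)=1+\sum_{n=1}^{\infty}\left(\sum_{j}\Bigl(\sum_{i}\gamma_{n,i,j}(1)\Bigr)x^{j}\right)\frac{z^{n}}{n!}
=\sum_{n=0}^{\infty}\left(\sum_{\pi\in\mcc_n}x^{\exc(\pi)}\right)\frac{z^{n}}{n!},
\end{equation*}
where for $n=0$ the empty permutation contributes $1$, consistent with the constant term of $\gamma$. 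Specializing $x=1/2$ and then replacing $z$ by $2z$ converts $x^{\exc(\pi)}=(1/2)^{\exc(\pi)}$ and the factor $(2z)^n/n!$ into $2^{n-\exc(\pi)}\,z^n/n!$, so that
\begin{equation*}
\gamma(1/2,1,1;2z)=\sum_{n=0}^{\infty}\left(\sum_{\pi\in\mcc_n}2^{n-\exc(\pi)}\right)\frac{z^{n}}{n!}.
\end{equation*}

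Next I would expand the right-hand side of \eqref{122z}. Using \eqref{sn}, we have
\begin{equation*}
\frac{\mathrm{e}^{z}}{\cos(z)-\sin(z)}=\left(\sum_{i=0}^{\infty}\frac{z^{i}}{i!}\right)\left(\sum_{j=0}^{\infty}s_{j}\frac{z^{j}}{j!}\right),
\end{equation*}
and the standard product rule for exponential generating functions shows that the coefficient of $z^{n}/n!$ on the right is $\sum_{i=0}^{n}\binom{n}{i}s_{i}$ (the factor $\mathrm{e}^z$ contributing $1$ in each term). Equating the coefficients of $z^{n}/n!$ in the two expansions of $\gamma(1/2,1,1;2z)=\mathrm{e}^{z}/(\cos z-\sin z)$ given by \eqref{122z} yields
\begin{equation*}
\sum_{\pi\in\mcc_n}2^{n-\exc(\pi)}=\sum_{i=0}^{n}\binom{n}{i}s_{i},
\end{equation*}
which is the claim.

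There is no serious obstacle here: the corollary is essentially a bookkeeping consequence of \eqref{122z}, Proposition~\ref{prop14}, and Springer's formula \eqref{sn}. The only points that need a little care are (i) checking that the $n=0$ term matches on both sides (both equal $1$), so that the sums may be started at $n=0$, and (ii) tracking the substitution $z\mapsto 2z$ correctly, namely that it turns $x^{\exc(\pi)}z^n/n!$ with $x=1/2$ into $2^{n-\exc(\pi)}z^n/n!$. Both are routine.
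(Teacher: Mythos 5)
Your proposal is correct and follows exactly the route the paper intends: the paper simply asserts that the corollary follows immediately by combining \eqref{sn} and \eqref{122z}, and your argument supplies the routine bookkeeping (specializing $p=q=1$ and $x=1/2$, the substitution $z\mapsto 2z$, and the EGF product formula) that makes this explicit.
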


A {\it left peak} of $\pi\in\msn$ is an index $i\in[n-1]$ such that $\pi(i-1)<\pi(i)>\pi(i+1)$, where $\pi(0)=0$.
Denote by $\lpk(\pi)$ the number of left peaks in $\pi$.
Let $Q(n,i)=\#\{\pi\in\msn: \lpk(\pi)=i\}$, and let $Q_n(x)=\sum_{i=0}^{\lrf{n/2}}Q(n,i)x^i$.
Gessel~\cite[A008971]{Sloane} obtained the following exponential generating function:
\begin{equation}\label{Gessel}
Q(x;z)=1+\sum_{n=1}^\infty Q_n(x)\frac{z^n}{n!}=\frac{\sqrt{1-x}}{\sqrt{1-x}\cosh(z\sqrt{1-x})-\sinh(z\sqrt{1-x})}.
\end{equation}
Comparing~\eqref{Csxz-explicit} with~\eqref{Gessel} leads to
\begin{equation}\label{Qxz}
Q(x;z)=\gamma\left(\frac{x}{4},\frac{1}{2},1;2z\right).
\end{equation}
As an equivalent result of~\eqref{Qxz}, we now present
the following result.
\begin{corollary}
We have
\begin{equation*}
\sum_{\pi\in\msn}x^{\lpk(\pi)}=\sum_{\pi\in\mcc_n}2^{n-\fix(\pi)-2\exc(\pi)}x^{\exc(\pi)}.
\end{equation*}
\end{corollary}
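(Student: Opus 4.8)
The plan is to read off the coefficient of $z^n/n!$ on both sides of the identity~\eqref{Qxz}, namely $Q(x;z)=\gamma\!\left(\frac{x}{4},\frac{1}{2},1;2z\right)$, which has already been established by comparing~\eqref{Csxz-explicit} with Gessel's generating function~\eqref{Gessel}. On the left-hand side this is immediate: by definition $Q(x;z)=1+\sum_{n\geqslant1}Q_n(x)\frac{z^n}{n!}$ with $Q_n(x)=\sum_{\pi\in\msn}x^{\lpk(\pi)}$, so the coefficient of $z^n/n!$ in $Q(x;z)$ is exactly $\sum_{\pi\in\msn}x^{\lpk(\pi)}$.

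For the right-hand side, recall that the generating function $\gamma(x,p,q;z)$ was defined so that
\[
\gamma(x,p,q;z)=1+\sum_{n\geqslant1}\sum_{i=0}^np^i\sum_{j=0}^{\lrf{(n-i)/2}}\gamma_{n,i,j}(q)\,x^j\,\frac{z^n}{n!},
\]
cf.~\eqref{eq:proof1} and~\eqref{eq:proof03}. Substituting $x\mapsto x/4$, $p\mapsto 1/2$, $q\mapsto 1$ and $z\mapsto 2z$, and using $(2z)^n=2^nz^n$, the coefficient of $z^n/n!$ on the right becomes
\[
2^n\sum_{i=0}^n\left(\frac{1}{2}\right)^i\sum_{j=0}^{\lrf{(n-i)/2}}\gamma_{n,i,j}(1)\left(\frac{x}{4}\right)^j=\sum_{i,j}\gamma_{n,i,j}(1)\,2^{\,n-i-2j}\,x^j.
\]
It then remains to invoke Proposition~\ref{prop14} with $q=1$: the number $\gamma_{n,i,j}(1)$ counts the permutations $\pi\in\msn$ with $\cda(\pi)=0$, $\fix(\pi)=i$ and $\exc(\pi)=j$, i.e. the permutations $\pi\in\mcc_n$ with $\fix(\pi)=i$ and $\exc(\pi)=j$. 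Summing over $i$ and $j$ therefore rewrites the last display as $\sum_{\pi\in\mcc_n}2^{\,n-\fix(\pi)-2\exc(\pi)}x^{\exc(\pi)}$, and equating the two coefficients of $z^n/n!$ gives the asserted identity.

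The whole argument is essentially bookkeeping, so there is no real obstacle; the one point that needs care is the rescaling $z\mapsto 2z$, which contributes the factor $2^n$ that — combined with the substitutions $p\mapsto 1/2$ and $x\mapsto x/4$ — produces precisely the exponent $n-\fix(\pi)-2\exc(\pi)$ in the statement. Both inputs, the analytic identity~\eqref{Qxz} and the combinatorial interpretation of $\gamma_{n,i,j}(1)$ from Proposition~\ref{prop14}, are already available, so no further work is required beyond this extraction of coefficients.
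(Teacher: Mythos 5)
Your proof is correct and follows exactly the route the paper intends: the corollary is stated there as an immediate consequence of the identity $Q(x;z)=\gamma\left(\frac{x}{4},\frac{1}{2},1;2z\right)$, and your coefficient extraction together with the combinatorial interpretation of $\gamma_{n,i,j}(1)$ from Proposition~\ref{prop14} is precisely the bookkeeping the authors leave implicit. Nothing is missing.
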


Let $$\gamma_{n}(x,p,q)=\sum_{i=0}^n\sum_{j=0}^{\lrf{(n-i)/2}}\gamma_{n,i,j}(q)p^ix^j.$$
The first few $\gamma_{n}(x,p,q)$ are
$\gamma_{1}(x,p,q)=pq,~\gamma_{2}(x,p,q)=p^2q^2+qx,~\gamma_{3}(x,p,q)=p^3q^3+q(1+3pq)x$.
By using~\eqref{Csxz-explicit}, one can easily derive the following curious enumerative consequence.
\begin{corollary}
For $n\geqslant 2$, we have
$$\gamma_{n}(x,1,-1)=\sum_{\ell\geqslant 0}(-1)^{n-\ell}\binom{n-\ell}{\ell}x^{\ell},$$
$$\gamma_{n}(x,0,-1)=\sum_{\ell\geqslant 0}(-1)^{\ell+1}\binom{n-2-\ell}{\ell}x^{\ell+1},$$
$$\gamma_{n}(x,x,-1)=\sum_{\ell\geqslant 0}(-1)^{\ell+1}\binom{2n-2-\ell}{\ell}x^{\ell+1}.$$
\end{corollary}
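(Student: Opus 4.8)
The plan is to read all three identities off the explicit generating function \eqref{Csxz-explicit} after setting $q=-1$. Writing $w:=\sqrt{1-4x}$ (everything below is interpreted as a formal power series, with $w$ disappearing from the final rational expressions), the substitution $q=-1$ turns the bracketed factor, which is raised to the power $q$, into its reciprocal, so that
$$\gamma(x,p,-1;z)=e^{z(\frac12-p)}\Bigl(\cosh\bigl(\tfrac z2 w\bigr)-\tfrac1w\sinh\bigl(\tfrac z2 w\bigr)\Bigr).$$
Expanding $\cosh$ and $\sinh$ in exponentials, the right-hand side is a linear combination of only two exponentials, $\gamma(x,p,-1;z)=c_+e^{\lambda_+z}+c_-e^{\lambda_-z}$, with $c_\pm=\tfrac12\bigl(1\mp\tfrac1w\bigr)$ and $\lambda_\pm=\tfrac12(1-2p\pm w)$. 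Hence $\gamma_n(x,p,-1)=c_+\lambda_+^n+c_-\lambda_-^n$ obeys a second-order linear recurrence in $n$, so its ordinary generating function is rational. Using $\lambda_++\lambda_-=1-2p$, $\lambda_+\lambda_-=p^2-p+x$, $c_++c_-=1$ and $c_+\lambda_-+c_-\lambda_+=\tfrac12(\lambda_++\lambda_-)+\tfrac1{2w}(\lambda_+-\lambda_-)=1-p$, this collapses to the uniform formula
$$\sum_{n\ge0}\gamma_n(x,p,-1)\,t^n=\frac{1-(1-p)t}{1-(1-2p)t+(p^2-p+x)t^2}.$$

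Next I would specialize $p$ and extract the coefficient of $t^n$. For $p=1$ the right-hand side is $\tfrac1{1+t+xt^2}=\sum_{m\ge0}(-t)^m(1+xt)^m$, and the binomial theorem gives $\gamma_n(x,1,-1)=\sum_\ell(-1)^{n-\ell}\binom{n-\ell}{\ell}x^\ell$ (valid for all $n\ge0$). For $p=0$ the right-hand side equals $\tfrac{1-t}{1-t+xt^2}=1-\tfrac{xt^2}{1-t+xt^2}$; since $\tfrac1{1-t+xt^2}=\sum_m t^m(1-xt)^m$ has $[t^N]=\sum_\ell(-1)^\ell\binom{N-\ell}{\ell}x^\ell$, one gets $\gamma_n(x,0,-1)=-x\,[t^{n-2}]\tfrac1{1-t+xt^2}=\sum_\ell(-1)^{\ell+1}\binom{n-2-\ell}{\ell}x^{\ell+1}$ for $n\ge2$. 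For $p=x$ the denominator factors as $(1+xt)^2-t$, and a short rearrangement gives $\tfrac{1-(1-x)t}{(1+xt)^2-t}=1-xt\cdot\tfrac{1+xt}{(1+xt)^2-t}$; putting $t=s^2$ shows $\tfrac{1+xt}{(1+xt)^2-t}=\tfrac12\bigl(\tfrac1{1-s+xs^2}+\tfrac1{1+s+xs^2}\bigr)$, i.e. it is the even part $\sum_m\bigl([t^{2m}]\tfrac1{1-t+xt^2}\bigr)t^m$. Hence $\gamma_n(x,x,-1)=-x\,[t^{2n-2}]\tfrac1{1-t+xt^2}=\sum_\ell(-1)^{\ell+1}\binom{2n-2-\ell}{\ell}x^{\ell+1}$ for $n\ge2$, which finishes the proof.

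I expect the $p=x$ case to be the main obstacle: unlike the $p=0,1$ cases, its denominator $1-(1-2x)t+x^2t^2$ is not of the form $1-t\cdot(\text{linear in }t)$ to which the binomial series applies directly, so one must pass through the even-part (diagonal) trick above — equivalently, recognize the clean identity $\gamma_n(x,x,-1)=-x\,[t^{2n-2}](1-t+xt^2)^{-1}$ — or else verify by two applications of Pascal's rule that the claimed sum satisfies the recurrence $a_n=(1-2x)a_{n-1}-x^2a_{n-2}$ with $a_0=1,\ a_1=-x$. In all three cases one must also keep track of the $n=0,1$ edge terms (the constant $1$ in the $p=0,x$ expansions, and the values of the generalized binomial coefficients at negative upper index), which is exactly why the statement is restricted to $n\ge2$.
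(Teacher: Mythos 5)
Your proposal is correct and follows exactly the route the paper intends: the paper's entire justification is the one-line remark that the corollary follows ``by using~\eqref{Csxz-explicit}'', and you carry out precisely that computation, setting $q=-1$, collapsing the generating function to $c_+e^{\lambda_+z}+c_-e^{\lambda_-z}$, and extracting coefficients from the resulting rational ordinary generating function. Your specializations at $p=1,0,x$ all check against the paper's listed values of $\gamma_2$ and $\gamma_3$, so the argument is a complete, correct filling-in of the details the paper omits.
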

%%%%%%%%%%%%%%%%%%%%%%%%%%%%%%%%%%%%%%%%%%%
\section{The symmetric decomposition of $A_{n}^{(k)}(x)$ and bi-$\gamma$-coefficients}\label{bi-gamma-coefficients}
%%%%%%%%%%%%%%%%%%%%%%%%%%%%%%%%%%%%%%%%%%%
%%%%%%%%%%%%%%%%%%%%%%%%%%%%%%%%%%%%%%%%%%%%%%%%%%%%%%%%%%%%%%%%%%%%%%%%%%%%%%%%%
%%%%%%%%%%%%%%%%%%%%%%%%%%%%%%%%%%%%%%%%%%%%%%%%%%%%%%%%%%%%%%%%%%%%%%%%%%%%%%%%%
%%%%%%%%%%%%%%%%%%%%%%%%%%%%%%%%%%%%%%%%%%%%%%%%%
In the first part of the proof of Theorem~\ref{thm001}, we get that the polynomials $A_{n}^{(k)}(x)$ are bi-$\gamma$-positive.
In this section, we shall give a combinatorial interpretation for the symmetric decomposition of $A_{n}^{(k)}(x)$
as well as bi-$\gamma$-coefficients of $A_{n}^{(2)}(x)$.
Let $A^+_{n,i;k}$ and $A^-_{n,i;k}$ be the numbers defined by~\eqref{recusystem}.
For $n\geqslant 2$, we define
\begin{align*}
{{A}}^+_{n;k}(x)=\sum_{i=0}^{\lrf{{(n-1)}/{2}}}A^+_{n,i;k}x^i,~{A}^-_{n;k}(x)=\sum_{i=0}^{\lrf{{(n-2)}/{2}}}A^-_{n,i;k}x^i.
\end{align*}
Multiplying both sides of~\eqref{recusystem} by $x^i$ and summing over all $i$, we obtain the recurrence system
\begin{equation*}\label{Stirling-recu005}
\left\{
  \begin{array}{ll}
A^+_{n+1;k}(x)&=(1+2k(n-1)x)A^+_{n;k}(x)+kx(1-4x)\frac{\mathrm{d}}{\mathrm{d}x}A^+_{n;k}(x)+xA^-_{n;k}(x),\\
A^-_{n+1;k}(x)&=k(1+2(n-2)x)A^-_{n;k}(x)+kx(1-4x)\frac{\mathrm{d}}{\mathrm{d}x}A^-_{n;k}(x)+(k-1)A^+_{n;k}(x),
  \end{array}
\right.
\end{equation*}
with $A^+_{1;k}(x)=1$ and  $A^-_{1;k}(x)=0$.
Below are the polynomials $A^+_{n;k}(x)$ and  $A^-_{n;k}(x)$ for $2\leqslant n\leqslant 4$:
\begin{align*}
A^+_{2;k}(x)&=1,~A^-_{2;k}(x)=k-1,~
A^+_{3;k}(x)=1+(3k-1)x,~A^-_{3;k}(x)=k^2-1,\\
A^+_{4;k}(x)&=1+(6k+4k^2-2)x,~A^-_{4;k}(x)=k^3-1+(1-6k+3k^2+2k^3)x.
\end{align*}
Recall that $A_n^{(k)}(x)=a_n^{(k)}(x)+xb_n^{(k)}(x)$.
It follows from~\eqref{ankxbnkx} that
\begin{align*}
a_{n}^{(k)}(x)&=(1+x)^{n-1}{{A}}^+_{n;k}\left(\frac{x}{(1+x)^2}\right),~
b_{n}^{(k)}(x)=(1+x)^{n-2}{A}^-_{n;k}\left(\frac{x}{(1+x)^2}\right).
\end{align*}

We now recall a combinatorial interpretation of $A_{n}^{(k)}(x)$.
Let $j^i=\underbrace{j,\ldots,j}_i$ for $i,j\geqslant 1$.
We say that a permutation of
$\{1^k,2^k,\ldots,n^k\}$ is a $k$-{\it Stirling permutation} of order $n$ if for each $i$, $1\leqslant i\leqslant n$,
all entries between the two occurrences of $i$ are at least $i$. When $k=2$, the $k$-Stirling permutation reduces to the classical Stirling permutation, see~\cite{Bona08,Haglund12} for instance.
Let $\mqn(k)$ be the set of $k$-{\it Stirling permutations} of order $n$.
Let $\sigma=\sigma_1\sigma_2\cdots\sigma_{nk}\in \mqn(k)$.
We say that an index $i$ is a {\it longest ascent plateau} if $\sigma_{i-1}<\sigma_i=\sigma_{i+1}=\sigma_{i+2}=\cdots=\sigma_{i+k-1}$, where $2\leqslant i\leqslant nk-k+1$.
A {\it longest left ascent plateau} of $\sigma$ is a longest ascent plateau of $\sigma$ endowed with a 0 in the front of $\sigma$.
Let $\ap(\sigma)$ (resp.~$\lap(\sigma)$) be the number of longest ascent plateaus (resp.~longest left ascent plateaus) of $\sigma$.
It is clear that
\begin{equation*}
\lap(\sigma):=\begin{cases}
\ap(\sigma)+1,& \text{if $\sigma_1=\sigma_2=\cdots =\sigma_k$};\\
\ap(\sigma), & \text{otherwise}.
\end{cases}
\end{equation*}
The following results were obtained in~\cite{Ma15}:
\begin{equation}\label{Ankx-stirling}
A_{n}^{(k)}(x)=\sum_{\sigma\in \mqn(k)}x^{\ap(\sigma)}, ~x^nA_{n}^{(k)}\left(\frac{1}{x}\right)=\sum_{\sigma\in {\mathcal{Q}}_{n}(k)}x^{\lap(\sigma)}.
\end{equation}
Note that $\deg A_{n}^{(k)}(x)=n-1$. Let $\left(a_n^{(k)}(x),b_n^{(k)}(x)\right)$ be the symmetric decomposition of $A_{n}^{(k)}(x)$.
Let $\overline{\mq}_n(k)=\{\sigma\in \mqn(k)\mid \sigma_{j}<\sigma_{j+1}{\text{ for some $j\in[k-1]$}}\}$ and let $\mqn=\mqn(2)$.
Combining~\eqref{ax-bx-prop01} and~\eqref{Ankx-stirling}, we obtain
$$a_n^{(k)}(x)=\frac{\sum_{\sigma\in \mqn(k)}x^{\ap(\sigma)}-\sum_{\sigma\in {\mathcal{Q}}_{n}(k)}x^{\lap(\sigma)}}{1-x}.$$
So we immediately get the following result.
\begin{proposition}
We have
\begin{align*}
a_n^{(k)}(x)=\sum_{\substack{\sigma \in\mqn(k)\\\sigma_{1}=\sigma_2=\cdots=\sigma_k}}x^{\ap(\sigma)},~xb_n^{(k)}(x)=\sum_{\sigma\in\overline{\mq}_n(k)}x^{\ap(\sigma)}.
\end{align*}
In particular, we have
\begin{equation*}
a_n^{(2)}(x)
=\sum_{\substack{\sigma \in\mqn\\\sigma_{1}=\sigma_2}}x^{\ap(\sigma)},~xb_n^{(2)}(x)
=\sum_{\substack{\sigma \in\mqn\\\sigma_{1}<\sigma_2}}x^{\ap(\sigma)}.
\end{equation*}
\end{proposition}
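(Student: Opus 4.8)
The plan is to start from the rational-function identity recorded just before the statement,
\[
a_n^{(k)}(x)=\frac{\sum_{\sigma\in\mqn(k)}x^{\ap(\sigma)}-\sum_{\sigma\in\mqn(k)}x^{\lap(\sigma)}}{1-x},
\]
and to combine it with the relation $\lap(\sigma)=\ap(\sigma)$ unless $\sigma_1=\sigma_2=\cdots=\sigma_k$, in which case $\lap(\sigma)=\ap(\sigma)+1$. First I would split each of the two sums over $\mqn(k)$ according to whether or not $\sigma_1=\cdots=\sigma_k$. On the set where these first $k$ entries are \emph{not} all equal the two exponents coincide, so those terms cancel in the numerator; on the set where $\sigma_1=\cdots=\sigma_k$ the numerator contributes $\sum x^{\ap(\sigma)}-\sum x^{\ap(\sigma)+1}=(1-x)\sum_{\sigma_1=\cdots=\sigma_k}x^{\ap(\sigma)}$. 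Dividing by $1-x$ yields the first asserted identity.

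For the second identity I would use $x b_n^{(k)}(x)=A_n^{(k)}(x)-a_n^{(k)}(x)$ together with $A_n^{(k)}(x)=\sum_{\sigma\in\mqn(k)}x^{\ap(\sigma)}$ from \eqref{Ankx-stirling}; subtracting leaves exactly $x b_n^{(k)}(x)=\sum x^{\ap(\sigma)}$, where $\sigma$ runs over those elements of $\mqn(k)$ for which $\sigma_1,\dots,\sigma_k$ are not all equal. It then remains to identify this index set with $\overline{\mq}_n(k)$. One inclusion is immediate: if $\sigma_j<\sigma_{j+1}$ for some $j\in[k-1]$ then $\sigma_1,\dots,\sigma_k$ cannot all be equal. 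For the reverse inclusion, suppose $\sigma_1,\dots,\sigma_k$ are not all equal, set $v=\sigma_1$, and let $p$ be the least index with $\sigma_p\ne v$, so that $2\leqslant p\leqslant k$. Positions $1,\dots,p-1$ all carry the value $v$, and since $p-1\leqslant k-1<k$ the value $v$ must occur again later, so its last occurrence lies strictly to the right of $p$. By the defining condition of a $k$-Stirling permutation every entry lying between the first and last occurrences of $v$ is $\geqslant v$; in particular $\sigma_p\geqslant v$, hence $\sigma_p>v$ because $\sigma_p\ne v$. Thus $\sigma_{p-1}=v<\sigma_p$ with $p-1\in[k-1]$, so $\sigma\in\overline{\mq}_n(k)$, completing the identification and hence the second formula.

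The case $k=2$ is a specialization: here $\overline{\mq}_n(2)=\{\sigma\in\mqn:\sigma_1<\sigma_2\}$, and the $2$-Stirling condition forces $\sigma_2\geqslant\sigma_1$ (the pigeonhole argument above with $p=k=2$), so the condition $\sigma_1\ne\sigma_2$ coincides with $\sigma_1<\sigma_2$; substituting $k=2$ in the two general identities then gives the stated expressions for $a_n^{(2)}(x)$ and $x b_n^{(2)}(x)$.

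I expect the only step requiring genuine care to be the set identification $\{\sigma:\sigma_1,\dots,\sigma_k\ \text{not all equal}\}=\overline{\mq}_n(k)$; everything else is bookkeeping with generating-function identities already in hand. The crux is the pigeonhole observation that the $k$ copies of $v=\sigma_1$ cannot all fit into the first $p-1\leqslant k-1$ positions, which is exactly what makes the $k$-Stirling constraint force $\sigma_p>v$.
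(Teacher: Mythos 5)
Your proof is correct and follows essentially the same route as the paper, which obtains the proposition directly from the displayed identity $a_n^{(k)}(x)=\bigl(\sum_{\sigma}x^{\ap(\sigma)}-\sum_{\sigma}x^{\lap(\sigma)}\bigr)/(1-x)$ combined with the dichotomy $\lap(\sigma)=\ap(\sigma)+1$ exactly when $\sigma_1=\cdots=\sigma_k$, then subtracts from $A_n^{(k)}(x)$ to get $xb_n^{(k)}(x)$. The only ingredient you spell out that the paper treats as immediate is the set identification $\{\sigma\in\mqn(k):\sigma_1,\dots,\sigma_k\ \text{not all equal}\}=\overline{\mq}_n(k)$, and your pigeonhole argument via the $k$-Stirling condition is a correct justification of it.
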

%Various statistics on Stirling permutations were repeatedly discovered, see e.g.,~\cite{Bona08,Ma19,Ma2001}.
%
%Setting $\gamma_{n,i}^+=A^+_{n,i;2}$ and $\gamma_{n,i}^-=A^-_{n,i;2}$, we have
%\begin{equation}\label{ankxbnkx}
%a_n^{(2)}(x)=\sum_{i=0}^{\lrf{{(n-1)}/{2}}}\gamma_{n,i}^+x^i(1+x)^{n-1-2i},~
%b_n^{(2)}(x)=\sum_{i=0}^{\lrf{{(n-2)}/{2}}}\gamma_{n,i}^-x^i(1+x)^{n-2-2i}.
%\end{equation}
The rest part of this section devoted to the combinatorial interpretation of bi-$\gamma$-coefficients of $A_n^{(2)}(x)$.
It follows from~\eqref{Ankx-def02} and~\eqref{Ankx-decom} that
\begin{equation*}\label{Ankx-def03}
A_n^{(2)}(x)=2^nA_n(x,1/2)=\sum_{\pi\in\msn}x^{\exc(\pi)}2^{n-\cyc(\pi)}=a_n^{(2)}(x)+xb_n^{(2)}(x).
\end{equation*}
Below are the polynomials $a_n^{(2)}(x)$ and $b_n^{(2)}(x)$ for $n\leqslant 4$:
\begin{align*}
a_1^{(2)}(x)&=1,~b_1^{(2)}(x)=0,~a_2^{(2)}(x)=1+x,~b_2^{(2)}(x)=1,~
a_3^{(2)}(x)=1+7x+x^2,\\~b_3^{(2)}(x)&=3+3x,~
a_4^{(2)}(x)=1+29x+29x^2+x^3,~b_4^{(2)}(x)=7+31x+7x^2.
%a_5^{(2)}(x)&=1+101x+321x^2+101x^3+x^4,~b_5^{(2)}(x)=15+195x+195x^2+15x^3.
\end{align*}

We say that $\pi\in\msn$ is a {\it circular permutation} if it has only one cycle. Let $A=\{x_1,\ldots, x_j\}$
be a finite set of positive integers,
and let $\cc_A$ be the set of all circular
permutations of $A$. Let $w\in\cc_A$. We will always write $w$ by using its
canonical presentation $w=y_1y_2\cdots y_j$, where $y_1=\min A, y_i=w^{i-1}(y_1)$ for $2\leqslant i\leqslant j$ and $y_1=w^j(y_1)$.
A {\it cycle peak} (resp.~{\it cycle double ascent}, {\it cycle double descent}) of $w$ is an entry $y_i$, $2\leqslant i\leqslant j$, such that
$y_{i-1}<y_i>y_{i+1}$ (resp.~$y_{i-1}<y_i<y_{i+1}$, $y_{i-1}>y_i>y_{i+1}$), where we set $y_{j+1}=+\infty$,
i.e., $y_{j+1}$ is the positive infinity. Let $\cpkk(w)$ (resp.~$\cdasc(w)$,~$\cddes(w)$) be the number of cycle peaks (resp.~cycle double ascents, cycle double descents) of $w$.
An {\it alternating run} of $w$ is a maximal consecutive subsequence that is increasing or decreasing. Following~\cite{Ma2001},
the number of {\it cycle runs} $\crun(w)$ of $w$ is defined to be the number of alternating runs of the word $y_1y_2\cdots y_j+\infty$.
Assume that $\cyc(\pi)=s$ and $\pi=w_1w_2\cdots w_s$, where $w_i$ is the $i$th cycle of $\pi$. Let $\crun(\pi)=\sum_{i=1}^s\crun(w_i)$ be
the number of cycle runs of $\pi$ and let $\cpkk(\pi)=\sum_{i=1}^s\cpkk(w_i)$ be the number of cycle peaks of $\pi$.
For $\pi\in\msn$, it is clear that $$1\leqslant \crun(\pi)\leqslant n,~\crun(\pi)=2\cpkk(\pi)+\cyc(\pi).$$
For instance, $\crun((1)(2)(3)\cdots(n))=n$ and $\crun((1,2,3,\ldots,n))=1$.
\begin{example}
If $\pi=(1,4,2)(3,5,6)(7)\in\ms_7$, then $\crun(\pi)=5$, since the numbers of cycle runs in the three cycles are $3,1,1$, respectively.
\end{example}
We can now present the third main result of this paper.
\begin{theorem}\label{thm18}
For $n\geqslant 2$, we have
$$\sum_{\pi\in\msn}x^{\exc(\pi)}2^{n-\cyc(\pi)}=\sum_{i=0}^{\lrf{{(n-1)}/{2}}}\xi_{n,i}^+x^i(1+x)^{n-1-2i}+
x\sum_{j=0}^{\lrf{{(n-2)}/{2}}}\xi_{n,j}^-x^j(1+x)^{n-2-2j}.$$
Let $\ms_{n,i}$ be the set of permutations in $\msn$ with $i$ cycle runs. Then we have
\begin{equation}\label{xi01}
\left\{
  \begin{array}{ll}
    \xi_{n,i}^+=\sum_{\pi\in\ms_{n,2i+1}}2^{\crun(\pi)-\cyc(\pi)}=\sum_{\pi\in\ms_{n,2i+1}}4^{\cpkk(\pi)}, &  \\
    \xi_{n,j}^-=\sum_{\pi\in\ms_{n,2j+2}}2^{\crun(\pi)-\cyc(\pi)}=\sum_{\pi\in\ms_{n,2j+2}}4^{\cpkk(\pi)}. &
  \end{array}
\right.
\end{equation}
%\begin{equation}\label{xi01}
%\xi_{n,i}^+=\sum_{\pi\in\ms_{n,2i+1}}2^{\crun(\pi)-\cyc(\pi)}=\sum_{\pi\in\ms_{n,2i+1}}4^{\cpkk(\pi)},~\xi_{n,j}^-=\sum_{\pi\in\ms_{n,2j+2}}2^{\crun(\pi)-\cyc(\pi)}=\sum_{\pi\in\ms_{n,2j+2}}4^{\cpkk(\pi)}.
%\end{equation}
\end{theorem}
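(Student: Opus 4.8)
The plan is to prove the $\gamma$-type expansion by finding a context-free grammar whose $n$-th derivative encodes the joint distribution of $\exc$, $\cyc$ (equivalently $\crun$) and $\cpkk$ over $\msn$, and then to perform a change of grammar that separates the "odd number of cycle runs" and "even number of cycle runs" contributions into the $a$- and $xb$-parts of the symmetric decomposition. First I would record a grammar $G$ realizing $D_G^n(I)=I\sum_{\pi\in\msn} t^{\crun(\pi)}z^{\cyc(\pi)}x^{\exc(\pi)}\cdots$ in the spirit of Lemma~\ref{lemma001exc}: each insertion of $n+1$ into a cycle either creates a new fixed-point cycle, lands at a cycle peak/valley (preserving $\crun$), or at a cycle double ascent/descent (increasing $\crun$ by $2$ and creating a new cycle peak). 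Using the identity $\crun(\pi)=2\cpkk(\pi)+\cyc(\pi)$ already noted in the excerpt, tracking $\crun$ and $\cyc$ is the same as tracking $\cpkk$ and $\cyc$; specializing the cycle variable to the value forced by $A_n^{(2)}(x)=\sum_\pi x^{\exc(\pi)}2^{n-\cyc(\pi)}$ (i.e. weight $4^{\cpkk(\pi)}$ per peak after the dust settles) should reduce the bookkeeping to two families of coefficients.

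Next I would set $u=x+y$, $v=xy$ exactly as in the proof of Theorem~\ref{thm001}, and carry out the change of grammar $G\rightsquigarrow G_1$ that produced~\eqref{DnI-stir}. The point is that~\eqref{DnI-stir} already exhibits $D_{G_1}^n(I)$ as $J(\cdots) + Iv(\cdots)$ with the two summations being precisely $A^+_{n;2}$ and $A^-_{n;2}$ evaluated at $v/u^2$; combined with~\eqref{Ankx-decom}–\eqref{ankxbnkx} this gives the claimed $\gamma$-expansion with $\xi_{n,i}^+=A^+_{n,i;2}$ and $\xi_{n,j}^-=A^-_{n,j;2}$. So the expansion itself is free once $k=2$ is substituted; the combinatorial content is entirely in~\eqref{xi01}. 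For that, I would show that the change of grammar is compatible with a refined grammatical labeling in which $u$ marks the generic (peak-free) steps and $v$ marks cycle peaks, so that after iteration the power of $v$ in a monomial equals $\cpkk(\pi)$, the $J$-part collects permutations whose total number of cycle runs is odd, and the $Iv$-part collects those with an even number of cycle runs — matching the parity split $\ms_{n,2i+1}$ versus $\ms_{n,2j+2}$. The two equal expressions in each line of~\eqref{xi01} are then just the identity $\crun-\cyc=2\cpkk$ together with $2^{2\cpkk}=4^{\cpkk}$.

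The step I expect to be the main obstacle is establishing the parity statement cleanly: namely that the change of grammar from $G_0$ (or $G$) to $G_1$ has a transparent combinatorial meaning on permutations, pinning down \emph{why} the $J$-summand is exactly the odd-$\crun$ part and the $Iv$-summand the even-$\crun$ part. One route is to avoid re-deriving this from scratch and instead leverage the recurrence system~\eqref{recusystem}: I would set up a sign-refined or parity-refined statistic recurrence for $\sum_{\pi\in\ms_{n,2i+1}}4^{\cpkk(\pi)}$ and $\sum_{\pi\in\ms_{n,2j+2}}4^{\cpkk(\pi)}$ directly — analyzing how inserting $n+1$ changes $\crun$ (by $0$ at peaks/valleys, by $+2$ at double ascents/descents, by $+1$ when starting a new cycle $(n+1)$) and how it changes $\cpkk$ (by $+1$ exactly at a double ascent/descent) — and check that these two sequences satisfy~\eqref{recusystem} with the same initial data $A^+_{1,0;2}=1$, $A^-_{1,i;2}=0$. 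Matching the $(k-1)a_n^{(k)}(x)$ cross term at $k=2$, i.e. the coefficient $1$ feeding the $\xi^-$ recursion from the $\xi^+$ side, should correspond to the modified Foata–Strehl action $\varphi'_x$ of Section~\ref{section002} turning a cycle double descent into the unique cycle double ascent; identifying that bijection in the present (full permutation, not derangement) setting is the delicate point, and I would model it on the argument just after Example~1 in the excerpt. Once the recurrences agree, uniqueness of the solution finishes the proof.
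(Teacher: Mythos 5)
Your ``route (b)'' is essentially the paper's own proof: one defines $\xi^+_{n,i}$ and $\xi^-_{n,j}$ combinatorially, derives a recurrence by analyzing the insertion of $n+1$, and matches it against~\eqref{recusystem} at $k=2$, after which the stated expansion follows from~\eqref{Ankx-decom} and~\eqref{ankxbnkx}. Two details of your sketch need correcting before it runs, though. First, the insertions that leave $\crun$ unchanged are not those ``at peaks/valleys'': they are the two slots immediately before and immediately after each cycle peak together with the slot at the end of each cycle, which is exactly $2\cpkk(\pi)+\cyc(\pi)=\crun(\pi)$ positions and is precisely what produces the coefficients $2i+1$ and $2i+2$; each of the remaining $n-\crun(\pi)$ slots raises $\crun$ by two and $\cpkk$ by one, hence carries weight $4$. (Inserting the new maximum right after a cycle valley does \emph{not} in general preserve $\crun$.) Second, the cross terms you single out as the delicate point require no modified Foata--Strehl action at all: both the term $\xi^-_{n,i-1}$ feeding $\xi^+_{n+1,i}$ and the term $\xi^+_{n,i}$ feeding $\xi^-_{n+1,i}$ arise from appending $(n+1)$ as a new singleton cycle, which increases $\crun$ by one and therefore flips the parity class, with coefficient $1$ on both sides --- matching the unit coefficient of $A^-_{n,i-1;k}$ and the value $k-1=1$ in~\eqref{recusystem}. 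With these corrections your plan coincides with the argument in the paper.
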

\begin{proof}
Note that $2^2A_2(x,1/2)=1+x+x$ and $2^3A_3(x,1/2)=[(1+x)^2+5x]+3x(1+x)$.
Thus $\xi_{2,0}^+=\xi_{2,0}^-=1,~\xi_{3,0}^+=1,~\xi_{3,1}^+=5,~\xi_{3,0}^-=3$.
Note that $\ms_{2,1}=\{(12)\},~\ms_{2,2}=\{(1)(2)\}$,
$$\ms_{3,1}=\{(1,2,3)\},~\ms_{3,2}=\{(1,2)(3),(1,3)(2),(1)(2,3)\},\ms_{3,3}=\{(1)(2)(3),(1,3,2)\}.$$
It is easy to check that~\eqref{xi01} holds for $n=2,3$. We proceed by induction on $n$.
In order to get permutations in
$\ms_{n+1,2i+1}$, we distinguish three cases:
\begin{itemize}
 \item [\rm ($c_1$)] If $\pi\in\ms_{n,2i}$, then we can insert $n+1$ into $\pi$ as a new cycle. This gives the term $\xi^-_{n,i-1}$;
  \item [\rm ($c_2$)] If $\pi\in \ms_{n,2i+1}$, then $2\cpkk(\pi)+\cyc(\pi)=2i+1$.
We can insert $n+1$ just before or right after each cycle peak of $\pi$.
  Moreover, we can insert $n+1$ at the end of a cycle of $\pi$. This gives the term $(2i+1)\xi^+_{n,i}$;
    \item [\rm ($c_3$)] If $\pi\in \ms_{n,2i-1}$, then $2\cpkk(\pi)+\cyc(\pi)=2i-1$.
    We can insert $n+1$ into any of the remaining $n-(2i-1)$ positions, and the number of cycle runs is increased by two.
   This gives the term $4(n-2i+1)\xi^+_{n,i-1}$.
\end{itemize}
Similarly, there are three ways to get permutations in $\ms_{n+1,2i+2}$ by inserting the entry $n+1$:
\begin{itemize}
 \item [\rm ($c_1$)]If $\pi\in \ms_{n,2i+1}$, then we can insert $n+1$ into $\pi$ as a new cycle. This gives the term $\xi^+_{n,i}$;
  \item [\rm ($c_2$)] If $\pi\in \ms_{n,2i+2}$, then $2\cpkk(\pi)+\cyc(\pi)=2i+2$. We can insert $n+1$ just before or right after each cycle peak of $\pi$.
  Moreover, we can insert $n+1$ at the end of a cycle of $\pi$. This gives the term $(2i+2)\xi^-_{n,i}$;
    \item [\rm ($c_3$)] If $\pi\in \ms_{n,2i}$, then $2\cpkk(\pi)+\cyc(\pi)=2i$.  We can insert $n+1$ into any of the remaining $n-2i$ positions, and the number of cycle runs is increased by two. This gives the term $4(n-2i)\xi^-_{n,i-1}$.
\end{itemize}
In conclusion, we have
\begin{equation*}\label{xi}
\left\{
  \begin{array}{ll}
    \xi^+_{n+1,i}=(2i+1)\xi^+_{n,i}+4(n-2i+1)\xi^+_{n,i-1}+\xi^-_{n,i-1}, & \\
  \xi^-_{n+1,i}=(2i+2)\xi^-_{n,i}+4(n-2i)\xi^-_{n,i-1}+\xi^+_{n,i},&
  \end{array}
\right.
\end{equation*}
Comparing this with~\eqref{recusystem} leads to $\xi^+_{n,i}=A^+_{n,i;2}$ and $\xi^-_{n,i}=A^-_{n,i;2}$, and this completes the proof.
\end{proof}
%%%%%%%%%%%%%%%%%%%%%%%%%%%%%%%%%%%%%%%%%%%%%%%%%%
%%%%%%%%%%%%%%%%%%%%%%%%%%%%%%%%%%%%%%%%%%%%%%%%%%%%%%%%%%%%%%%%%%%%%%%%%%%%%%%%%%
\section{Excedances, fixed points and cycles of signed permutations}\label{section004}
%%%%%%%%%%%%%%%%%%%%%%%%%%%%%%%%%%%%%%%%%%%%
%%%%%%%%%%%%%%%%%%%%%%%%%%%%%%%%%%%%%%%%%%%%%%%%%%%%%%%%%%%%%%%%%%%%%%%%%%%%%%%%%%
%%%%%%%%%%%%%%%%%%%%%%%%%%%%%%%%%%%%%%%%%%%%%%%%%%%%%%%%%%%%%%%%%%%%%%%%%%%%%%%%%%
%%%%%%%%%%%%%%%%%%%%%%%%%%%%%%%%%%%%%%%%%%%%%%%%%%
%%%%%%%%%%%%%%%%%%%%%%%%%%%%%%%%%%%%%%%%%%%%%%%%%%%%%%%%%%%%%%%%%%%%%%%%%%%%%%%%%%
%%%%%%%%%%%%%%%%%%%%%%%%%%%%%%%%%%%%%%%%%%%%%%%%%%%%%%%%%%%%%%%%%%%%%%%%%%%%%%%%%%
\subsection{Basic definitions}
%%%%%%%%%%%%%%%%%%%%%%%%%%%%%%%%%%%%%%%%%%%%
%%%%%%%%%%%%%%%%%%%%%%%%%%%%%%%%%%%%%%%%%%%%%%%%%%%%%%%%%%%%%%%%%%%%%%%%%%%%%%%%%%
%%%%%%%%%%%%%%%%%%%%%%%%%%%%%%%%%%%%%%%%%%%%%%%%%%%%%%%%%%%%%%%%%%%%%%%%%%%%%%%%%%
%%%%%%%%%%%%%%%%%%%%%%%%%%%%%%%%%%%%%%%%%%%%%%%%%%
%%%%%%%%%%%%%%%%%%%%%%%%%%%%%%%%%%%%%%%%%%%%%%%%%%%%%%%%%%%%%%%%%%%%%%%%%%%%%%%%%%
\hspace*{\parindent}

Let $\sigma=\sigma(1)\sigma(2)\cdots\sigma(n)\in \mbn$.
It should be noted that the $n$ letters
appearing in the cycle notation of $\sigma\in \mbn$ are the
letters $\sigma(1),\sigma(2),\ldots,\sigma(n)$.
Let $\des_B(\sigma)=\#\{i\in\{0,1,\ldots,n-1\}\mid \sigma(i)>\sigma({i+1})\}$, where $\sigma(0)=0$.
We say that $i\in [n]$ is a {\it weak excedance} of $\sigma$ if $\sigma(i)=i$ or $\sigma(|\sigma(i)|)>\sigma(i)$ (see~\cite[p.~431]{Brenti94}).
Let $\we(\sigma)$ be the number of weak excedances of $\sigma$. Then $\we(\sigma)=\exc(\sigma)+\fix(\sigma)$.
According to~\cite[Theorem~3.15]{Brenti94}, the statistics $\des_B$ and $\we$ have the same distribution over $B_n$,
and their common enumerative polynomial is the
{\it type $B$ Eulerian polynomial}:
$$B_n(x)=\sum_{\sigma\in \mbn}x^{\des_B(\sigma)}=\sum_{\sigma\in \mbn}x^{\we(\sigma)}.$$

Recall that $Q(n,i)$ is the number of permutations in $\msn$ with $i$ left peaks.
By using the theory of enriched $P$-partitions, Petersen~\cite[Proposition~4.15]{Petersen07} obtained that
\begin{equation}\label{Bnxgamma}
B_n(x)=\sum_{i=0}^{\lrf{n/2}}4^iQ(n,i)x^i(1+x)^{n-2i},
\end{equation}
which has been extensively studied, see~\cite{Chow08,Lin15,Zeng16,Zhuang17} and references therein.
%Let $Q_n(x)=\sum_{i=0}^{\lrf{n/2}}4^iQ(n,i)x^i$.
%According to~\cite[Proposition~4.10]{Chow08}, we have
%\begin{equation}\label{bxz-EGF}
%Q(x;z)=1+\sum_{n=1}^\infty Q_n(x)\frac{z^n}{n!}=\frac{\sqrt{4x-1}\sec\left(z\sqrt{4x-1}\right)}
%{\sqrt{4x-1}-\tan\left(z\sqrt{4x-1}\right)}.
%\end{equation}
%%%%%%%%%%%%%%%%%%%%%%%%%%%%%%%%%%%%%%%%%%%%%%%%%%%%%%%%%%%%%%%%%%%%%%%%%%%%%%%%%%
%%%%%%%%%%%%%%%%%%%%%%%%%%%%%%%%%%%%%%%%%%%%%%%%%%
\subsection{A unified generalization of~\eqref{Bnxgamma} and Propositions~\ref{Foata70} and~\ref{Zeng12}}
%%%%%%%%%%%%%%%%%%%%%%%%%%%%%%%%%%%%%%%%%%%%
%%%%%%%%%%%%%%%%%%%%%%%%%%%%%%%%%%%%%%%%%%%%%%%%%%%%%%%%%%%%%%%%%%%%%%%%%%%%%%%%%%
%%%%%%%%%%%%%%%%%%%%%%%%%%%%%%%%%%%%%%%%%%%%%%%%%%%%%%%%%%%%%%%%%%%%%%%%%%%%%%%%%%
%%%%%%%%%%%%%%%%%%%%%%%%%%%%%%%%%%%%%%%%%%%%%%%%%%
%%%%%%%%%%%%%%%%%%%%%%%%%%%%%%%%%%%%%%%%%%%%%%%%%%%%%%%%%%%%%%%%%%%%%%%%%%%%%%%%%%
\hspace*{\parindent}

We say that $i$ is an {\it excedance} (resp.~{\it anti-excedance}, {\it fixed point}, {\it singleton}) of $\sigma$ if $\sigma(|\sigma(i)|)>\sigma(i)$ (resp.~$\sigma(|\sigma(i)|)<\sigma(i)$, $\sigma(i)=i$, $\sigma(i)=\overline{i}$).
Let $\exc(\sigma)$ (resp.~$\aexc(\sigma)$, $\fix(\sigma)$, $\single(\sigma)$, $\negg(\sigma)$) be the number of excedances (resp.~anti-excedances, fixed points, singletons, negative entries) of $\sigma$.
\begin{example}
The signed permutation $\pi=\overline{3}51\overline{7}2468\overline{9}$ can be written as
$(\overline{9})(\overline{3},1)(2,5)(4,\overline{7},6)(8)$. Moreover, $\pi$ with only one
singleton $9$ and one fixed point $8$, and $\pi$ has $3$ excedances, $4$ anti-excedances and $3$ negative entries.
\end{example}

Consider the following polynomials
$$B_n(x,y,s,t,p,q)=\sum_{\sigma\in \mbn}x^{\exc(\sigma)}y^{\aexc(\sigma)}s^{\single(\sigma)}t^{\fix(\sigma)}p^{\negg(\sigma)}q^{\cyc(\sigma)}.$$
Clearly, $A_n(x,q)=B_n(x,1,1,1,0,q)$ and $B_n(x)=B_n(x,1,1,x,1,1)$.
%Below are the polynomials $B_n(x,y,s,t,p,q)$ for $0\leqslant n\leqslant 3$:
%\begin{align*}
%B_0(x,y,s,t,p,q)&=1,\\
%B_1(x,y,s,t,p,q)&=q(s+pt),\\
%B_2(x,y,s,t,p,q)&=q^2(s+pt)^2+q(1+p)^2xy,\\
%B_3(x,y,s,t,p,q)&=q^3(s+pt)^3+3q^2(1+p)^2(s+pt)xy+q(1+p)^3xy(x+y).
%\end{align*}
%Let $$B(x,y,s,t,p,q;z)=\sum_{n=0}^\infty B_n(x,y,s,t,p,q)\frac{z^n}{n!}.$$
We can now present the fourth main result of this paper.
\begin{theorem}\label{mainthm004}
We have
\begin{equation}\label{Bxy-EGF}
B_n(x,y,s,t,p,q)=(1+p)^ny^nA_n\left(\frac{x}{y},\frac{t+sp}{(1+p)y},q\right).
\end{equation}
\end{theorem}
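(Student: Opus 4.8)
The plan is to establish the identity at the level of exponential generating functions, using the context-free grammar machinery already developed for $A_n(x,p,q)$ in the proof of Theorem~\ref{thm001}. First I would set up a grammatical labeling for signed permutations written in standard cycle form, analogous to the labeling $(L_1)$--$(L_4)$ used in Lemma~\ref{lemma001exc}. The key observation is that in the cycle notation of $\sigma\in\mbn$, each of the $n$ letters $\sigma(1),\dots,\sigma(n)$ can be classified as an excedance, an anti-excedance, a fixed point, or a singleton, and each letter is either positive or negative; the statistic $\negg$ counts the negative letters while $\cyc$ counts the cycles. So I would introduce a grammar on an alphabet containing variables tracking $x$ (excedance), $y$ (anti-excedance), $t$ (fixed point), $s$ (singleton), $p$ (a negative letter), $q$ (a cycle), together with an ``active'' letter $I$ marking the whole word, and verify that $D_G^n(I)$ generates $\sum_{\sigma\in\mbn}$ of the appropriate weight. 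The insertion step is: to pass from $\mbn$ to $\mathcal S^B_{n+1}$ one inserts $\pm(n+1)$ either as a new singleton/fixed-point cycle (two choices, contributing $q(s p + t)$) or immediately after one of the $2n$ existing letters $\pm\sigma(i)$ (contributing a factor reflecting whether $n+1$ or $\overline{n+1}$ is inserted and how the excedance/anti-excedance status of the neighboring letter changes). Carrying out this bookkeeping carefully is the technical heart of the argument.

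Next I would recognize that, after the substitution, the resulting grammar is a rescaled version of the grammar $G=\{I\to Ipq,\ p\to xy,\ x\to xy,\ y\to xy\}$ of Lemma~\ref{lemma001exc}, which produces $A_n(x,p,q)$. Concretely, I expect the correct change of variables to be: replace the excedance variable by $x$, the anti-excedance variable by $y$ (so that homogeneity in $x/y$ appears), and replace the ``fixed-point-or-negative-singleton'' contribution $t+sp$ against the total negative-plus-positive weight $(1+p)y$ per letter, which is exactly the combination appearing on the right-hand side of~\eqref{Bxy-EGF}. The factor $(1+p)^n y^n$ records that each of the $n$ letters contributes, independently, a positive or negative version (the $1+p$) scaled against $y$. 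I would then either (a) check directly that the two generating functions satisfy the same first-order PDE with the same initial condition, using the EGF~\eqref{Anxpq-EGF} for $A_n(x,p,q)$, or (b) argue bijectively that each signed permutation of $[n]$ decomposes as a choice of sign pattern together with an underlying structure enumerated by $A_n(x,p,q)$ with the indicated substituted arguments. Approach (a) is cleaner: one writes the PDE satisfied by $\sum_n B_n(x,y,s,t,p,q)z^n/n!$ from the grammar, performs the substitution $x\mapsto x/y$, $p\mapsto (t+sp)/((1+p)y)$, $z\mapsto (1+p)yz$ in the known EGF for $\sum_n A_n z^n/n!$, and verifies the two sides agree.

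The main obstacle will be the careful verification of the insertion rules for signed permutations: unlike the unsigned case, inserting $\overline{n+1}$ versus $n+1$ after a given letter has genuinely different effects on the anti-excedance/excedance counts and on whether a new singleton is created, and one must confirm that summing over all $2n+2$ insertion positions reproduces exactly the grammar whose substitution yields the clean formula~\eqref{Bxy-EGF}. A secondary subtlety is the treatment of one-element cycles: a cycle $(i)$ is a fixed point contributing $tq$, while $(\overline i)$ is a singleton contributing $spq$, so the ``new cycle'' substitution rule must read $I\to I(t+sp)q$ rather than $I\to Ipq$ as in the unsigned case, and one must check this is consistent with the rescaling. Once the grammar is pinned down, the remainder is a routine comparison — either a PDE check or a direct manipulation of~\eqref{Anxpq-EGF} — and I would present that comparison explicitly while relegating the insertion case-analysis to a lemma.
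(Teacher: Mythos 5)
Your proposal follows essentially the same route as the paper: a grammatical labeling of signed permutations in standard cycle form, an insertion argument from $\mb_{n-1}$ to $\mbn$ packaged as a lemma (the paper's grammar is $G_3=\{J\rightarrow qJ(t+sp),\,s,t,x,y\rightarrow (1+p)xy\}$), followed by the change of grammar $A=t+sp$, $B=(1+p)x$, $C=(1+p)y$ that reduces everything to Lemma~\ref{lemma001exc}, which is exactly the rescaling you identify. The substitution you predict, including the new-cycle rule $I\to I(t+sp)q$ and the factor $(1+p)^ny^n$, matches the paper's proof, so the plan is correct and not materially different.
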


%Define
%$$d_n^B(x,p)=B_n(x,1,1,0,p,1)=\sum_{\sigma\in \md_n^B}x^{\exc(\sigma)}p^{\negg(\sigma)}.$$
%%It should be noted that the polynomials $d_n^B(x,1)$ have been studied by Chow~\cite{Chow09} and the polynomials
%%$x^nd_n^B(1/x,1)$ have been studied by Chen et al.~\cite{Chen09} and Shin and Zeng~\cite{Zeng16}.
%Note that $d_n^B(x,1)=d_n^B(x)$ and $ d_n^B(x,0)=d_n(x)$.
%Below are the polynomials $d_n^B(x,q)$ for $0\leq n\leq 3$:
%\begin{align*}
%d_0^B(x,q)&=1,~
%d_1^B(x,q)=q,~
%d_2^B(x,q)=x+2qx+q^2(1+x),\\
%d_3^B(x,q)&=x(1+x)+3qx(2+x)+3q^2x(3+x)+q^3(1+4x+x^2),\\
%d_4^B(x,q)&=x(1+7x+x^2)+4qx(2+8x+x^2)+6q^2x(4+9x+x^2)+\\&4q^3x(7+10x+x^2)+q^4(1+11x+11x^2+x^3).
%\end{align*}
In the rest part of this subsection, we shall prove Theorem~\ref{mainthm004}.
\begin{lemma}\label{lemmaBn}
If $V=\{J,s,t,x,y\}$ and
\begin{equation}\label{Ixyst-G}
G_3=\{J\rightarrow qJ(t+sp),s\rightarrow (1+p)xy,t\rightarrow (1+p)xy,x\rightarrow (1+p)xy,y\rightarrow (1+p)xy\},
\end{equation}
then we have $D_{G_3}^n(J)=JB_n(x,y,s,t,p,q)$.
\end{lemma}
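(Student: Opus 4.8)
The plan is to prove Lemma~\ref{lemmaBn} by introducing a grammatical labeling on signed permutations written in standard cycle form, exactly parallel to the labeling used in Lemma~\ref{lemma001exc}, and then to check that $D_{G_3}$ mimics the insertion of the largest new letter. First I would set up the weight: for $\sigma\in\mbn$ in standard cycle form, attach a superscript $x$ after each excedance $i$ (where $\sigma(|\sigma(i)|)>\sigma(i)$), a superscript $y$ after each anti-excedance, a superscript $t$ after each fixed point, a superscript $s$ after each singleton, and record $q^{\cyc(\sigma)}$ by a subscript after each cycle together with an overall superscript $J$; the power of $p$ is read off as $\negg(\sigma)$. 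The weight of $\sigma$ is then the product of all labels, so that $\sum_{\sigma\in\mbn}w(\sigma)=JB_n(x,y,s,t,p,q)$, and $D_{G_3}^0(J)=J$ matches $n=0$.

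The inductive step is the heart of the argument. Passing from $\mathcal{S}^B_{n}$ to $\mathcal{S}^B_{n+1}$ means inserting the pair $\{n+1,\overline{n+1}\}$; since the cycle form lists the letters $\sigma(1),\dots,\sigma(n)$, inserting $n+1$ (with one of two signs) either creates a new cycle or is spliced into an existing cycle immediately after some letter. I would argue that: (a) creating a new singleton cycle $(\,\overline{n+1}\,)$ or a new fixed-point cycle $(n+1)$ accounts for the rule $J\to qJ(t+sp)$, the factor $q$ recording the new cycle and $t+sp$ the two choices of sign; (b) inserting $n+1$ or $\overline{n+1}$ right after a letter that currently carries label $z\in\{s,t,x,y\}$ replaces that label according to a case analysis on whether the inserted letter becomes an excedance or anti-excedance and on how the status of the letter it follows changes — and in every one of these cases the two sign choices together contribute the factor $(1+p)$, while the product of the new labels is $xy$. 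This is where one must be careful: I need to verify that for each of the four label types $s,t,x,y$, the two insertion positions around it (before/after in the cyclic sense) and the two signs collectively produce exactly $(1+p)xy$ with the correct bookkeeping of $\negg$ and of which letter inherits which of $x,y$. This combinatorial case-check, analogous to the verification in Lemma~\ref{lemma001exc} but complicated by signs and by the singleton statistic, is the main obstacle.

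Once the label-substitution dictionary is established, the conclusion is automatic: the action of $D_{G_3}$ on the multiset of weights of $\mathcal{S}^B_n$ yields precisely the multiset of weights of $\mathcal{S}^B_{n+1}$, so $D_{G_3}^{n+1}(J)=JB_{n+1}(x,y,s,t,p,q)$, completing the induction. An alternative, cleaner route — which I would mention as a remark if the direct labeling proves cumbersome — is to deduce the lemma from Lemma~\ref{lemma001exc} by a change of grammar: under the substitution $I\mapsto J$, $p\mapsto (t+sp)/?$ and a rescaling $x\mapsto (1+p)x$, $y\mapsto (1+p)y$ absorbing the sign choices, the grammar $G=\{I\to Ipq,\,p\to xy,\,x\to xy,\,y\to xy\}$ transforms into $G_3$; but making this precise still requires the same underlying combinatorial identity relating signed and unsigned cycle insertions, so I expect the direct grammatical-labeling proof to be the one to write out in full. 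Either way, Theorem~\ref{mainthm004} then follows by setting $J=y=1$ in $D_{G_3}^n(J)$ after the change of variables that turns $G_3$ into a scaled copy of the grammar underlying $A_n(x,p,q)$.
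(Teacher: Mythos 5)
Your proposal follows the paper's proof essentially verbatim: the same grammatical labeling ($x$ for excedances, $y$ for anti-excedances, $t$ for fixed points, $s$ for singletons, a subscript $q$ per cycle and $p$ per negative entry) and the same induction, in which inserting $n$ or $\overline{n}$ either forms a new cycle (matching the rule $J\rightarrow qJ(t+sp)$) or is spliced immediately after a letter carrying a label $z\in\{s,t,x,y\}$, the two sign choices supplying the factor $(1+p)$ in $z\rightarrow(1+p)xy$. The only slip is your reference to ``two insertion positions around'' each letter: in the paper there is a single insertion slot (right after the letter) and the multiplicity comes solely from the choice of sign of the inserted entry, which is exactly how the five-case verification is carried out there.
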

\begin{proof}
We first introduce a grammatical labeling of $\sigma\in \mbn$ as follows:
\begin{itemize}
  \item [\rm ($L_1$)]If $i$ is an excedance, then put a superscript label $x$ right after $\sigma(i)$;
 \item [\rm ($L_2$)]If $i$ is a anti-excedance, then put a superscript label $y$ right after $\sigma(i)$;
\item [\rm ($L_3$)]If $i$ is a fixed point, then put a superscript label $t$ right after $i$;
\item [\rm ($L_4$)]If $i$ is a singleton, then put a superscript label $s$ right after $\overline{i}$;
\item [\rm ($L_5$)]Put a superscript label $J$ at the end of $\sigma$;
\item [\rm ($L_6$)]Put a subscript label $q$ at the end of each cycle of $\sigma$;
\item [\rm ($L_7$)]Put a subscript label $p$ right after each negative entry of $\sigma$.
\end{itemize}
For example, let $\sigma=(1,3,\overline{2},6)(\overline{4})(5)$.
The grammatical labeling of $\sigma$ is given below:
$$(1^x3^y\overline{2}^x_p6^y)_q(\overline{4}^s_p)_q(5^t)_q^J.$$
Note that the weight of $\sigma$ is given by $w(\sigma)=Jx^{\exc(\sigma)}y^{\aexc(\sigma)}s^{\single(\sigma)}t^{\fix(\sigma)}p^{\negg(\sigma)}q^{\cyc(\sigma)}$.
For $n=1$, we have $\mb_1=\{(1^t)^J_q,(\overline{1}^s_p)^J_q\}$.
Note that $D_{G_3}(J)=qJ(t+sp)$.
Hence the result holds for $n=1$. We proceed by induction.
Suppose we get all labeled permutations in $\sigma\in \mb_{n-1}$, where $n\geqslant 2$. Let
$\widehat{{\sigma}}$ be obtained from $\sigma\in \mb_{n-1}$ by inserting the entry $n$ or $\overline{n}$.
There are five cases to label the inserted element and relabel some elements of $\sigma$:
\begin{itemize}
  \item [\rm ($c_1$)]If $n$ or $\overline{n}$ appear a new cycle, then the changes of labeling are illustrated as follows:
$$\cdots(\cdots)_q^J\rightarrow\cdots(\cdots)_q(n^t)_q^J,\quad \cdots(\cdots)^J\rightarrow\cdots(\cdots)_q(\overline{n}_p^s)_q^J;$$
 \item [\rm ($c_2$)]If we insert $n$ or $\overline{n}$ right after a fixed point, then the changes of labeling are illustrated as follows:
$$\cdots(i^t)_q(\cdots)\cdots\rightarrow\cdots(i^xn^y)_q(\cdots)\cdots,\quad \cdots(i^t)_q(\cdots)\cdots\rightarrow\cdots(i^y\overline{n}_p^x)_q(\cdots)\cdots;$$
\item [\rm ($c_3$)]If we insert $n$ or $\overline{n}$ right after a singleton, then the changes of labeling are illustrated as follows:
$$\cdots(\overline{i}^s_p)_q(\cdots)\cdots\rightarrow\cdots(\overline{i}^x_pn^y)_q(\cdots)\cdots,\quad \cdots(\overline{i}^s_p)_q(\cdots)\cdots\rightarrow\cdots(\overline{i}^y_p\overline{n}_p^x)_q(\cdots)\cdots;$$
\item [\rm ($c_4$)]If we insert $n$ or $\overline{n}$ right after an excedance, then the changes of labeling are illustrated as follows:
$$\cdots(\cdots \sigma(i)^x\sigma(|\sigma(i)|)\cdots)_q(\cdots)\cdots\rightarrow\cdots(\cdots \sigma(i)^xn^y\sigma(|\sigma(i)|)\cdots)_q(\cdots)\cdots,$$
$$\cdots(\cdots \sigma(i)^x\sigma(|\sigma(i)|)\cdots)_q(\cdots)\cdots\rightarrow\cdots(\cdots \sigma(i)^y\overline{n}_p^x\sigma(|\sigma(i)|)\cdots)_q(\cdots)\cdots;$$
\item [\rm ($c_5$)]If we insert $n$ or $\overline{n}$ right after an anti-excedance, then the changes of labeling are illustrated as follows:
$$\cdots(\cdots \sigma(i)^y\sigma(|\sigma(i)|)\cdots)_q(\cdots)\cdots\rightarrow\cdots(\cdots \sigma(i)^xn^y\sigma(|\sigma(i)|)\cdots)_q(\cdots)\cdots,$$
$$\cdots(\cdots \sigma(i)^y\sigma(|\sigma(i)|)\cdots)_q(\cdots)\cdots\rightarrow\cdots(\cdots \sigma(i)^y\overline{n}_p^x\sigma(|\sigma(i)|)\cdots)_q(\cdots)\cdots.$$
\end{itemize}
In each case, the insertion of $n$ or $\overline{n}$ corresponds to one substitution rule in $G$. By induction,
it is routine to check that the action of $D_{G_3}$
on the set of weights of signed permutations in $\mb_{n-1}$ gives the set of weights of signed permutations in $\mb_{n}$.
This completes the proof.
\end{proof}

\noindent{\bf A proof of~\eqref{Bxy-EGF}:}
\begin{proof}
Let $G_3$ be the grammar given in Lemma~\ref{lemmaBn}.
Consider a change of the grammar $G_3$. Setting $A=t+sp,B=(1+p)x$ and $C=(1+p)y$, we get
$$D_{G_3}(J)=qJA,~D_{G_3}(A)=BC,~D_{G_3}(B)=BC,~D_{G_3}(C)=BC.$$
Let $G_4=\{J\rightarrow qJA, A\rightarrow BC, B\rightarrow BC, C\rightarrow BC\}$.
It follows from Lemma~\ref{lemma001exc} that
\begin{equation*}
D_{G_4}^n(J)=J\sum_{\pi\in\msn}A^{\fix(\pi)}B^{\exc(\pi)}C^{\drop(\pi)}q^{\cyc(\pi)}.
\end{equation*}
Then upon taking $A=t+sp,B=(1+p)x$ and $C=(1+p)y$ in the above expansion, we get
\begin{equation}\label{DG3}
D_{G_3}^n(J)=J\sum_{\pi\in\msn}(t+sp)^{\fix(\pi)}(x+px)^{\exc(\pi)}(y+py)^{\drop(\pi)}q^{\cyc(\pi)}.
\end{equation}
Combining~\eqref{DG3} and Lemma~\ref{lemmaBn}, we obtain~\eqref{Bxy-EGF}.
This completes the proof.
\end{proof}
%%%%%%%%%%%%%%%%%%%%%%%%%%%%%%%%%%%%%%%%%%%%%%%%%%%%%%%%%%%%%%%%%%%%%%%%%%%%%%%%%%
\subsection{Some applications of Theorem~\ref{mainthm004}}
%%%%%%%%%%%%%%%%%%%%%%%%%%%%%%%%%%%%%%%%%%%%
%%%%%%%%%%%%%%%%%%%%%%%%%%%%%%%%%%%%%%%%%%%%%%%%%%%%%%%%%%%%%%%%%%%%%%%%%%%%%%%%%%
%%%%%%%%%%%%%%%%%%%%%%%%%%%%%%%%%%%%%%%%%%%%%%%%%%%%%%%%%%%%%%%%%%%%%%%%%%%%%%%%%%
%%%%%%%%%%%%%%%%%%%%%%%%%%%%%%%%%%%%%%%%%%%%%%%%%%
%%%%%%%%%%%%%%%%%%%%%%%%%%%%%%%%%%%%%%%%%%%%%%%%%%%%%%%%%%%%%%%%%%%%%%%%%%%%%%%%%%
\hspace*{\parindent}

Combining~\eqref{eq:proof03} and~\eqref{Bxy-EGF}, we immediately get the following result.
\begin{corollary}
Let $\gamma_{n,i,j}(q)$ be the polynomials defined by~\eqref{bnij-combin}.
Then we have
\begin{equation*}\label{Bnxyspq-gamma}
B_n(x,y,s,t,p,q)=\sum_{i=0}^n(t+sp)^i(1+p)^{n-i}\sum_{j=0}^{\lrf{(n-i)/2}}\gamma_{n,i,j}(q)(xy)^j(x+y)^{n-i-2j}.
\end{equation*}
In particular, setting $y=s=1$ and $t=0$ in $B_n(x,y,s,t,p,q)$, we get
\begin{equation}\label{Dnbxq01}
\sum_{\sigma\in \md_n^B}x^{\exc(\sigma)}p^{\negg(\sigma)}q^{\cyc(\pi)}=\sum_{i=0}^np^i(1+p)^{n-i}\sum_{j=0}^{\lrf{(n-i)/2}}\gamma_{n,i,j}(q)x^j(x+1)^{n-i-2j}.
\end{equation}
\end{corollary}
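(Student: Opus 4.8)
The plan is to deduce both displays of the corollary directly from Theorem~\ref{mainthm004}, using the partial $\gamma$-expansion of the $(p,q)$-Eulerian polynomials obtained in the proof of Theorem~\ref{thm001}. Concretely, the starting point is \eqref{eq:proof03},
$$A_n(x,p,q)=\sum_{i=0}^np^i\sum_{j=0}^{\lrf{(n-i)/2}}\gamma_{n,i,j}(q)x^j(1+x)^{n-i-2j},$$
in which, by Proposition~\ref{prop14}, the coefficients $\gamma_{n,i,j}(q)$ are precisely the polynomials of \eqref{bnij-combin}.

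Next I would substitute $x\mapsto x/y$ and $p\mapsto (t+sp)/((1+p)y)$ into this identity and multiply through by $(1+p)^ny^n$, exactly as prescribed by \eqref{Bxy-EGF}. In the resulting $(i,j)$-term the factor $(1+p)^ny^n$ absorbs the $((1+p)y)^{-i}$ coming from the $i$-th power, leaving $(1+p)^{n-i}y^{n-i}$, while the remaining powers of $y$ telescope: the total exponent of $y$ is $n-i-j-(n-i-2j)=j$, so $y^{n-i}(x/y)^j(1+x/y)^{n-i-2j}$ collapses to $(xy)^j(x+y)^{n-i-2j}$. Summing over $i$ and $j$ then gives exactly \eqref{Bnxyspq-gamma}. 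For \eqref{Dnbxq01} I would specialize $y=s=1$ and $t=0$: on the left, $t^{\fix(\sigma)}=0^{\fix(\sigma)}$ equals $1$ when $\fix(\sigma)=0$ and $0$ otherwise, so $B_n(x,1,1,0,p,q)$ collapses to the sum over derangements $\sigma\in\md_n^B$ of $x^{\exc(\sigma)}p^{\negg(\sigma)}q^{\cyc(\sigma)}$; on the right, $(t+sp)^i=p^i$, $(xy)^j=x^j$ and $(x+y)^{n-i-2j}=(x+1)^{n-i-2j}$, which is the claimed expansion.

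There is essentially no obstacle here: both identities are formal consequences of Theorem~\ref{mainthm004} together with \eqref{eq:proof03}. The only points requiring a little care are verifying that the powers of $y$ cancel exactly as claimed in the passage from $A_n$ to $B_n$, and noting that the specialization $t=0$ is precisely what restricts the sum from $\mbn$ to the derangement set $\md_n^B$ (so that no separate combinatorial argument over $\mbn$ is needed).
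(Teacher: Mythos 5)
Your proof is correct and follows exactly the paper's route: the paper likewise obtains the first display by combining the partial $\gamma$-expansion \eqref{eq:proof03} of $A_n(x,p,q)$ with the substitution formula \eqref{Bxy-EGF} of Theorem~\ref{mainthm004}, and the second display by the specialization $y=s=1$, $t=0$. Your explicit verification of the cancellation of the powers of $y$ and of the restriction to $\md_n^B$ via $0^{\fix(\sigma)}$ just fills in the details the paper leaves as ``immediate.''
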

When $p=0$, then~\eqref{Dnbxq01} reduces to~\eqref{dnxq-def}.
From~\eqref{Bxy-EGF}, we get
\begin{equation}\label{Bnx1st1q}
B_n(x,1,s,t,1,q)=\sum_{\sigma\in\mbn}x^{\exc(\pi)}s^{\single(\sigma)}t^{\fix(\pi)}q^{\cyc(\pi)}=2^nA_n\left(x,\frac{t+s}{2},q\right).
\end{equation}
It should be note that $$B_n(x,1,1,-1,1,1)=2^nd_n(x),~B_n(x,1,1,0,1,1)=d_n^B(x),~B_n(x,1,2,0,1,1)=2^nA_n(x).$$
Combining Theorem~\ref{thm001} and~\eqref{Bnx1st1q}, we get the following result.
\begin{corollary}
Let $s,t$ and $q$ be three given real numbers satisfying $0\leqslant s+t\leqslant 2$ and $0\leqslant q\leqslant 1$. Then
$B_n(x,1,s,t,1,q)$ are alternatingly increasing for $n\geqslant 1$. In particular, when $t+s=2$, the polynomials $B_n(x,1,s,t,1,q)$
are bi-$\gamma$-positive.
\end{corollary}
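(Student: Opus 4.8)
The plan is to reduce everything to Theorem~\ref{thm001} via the specialization~\eqref{Bnx1st1q}. First I would observe that~\eqref{Bnx1st1q} gives
$$B_n(x,1,s,t,1,q)=2^nA_n\!\left(x,\frac{t+s}{2},q\right),$$
so the problem is entirely about the $(p,q)$-Eulerian polynomials with $p=\frac{t+s}{2}$. The hypothesis $0\leqslant s+t\leqslant 2$ is exactly the statement that $p=\frac{t+s}{2}\in[0,1]$, and by hypothesis $q\in[0,1]$ as well, so the pair $(p,q)$ satisfies the assumptions of Theorem~\ref{thm001}.

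Next I would invoke the elementary fact that multiplying a polynomial with nonnegative coefficients by a fixed positive constant (here $2^n$) preserves the alternatingly increasing property, $\gamma$-positivity, and bi-$\gamma$-positivity: all three notions are defined purely by inequalities among, or nonnegativity of, the coefficients (respectively the $\gamma$- or bi-$\gamma$-coefficients), and these are unchanged under uniform positive scaling. Hence, by Theorem~\ref{thm001}\,$(ii)$, $A_n\!\left(x,\frac{t+s}{2},q\right)$ is alternatingly increasing, and therefore so is $B_n(x,1,s,t,1,q)=2^nA_n\!\left(x,\frac{t+s}{2},q\right)$, proving the first assertion for all $n\geqslant 1$.

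For the stronger conclusion in the boundary case $t+s=2$, I would note that then $p=\frac{t+s}{2}=1$, so~\eqref{Bnx1st1q} specializes to $B_n(x,1,s,t,1,q)=2^nA_n(x,1,q)=2^nA_n(x,q)$. Theorem~\ref{thm001}\,$(i)$ asserts that $A_n(x,q)$ is bi-$\gamma$-positive for $n\geqslant 1$ whenever $q\in[0,1]$, and again positive scaling preserves bi-$\gamma$-positivity, so $B_n(x,1,s,t,1,q)$ is bi-$\gamma$-positive. There is no real obstacle here: the entire content has been shifted into Theorem~\ref{thm001}, and the only thing one must be careful about is checking that the substitution $p=\frac{t+s}{2}$ indeed lands in $[0,1]$ under the stated hypotheses and that the factor $2^n$ is harmless.
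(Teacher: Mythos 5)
Your proposal is correct and is exactly the paper's argument: the paper states the corollary as an immediate consequence of combining Theorem~\ref{thm001} with the specialization~\eqref{Bnx1st1q}, which is precisely the reduction you carry out (with the routine observations about $p=\frac{t+s}{2}\in[0,1]$ and the harmlessness of the factor $2^n$ made explicit).
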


In the rest part of this subsection, we shall
consider the following multivariate polynomials
$$B_n(x,y,s,t,p,1)=\sum_{\sigma\in \mb_n}x^{\exc(\sigma)}y^{\aexc(\sigma)}s^{\single(\sigma)}t^{\fix(\sigma)}p^{\negg(\sigma)}.$$
Set $B_0(x,y,s,t,p,1)=1$.
If follows from~\eqref{Anxpq-EGF} and~\eqref{Bxy-EGF} that $$B(x,y,s,t,p,1;z)=\sum_{n=0}^\infty B_n(x,y,s,t,p,1)\frac{z^n}{n!}=\frac{(y-x)\mathrm{e}^{(t+sp)z}}{y\mathrm{e}^{(1+p)xz}-x\mathrm{e}^{(1+p)yz}}.$$
%The first few $B_n(x,y,s,t,p)$ are given as follows:
%\begin{align*}
%B_1(x,y,s,t,p)&=t+sp,\\
%B_2(x,y,s,t,p)&=(t+sp)^2+(1+p)^2xy,\\
%B_3(x,y,s,t,p)&=(t+sp)^3+3(1+p)^2(t+sp)xy+(1+p)^3xy(x+y).
%\end{align*}
We define
$$\Phi_{n}(x,y)=xy\frac{x^{n-1}-y^{n-1}}{x-y}=xy(x^{n-2}+x^{n-3}y+\cdots+xy^{n-3}+y^{n-2}) ~~\text{for $n\geqslant 2$}.$$
In particular, $\Phi_{2}(x,y)=xy$ and $\Phi_{3}(x,y)=xy(x+y)$. Set $\Phi_{0}(x,y)=\Phi_{1}(x,y)=0$.
%We now present the following result.
\begin{theorem}\label{Bnxystq-recu}
For $n\geqslant 2$, we have
\begin{equation}\label{Bnxystq}
B_n(x,y,s,t,p,1)=(t+sp)^n+\sum_{k=0}^{n-2}\binom{n}{k}B_k(x,y,s,t,p,1)\Phi_{n-k}(x,y)(1+p)^{n-k}.
\end{equation}
\end{theorem}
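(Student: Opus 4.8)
The plan is to pass to exponential generating functions and extract the recurrence by differentiation. Write $B(z)=B(x,y,s,t,p,1;z)=\sum_{n\geqslant0}B_n(x,y,s,t,p,1)\frac{z^n}{n!}$, which by the excerpt equals $\frac{(y-x)e^{(t+sp)z}}{y e^{(1+p)xz}-x e^{(1+p)yz}}$. From Theorem~\ref{mainthm004} together with the grammar $G_3$ of Lemma~\ref{lemmaBn}, we have $D_{G_3}^n(J)=J\,B_n(x,y,s,t,p,1)$ with $G_3$ as in~\eqref{Ixyst-G} specialized at $q=1$. The natural route is to use the grammar directly: since $D_{G_3}(J)=J(t+sp)$ and the letters $s,t,x,y$ all map to $(1+p)xy$, an induction on the grammar should produce~\eqref{Bnxystq} after one separates off the ``all substitutions land in $xy$'' part from the leading $(t+sp)^n$ term coming from never touching a non-$J$ letter.

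First I would set $A=t+sp$ and $w=(1+p)xy$, so $D_{G_3}(J)=JA$, and note that $B_n(x,y,s,t,p,1)$ is a polynomial in $A$ and in the variables $x,y$ (through $w$ and through the ``final letters''). The cleanest execution is analytic: differentiate $B(z)$ and show it satisfies the ODE-type identity equivalent to~\eqref{Bnxystq}. Multiplying~\eqref{Bnxystq} by $\frac{z^n}{n!}$ and summing over $n\geqslant2$, the left side is $B(z)-1-(t+sp)z$; the first right-hand term sums to $\sum_{n\geqslant2}(t+sp)^n\frac{z^n}{n!}=e^{(t+sp)z}-1-(t+sp)z$; and the convolution term $\sum_{n\geqslant2}\sum_{k=0}^{n-2}\binom{n}{k}B_k\,\Phi_{n-k}(x,y)(1+p)^{n-k}\frac{z^n}{n!}$ factors as $B(z)\cdot\Psi(z)$, where $\Psi(z)=\sum_{m\geqslant2}\Phi_m(x,y)(1+p)^m\frac{z^m}{m!}$. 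Thus~\eqref{Bnxystq} is equivalent to the functional equation
\[
B(z)=e^{(t+sp)z}+B(z)\,\Psi(z).
\]
So the key computation is to evaluate $\Psi(z)$ in closed form using $\Phi_m(x,y)=xy\frac{x^{m-1}-y^{m-1}}{x-y}$: one gets $\Psi(z)=\frac{xy}{x-y}\sum_{m\geqslant2}(x^{m-1}-y^{m-1})(1+p)^m\frac{z^m}{m!}=\frac{xy}{x-y}\Bigl(\tfrac{1}{x}\bigl(e^{(1+p)xz}-1-(1+p)xz\bigr)-\tfrac{1}{y}\bigl(e^{(1+p)yz}-1-(1+p)yz\bigr)\Bigr)$, and after simplification the $1$ and linear terms cancel, leaving $\Psi(z)=\frac{1}{x-y}\bigl(y e^{(1+p)xz}-x e^{(1+p)yz}\bigr)+\text{(linear/constant correction)}$. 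Then $1-\Psi(z)$ should be exactly $\frac{y e^{(1+p)xz}-x e^{(1+p)yz}}{y-x}\cdot$ (the denominator of $B(z)$ over $y-x$), so that $\frac{e^{(t+sp)z}}{1-\Psi(z)}$ reproduces the stated formula for $B(z)$.

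I expect the main obstacle to be the bookkeeping of the low-order ($m=0,1$) correction terms in $\Psi(z)$: because $\Phi_0=\Phi_1=0$ and the sum in~\eqref{Bnxystq} starts at $k=0$ forcing $n-k\geqslant2$, one must be careful that the truncations ``$-1-(1+p)xz$'' inside each exponential cancel precisely against the corresponding pieces of $e^{(t+sp)z}-1-(t+sp)z$ and the subtracted $1+(t+sp)z$ on the left; getting $1-\Psi(z)=\frac{ye^{(1+p)xz}-xe^{(1+p)yz}}{y-x}$ on the nose (with no leftover linear term) is the delicate point. Once that identity is verified, the functional equation $B(z)(1-\Psi(z))=e^{(t+sp)z}$ matches the known generating function from~\eqref{Anxpq-EGF}--\eqref{Bxy-EGF} immediately, and comparing coefficients of $z^n/n!$ gives~\eqref{Bnxystq}. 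As a consistency check I would verify~\eqref{Bnxystq} by hand for $n=2$ and $n=3$ against $\Phi_2=xy$, $\Phi_3=xy(x+y)$ and the small cases $B_0=1$, $B_1=t+sp$.
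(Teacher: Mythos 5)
Your proposal is correct and follows essentially the same route as the paper: the paper likewise passes to exponential generating functions, computes $\sum_{m\geqslant 0}\Phi_m(x,y)(1+p)^m z^m/m! = 1-\frac{y\mathrm{e}^{(1+p)xz}-x\mathrm{e}^{(1+p)yz}}{y-x}$ in closed form, and verifies the functional equation $B(z)=\mathrm{e}^{(t+sp)z}+B(z)\Psi(z)$ against the known formula for $B(z)$. The cancellation you flag as the delicate point does hold on the nose (the $xyu$ terms cancel and the constants contribute exactly $+1$), so there is no leftover linear correction.
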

\begin{proof}
It is easy to verify that
$$\Phi(x,y,p;z):=\sum_{n=0}^\infty\Phi_{n}(x,y)(1+p)^n\frac{z^n}{n!}=1-\frac{y\mathrm{e}^{(1+p)xz}-x\mathrm{e}^{(1+p)yz}}{y-x}.$$
For $n\geqslant 2$, we define
\begin{equation}\label{fnxystq}
f_n(x,y,s,t,p)=(t+sp)^n+\sum_{k=0}^{n-2}\binom{n}{k}B_k(x,y,s,t,p,1)\Phi_{n-k}(x,y)(1+p)^{n-k}.
\end{equation}
Set $f_0(x,y,s,t,p)=1$ and $f_1(x,y,s,t,p)=t+sp$.
It follows from~\eqref{fnxystq} that
\begin{align*}
f(x,y,s,t,p;z)&=\sum_{n=0}^\infty f_n(x,y,s,t,p)\frac{z^n}{n!}\\
=&\mathrm{e}^{(t+sp)z}+B(x,y,s,t,p,1;z)\Phi(x,y,p;z)\\
&=\mathrm{e}^{(t+sp)z}+\left(\frac{(y-x)\mathrm{e}^{(t+sp)z}}{y\mathrm{e}^{(1+p)xz}-x\mathrm{e}^{(1+p)yz}}\right)\left(1-\frac{y\mathrm{e}^{(1+p)xz}-x\mathrm{e}^{(1+p)yz}}{y-x}\right)\\
&=B(x,y,s,t,p,1;z),
\end{align*}
which leads to the desired result.
This completes the proof.
\end{proof}

Note that $A_n(x)=B_n(x,1,0,1,0,1),~d_n(x)=B_n(x,1,0,0,0,1)$,
$B_n(x)=B_n(x,1,1,x,1,1)$ and $d_n^B(x)=B_n(x,1,1,0,1,1)$.
The following corollary is immediate from Theorem~\ref{Bnxystq-recu} by considering some special cases.
\begin{corollary}\label{coro}
For $n\geqslant 2$, we have
\begin{align*}
A_n(x)&=1+\sum_{k=0}^{n-2}\binom{n}{k}A_k(x)(x+x^2+\cdots+x^{n-1-k});\\
d_n(x)&=\sum_{k=0}^{n-2}\binom{n}{k}d_k(x)(x+x^2+\cdots+x^{n-1-k});\\
B_n(x)&=(1+x)^n+\sum_{k=0}^{n-2}\binom{n}{k}B_k(x)(x+x^2+\cdots+x^{n-1-k})2^{n-k};\\
d_n^B(x)&=1+\sum_{k=0}^{n-2}\binom{n}{k}d_k^B(x)(x+x^2+\cdots+x^{n-1-k})2^{n-k}.
\end{align*}
\end{corollary}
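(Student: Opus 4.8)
The plan is to obtain all four identities as specializations of Theorem~\ref{Bnxystq-recu}, via the evaluations recorded immediately before the statement: $A_n(x)=B_n(x,1,0,1,0,1)$, $d_n(x)=B_n(x,1,0,0,0,1)$, $B_n(x)=B_n(x,1,1,x,1,1)$ and $d_n^B(x)=B_n(x,1,1,0,1,1)$. In each case one sets $y=1$ and $q=1$, so the only ingredients of the recursion~\eqref{Bnxystq} that need to be tracked are the seed term $(t+sp)^n$, the weight $(1+p)^{n-k}$, and the polynomial $\Phi_{n-k}(x,1)$.

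First I would record the telescoping $\Phi_m(x,1)=x\,\dfrac{x^{m-1}-1}{x-1}=x+x^2+\cdots+x^{m-1}$ for $m\geqslant 2$, which is exactly the factor appearing in all four displayed formulas; combined with $\Phi_0(x,y)=\Phi_1(x,y)=0$ this is also consistent with the summation range $0\leqslant k\leqslant n-2$. Then I would substitute the four parameter choices in turn. For $A_n$: $t+sp=1$ and $1+p=1$, giving constant term $1$ and no power of $2$. For $d_n$: $t+sp=0$, so $(t+sp)^n$ vanishes for $n\geqslant 2$, and $1+p=1$. For $B_n$: $t+sp=1+x$ and $1+p=2$, giving constant term $(1+x)^n$ and the factor $2^{n-k}$. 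For $d_n^B$: $t+sp=1$ and $1+p=2$, giving constant term $1$ and the factor $2^{n-k}$. In each line the polynomial $B_k(x,1,s,t,p,1)$ occurring under the sum is, by the same substitution, precisely $A_k(x)$, $d_k(x)$, $B_k(x)$, or $d_k^B(x)$, so each recursion closes on itself and the corollary follows term by term.

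The only point requiring mild care is the low-order consistency: Theorem~\ref{Bnxystq-recu} holds for $n\geqslant 2$ and its proof uses $B_0=1$ and $B_1(x,y,s,t,p,1)=t+sp$; after the relevant substitutions these become $d_0(x)=1$, $d_1(x)=0$, $d_0^B(x)=1$, $d_1^B(x)=0$, which is correct since $\md_1=\md_1^B=\emptyset$. There is no genuine obstacle here beyond organizing the substitutions cleanly; if anything, the fiddly part is keeping the $(1+p)^{n-k}$ factors aligned with the $\Phi_{n-k}$ factors in the two type $B$ cases, where $p=1$.
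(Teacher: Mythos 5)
Your argument is correct and is exactly the paper's: the corollary is obtained by specializing Theorem~\ref{Bnxystq-recu} at the four evaluations $B_n(x,1,0,1,0,1)$, $B_n(x,1,0,0,0,1)$, $B_n(x,1,1,x,1,1)$ and $B_n(x,1,1,0,1,1)$, using $\Phi_m(x,1)=x+x^2+\cdots+x^{m-1}$ and reading off $(t+sp)^n$ and $(1+p)^{n-k}$ in each case. One small slip in your closing consistency check: $\md_1^B$ is not empty (it contains $\overline{1}$, which has no fixed point), and $d_1^B(x)=B_1(x,1,1,0,1,1)=t+sp=1$, not $0$; this value is precisely what your own substitution into $B_1=t+sp$ yields and is needed for the $k=1$ term of the fourth recursion to be correct for $n\geqslant 3$, so the main argument is unaffected.
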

Very recently, by using the theory of geometric combinatorics, Juhnke-Kubitzke, Murai and Sieg~\cite[Corollary~4.2]{Sieg19}
obtained the recurrence relation of $d_n(x)$ that is given in Corollary~\ref{coro}.
%%%%%%%%%%%%%%%%%%%%%%%%%%%%%%%%%%%%%%%%%%%%%%%%%%%%%%%%%%%%%%%%%%%%%%%%%%%%%%%%%%
%%%%%%%%%%%%%%%%%%%%%%%%%%%%%%%%%%%%%%%%%%%%%%%%%%%%%%%%%%%%%%%%%%%%%%%%%%%%%%%%%%
%%%%%%%%%%%%%%%%%%%%%%%%%%%%%%%%%%%%%%%%%%%%%%%%%%
\subsection{The flag excedance statistic of signed permutations}
%%%%%%%%%%%%%%%%%%%%%%%%%%%%%%%%%%%%%%%%%%%%
%%%%%%%%%%%%%%%%%%%%%%%%%%%%%%%%%%%%%%%%%%%%%%%%%%%%%%%%%%%%%%%%%%%%%%%%%%%%%%%%%%
%%%%%%%%%%%%%%%%%%%%%%%%%%%%%%%%%%%%%%%%%%%%%%%%%%%%%%%%%%%%%%%%%%%%%%%%%%%%%%%%%%
%%%%%%%%%%%%%%%%%%%%%%%%%%%%%%%%%%%%%%%%%%%%%%%%%%
%%%%%%%%%%%%%%%%%%%%%%%%%%%%%%%%%%%%%%%%%%%%%%%%%%%%%%%%%%%%%%%%%%%%%%%%%%%%%%%%%%
\hspace*{\parindent}

We say that $i\in[n]$ is an index of {\it type $A$ excedance} if $\sigma(i)>i$.
For $\sigma\in\mbn$,
we let
\begin{align*}
\exc_A(\sigma)&=\#\{i\in [n]: \sigma(i)>i\},~\negg(\sigma)=\#\{|\sigma(i)|\in [n]: \sigma(i)<0\},\\
\fix(\sigma)&=\#\{i\in [n]: \sigma(i)=i\},~\single(\sigma)=\#\{i\in [n]: \sigma(i)=\overline{i}\},\\
\fexc(\sigma)&=2\exc_A(\sigma)+\negg(\sigma),~\aexc_A(\sigma)=n-\exc_A(\sigma)-\fix(\sigma)-\single(\sigma).
\end{align*}

\begin{example}
The standard cycle decomposition of $\sigma=2~\overline{5}~1~3~4~\overline{6}~8~7$ is $(1,2,\overline{5},4,3)(\overline{6})(7,8)$.
Moreover, we have $\exc_A(\sigma)=\#\{1,7\}=2,~\negg(\sigma)=\#\{5,6\}=2$ and $\fexc(\sigma)=6$.
\end{example}

The flag excedance statistic $\fexc$ has been studied by Bagno and Garber~\cite{Bagno04}, Foata and Han~\cite{Foata11}, Mongelli~\cite{Mongelli13}, Shin and Zeng~\cite{Zeng16} and Zhuang~\cite{Zhuang17}. In particular,
Mongelli~\cite[Section~3]{Mongelli13} found two formulas:
\begin{equation}\label{Mongelli01}
\sum_{\sigma\in\mbn}x^{2\exc_A(\sigma)}p^{\negg(\sigma)}=(1+p)^nA_n\left(\frac{x^2+p}{1+p}\right),
\end{equation}
\begin{equation}\label{Mongelli02}
\sum_{\sigma\in\mdn^B}x^{2\exc_A(\sigma)}p^{\negg(\sigma)}=\sum_{k=0}^n\binom{n}{k}(1+p)^kp^{n-k}d_k\left(\frac{x^2+p}{1+p}\right).
\end{equation}
Setting $q=x$ in~\eqref{Mongelli01} leads to the well known formula (see~\cite{Adin2001,Foata11} for instance):
\begin{equation*}\label{mbnAnx}
\sum_{\sigma\in\mbn}x^{\fexc(\sigma)}=(1+x)^nA_n(x).
\end{equation*}
Let $$D_n^B(x)=\sum_{\sigma\in\mdn^B}x^{\fexc(\sigma)}$$ be the {\it flag derangement polynomials}.
It follows from~\eqref{Mongelli02} that
\begin{equation*}
D_n^B(x)=\sum_{k=0}^n\binom{n}{k}(1+x)^kx^{n-k}d_k(x).
\end{equation*}
By using the above expansion, Mongelli~\cite[Proposition~3.5]{Mongelli13} proved the symmetry of $D_n^B(x)$. Subsequently,
Shin and Zeng~\cite[Corollary~5]{Zeng16} proved that $D_n^B(x)$ is $\gamma$-positive.

Let
$$B_n^{(A)}(x,y,s,t,p,q)=\sum_{\sigma\in\mbn}x^{\exc_A(\sigma)}y^{\aexc_A(\sigma)}s^{\single(\sigma)}t^{\fix(\sigma)}p^{\negg(\sigma)}q^{\cyc(\sigma)}.$$
Now we give the fifth main result.
\begin{theorem}\label{thm002}
One has
\begin{equation}\label{Bnxqy01}
B_n^{(A)}(x,y,s,t,p,q)=\sum_{\pi\in\msn}(x+py)^{\exc(\pi)}(y+py)^{\drop(\pi)}(t+sp)^{\fix(\pi)}q^{\cyc(\sigma)}.
\end{equation}
Equivalently,
\begin{equation}\label{Bnxqy}
B_n^{(A)}(x,y,s,t,p,q)=(1+p)^ny^nA_n\left(\frac{x+py}{y+py},\frac{t+sp}{y+py},q\right).
\end{equation}
\end{theorem}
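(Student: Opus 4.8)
The plan is to mimic the proof of the previous theorem (the identity~\eqref{Bxy-EGF} for $B_n(x,y,s,t,p,q)$) via context-free grammars, adjusting only the labeling rule that distinguishes type~$A$ excedances from the "sign-aware" excedances used before. The key observation is that the difference between $\exc(\sigma)$ (counting $i$ with $\sigma(|\sigma(i)|)>\sigma(i)$) and $\exc_A(\sigma)$ (counting $i$ with $\sigma(i)>i$) is governed entirely by the negative entries lying in cyclic position after a given letter: a letter that would contribute $y$ (anti-excedance) in the old bookkeeping but sits just before a negative entry actually contributes to $\exc_A$ instead, and vice versa. So the plan is to set up a grammar $G_5$ whose substitution rules bundle the factor $p$ (for the negative sign) together with the $x$/$y$ choice in exactly the way that converts $(x+px)^{\exc}(y+py)^{\drop}$ into $(x+py)^{\exc_A}(y+py)^{\aexc_A}$, and then read off~\eqref{Bnxqy01} by the same induction-on-$n$ insertion argument as in Lemma~\ref{lemmaBn}.

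First I would state a lemma analogous to Lemma~\ref{lemmaBn}: with the alphabet $V=\{J,s,t,x,y\}$ and a grammar
$$G_5=\{J\rightarrow qJ(t+sp),\ s\rightarrow (x+py)+(y+py),\ t\rightarrow (x+py)+(y+py),\ x\rightarrow (x+py)+(y+py),\ y\rightarrow (x+py)+(y+py)\},$$
one has $D_{G_5}^n(J)=J\,B_n^{(A)}(x,y,s,t,p,q)$; here I am being schematic, and the actual rule must split the two summands according to whether the inserted letter $n$ or $\overline{n}$ is itself negative, so that the $p$'s attach to negative entries only. Concretely I would introduce a grammatical labeling of $\sigma\in\mbn$ in which an index $i$ with $\sigma(i)>i$ gets label $x$, an index with $\sigma(i)<i$ and $\sigma(i)\neq\overline{i}$ gets label $y$, fixed points get $t$, singletons get $s$, each negative entry gets a subscript $p$, each cycle a subscript $q$, and the whole word a superscript $J$; the weight of $\sigma$ is then $J\,x^{\exc_A(\sigma)}y^{\aexc_A(\sigma)}s^{\single(\sigma)}t^{\fix(\sigma)}p^{\negg(\sigma)}q^{\cyc(\sigma)}$ since $\aexc_A(\sigma)=n-\exc_A(\sigma)-\fix(\sigma)-\single(\sigma)$ by definition. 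The induction step inserts $n$ or $\overline{n}$ after an existing letter; one checks case by case (insert as a new cycle, after a fixed point, after a singleton, after a type~$A$ excedance, after a type~$A$ anti-excedance, in each case with the inserted letter positive or negative) that each insertion corresponds to applying one term of one substitution rule of $G_5$, and that type~$A$ excedance status and negativity are updated correctly.

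Then I would perform a change of grammar exactly as in the proof of~\eqref{Bxy-EGF}: set $A=t+sp$, $B=x+py$, $C=y+py$, observe $D_{G_5}(J)=qJA$, $D_{G_5}(A)=B+C$ times the appropriate... wait—more precisely the rules collapse to $D_{G_5}(A)=D_{G_5}(B)=D_{G_5}(C)=B+C$, except that here $B+C=(x+py)+(y+py)$ is not of the product form $BC$ appearing in $G_4$, so instead I would use the grammar of Lemma~\ref{lemma001exc} directly with the substitution $x\mapsto x+py$, $y\mapsto y+py$, $p\mapsto t+sp$, $q\mapsto q$, $I\mapsto J$; applying Lemma~\ref{lemma001exc} gives $D^n(J)=J\sum_{\pi\in\msn}(x+py)^{\exc(\pi)}(y+py)^{\drop(\pi)}(t+sp)^{\fix(\pi)}q^{\cyc(\pi)}$, which is exactly the right-hand side of~\eqref{Bnxqy01}. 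Combining this with the lemma yields~\eqref{Bnxqy01}, and then~\eqref{Bnxqy} follows from~\eqref{Bnxqy01} by factoring out $(y+py)^n=(1+p)^ny^n$ from the sum (using $\exc(\pi)+\drop(\pi)+\fix(\pi)=n$) and recognizing the result as $(1+p)^ny^n$ times the definition of $A_n$ evaluated at $x/(y(1+p))\cdot(1+p)=\ldots$—I would just match it against $A_n\!\big(\tfrac{x+py}{y+py},\tfrac{t+sp}{y+py},q\big)$.

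The main obstacle will be verifying the correctness of the labeling under insertion in the five cases, specifically tracking how inserting $\overline{n}$ (a negative letter) changes the type~$A$ excedance status both of the newly inserted position and of the letter it is inserted after. Because $\exc_A$ depends on the raw value comparison $\sigma(i)>i$ rather than on cyclic order, one must be careful that inserting a large positive value $n$ after letter $y$ in a cycle makes $i$ (the position now pointing to $n$) a type~$A$ excedance while the old relation is inherited by $n$'s position, and that inserting $\overline{n}$ flips the roles because $\overline{n}$ is the smallest possible value; the bookkeeping of which of the two summands $(x+py)$ or $(y+py)$ is selected, and whether the $p$-subscript is placed, has to be checked to exactly reproduce the stated weight. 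Once that case analysis is pinned down, everything else is the same routine grammar manipulation already used for Theorem~\ref{mainthm004}.
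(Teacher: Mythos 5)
Your overall route is the same as the paper's: a grammatical labeling of $\mbn$ recording $\exc_A$, $\aexc_A$, $\single$, $\fix$, $\negg$, $\cyc$, followed by a change of grammar that reduces everything to Lemma~\ref{lemma001exc}. The combinatorial facts you single out are also the right ones: inserting $n$ after any letter turns the predecessor's position into a type~$A$ excedance and makes $n$'s own position an anti-excedance, while inserting $\overline{n}$ makes both the predecessor's position and $\overline{n}$'s position anti-excedances and attaches a $p$ to the new entry.

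However, the grammar you actually write down is wrong, and the error propagates into your change-of-grammar step. Each insertion replaces one labeled position by \emph{two} labeled positions, so every substitution rule must have a quadratic right-hand side; your rule $\ell\rightarrow(x+py)+(y+py)$ is linear in the letters and cannot encode the insertion. Your own case analysis yields the correct rule directly: a positive insertion contributes $x\cdot y$ (label $x$ on the predecessor, $y$ on $n$) and a negative insertion contributes $y\cdot py$ (label $y$ on the predecessor, $y$ with subscript $p$ on $\overline{n}$), so the rule is $\ell\rightarrow xy+py^{2}=y(x+py)$ for each $\ell\in\{s,t,x,y\}$ --- which is exactly the paper's grammar $G_6$. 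Consequently the collapsed rules are not $D(A)=D(B)=D(C)=B+C$ but $D(A)=D(B)=D(C)=BC$ with $A=t+sp$, $B=x+py$, $C=(1+p)y$; indeed $D(x+py)=(1+p)y(x+py)=CB$. With that correction, your plan to apply Lemma~\ref{lemma001exc} to the letters $A,B,C$ is precisely the paper's argument and gives \eqref{Bnxqy01}, from which \eqref{Bnxqy} follows by factoring out $((1+p)y)^{n}$ as you describe. One caveat on your shortcut of ``using Lemma~\ref{lemma001exc} directly with the substitution'': that only produces the right-hand side of \eqref{Bnxqy01}; to identify it with $B_n^{(A)}$ you still must verify that the combinatorial grammar acts on the composite letters $A$, $B$, $C$ by these product rules, which is exactly the quadratic-rule computation above.
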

\begin{proof}
We claim that if $G_6=\{I\rightarrow Iq(t+sp),~t\rightarrow y(x+py),~s\rightarrow y(x+py),~x\rightarrow y(x+py),~y\rightarrow y(x+py)\}$.
Then we have
\begin{equation}\label{DnI-grammar}
D_{G_6}^n(I)=I\sum_{\sigma\in\mbn}x^{\exc_A(\sigma)}y^{\aexc_A(\sigma)}s^{\single(\sigma)}t^{\fix(\sigma)}p^{\negg(\sigma)}q^{\cyc(\sigma)}.
\end{equation}
Let $\sigma\in \mbn$.
We introduce a grammatical labeling of $\sigma$ as follows:
\begin{itemize}
 \item [\rm ($L_1$)] Put a subscript label $p$ right after every negative element of $\sigma$;
  \item [\rm ($L_2$)]If $i$ is a fixed point, then put a superscript label $t$ right after $i$, i.e., $(i^t)$;
    \item [\rm ($L_3$)]If $i$ is a singleton, then put a superscript label $s$ right after $\overline{i}$, i.e., $(\overline{i}^s)$;
 \item [\rm ($L_4$)]If $\sigma(i)>i$, then put a superscript label $x$ just before $\sigma(i)$;
  \item [\rm ($L_5$)]If $\sigma(i)<i$, then put a superscript label $y$ right after $i$ or $\overline{i}$;
\item [\rm ($L_6$)] Put a subscript label $q$ right after each cycle;
\item [\rm ($L_7$)]Put a superscript label $I$ right after $\sigma$.
\end{itemize}
For example, if $\sigma=(1,2,\overline{5},4,3)(\overline{6})(7,8)(9,\overline{10})(\overline{11},12)$, then
the grammatical labeling of $\sigma$ is given as follows:
$$(1^x2^y\overline{5}_p^y4^y3^y)_q(\overline{6}_p^s)_q(7^x8^y)_q(9^y\overline{10}_p^y)_q(\overline{11}_p^x12^y)_q^I.$$
Note that the weight of $\sigma$ is given by $Ix^{\exc_A(\sigma)}y^{\aexc_A(\sigma)}s^{\single(\sigma)}t^{\fix(\sigma)}p^{\negg(\sigma)}q^{\cyc(\sigma)}$.
Every permutation in $\mbn$ can be obtained from a permutation in $\mb_{n-1}$ by inserting $n$ or $\overline{n}$.
For $n=1$, we have $\mb_1=\{(1^t)^I_q,~(\overline{1}^s_p)^I_q\}$.
Note that $D_{G_6}(I)=Iq(t+sp)$.
Then the sum of weights of the elements in $\mb_1$ is given by $D_{G_6}(I)$.
Hence the result holds for $n=1$.

We proceed by induction on $n$.
Suppose that we get all labeled permutations in $\mb_{n-1}$, where $n\geqslant 2$. Let
$\widetilde{\sigma}$ be obtained from $\sigma\in \mb_{n-1}$ by inserting $n$ or $\overline{n}$.
When the inserted $n$ or $\overline{n}$ forms a new cycle, the insertion corresponds to the substitution rule $I\rightarrow Iq(t+sp)$.
Now we insert $n$ or $\overline{n}$ right after $\sigma(i)$.
If $i$ is a fixed point or a singleton of $\sigma$, then the changes of labeling are respectively illustrated as follows:
$$\cdots(i^t)_q\cdots\rightarrow \cdots(i^xn^y)_q\cdots,~\cdots(i^t)_q\cdots\rightarrow \cdots({i}^y\overline{n}_p^y)_q\cdots;$$
$$\cdots(\overline{i}_p^s)_q\cdots\rightarrow \cdots(\overline{i}_p^xn^y)_q\cdots,~\cdots(\overline{i}_p^s)_q\cdots\rightarrow \cdots(\overline{i}_p^y\overline{n}_p^y)_q\cdots.$$
If $i$ is an excedance of type $A$, then the changes of labeling are respectively illustrated as follows:
$$\cdots(\cdots \sigma(i)^x\sigma(|\sigma(i)|)\cdots)_q\cdots\rightarrow \cdots(\cdots \sigma(i)^xn^y\sigma(|\sigma(i)|)\cdots)_q\cdots,$$
$$\cdots(\cdots \sigma(i)^x\sigma(|\sigma(i)|)\cdots)_q\cdots\rightarrow \cdots(\cdots \sigma(i)^y\overline{n}_p^y\sigma(|\sigma(i)|)\cdots)_q\cdots.$$
The same argument applies to the case when the new element
is inserted right after an element labeled by $y$.
In each case, the insertion of $n$ or $\overline{n}$ corresponds to one substitution rule in $G_6$.
By induction, it is routine to check that the action of $D_{G_6}$ on the set of weights of signed permutations
in $\mb_{n-1}$ gives the set of weights of signed permutations in $\mb_n$, and so~\eqref{DnI-grammar} holds.

For the grammar $G_6$, setting $A=t+sp,B=x+py$ and $C=(1+p)y$, we get
$$D_{G_6}(I)=IqA,~D_{G_6}(A)=BC,~D_{G_6}(B)=BC,~D_{G_6}(C)=BC.$$
Let $G_7=\{I\rightarrow IqA, A\rightarrow BC, B\rightarrow BC, C\rightarrow BC\}$.
It follows from Lemma~\ref{lemma001exc} that
\begin{equation}\label{DG7}
D_{G_7}^n(I)=I\sum_{\pi\in\msn}A^{\fix(\pi)}B^{\exc(\pi)}C^{\drop(\pi)}q^{\cyc(\pi)}.
\end{equation}
Then upon taking $A=t+sp,B=x+py$ and $C=(1+p)y$ in~\eqref{DG7}, we get~\eqref{Bnxqy01}. This completes the proof.
\end{proof}

Combining~\eqref{Anxpq-EGF} and~\eqref{Bnxqy}, we immediately get
\begin{equation}\label{Bnxqy-EGF}
\sum_{n=0}^\infty B_n^{(A)}(x,y,s,t,p,q)\frac{z^n}{n!}=\left(\frac{(y-x)\mathrm{e}^{(t+sp)z}}{(1+p)y\mathrm{e}^{(x+py)z}-(x+py)\mathrm{e}^{(1+p)yz}}\right)^q.
\end{equation}

Replacing $x$ by $x^2$ and setting $y=t=s=q=1$ in~\eqref{Bnxqy}, we immediately get~\eqref{Mongelli01}.
Replacing $x$ by $x^2$ and setting $y=s=q=1$ and $t=0$ in~\eqref{Bnxqy}, one can easily derive~\eqref{Mongelli02}.
Note that $$B_n^{(A)}(x^2,1,1,0,px,q)=\sum_{\sigma\in \mdn^B}x^{\fexc(\sigma)}p^{\negg(\sigma)}q^{\cyc(\sigma)}.$$
It follows from~\eqref{Bnxqy-EGF} that
\begin{equation}\label{Dnxpq-EGF}
\sum_{n=0}^\infty B_n^{(A)}(x^2,1,1,0,px,q)\frac{z^n}{n!}=\left(\frac{(1-x^2)\mathrm{e}^{pxz}}{(1+px)\mathrm{e}^{(x^2+px)z}-(x^2+px)\mathrm{e}^{(1+px)z}}\right)^q.
\end{equation}
By using~\eqref{Dnxpq-EGF}, one can easily derive the following curious enumerative consequence.
\begin{corollary}
For $n\geqslant 1$, we have
$$\sum_{\sigma\in \mdn^B}x^{\fexc(\sigma)}p^{\negg(\sigma)}(-1)^{\cyc(\sigma)}=-\sum_{i=1}^{n-1}x^{2i}-p\sum_{i=1}^{n}x^{2i-1}.$$
\end{corollary}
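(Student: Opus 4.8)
The plan is to read the identity straight off the exponential generating function~\eqref{Dnxpq-EGF}, combined with the combinatorial evaluation $B_n^{(A)}(x^2,1,1,0,px,q)=\sum_{\sigma\in \mdn^B}x^{\fexc(\sigma)}p^{\negg(\sigma)}q^{\cyc(\sigma)}$ recorded just above it. First I would note that both sides of~\eqref{Dnxpq-EGF} are power series in $z$ whose coefficients are polynomials in $x,p,q$, so specializing $q=-1$ is legitimate; under this specialization the right-hand side becomes the reciprocal of the bracketed rational-exponential expression.

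Next I would simplify that reciprocal. Cancelling the common factor $\mathrm{e}^{pxz}$ from numerator and denominator (the exponent arithmetic $(x^2+px)-px=x^2$ and $(1+px)-px=1$ works out cleanly), the generating function collapses to $\bigl((1+px)\mathrm{e}^{x^2z}-(x^2+px)\mathrm{e}^{z}\bigr)/(1-x^2)$, which is a $\mathbb{Q}(x,p)$-linear combination of just two exponentials. Extracting the coefficient of $z^n/n!$ then gives, for every $n\geqslant 0$, the closed form $\sum_{\sigma\in \mdn^B}x^{\fexc(\sigma)}p^{\negg(\sigma)}(-1)^{\cyc(\sigma)}=\bigl((1+px)x^{2n}-(x^2+px)\bigr)/(1-x^2)$.

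Finally I would write the numerator as $(x^{2n}-x^{2})+p(x^{2n+1}-x)$ and, for $n\geqslant 1$, recognize the two resulting quotients as finite geometric sums: $\frac{x^{2n}-x^{2}}{1-x^{2}}=-\sum_{i=1}^{n-1}x^{2i}$ and $\frac{x^{2n+1}-x}{1-x^{2}}=-\sum_{i=1}^{n}x^{2i-1}$, giving exactly the asserted expression. I do not anticipate a genuine obstacle; the only points needing care are verifying the exactness of the $\mathrm{e}^{pxz}$ cancellation and performing the division by $1-x^{2}$ correctly, the latter explaining the hypothesis $n\geqslant 1$ (for $n=0$ the same computation returns $1$, as it must, since $\mdn^B$ then contributes only the empty permutation).
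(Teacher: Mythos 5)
Your proposal is correct and follows exactly the route the paper intends: the paper gives no details beyond ``By using~\eqref{Dnxpq-EGF}, one can easily derive...'', and setting $q=-1$ in that generating function, cancelling $\mathrm{e}^{pxz}$, and extracting the coefficient of $z^n/n!$ is precisely that derivation. The algebra in your geometric-sum step checks out, so there is nothing to add.
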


It would be interesting to find a direct combinatorial proof of the preceding
corollary.
In the rest part of this subsection, we consider the following bivariate polynomials:
\begin{align*}
F^{(\fexc,\cyc)}_n(x,q)&=\sum_{\sigma\in\mbn}x^{\fexc(\sigma)}q^{\cyc(\sigma)},\\
D^{(\fexc,\cyc)}(x,q)&=\sum_{\sigma\in\mdn^B}x^{\fexc(\sigma)}q^{\cyc(\sigma)},\\
F^{(\fexc,\negg)}(x,p)&=\sum_{\sigma\in\mbn}x^{\fexc(\sigma)}p^{\negg(\sigma)}.
\end{align*}

From~\eqref{Bnxqy}, we see that $F^{(\fexc,\cyc)}_n(x,q)=B_n^{(A)}(x^2,1,1,1,x,q)=(1+x)^nA_n(x,q)$.
Therefore, by using Lemma~\ref{lemma01} and Theorem~\ref{thm001}, we get the following corollary.
\begin{corollary}
Let $0\leqslant q\leqslant 1$ be a given real number. For $n\geqslant 1$, the polynomials
$F^{(\fexc,\cyc)}_n(x,q)$ are bi-$\gamma$-positive, and so they are alternatingly increasing.
\end{corollary}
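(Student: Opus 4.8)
The plan is to reduce the statement to Theorem~\ref{thm001} by means of the factorization $F^{(\fexc,\cyc)}_n(x,q)=(1+x)^nA_n(x,q)$. This identity is precisely the specialization $B_n^{(A)}(x^2,1,1,1,x,q)$ of~\eqref{Bnxqy}, where the variables are chosen so that the six-variable monomial recording type-$A$ excedances, anti-excedances, singletons, fixed points, negative entries and cycles collapses to $x^{\fexc}q^{\cyc}$; this has already been recorded in the discussion immediately preceding the corollary, so I would simply cite it.

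Granting the factorization, I would argue in three short steps. First, $(1+x)^n$ is trivially $\gamma$-positive: its $\gamma$-expansion is $x^0(1+x)^n$, so its only $\gamma$-coefficient is $\gamma_0=1$. Second, since $0\leqslant q\leqslant 1$, Theorem~\ref{thm001}(i) asserts that $A_n(x,q)$ is bi-$\gamma$-positive for every $n\geqslant 1$. Third, I would apply Lemma~\ref{lemma01} with $f(x)=(1+x)^n$ and $g(x)=A_n(x,q)$: since the product of a $\gamma$-positive polynomial and a bi-$\gamma$-positive polynomial is bi-$\gamma$-positive, we conclude that $F^{(\fexc,\cyc)}_n(x,q)$ is bi-$\gamma$-positive. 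The alternatingly increasing property is then automatic, because (as explained in the introduction, following Br\"and\'en and Solus) bi-$\gamma$-positivity forces both symmetric polynomials in the symmetric decomposition to be $\gamma$-positive, hence unimodal with nonnegative coefficients, which is equivalent to $F^{(\fexc,\cyc)}_n(x,q)$ being alternatingly increasing.

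I do not expect a genuine obstacle: all the content is carried by Theorem~\ref{thm001}(i), whose proof in Section~\ref{section002} uses the change-of-grammar technique to produce the recurrence system~\eqref{recusystem} and the explicit bi-$\gamma$-decomposition~\eqref{Ankx-decom} of $A_n^{(k)}(x)=k^nA_n(x,1/k)$. The only minor point to verify is the degree bookkeeping: multiplying a symmetric decomposition $a(x)+xb(x)$ of $A_n(x,q)$, of degree $n-1$ with $a$ symmetric of degree $n-1$ and $b$ symmetric of degree at most $n-2$, by $(1+x)^n$ yields $(1+x)^na(x)+x(1+x)^nb(x)$, and one checks via Proposition~\ref{prop01} that this is again a valid symmetric decomposition, now of degree $2n-1$. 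This is exactly the mechanism underlying Lemma~\ref{lemma01}, so nothing new is required, and if desired the bi-$\gamma$-coefficients of $F^{(\fexc,\cyc)}_n(x,q)$ can be read off as the images of those of $A_n(x,q)$ under $x^i(1+x)^{m}\mapsto x^i(1+x)^{m+n}$.
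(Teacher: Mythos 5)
Your proposal is correct and follows the paper's own route exactly: the paper likewise derives the factorization $F^{(\fexc,\cyc)}_n(x,q)=B_n^{(A)}(x^2,1,1,1,x,q)=(1+x)^nA_n(x,q)$ from~\eqref{Bnxqy} and then invokes Lemma~\ref{lemma01} together with Theorem~\ref{thm001}(i). Your additional remarks on the degree bookkeeping and on reading off the bi-$\gamma$-coefficients are consistent with the mechanism of Lemma~\ref{lemma01} and add nothing that conflicts with the paper.
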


By using~\eqref{Bnxqy}, we get $$D^{(\fexc,\cyc)}_n(x,q)=B_n^{(A)}(x^2,1,1,0,x,q)=(1+x)^nA_n\left(x,\frac{x}{1+x},q\right).$$
Therefore,
\begin{align*}
D^{(\fexc,\cyc)}_n(x,q)&=\sum_{\pi\in\msn}x^{\exc(\pi)}x^{\fix(\pi)}(1+x)^{n-\fix(\pi)}q^{\cyc(\pi)}\\
&=\sum_{i=0}^n\binom{n}{i}(qx)^i(1+x)^{n-i}\sum_{\pi\in\md_{n-i}}x^{\exc(\pi)}q^{\cyc(\pi)}.
\end{align*}
Let $q>0$ be a given real number.
Combining Proposition~\ref{Zeng12} and Lemma~\ref{lemma01}, we find that for $0\leqslant i\leqslant n$, the
polynomials
$(qx)^i(1+x)^{n-i}\sum_{\pi\in\md_{n-i}}x^{\exc(\pi)}q^{\cyc(\pi)}$
are all $\gamma$-positive with the same centre of symmetry. So we get the following result, which is a generalization of~\cite[Corollary~5]{Zeng16}.
\begin{corollary}\label{corfexc}
Let $q>0$ be a given real number. Then the polynomials $D^{(\fexc,\cyc)}_n(x,q)$ are $\gamma$-positive for $n\geqslant 1$.
\end{corollary}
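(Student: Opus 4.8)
The plan is to run the argument directly off the expansion of $D^{(\fexc,\cyc)}_n(x,q)$ recorded just above the statement, namely
\begin{equation*}
D^{(\fexc,\cyc)}_n(x,q)=\sum_{i=0}^{n}\binom{n}{i}(qx)^i(1+x)^{n-i}d_{n-i}(x,q),\qquad d_m(x,q)=\sum_{\pi\in\mathcal{D}_m}x^{\exc(\pi)}q^{\cyc(\pi)},
\end{equation*}
which is obtained by recording, for each $\pi\in\msn$, its set of $i=\fix(\pi)$ fixed points (each contributing a factor $qx$ together with one factor $(1+x)$ from $(1+x)^{n-\fix(\pi)}$) together with the derangement induced on the remaining $n-i$ letters. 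Since any nonnegative linear combination of $\gamma$-positive polynomials sharing a common centre of symmetry is again $\gamma$-positive, it suffices to show that each summand $(qx)^i(1+x)^{n-i}d_{n-i}(x,q)$ is $\gamma$-positive with centre of symmetry $n$.

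Fix $i$ and let $q>0$. By Proposition~\ref{Zeng12}, $d_{n-i}(x,q)$ is $\gamma$-positive, and its displayed expansion exhibits it as symmetric of degree $n-i$, i.e.\ with centre $(n-i)/2$; the binomial $(1+x)^{n-i}$ is trivially $\gamma$-positive with the same centre. By Lemma~\ref{lemma01} the product $(1+x)^{n-i}d_{n-i}(x,q)$ is $\gamma$-positive, and its centre of symmetry is $(n-i)/2+(n-i)/2=n-i$. Finally, multiplying by $(qx)^i$ only scales by the nonnegative constant $q^i$ and shifts every exponent up by $i$; since
\begin{equation*}
x^i\sum_k\gamma_kx^k(1+x)^{m-2k}=\sum_k\gamma_kx^{k+i}(1+x)^{(m+2i)-2(k+i)},
\end{equation*}
multiplication by $x^i$ carries a $\gamma$-positive polynomial with centre $c$ to a $\gamma$-positive polynomial with centre $c+i$. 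Hence $(qx)^i(1+x)^{n-i}d_{n-i}(x,q)$ is $\gamma$-positive with centre $(n-i)+i=n$, independently of $i$; the degenerate summands $n-i=0$ (with $d_0=1$) and $n-i=1$ (with $d_1=0$) are consistent with this. Summing over $i$ gives the assertion.

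I do not anticipate a genuine obstacle: the whole content is the bookkeeping of centres of symmetry needed so that Lemma~\ref{lemma01} applies and the summands can legitimately be added, together with the elementary observation that multiplication by $x^i$ merely translates a $\gamma$-expansion. If one wished to avoid invoking this ad hoc product decomposition, an alternative would be to start from the identity $D^{(\fexc,\cyc)}_n(x,q)=(1+x)^nA_n(x,x/(1+x),q)$ and combine it with the partial $\gamma$-expansion~\eqref{eq:proof03} of $A_n(x,p,q)$ in the same spirit; but the decomposition above is the shortest route and reuses exactly the tools already developed in the paper.
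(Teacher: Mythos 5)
Your proposal is correct and follows essentially the same route as the paper: the paper likewise starts from the displayed identity $D^{(\fexc,\cyc)}_n(x,q)=\sum_{i=0}^n\binom{n}{i}(qx)^i(1+x)^{n-i}d_{n-i}(x,q)$ and combines Proposition~\ref{Zeng12} with Lemma~\ref{lemma01} to conclude that all summands are $\gamma$-positive with a common centre of symmetry. Your extra bookkeeping verifying that this common centre is $n$ (including the shift under multiplication by $x^i$) is exactly the detail the paper leaves implicit.
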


It follows from~\eqref{Bnxqy} that
\begin{equation}\label{Enxp}
F^{(\fexc,\negg)}(x,p)=B_n^{(A)}(x^2,1,1,1,px,1)=(1+px)^nA_n\left(\frac{x^2+px}{1+px}\right).
\end{equation}
In particular, $F^{(\fexc,\negg)}(x,-1)=(1-x)^nA_n(-x)$.
We can now present the following result.
\begin{theorem}
Let $p\geqslant1$ be a given real number. For $n\geqslant 1$, the polynomials $F^{(\fexc,\negg)}(x,p)$ are bi-$\gamma$-positive, and so they are alternatingly increasing.
\end{theorem}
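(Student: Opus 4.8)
The plan is to use the closed form \eqref{Enxp}, namely $F^{(\fexc,\negg)}(x,p)=(1+px)^nA_n(g(x))$ with $g(x)=\frac{x^2+px}{1+px}$, to pin down the symmetric decomposition of $F^{(\fexc,\negg)}(x,p)$ explicitly and then reduce bi-$\gamma$-positivity to the $\gamma$-positivity of a single auxiliary polynomial. Set
$$H_n(x,p)=(1+px)^{n-1}A_n\!\left(\frac{x^2+px}{1+px}\right),$$
so that $F^{(\fexc,\negg)}(x,p)=(1+px)\,H_n(x,p)$. I claim the symmetric decomposition of $F^{(\fexc,\negg)}(x,p)$ is $\bigl(a(x),b(x)\bigr)=\bigl((1+x)H_n(x,p),\,(p-1)H_n(x,p)\bigr)$. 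Granting for the moment that $H_n(x,p)$ is a symmetric polynomial, both $a(x)$ and $b(x)$ are symmetric (one is a scalar multiple of $H_n$, the other is $(1+x)$ times $H_n$), and since $a(x)+xb(x)=(1+x)H_n+(p-1)xH_n=(1+px)H_n=F^{(\fexc,\negg)}(x,p)$, the uniqueness in Proposition~\ref{prop01} shows this is indeed the symmetric decomposition. Hence it suffices to prove that $H_n(x,p)$ is $\gamma$-positive for $p\geqslant1$: then $b(x)=(p-1)H_n(x,p)$ is $\gamma$-positive since $p-1\geqslant0$, and $a(x)=(1+x)H_n(x,p)$ is $\gamma$-positive by Lemma~\ref{lemma01}, so $F^{(\fexc,\negg)}(x,p)$ is bi-$\gamma$-positive, hence (since bi-$\gamma$-positivity implies the alternatingly increasing property) alternatingly increasing.

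To handle $H_n(x,p)$, I would substitute the Foata--Sch\"utzenberger expansion $A_n(y)=\sum_{i=0}^{\lrf{(n-1)/2}}\gamma_{n,i}\,y^i(1+y)^{n-1-2i}$ of Proposition~\ref{Foata70}. Using $1+g(x)=\frac{1+2px+x^2}{1+px}$ and $x^2+px=x(x+p)$, a direct simplification cancelling the powers of $1+px$ gives
$$H_n(x,p)=\sum_{i=0}^{\lrf{(n-1)/2}}\gamma_{n,i}\;x^i\,\bigl[(x+p)(1+px)\bigr]^i\,(1+2px+x^2)^{\,n-1-2i},$$
which in particular shows $H_n(x,p)$ is a polynomial. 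Now $(x+p)(1+px)=p(1+x)^2+(p-1)^2x$ is $\gamma$-positive for every $p\geqslant0$, and $1+2px+x^2=(1+x)^2+(2p-2)x$ is $\gamma$-positive exactly when $p\geqslant1$; this is the single place where the hypothesis $p\geqslant1$ enters. For $p\geqslant1$, Lemma~\ref{lemma01} then shows $\bigl[(x+p)(1+px)\bigr]^i(1+2px+x^2)^{\,n-1-2i}$ is $\gamma$-positive and symmetric of degree $2n-2-2i$, i.e.\ with centre of symmetry $n-1-i$. Multiplying a $\gamma$-positive polynomial $\sum_j\nu_jx^j(1+x)^{2c-2j}$ with centre $c$ by $x^i$ yields $\sum_j\nu_jx^{i+j}(1+x)^{2(c+i)-2(i+j)}$, a $\gamma$-positive polynomial with centre $c+i$; applying this with $c=n-1-i$ produces, for each $i$, a $\gamma$-positive polynomial with the common centre $n-1$. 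Since the $\gamma_{n,i}$ are nonnegative, summing over $i$ shows $H_n(x,p)$ is $\gamma$-positive with centre of symmetry $n-1$ (in particular symmetric of degree $2n-2$), which closes the argument.

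The algebra producing the displayed formula for $H_n(x,p)$, and the identity $a(x)+xb(x)=F^{(\fexc,\negg)}(x,p)$, are routine. The point that needs care is the last step: the $i$-th summand of $H_n(x,p)$ has varying degree $2n-2-2i$ and centre $n-1-i$, and one must verify that multiplication by $x^i$ realigns all of them to the single centre $n-1$ — only then can the individual $\gamma$-expansions be added coefficientwise to conclude $\gamma$-positivity of the sum. Once this bookkeeping is done, bi-$\gamma$-positivity of $F^{(\fexc,\negg)}(x,p)$, and therefore its alternatingly increasing property, follow as above for all $n\geqslant1$ and all real $p\geqslant1$.
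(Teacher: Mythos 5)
Your proof is correct and follows essentially the same route as the paper: your $H_n(x,p)$ is precisely the paper's $\widetilde{E}_n(x,p)$, and the decomposition $F^{(\fexc,\negg)}(x,p)=(1+x)H_n(x,p)+(p-1)xH_n(x,p)$ together with the identities $(x+p)(1+px)=p(1+x)^2+(p-1)^2x$, $1+2px+x^2=(1+x)^2+2(p-1)x$ and Lemma~\ref{lemma01} is exactly the paper's argument. Your explicit check that multiplication by $x^i$ realigns all summands to the common centre $n-1$ merely spells out a step the paper leaves implicit.
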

\begin{proof}
Combining~\eqref{Enxp} and Proposition~\ref{Foata70}, we obtain $$F^{(\fexc,\negg)}(x,p)=(1+xp)\widetilde{E}_n(x,p)=(1+x)\widetilde{E}_n(x,p)+(p-1)x\widetilde{E}_n(x,p)$$
for $n\geqslant1$, where
$$\widetilde{E}_n(x,p)=\sum_{i=0}^{\lrf{(n-1)/2}}\gamma_{n,i}(x^2+px)^i(1+px)^i(1+2px+x^2)^{n-1-2i}.$$
Note that $(x^2+px)(1+px)=x(p(1+x)^2+(p-1)^2x),~1+2px+x^2=(1+x)^2+2(p-1)x$.
Therefore, we get
$$\widetilde{E}_n(x,p)=\sum_{i=0}^{\lrf{(n-1)/2}}\gamma_{n,i}x^i[p(1+x)^2+(p-1)^2x]^i[(1+x)^2+2(p-1)x]^{n-1-2i},$$
where $\gamma_{n,i}\geqslant 0$ for all $0\leqslant i\leqslant \lrf{(n-1)/2}$ and $p\geqslant 1$.
By using Lemma~\ref{lemma01}, we find that for $0\leqslant i\leqslant \lrf{(n-1)/2}$, the polynomials
$\gamma_{n,i}x^i[p(1+x)^2+(p-1)^2x]^i[(1+x)^2+2(p-1)x]^{n-1-2i}$ are all $\gamma$-positive with the same center of symmetry, and so $\widetilde{E}_n(x,p)$
is $\gamma$-positive, which yields the desired result. This completes the proof.
\end{proof}
\section{Excedances, fixed points and cycles of colored permutations}\label{section005}
%%%%%%%%%%%%%%%%%%%%%%%%%%%%%%%%%%%%%%%%%%%
%%%%%%%%%%%%%%%%%%%%%%%%%%%%%%%%%%%%%%%%%%%%%%%%%%%%%%%%%%%%%%%%%%%%%%%%%%%%%%%%%
%%%%%%%%%%%%%%%%%%%%%%%%%%%%%%%%%%%%%%%%%%%%%%%%%%%%%%%%%%%%%%%%%%%%%%%%%%%%%%%%%%
%%%%%%%%%%%%%%%%%%%%%%%%%%%%%%%%%%%%%%%%%%%%%%%%%%%%%%%%%%%%%%%%%%%%%%%%%%%%%%%%%%
%%%%%%%%%%%%%%%%%%%%%%%%%%%%%%%%%%%%%%%%%%%%%%%%%%
%%%%%%%%%%%%%%%%%%%%%%%%%%%%%%%%%%%%%%%%%%%%%%%%%
\subsection{Basic definitions and notation}
%%%%%%%%%%%%%%%%%%%%%%%%%%%%%%%%%%%%%%%%%%%
%%%%%%%%%%%%%%%%%%%%%%%%%%%%%%%%%%%%%%%%%%%%%%%%
\hspace*{\parindent}

Let $r$ be a fixed positive integer.
An {\it $r$-colored permutation} can be written as $\pi^c$, where $\pi=\pi_1\pi_2\cdots\pi_n\in \msn$ and
$c=(c_1,c_2,\ldots,c_n)\in [0,r-1]^n$, i.e., $c_i$ is a nonnegative integer lies in the interval $[0,r-1]$ for any $i\in[n]$. As usual, $\pi^c$ can be denoted as $\pi_1^{c_1}\pi_2^{c_2}\cdots\pi_n^{c_n}$, where $c_i$ can
be thought of as the color assigned to $\pi_i$.
Denote by $\z_r\wr\msn$ the set of all $r$-colored permutations of order $n$.
The wreath product $\z_r \wr \msn$ could be considered as the colored permutation group $G_{r,n}$ consists of all permutations of the alphabet $\Sigma$ of $rn$ letters:
$$\Sigma=\{1,2,\ldots,n,\overline{1},\ldots,\overline{n},\ldots,1^{[r-1]},\ldots,n^{[r-1]}\}$$
satisfying $\pi(\overline{i})=\overline{\pi(i)}$. In particular, $\z_1 \wr \msn=\msn$ and $\z_2 \wr \msn=\mbn$.
%%Recall that $\z_r\wr\msn$ is the set of $r$-colored permutations of order $n$.
%Let $\pi^c\in \z_r \wr \msn$.
Following Steingr\'imsson~\cite{Steingrimsson}, for $1\leqslant i\leqslant n$,
we say that an entry $\pi_i^{c_i}$ is an {\it excedance} of $\pi^c$ if $i<_f\pi_i$,
where we use the order $<_f$ of $\Sigma$:
\begin{equation}\label{order}
1<_f\overline{1}<_f\cdots<_f1^{[r-1]}<_f2<_f\overline{2}<_f\cdots<_f2^{[r-1]}<_f\cdots <_fn<_f\overline{n}<_f\cdots<_fn^{[r-1]}.
\end{equation}
%We say that an index $k\in[n]$ is a
%{\it descent} of $\pi^c$ if either $c_k>c_{k+1}$, or $c_k=c_{k+1}$ and $\pi_k>\pi_{k+1}$, where $\pi(n+1):=n+1$
%and $c_{n+1}=0$. Let $\des(\pi^c)$ be the number of descents of $\pi^c$.

Let $\exc(\pi^c)$ be the number of excedances of $\pi^c$.
A {\it fixed point} of $\pi^c\in \z_r \wr \msn$ is an entry $\pi_k^{c_k}$ such that $\pi_k=k$ and $c_k=0$.
An element $\pi^c\in \z_r \wr \msn$ is called a {\it derangement} if it has no fixed points.
Let $\D_{n,r}$ be the set of derangements in $\z_r \wr \msn$.
The {\it$q$-colored derangement polynomials} are defined by
$$d_{n,r}(x,q)=\sum_{\pi^c\in \D_{n,r}}x^{\exc(\pi^c)}q^{\cyc(\pi)}.$$
Let $d_{n,r}(x)=d_{n,r}(x,1)$ be the colored derangement polynomials.
According to~\cite[Theorem 5]{Chow10},
\begin{equation*}\label{dnrx-EGF}
\sum_{n=0}^\infty d_{n,r}(x)\frac{z^n}{n!}=\frac{(1-x)e^{(r-1)xz}}{e^{rxz}-xe^{rz}}.
\end{equation*}
There has been much work on the polynomials $d_{n,r}(x)$, see~\cite{Chow10,Zeng16} for instance.
By using the theory of Rees products of posets,
Athanasiadis~\cite[Theorem 1.3]{Athanasiadis14} obtained the following result.
\begin{theorem}[{\cite[Theorem~1.3]{Athanasiadis14}}]\label{dnr-def}
We have
\begin{equation}\label{dnrx-gamma}
d_{n,r}(x)=\sum_{i=0}^{\lrf{n/2}}\beta^+_{n,r,i}x^i(1+x)^{n-2i}+\sum_{i=0}^{\lrf{(n+1)/2}}\beta^-_{n,r,i}x^i(1+x)^{n+1-2i},
\end{equation}
where $\beta^+_{n,r,i}$ is the number of colored permutations of $\z_r \wr \msn$ for which $\Asc(\pi^c)\in [2,n]$
has exactly $i$ elements, no two consecutive, and contains $n$, and
$\beta^-_{n,r,i}$ is the number of colored permutations of $\z_r \wr \msn$ for which $\Asc(\pi^c)\in [2,n-1]$
has exactly $i-1$ elements, no two consecutive.
\end{theorem}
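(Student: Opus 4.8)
The plan is to deduce Theorem~\ref{dnr-def} from the grammar machinery of Section~\ref{section002} and the colored Eulerian polynomials of Section~\ref{section005}, in three steps. First, reduce $d_{n,r}(x)$ to a one-parameter specialization of $A_n$. Comparing the exponential generating function $\sum_{n\geqslant 0}d_{n,r}(x)z^n/n!=(1-x)e^{(r-1)xz}/(e^{rxz}-xe^{rz})$ with~\eqref{Anxpq-EGF} after the rescaling $z\mapsto rz$ — equivalently, specializing the identity $A_{n,r}(x,y,s,t,p,q)=[r]_p^ny^nA_n(\cdots)$ recorded in the introduction — yields
$$d_{n,r}(x)=r^nA_n\!\left(x,\tfrac{(r-1)x}{r},1\right),$$
the colored analogue of $2^nA_n(x,1/2,1)=d_n^B(x)$. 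Feeding this into~\eqref{eq:proof03} together with Proposition~\ref{prop14} already expresses $d_{n,r}(x)$ as a nonnegative combination $\sum_{i,j}(r-1)^ir^{n-i}\gamma_{n,i,j}(1)\,x^{i+j}(1+x)^{n-i-2j}$ of ``tilted'' monomials; but these monomials are neither $\gamma$-basis elements nor symmetric about $n/2$, so this is not yet the decomposition~\eqref{dnrx-gamma}, and regrouping alone will not produce it.

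To obtain~\eqref{dnrx-gamma} itself I would introduce a context-free grammar $G$, together with a grammatical labelling of colored permutations (labelling cycle peaks, cycle double ascents, cycle double descents, singletons, colors and cycles), so that $D_G^n(I)$ enumerates the refinement $d_{n,r}(x,q)=\sum_{\pi^c\in\D_{n,r}}x^{\exc(\pi^c)}q^{\cyc(\pi^c)}$, in the spirit of Lemmas~\ref{lemma001exc} and~\ref{lemmaBn}; here the $r$-fold coloring enters through the structure constants of $G$ (each admissible insertion point contributing $r$ or $r-1$ possibilities according to its type). Then, mimicking the change of grammar from $G_0$ to $G_1$ in the proof of Theorem~\ref{thm001}(i), I would pass to the letters $u=x+y$, $v=xy$ (plus an auxiliary letter recording the singleton contribution), so that the grammar closes on a small alphabet and the derivative is forced into the shape
$$D_G^n(I)=J\sum_i\beta^+_{n,r,i}\,v^iu^{n-1-2i}+Iv\sum_i\beta^-_{n,r,i}\,v^iu^{n-2-2i},$$
with the $\beta^\pm_{n,r,i}$ nonnegative integers — the integrality of $r$ keeping the coefficients $r,r-1,\dots$ produced by the change of grammar nonnegative — governed by a recurrence system parallel to~\eqref{recusystem}. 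Specializing the auxiliary letters then recovers $d_{n,r}(x)=a_{n,r}(x)+xb_{n,r}(x)$ precisely in the form~\eqref{dnrx-gamma}, exactly as~\eqref{Ankx-decom}--\eqref{ankxbnkx} is extracted from~\eqref{DnI-Stirl01} in the proof of Theorem~\ref{thm001}(i), and as in the proof of Theorem~\ref{thm18}.

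Finally, I would read off the combinatorial meaning of $\beta^\pm_{n,r,i}$. The recurrence system of the previous step should be realizable by insertions of $n$, colored or not, into colored permutations having no cycle double ascents, exactly as~\eqref{Enij-recu} is realized in the proof of Proposition~\ref{prop14}; a colored version of the modified Foata--Strehl action $\varphi'_x$ then transports these cyclic statistics to the one-line ascent statistic $\Asc(\pi^c)$, the ``no two consecutive elements'' condition on $\Asc(\pi^c)$ being the one-line shadow of ``no cycle double ascents.'' I expect this last transfer to be the main obstacle: in the colored setting the Foata--Strehl/fundamental-transformation correspondence has to respect the colors and the $<_f$-excedance order simultaneously, and verifying that the boundary conditions separating $\beta^+$ from $\beta^-$ — whether $\Asc(\pi^c)$ is required to lie in $[2,n]$ and to contain $n$, or to lie in $[2,n-1]$ — land on the correct piece of the symmetric decomposition is delicate. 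Should a transparent bijection prove elusive, the identification of the grammar's $\beta^\pm_{n,r,i}$ with Athanasiadis's counts can be carried out instead at the level of generating functions, by reading~\eqref{dnrx-gamma} as the $\gamma$-expansions of $a_{n,r}$ and $b_{n,r}$ and matching them against the formulas of~\cite{Athanasiadis14}.
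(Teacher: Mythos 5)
There is no ``paper's own proof'' to compare against here: Theorem~\ref{dnr-def} is quoted verbatim from Athanasiadis \cite[Theorem~1.3]{Athanasiadis14}, where it is proved by the theory of Rees products of posets; the present paper only cites it and uses it. So your proposal has to stand on its own as a proof, and as written it does not.

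Your first step is fine and essentially reproduces material already in Section~\ref{section005}: the identity $d_{n,r}(x)=r^nA_n(x,(r-1)x/r,1)$ (equivalently $d_{n,r}(x,q)=r^na_n(x,\frac{r-1}{r},q)$) and the resulting expansion into monomials $x^{i+j}(1+x)^{n-i-2j}$, which you correctly observe is not the decomposition~\eqref{dnrx-gamma}. But everything after that is a programme rather than an argument. The grammar $G$, its grammatical labelling, the change of grammar, the closed alphabet, and the recurrence system for $\beta^{\pm}_{n,r,i}$ are all asserted to exist ``in the spirit of'' earlier lemmas without being written down; note also that your proposed output shape $J\sum_i\beta^+v^iu^{n-1-2i}+Iv\sum_i\beta^-v^iu^{n-2-2i}$ has the wrong degrees ($n-1$ and $n-2$) for $d_{n,r}(x)$, whose symmetric decomposition consists of pieces of degrees $n$ and $n-1$, so the template from $A_n^{(k)}(x)$ does not transfer verbatim. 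Most importantly, the actual content of the theorem is the identification of $\beta^{\pm}_{n,r,i}$ with the stated counts of colored permutations by the ascent set $\Asc(\pi^c)$, and this is precisely the step you defer: you name it ``the main obstacle'' and offer no construction of the required colored Foata--Strehl action. The concluding remarks of the paper explicitly flag a group-action proof of this symmetric decomposition as an open problem, so you are proposing to route the proof through a question the authors themselves could not resolve. Your fallback --- ``matching them against the formulas of \cite{Athanasiadis14}'' --- is circular, since those formulas are the theorem being proved. To turn this into a proof you would need either to exhibit the grammar and the bijection explicitly, or to give an independent derivation of the generating functions for the two symmetric pieces together with a separate combinatorial count of the ascent-set classes.
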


%%%%%%%%%%%%%%%%%%%%%%%%%%%%%%%%%%%%%%%%%%%%%%%%%%%%%%%%%%%%%%%%%%%%%%%%%%%%%%%%%%
%%%%%%%%%%%%%%%%%%%%%%%%%%%%%%%%%%%%%%%%%%%%%%%%%%%%%%%%%%%%%%%%%%%%%%%%%%%%%%%%%%
%%%%%%%%%%%%%%%%%%%%%%%%%%%%%%%%%%%%%%%%%%%%%%%%%%
%%%%%%%%%%%%%%%%%%%%%%%%%%%%%%%%%%%%%%%%%%%%%%%%%
\subsection{An equivalent result of Theorem~\ref{thm001}}
%%%%%%%%%%%%%%%%%%%%%%%%%%%%%%%%%%%%%%%%%%%
%%%%%%%%%%%%%%%%%%%%%%%%%%%%%%%%%%%%%%%%%%%%%%%%
\hspace*{\parindent}

In~\cite[Proposition~4]{Chow10}, Chow and Toufik found that
\begin{equation*}\label{dnr1}
d_{n,r}(x)=\sum_{\pi\in\msn}x^{\wexc(\pi)}(r-1)^{\fix(\pi)}r^{n-\fix(\pi)},
\end{equation*}
where $\wexc(\pi)=\#\{i\in[n]: \pi(i)\geqslant i\}$ is the number of {\it weak excedances} of $\pi$.
Note that $\wexc(\pi)=\exc(\pi)+\fix(\pi)$.
Along the same lines as in the proof of~\cite[Proposition~4]{Chow10}, one can easily derive that
\begin{equation*}\label{dnrq}
d_{n,r}(x,q)=\sum_{\pi\in\msn}x^{\wexc(\pi)}(r-1)^{\fix(\pi)}r^{n-\fix(\pi)}q^{\cyc(\pi)}.
\end{equation*}

Define $$a_n(x,p,q)=\sum_{\pi\in\msn}x^{\wexc(\pi)}p^{\fix(\pi)}q^{\cyc(\pi)}.$$
Then $a_n(x,p,q)=A_n(x,xp,q)$.
Let $\pi^{-1}$ be the inverse of $\pi$. The bijection $\pi\rightarrow \pi^{-1}$ on $\msn$ shows that
$(\exc,\fix,\cyc)$ is equidistributed with $(\drop,\fix,\cyc)$. Thus $(\wexc,\fix,\cyc)$ is equidistributed with $(n-\exc,\fix,\cyc)$.
Therefore, we get $$a_n(x,p,q)=x^nA_n\left(\frac{1}{x},p,q\right).$$
It should be noted that if $p>0$ and $q>0$, then $\deg a_n(x,p,q)=n$.
Moreover, $a_n(0,p,q)=0$ and the coefficient of the highest degree term of $a_n(x,p,q)$ is $p^nq^n$, and so $a_n(0,p,q)< p^nq^n$ when $p>0$ and $q>0$.
Therefore, an equivalent result of Theorem~\ref{thm001} is given as follows.
\begin{theorem}\label{cor-anxp}
Let $p\in[0,1]$ and $q\in [0,1]$ be two given real numbers. The polynomials $a_n(x,1,q)$ are bi-$\gamma$-positive and the polynomials $a_n(x,p,q)$ are alternatingly increasing for $n\geqslant 1$.
\end{theorem}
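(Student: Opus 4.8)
The plan is to derive Theorem~\ref{cor-anxp} directly from Theorem~\ref{thm001} through the identity $a_n(x,p,q)=x^nA_n(1/x,p,q)$ recorded above (which comes from $\pi\mapsto\pi^{-1}$, so that $(\wexc,\fix,\cyc)$ is equidistributed with $(n-\exc,\fix,\cyc)$). The case $q=0$ is vacuous, since then $A_n(x,p,q)$ and $a_n(x,p,q)$ are both the zero polynomial; so assume $q\in(0,1]$, in which case $\deg_xA_n(x,p,q)=n-1$ (the coefficient of $x^{n-1}$ is $q$, contributed by the single $n$-cycle). Then $a_n(x,p,q)=x^{(n-1)+1}A_n(1/x,p,q)$, and the whole theorem reduces to two elementary stability facts for the operation $f\mapsto x^{\deg f+1}f(1/x)$.

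For statement $(ii)$ I would record the coefficient-comparison lemma: if $f=\sum_{i=0}^df_ix^i$ has $f_i\geqslant0$ and $g(x)=x^{d+1}f(1/x)$, then $g_0=0$ and $g_j=f_{d+1-j}$ for $1\leqslant j\leqslant d+1$, so the alternatingly increasing chain $g_0\leqslant g_{d+1}\leqslant g_1\leqslant g_d\leqslant\cdots$ for $g$ (viewed as a polynomial of degree $d+1$) is precisely $0\leqslant f_0\leqslant f_d\leqslant f_1\leqslant\cdots$, i.e. the alternatingly increasing chain for $f$ with the automatically valid inequality $f_0\geqslant0$ prepended. Hence $f$ is alternatingly increasing if and only if $x^{d+1}f(1/x)$ is. Applying this to $f=A_n(x,p,q)$ with $d=n-1$ and invoking Theorem~\ref{thm001}$(ii)$ shows that $a_n(x,p,q)$ is alternatingly increasing.

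For statement $(i)$ I would prove: if $f$ has degree $d$ with $f(0)\neq0$ and symmetric decomposition $(a(x),b(x))$ as in Proposition~\ref{prop01}, then $g(x)=x^{d+1}f(1/x)$ has symmetric decomposition $(xb(x),a(x))$. Indeed, from $x^{d+1}g(1/x)=f(x)$ one gets, via~\eqref{ax-bx-prop01} applied to $g$ (which has degree $d+1$), that $B(x)=\frac{f(x)-g(x)}{1-x}=a(x)$ and $A(x)=\frac{g(x)-xf(x)}{1-x}=x\cdot\frac{x^df(1/x)-f(x)}{1-x}=xb(x)$, while the symmetry of $b$ about $(d-1)/2$ translates into symmetry of $xb$ about $(d+1)/2$. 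Now if $f$ is bi-$\gamma$-positive then $a$ and $b$ are $\gamma$-positive; multiplying the $\gamma$-expansion of $b$ by $x$ shifts every exponent and the centre by $1$ while keeping the $\gamma$-coefficients nonnegative, so $xb$ is $\gamma$-positive as well, and therefore $g$ is bi-$\gamma$-positive. Taking $f=A_n(x,q)=A_n(x,1,q)$ with $d=n-1$ and $f(0)=q^n\neq0$, the bi-$\gamma$-positivity of $A_n(x,q)$ from Theorem~\ref{thm001}$(i)$ yields that of $a_n(x,1,q)=x^nA_n(1/x,q)$.

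I do not expect a genuine obstacle. The one place that needs attention is the second lemma, where one must correctly identify the symmetric decomposition of $x^{d+1}f(1/x)$ — the roles of $a$ and $b$ get interchanged and $b$ picks up a factor $x$ — and note that this extra factor of $x$, present exactly because $A_n$ has degree $n-1$ rather than $n$, is essential: the plain reversal $x^df(1/x)$ does \emph{not} preserve bi-$\gamma$-positivity. One should also keep an eye on the degenerate inputs ($q=0$, where both sides vanish, and $p=0$, where $a_n(x,0,q)=d_n(x,q)$ is already symmetric and $\gamma$-positive by Proposition~\ref{Zeng12}) so that the degree hypotheses of the two lemmas are met.
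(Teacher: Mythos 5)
Your proof is correct and takes essentially the same route as the paper: the paper simply records the identity $a_n(x,p,q)=x^nA_n(1/x,p,q)$ (noting that $\deg a_n(x,p,q)=n$ and $a_n(0,p,q)=0$ when $p,q>0$) and declares the theorem an equivalent restatement of Theorem~\ref{thm001}. Your two lemmas on how the operation $f\mapsto x^{\deg f+1}f(1/x)$ interacts with the alternatingly increasing property and with symmetric decompositions (swapping $a$ and $b$ and attaching a factor of $x$ to $b$) spell out precisely the verification that the paper leaves implicit, including the degenerate cases $q=0$ and $p=0$.
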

%Shin and Zeng~\cite[Theorem 3]{Zeng16} obtained the following expansion:
%\begin{equation*}
%d_{n,r}(x,1)=\sum_{1\leqslant i+2j\leqslant n}\gamma_{n,i,j}x^{i+j}(1+x)^{n-i-2j}(r-1)^ir^{n-i}.
%\end{equation*}

Note that $$d_{n,r}(x,q)=r^na_n\left(x,\frac{r-1}{r},q\right).$$
As a special case of Theorem~\ref{cor-anxp}, we get the following result.
\begin{corollary}
Let $q\in[0,1]$ be a given real number and let $r$ be a fixed positive integer. Then
the polynomials $d_{n,r}(x,q)$ are alternatingly increasing for $n\geqslant 1$.
\end{corollary}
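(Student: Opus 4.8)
The plan is to obtain this as an immediate consequence of Theorem~\ref{cor-anxp} (equivalently Theorem~\ref{thm001}), via the substitution identity recorded just before the statement. First I would recall that, by the formula $d_{n,r}(x,q)=\sum_{\pi\in\msn}x^{\wexc(\pi)}(r-1)^{\fix(\pi)}r^{n-\fix(\pi)}q^{\cyc(\pi)}$ together with the definition $a_n(x,p,q)=\sum_{\pi\in\msn}x^{\wexc(\pi)}p^{\fix(\pi)}q^{\cyc(\pi)}$, factoring $r^n$ out of each summand gives
$$d_{n,r}(x,q)=r^n\sum_{\pi\in\msn}x^{\wexc(\pi)}\left(\frac{r-1}{r}\right)^{\fix(\pi)}q^{\cyc(\pi)}=r^na_n\!\left(x,\frac{r-1}{r},q\right),$$
which is exactly the displayed identity preceding the corollary.

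Next I would check that the parameters land in the admissible range of Theorem~\ref{cor-anxp}: since $r$ is a positive integer, $\frac{r-1}{r}=1-\frac1r\in[0,1)\subseteq[0,1]$, and by hypothesis $q\in[0,1]$. Hence Theorem~\ref{cor-anxp} applies with $p=\frac{r-1}{r}$, and it tells us that $a_n\!\left(x,\frac{r-1}{r},q\right)$ is alternatingly increasing for $n\geqslant1$.

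Finally I would observe that the alternatingly increasing property of a polynomial is a collection of inequalities among its coefficients, and multiplying every coefficient by the same positive constant $r^n$ preserves each such inequality; therefore $d_{n,r}(x,q)=r^na_n\!\left(x,\frac{r-1}{r},q\right)$ is alternatingly increasing as well, which completes the argument. There is no genuine obstacle here: all the real work sits in Theorem~\ref{thm001}, and the only points deserving a line of care are the verification that $\frac{r-1}{r}\in[0,1]$ for every positive integer $r$ and the invariance of the coefficient inequalities under scaling by a positive constant. The degenerate case $q=0$ is harmless, since then $a_n\equiv0$ for $n\geqslant1$ as every permutation has at least one cycle, and the zero polynomial is vacuously alternatingly increasing.
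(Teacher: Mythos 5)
Your proposal is correct and follows exactly the paper's route: the paper likewise derives the identity $d_{n,r}(x,q)=r^na_n\bigl(x,\tfrac{r-1}{r},q\bigr)$ from the $\wexc$-formula and then invokes Theorem~\ref{cor-anxp} with $p=\tfrac{r-1}{r}\in[0,1]$. The extra remarks on positive scaling and the $q=0$ case are harmless elaborations of what the paper leaves implicit.
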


%%%%%%%%%%%%%%%%%%%%%%%
\subsection{The bi-$\gamma$-positivity of colored Eulerian polynomials and an application}
\hspace*{\parindent}
%%%%%%%%%%%%%%%%%%%%%%%
%%%%%%%%%%%%%%%%%%%%%%%

Following Steingr\'imsson~\cite{Steingrimsson},
the {\it $r$-colored Eulerian polynomial} is defined by
$$A_{n,r}(x)=\sum_{\pi^c\in\z_r \wr \msn}x^{\exc(\pi^c)},$$
which satisfy the recurrence relation
\begin{equation}\label{Anrx-recu}
A_{n,r}(x)=(1+(rn-1)x)A_{n-1,r}(x)+rx(1-x)\frac{\mathrm{d}}{\mathrm{d}x}A_{n,r}(x),
\end{equation}
with $A_{0,r}(x)=1,A_{1,r}(x)=1+(r-1)x,~A_{2,r}(x)=1+(r^2+2r-2)x+(r-1)^2x^2$.
%A_{3,r}(x)&=1+(r^3+3r^2+3r-3)x+(3-6r+4r^3)x^2+(r-1)^3x^3.
Let $A_{n,r}(x)=\sum_{k=0}^nA_r(n,k)x^k$. It follows from~\eqref{Anrx-recu} that
the numbers $A_r(n,k)$ satisfy the recurrence
\begin{equation}\label{Arnk-recu}
A_r(n,k)=(rk+1)A_r(n-1,k)+(r(n-k)+r-1)A_r(n-1,k-1),
\end{equation}
with $A_r(0,k)=\delta_{0,k}$.
%Following~\cite[Lemma~3.5]{Savage15}, we have $$A_{n,r}(x)=E_n^{(r,2r,\ldots,nr)}(x).$$
According to~\cite[Theorem~20]{Steingrimsson}, we have
\begin{equation}\label{Anr-EGF}
\sum_{n=0}^\infty A_{n,r}(x)\frac{z^n}{n!}=\frac{(1-x)\mathrm{e}^{z(1-x)}}{1-x\mathrm{e}^{rz(1-x)}}.
\end{equation}
When $r=1$ and $r=2$, the polynomial $A_{n,r}(x)$ reduces to the types $A$ and $B$ Eulerian polynomials $A_n(x)$ and $B_n(x)$, respectively.
%The $\gamma$-positivities of $A_{n,1}(x)$ and $A_{n,2}(x)$, along with several $q$-analogues, were discovered repeatedly, see, e.g.,~\cite{Athanasiadis17,Lin15}.

By using the principle of inclusion-exclusion, one has $A_{n,r}(x)=\sum_{k=0}^n\binom{n}{k}d_{k,r}(x)$.
Combining this with~\eqref{dnrx-gamma},
Athanasiadis~\cite[Eq.~(21)]{Athanasiadis20} recently obtained the following expansion:
\begin{equation}\label{Anrx01}
A_{n,r}(x)=A_{n,r}^+(x)+A_{n,r}^-(x),
\end{equation}
where $$A_{n,r}^+(x)=\sum_{k=0}^n\binom{n}{k}d_{k,r}^+(x),~A_{n,r}^-(x)=\sum_{k=0}^n\binom{n}{k}d_{k,r}^-(x),$$
$$d_{n,r}^+(x)=\sum_{i=0}^{\lrf{n/2}}\beta^+_{n,r,i}x^i(1+x)^{n-2i},~d_{n,r}^-(x)=\sum_{i=1}^{\lrf{(n+1)/2}}\beta^-_{n,r,i}x^i(1+x)^{n+1-2i}.$$
Comparing~\eqref{Anxpq-EGF} with~\eqref{Anr-EGF}, one can easily find that
\begin{equation}\label{AnrxAn}
A_{n,r}(x)=r^nA_n\left(x,\frac{1+(r-1)x}{r},1\right).
\end{equation}
In particular, we have $$B_n(x)=A_{n,2}(x)=\sum_{\pi\in\msn}x^{\exc(\pi)}(1+x)^{\fix(\pi)}2^{n-\fix(\pi)}.$$
Motivated by~\eqref{Anrx01} and~\eqref{AnrxAn}, in the following we shall first present a direct proof of the bi-$\gamma$-positivity of $A_{n,r}(x)$,
and then we study some multivariate colored Eulerian polynomials.
%%%%%%%%%%%%%%%%%%%%%%%%%%%%%%%%%%%%%%%%%%%%%
%%%%%%%%%%%%%%%%%%%%%%%%%%%%%%%%%%%%%%%%%%%%%%%%%%%%%%%%%%%%%%%%%%%%%%%%%%%%%%%%%%%%
%%%%%%%%%%%%%%%%%%%%%%%%%%%%%%%%%%%%%%%%%%%%%%%%%%%%%%%%%%%%%%%%%%%%%%%%%%%%%%%%%%%%

%The following result gives a grammatical interpretation of the numbers $A_r(n,k)$.
\begin{lemma}\label{lemma001}
If
$G_8=\{u\rightarrow uv^r,v\rightarrow u^rv\}$,
then we have
\begin{equation}\label{grammar01G1}
D_{G_8}^n(u^{r-1}v)=u^{r-1}v\sum_{k=0}^nA_r(n,k)u^{(n-k)r}v^{kr}.
\end{equation}
\end{lemma}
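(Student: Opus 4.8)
The plan is to prove~\eqref{grammar01G1} by induction on $n$, by showing that the coefficients generated by iterating $D_{G_8}$ on the seed $u^{r-1}v$ satisfy exactly the recurrence~\eqref{Arnk-recu} together with the initial condition $A_r(0,k)=\delta_{0,k}$. For $n=0$ both sides of~\eqref{grammar01G1} equal $u^{r-1}v$, and a direct computation gives $D_{G_8}(u^{r-1}v)=u^{2r-1}v+(r-1)u^{r-1}v^{r+1}$, which matches $A_r(1,0)=1$ and $A_r(1,1)=r-1$; this disposes of the small cases.

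The first real step is to record the action of $D_{G_8}$ on an arbitrary monomial. Since $G_8=\{u\rightarrow uv^r,\ v\rightarrow u^rv\}$, one has $D_{G_8}(u^a)=a\,u^{a}v^{r}$ and $D_{G_8}(v^b)=b\,u^{r}v^{b}$ for all $a,b\geqslant 0$, so the product rule yields
\[
D_{G_8}(u^av^b)=a\,u^av^{b+r}+b\,u^{a+r}v^b .
\]
In words, $D_{G_8}$ acts on a monomial through two moves: raise the exponent of $v$ by $r$ with weight equal to the current exponent of $u$, or raise the exponent of $u$ by $r$ with weight equal to the current exponent of $v$.

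Next I would carry out the inductive step. Assuming~\eqref{grammar01G1} for $n$, the general term on the right-hand side is $A_r(n,k)\,u^{(r-1)+(n-k)r}v^{1+kr}$, so in the displayed identity we take $a=(r-1)+(n-k)r$ and $b=1+kr$. The $u$-raising move turns this term into $(rk+1)\,u^{r-1}v\cdot u^{(n+1-k)r}v^{kr}$, a contribution to the term of $D_{G_8}^{n+1}(u^{r-1}v)$ indexed by the same $k$; the $v$-raising move turns it into $\bigl((r-1)+(n-k)r\bigr)\,u^{r-1}v\cdot u^{(n-k)r}v^{(k+1)r}$, a contribution to the term indexed by $k+1$. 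Collecting the coefficient of $u^{r-1}v\cdot u^{(n+1-k)r}v^{kr}$ in $D_{G_8}^{n+1}(u^{r-1}v)$ therefore gives
\[
(rk+1)\,A_r(n,k)+\bigl(r(n+1-k)+r-1\bigr)A_r(n,k-1),
\]
where one uses $(r-1)+\bigl(n-(k-1)\bigr)r=r(n+1-k)+r-1$. This is precisely the right-hand side of~\eqref{Arnk-recu} with $n$ replaced by $n+1$, and since the initial data also agree, the coefficient equals $A_r(n+1,k)$, completing the induction.

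The computation is essentially bookkeeping of exponents, and the only delicate point — the place where an index slip would be fatal — is verifying that the shift $k\mapsto k+1$ induced by the $v$-raising move, carrying weight $(r-1)+(n-k)r$, lines up with the factor $r(n+1-k)+r-1$ multiplying $A_r(n,k-1)$ in~\eqref{Arnk-recu}. An alternative route would be to introduce a grammatical labeling of the colored permutations in $\z_r\wr\msn$ realizing the two moves as insertion operations, in the spirit of Lemma~\ref{lemma001exc}, but the recurrence-based induction above is the shortest self-contained argument.
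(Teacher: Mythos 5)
Your proof is correct and follows essentially the same route as the paper: induction on $n$, applying the Leibniz rule to the general term $A_r(n,k)\,u^{(r-1)+(n-k)r}v^{1+kr}$ and matching the collected coefficients against the recurrence~\eqref{Arnk-recu}; the index verification $(r-1)+\bigl(n-(k-1)\bigr)r=r(n+1-k)+r-1$ is exactly the step the paper's computation relies on. The only cosmetic difference is that you isolate the monomial formula $D_{G_8}(u^av^b)=a\,u^av^{b+r}+b\,u^{a+r}v^b$ as an explicit intermediate step, which the paper carries out inline.
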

\begin{proof}
Note that $D_{G_8}^0(u^{r-1}v)=u^{r-1}v$ and $D_{G_8}(u^{r-1}v)=u^{r-1}v(u^r+(r-1)v^r)$.
Assume that the result holds for $n=m$, where $m\geqslant 1$. Then
\begin{align*}
&D_{G_8}^{m+1}(u^{r-1}v)\\
&=D_{G_8}\left(u^{r-1}v\sum_{k=0}^mA_r(m,k)u^{(m-k)r}v^{kr}\right)\\
&=u^{r-1}v\sum_{k}A_r(m,k)((mr-kr+r-1)u^{(m-k)r}v^{(k+1)r}+(kr+1)u^{(m-k+1)r}v^{kr}).
\end{align*}
So we get that
$A_r(m+1,k)=(rk+1)A_r(m,k)+(r(m+1-k)+r-1)A_r(m,k-1)$. It follows from~\eqref{Arnk-recu} that the result holds for $n=m+1$. This completes the proof.
\end{proof}

Recall that $A_{n,1}(x)=A_n(x)$ and $A_{n,2}(x)=B_n(x)$, which are both $\gamma$-positive polynomials.
We can now present the sixth main result of this paper.
\begin{theorem}\label{lemma-Eulerian}
For each $r\geqslant 2$,
the polynomial $A_{n,r}(x)$ are bi-$\gamma$-positive. More precisely,
we have
\begin{equation}\label{mainformula01}
A_{n,r}(x)=\sum_{k=0}^{\lrf{n/2}}\alpha^+_{n,k;r}x^k(1+x)^{n-2k}+x\sum_{k=0}^{\lrf{(n-1)/2}}\alpha^-_{n,k;r}x^k(1+x)^{n-1-2k},
\end{equation}
where the numbers $\alpha^+_{n,k;r}$ and $\alpha^-_{n,k;r}$ satisfy the recurrence system
\begin{equation*}\label{recu-ankr02}
\left\{
  \begin{array}{ll}
   \alpha^+_{n+1,k;r}=(1+rk)\alpha^+_{n,k;r}+2r(n-2k+2)\alpha^+_{n,k-1;r}+2\alpha^-_{n,k-1;r}, & \\
    \alpha^-_{n+1,k;r}=(r-2)\alpha^+_{n,k;r}+(r-1+rk)\alpha^-_{n,k;r}+2r(n-2k+1)\alpha^-_{n,k-1;r},
  \end{array}
\right.
\end{equation*}
with the initial conditions $\alpha^+_{1,0;r}=1,~\alpha^-_{1,0;r}=r-2$, $\alpha^+_{1,k;r}=\alpha^-_{1,k;r}=0$ for $k\neq 0$.
\end{theorem}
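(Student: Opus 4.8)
The plan is to prove Theorem~\ref{lemma-Eulerian} by a change of grammar, in close analogy with the first part of the proof of Theorem~\ref{thm001}. I would start from the grammar $G_8$ of Lemma~\ref{lemma001} and first reduce it to a three-letter grammar: setting $a=u^{r}$, $b=v^{r}$ and $w=u^{r-1}v$, a direct computation gives $D_{G_8}(a)=rab$, $D_{G_8}(b)=rab$ and $D_{G_8}(w)=w(a+(r-1)b)$, so that \eqref{grammar01G1} becomes $D_{\widetilde{G}}^{\,n}(w)=w\sum_{k}A_r(n,k)a^{n-k}b^{k}$, where $\widetilde{G}=\{w\rightarrow w(a+(r-1)b),\ a\rightarrow rab,\ b\rightarrow rab\}$. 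Substituting $a=1$ and $b=x$ in $D_{\widetilde{G}}^{\,n}(w)/w$ recovers $A_{n,r}(x)$.

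Next comes the crux, a change of grammar introducing symmetric variables. I would put $W=wb$, $s=a+b$ and $t=ab$, and use the relations $wb=W$, $ab=t$, $a+b=s$ and $b^{2}=bs-t$ to re-express each right-hand side; this yields the grammar $G_{9}=\{w\rightarrow ws+(r-2)W,\ W\rightarrow(r-1)Ws+2wt,\ s\rightarrow 2rt,\ t\rightarrow rst\}$, all of whose coefficients are nonnegative precisely because $r\geqslant 2$ --- this is where the hypothesis is used (the cases $r=1,2$ already give the $\gamma$-positive polynomials $A_n(x)$ and $B_n(x)$). I would then prove by induction on $n$ that
\begin{equation*}
D_{G_{9}}^{\,n}(w)=w\sum_{k\geqslant 0}\alpha^{+}_{n,k;r}\,t^{k}s^{n-2k}+W\sum_{k\geqslant 0}\alpha^{-}_{n,k;r}\,t^{k}s^{n-1-2k}
\end{equation*}
for nonnegative numbers $\alpha^{\pm}_{n,k;r}$. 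Applying $D_{G_{9}}$ to the right-hand side, using $D_{G_{9}}(t^{k}s^{i})=rk\,t^{k}s^{i+1}+2ri\,t^{k+1}s^{i-1}$, and collecting the coefficients of $wt^{k}s^{n+1-2k}$ and of $Wt^{k}s^{n-2k}$ gives exactly the recurrence system of the theorem, with the initial values $\alpha^{+}_{1,0;r}=1$ and $\alpha^{-}_{1,0;r}=r-2$ read off from $D_{G_{9}}(w)=ws+(r-2)W$; nonnegativity of the $\alpha^{\pm}_{n,k;r}$ then follows at once from the recurrence since $r\geqslant 2$.

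Finally I would specialize $a=1$, $b=x$, so that $s=1+x$, $t=x$ and $W/w=x$, and dividing the displayed identity by $w$ gives precisely \eqref{mainformula01}. Writing $a_{n,r}(x)=\sum_{k}\alpha^{+}_{n,k;r}x^{k}(1+x)^{n-2k}$ and $b_{n,r}(x)=\sum_{k}\alpha^{-}_{n,k;r}x^{k}(1+x)^{n-1-2k}$, the recurrence gives $\alpha^{+}_{n,0;r}=1$ for all $n\geqslant 1$, so $a_{n,r}(x)$ is symmetric of degree exactly $n$, while $b_{n,r}(x)$ is symmetric of degree at most $n-1$; since $A_{n,r}(x)=a_{n,r}(x)+xb_{n,r}(x)$, the uniqueness part of Proposition~\ref{prop01} identifies $(a_{n,r}(x),b_{n,r}(x))$ as the symmetric decomposition of $A_{n,r}(x)$, and both summands are $\gamma$-positive by construction, so $A_{n,r}(x)$ is bi-$\gamma$-positive. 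The main obstacle is the change-of-grammar computation --- choosing the right auxiliary letters, the nonobvious one being $W=wb$ (parallel to $J=Iy$ in the proof of Theorem~\ref{thm001}), and then carefully closing the grammar on $w,W,s,t$; everything after that is routine bookkeeping and induction.
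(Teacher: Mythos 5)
Your proposal is correct and follows essentially the same route as the paper: after the intermediate reduction to the three-letter grammar on $w=u^{r-1}v$, $a=u^r$, $b=v^r$, your change of variables $(w,W,s,t)=(w,wb,a+b,ab)$ is exactly the paper's $(I,c,b,a)=(u^{r-1}v,\,u^{r-1}v^{r+1},\,u^r+v^r,\,u^rv^r)$ under renaming, and the resulting grammar, induction, recurrence system and specialization $u^r=1$, $v^r=x$ all coincide with the paper's argument. Your closing remark invoking the uniqueness in Proposition~\ref{prop01} to identify $(a_{n,r}(x),b_{n,r}(x))$ as the symmetric decomposition is a small but welcome addition that the paper leaves implicit.
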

\begin{proof}
Consider a change of the grammar given in Lemma~\ref{lemma001}.
Note that
\begin{align*}
D_{G_8}(u^rv^r)&=ru^rv^r(u^r+v^r),~D_{G_8}(u^r+v^r)=2ru^rv^r,\\
D_{G_8}(u^{r-1}v)&=(r-2)u^{r-1}v^{r+1}+u^{r-1}v(u^r+v^r),\\
D_{G_8}(u^{r-1}v^{r+1})
&=(r-1)u^{r-1}v^{r+1}(u^r+v^r)+2u^{r-1}v(u^rv^r).
\end{align*}
Setting $a=u^rv^r,~b=u^r+v^r,~c=u^{r-1}v^{r+1}$ and $I=u^{r-1}v$, we obtain
$D_{G_8}(a)=rab,~D_{G_8}(b)=2ra,~D_{G_8}(c)=(r-1)bc+2Ia,~D_{G_8}(I)=(r-2)c+Ib$.
Clearly, $c=Iv^r$.
Consider the grammar
$$G_9=\{I\rightarrow Ib+(r-2)c,~a\rightarrow rab,~b\rightarrow 2ra,~c\rightarrow (r-1)bc+2Ia\}.$$
Note that $D_{G_9}(I)=Ib+(r-2)c,~D_{G_9}^2(I)=(4(r-1)a+b^2)I+r(r-2)bc$.
%$$D_{G_1}^3(I)=[(6r^2-4)ab+b^3]I+[2(r-2)(r^2+2r-2)a+(r-2)(r^2-r+1)b^2]c.$$
Then by induction, it is routine to check that there exist nonnegative integers
such that
\begin{equation}\label{grammar02G2}
D_{G_9}^n(I)=\sum_{k=0}^{\lrf{n/2}}\alpha^+_{n,k;r}a^kb^{n-2k}I+\sum_{k=0}^{\lrf{(n-1)/2}}\alpha^-_{n,k;r}a^kb^{n-1-2k}c.
\end{equation}
We proceed to the inductive step.
Note that
\begin{align*}
D_{G_9}^{n+1}(I)&=\sum_{k}\alpha^+_{n,k;r}b^{n-1-2k}((1+rk)a^kb^{2}I+2r(n-2k)a^{k+1}I+(r-2)a^kbc)+\\
&\sum_{k}\alpha^-_{n,k;r}b^{n-2-2k}((r-1+rk)a^kb^{2}c+2r(n-1-2k)a^{k+1}c+2a^{k+1}bI).
\end{align*}
Taking coefficients of $a^kb^{n+1-2k}I$ and $a^kb^{n-2k}c$ on both sides and simplifying yields the desired
recurrence system.
Setting $u^r=1$ and $v^r=x$, we have $a=x,~b=1+x$ and $c=Ix$.
Comparing~\eqref{grammar01G1} and~\eqref{grammar02G2}, we immediately get~\eqref{mainformula01}.
\end{proof}

Define
$$A^+_{n,r}(x)=\sum_{k=0}^{\lrf{n/2}}\alpha^+_{n,k;r}x^k(1+x)^{n-2k},~\widetilde{A}^-_{n,r}(x)=\sum_{k=0}^{\lrf{(n-1)/2}}\alpha^-_{n,k;r}x^k(1+x)^{n-1-2k}.$$
Comparing this with~\eqref{Anrx01}, we obtain $A_{n,r}(x)=A^+_{n,r}(x)+x\widetilde{A}^-_{n,r}(x)$ and $A^-_{n,r}(x)=x\widetilde{A}^-_{n,r}(x)$.
It is routine to verify the following corollary.
\begin{corollary}\label{cor-Anrx}
For $n\geqslant 1$, the polynomials $A^+_{n,r}(x)$ and $\widetilde{A}^-_{n,r}(x)$ satisfy the recurrence system
\begin{align*}
A^+_{n+1,r}(x)&=(1+x+rnx)A^+_{n,r}(x)+rx(1-x)\frac{\mathrm{d}}{\mathrm{d}x}A^+_{n,r}(x)+2x\widetilde{A}^-_{n,r}(x),\\
\widetilde{A}^-_{n+1,r}(x)&=(r-1+(rn-1)x)\widetilde{A}^-_{n,r}(x)+rx(1-x)\frac{\mathrm{d}}{\mathrm{d}x}\widetilde{A}^-_{n,r}(x)+(r-2)A^+_{n,r}(x),
\end{align*}
with the initial conditions $A^+_{0,r}(x)=1$ and $\widetilde{A}^-_{0,r}(x)=0$.
\end{corollary}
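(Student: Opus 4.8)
The plan is to read the two recurrences off directly from the grammar $G_9$ built in the proof of Theorem~\ref{lemma-Eulerian}, rather than from the scalar recurrence system for $\alpha^{\pm}_{n,k;r}$. Write $P_n(a,b)=\sum_{k}\alpha^+_{n,k;r}a^kb^{n-2k}$ and $Q_n(a,b)=\sum_{k}\alpha^-_{n,k;r}a^kb^{n-1-2k}$, so that~\eqref{grammar02G2} says $D_{G_9}^n(I)=P_n(a,b)\,I+Q_n(a,b)\,c$, and note that the substitution $u^r=1$, $v^r=x$ (i.e.\ $a=x$, $b=1+x$, $c=Ix$) turns $P_n$ into $A^+_{n,r}(x)$ and $Q_n$ into $\widetilde{A}^-_{n,r}(x)$. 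The initial conditions $A^+_{0,r}(x)=1$ and $\widetilde{A}^-_{0,r}(x)=0$ come from $D_{G_9}^0(I)=I$ (equivalently from $\alpha^+_{1,0;r}=1$, $\alpha^-_{1,0;r}=r-2$).

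First I would apply $D_{G_9}$ once more to $P_nI+Q_nc$, using $D_{G_9}(a)=rab$, $D_{G_9}(b)=2ra$, $D_{G_9}(I)=Ib+(r-2)c$, $D_{G_9}(c)=(r-1)bc+2Ia$ together with the product rule, to obtain
$$P_{n+1}=bP_n+rab\,\partial_aP_n+2ra\,\partial_bP_n+2aQ_n,\qquad Q_{n+1}=(r-2)P_n+(r-1)bQ_n+rab\,\partial_aQ_n+2ra\,\partial_bQ_n.$$
Then I would specialize $a=x$, $b=1+x$. The one point needing care is that differentiating $A^+_{n,r}(x)=P_n(x,1+x)$ yields only the combination $(\partial_aP_n+\partial_bP_n)|_{a=x,\,b=1+x}$, so to recover the two partial derivatives separately one also invokes Euler's relation for the weighted-homogeneous polynomial $P_n$ (degree $n$ when $a$ has weight $2$ and $b$ weight $1$), namely $2a\,\partial_aP_n+b\,\partial_bP_n=nP_n$. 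Solving the resulting $2\times2$ linear system expresses $\partial_aP_n$ and $\partial_bP_n$ at $a=x$, $b=1+x$ as explicit combinations of $A^+_{n,r}(x)$ and $\tfrac{d}{dx}A^+_{n,r}(x)$ with common denominator $x-1$.

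Substituting these back into the displayed recurrence for $P_{n+1}$, the denominator $x-1$ cancels against $4x-(1+x)^2=-(x-1)^2$, so that $rx(1+x)\,\partial_aP_n+2rx\,\partial_bP_n$ collapses to $rnx\,A^+_{n,r}(x)+rx(1-x)\tfrac{d}{dx}A^+_{n,r}(x)$; adding the terms $bP_n=(1+x)A^+_{n,r}(x)$ and $2aQ_n=2x\widetilde{A}^-_{n,r}(x)$ gives exactly the first asserted identity. Running the identical computation for $Q_n$ — which is weighted-homogeneous of degree $n-1$, so Euler's relation reads $2a\,\partial_aQ_n+b\,\partial_bQ_n=(n-1)Q_n$ — produces $(r-2)A^+_{n,r}(x)$, the derivative term $rx(1-x)\tfrac{d}{dx}\widetilde{A}^-_{n,r}(x)$, and a coefficient of $\widetilde{A}^-_{n,r}(x)$ that simplifies to $(r-1)+(rn-1)x$, which is the second asserted identity.

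The main ``obstacle'' is purely bookkeeping: there is no conceptual difficulty, only the need to notice that Euler's identity is precisely what disentangles the two partial derivatives, and to track the cancellation of $x-1$. As an alternative (more elementary but heavier on computation), one can bypass the grammar, multiply the recurrences of Theorem~\ref{lemma-Eulerian} for $\alpha^+_{n+1,k;r}$ and $\alpha^-_{n+1,k;r}$ by $x^k(1+x)^{n+1-2k}$, sum over $k$, and re-express the three resulting sums in the basis $\{x^j(1+x)^{n+1-2j}\}$ using the expansion of $rx(1-x)\tfrac{d}{dx}\big[x^k(1+x)^{n-2k}\big]$; this recovers the same two recurrences.
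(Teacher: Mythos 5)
Your proposal is correct; I checked the key computations and they go through. Writing $D_{G_9}^{n+1}(I)$ via the product rule does give $P_{n+1}=bP_n+rab\,\partial_aP_n+2ra\,\partial_bP_n+2aQ_n$ and $Q_{n+1}=(r-2)P_n+(r-1)bQ_n+rab\,\partial_aQ_n+2ra\,\partial_bQ_n$, the weighted Euler relations $2a\,\partial_aP_n+b\,\partial_bP_n=nP_n$ and $2a\,\partial_aQ_n+b\,\partial_bQ_n=(n-1)Q_n$ do disentangle the partials at $a=x$, $b=1+x$, and the factor $4x-(1+x)^2=-(x-1)^2$ cancels the denominator $x-1$ exactly as you say, yielding $rnx\,A^+_{n,r}(x)+rx(1-x)\frac{\mathrm{d}}{\mathrm{d}x}A^+_{n,r}(x)$ and, in the second line, the coefficient $r(n-1)x+(r-1)(1+x)=(r-1)+(rn-1)x$. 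The paper offers no proof (it declares the corollary ``routine''), and the implied verification is precisely your \emph{alternative} route: multiply the recurrences of Theorem~\ref{lemma-Eulerian} for $\alpha^{\pm}_{n+1,k;r}$ by $x^k(1+x)^{n+1-2k}$ and resum, exactly as the paper does earlier for $a_n^{(k)}(x)$ and $b_n^{(k)}(x)$. Your primary route is genuinely different and arguably cleaner: it works one level up, at the grammar, and replaces the index-shifting bookkeeping by a single application of Euler's identity; the price is the small linear-algebra step to recover $\partial_aP_n$ and $\partial_bP_n$ from $\frac{\mathrm{d}}{\mathrm{d}x}A^+_{n,r}(x)$. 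One tiny imprecision: the initial conditions $A^+_{0,r}(x)=1$, $\widetilde{A}^-_{0,r}(x)=0$ follow from $D_{G_9}^0(I)=I$ (i.e.\ $P_0=1$, $Q_0=0$), not from $\alpha^+_{1,0;r}=1$ and $\alpha^-_{1,0;r}=r-2$, which instead give the $n=1$ values $A^+_{1,r}(x)=1+x$ and $\widetilde{A}^-_{1,r}(x)=r-2$ and serve only as a consistency check of the recurrence at $n=0$.
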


In the rest of this subsection, we present an application of Theorem~\ref{lemma-Eulerian}.
For $\sigma\in\mbn$, let $\des_B(\sigma)=\#\{i\in\{0,1,\ldots,n-1\}\mid\sigma(i)>\sigma({i+1})\}$, where $\sigma(0)=0$.
Following Brenti~\cite[Corollary~3.16]{Brenti94},
the {\it type $B$ $q$-Eulerian polynomials} can be defined as follows:
$$B_n(x,q)=\sum_{\pi\in \mbn}x^{\des_B(\pi)}q^{\negg(\sigma)}=\sum_{\pi\in \mbn}x^{\exc(\sigma)+\fix(\sigma)}q^{n-\negg(\pi)}.$$
The polynomials $B_n(x,q)$ satisfy the recurrence relation
\begin{equation*}
B_n(x,q)=(1+(1+q)nx-x)B_{n-1}(x,q)+(1+q)(x-x^2)\frac{\mathrm{\partial}}{\mathrm{\partial} x}B_{n-1}(x,q).
\end{equation*}
with $B_0(x,q)=1,B_1(x,q)=1+qx$ and $B_2(x,q)=1+(1+4q+q^2)x+q^2x^2$, and the exponential generating function of $B_n(x,q)$ is given as follows (see~\cite[Theorem~3.4]{Brenti94}):
\begin{equation}\label{BnxEGF}
\sum_{n=0}^{\infty}B_n(x,q)\frac{z^n}{n!}=\frac{(1-x)\mathrm{e}^{z(1-x)}}{1-x\mathrm{e}^{z(1-x)(1+q)}}.
\end{equation}

Comparing~\eqref{Anr-EGF} with~\eqref{BnxEGF}, one has $A_{n,q+1}(x)=B_n(x,q)$.
Let $B_n(x,q)=\sum_{k=0}^nB_{n,k}(q)x^k$ and let $$q^nB_n\left(x,{1}/{q}\right)=\sum_{k=0}^n\widetilde{B}_{n,k}(q)x^k,$$ where $q>0$. It follows
from~\cite[Corollary~3.16]{Brenti94} that
$B_{n,k}(q)=\widetilde{B}_{n,n-k}(q)$.
By using Theorem~\ref{lemma-Eulerian}, we get the following.
\begin{corollary}\label{mthm03}
If $q\geqslant 1$, then the polynomial $B_n(x,q)$ is alternatingly increasing. If $0\leqslant q\leqslant 1$,
then $B_n(x,q)$ is spiral, i.e.,
$$B_{n,n}(q)\leqslant B_{n,0}(q)\leqslant B_{n,n-1}(q)\leqslant B_{n,1}(q)\leqslant \cdots\leqslant B_{n,\lrf{n/2}}(q).$$
\end{corollary}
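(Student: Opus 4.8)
The plan is to reduce everything to Theorem~\ref{lemma-Eulerian} via the polynomial identity $A_{n,q+1}(x)=B_n(x,q)$ established just above by comparing the generating functions~\eqref{Anr-EGF} and~\eqref{BnxEGF}. First I would observe that the proof of Theorem~\ref{lemma-Eulerian} never uses that $r$ is an integer: the change of grammar from $G_8$ to $G_9$, the expansion~\eqref{grammar02G2}, and the recurrence system for $\alpha^+_{n,k;r},\alpha^-_{n,k;r}$ are formal identities valid for any real $r$, and the hypothesis $r\geqslant 2$ enters only to make the initial values $\alpha^+_{1,0;r}=1$ and $\alpha^-_{1,0;r}=r-2$ nonnegative, after which the recurrence forces $\alpha^\pm_{n,k;r}\geqslant 0$ for all $n,k$. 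Hence $A_{n,r}(x)$ is bi-$\gamma$-positive, and a fortiori alternatingly increasing, for every real $r\geqslant 2$.

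With this in hand the first assertion is immediate: if $q\geqslant 1$ then $r:=q+1\geqslant 2$, so $B_n(x,q)=A_{n,r}(x)$ is bi-$\gamma$-positive, hence alternatingly increasing. For the second assertion, when $0\leqslant q\leqslant 1$, I would dispose of $q=0$ separately, using $B_n(x,0)=A_n(x)$ together with the fact that a palindromic unimodal polynomial is spiral (the displayed chain collapses, via $A_{n,i}=A_{n,n-1-i}$, to the unimodality of $A_n(x)$). For $0<q\leqslant 1$ one has $\deg B_n(x,q)=n$ since $B_{n,n}(q)=q^n>0$, and Brenti's reciprocity $\widetilde{B}_{n,k}(q)=B_{n,n-k}(q)$ gives
\[
x^nB_n(1/x,q)=\sum_{k=0}^{n}B_{n,n-k}(q)\,x^k=\sum_{k=0}^{n}\widetilde{B}_{n,k}(q)\,x^k=q^nB_n(x,1/q).
\]
Since $1/q\geqslant 1$, the first assertion shows that $B_n(x,1/q)$ is alternatingly increasing, a property preserved by the positive scaling $q^n$; thus $x^nB_n(1/x,q)$ is alternatingly increasing, which by the equivalence recorded in the introduction (a degree-$n$ polynomial is spiral if and only if its reciprocal $x^nf(1/x)$ is alternatingly increasing) says exactly that $B_n(x,q)$ is spiral, i.e.\ the asserted chain of inequalities holds.

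The one genuinely non-routine point is the passage from integer to real $r$ in Theorem~\ref{lemma-Eulerian}; I expect this to require nothing more than rereading that proof and noting that every step is a polynomial (or formal power series) identity in $r$, but it should be stated explicitly. Everything else is bookkeeping with the reciprocity and the spiral/alternatingly-increasing dictionary from the introduction.
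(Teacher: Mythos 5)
Your proposal is correct and follows essentially the same route the paper intends: the identity $A_{n,q+1}(x)=B_n(x,q)$ reduces the case $q\geqslant 1$ to the bi-$\gamma$-positivity in Theorem~\ref{lemma-Eulerian}, and Brenti's reciprocity $B_{n,k}(q)=\widetilde{B}_{n,n-k}(q)$ together with the spiral/alternatingly-increasing dictionary handles $0\leqslant q\leqslant 1$. Your explicit justification that the recurrence system for $\alpha^{\pm}_{n,k;r}$ extends to real $r\geqslant 2$ (and your separate treatment of $q=0$) fills in details the paper leaves implicit, but the argument is the same.
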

%%%%%%%%%%%%%%%%%%%%%%%
\subsection{The first kind of multivariate colored Eulerian polynomials}
\hspace*{\parindent}

%Let $\pi^c=\pi_1^{c_1}\pi_2^{c_2}\cdots\pi_n^{c_n}\in \z_r \wr \msn$.
%We define
%\begin{align*}
%\bexc(\pi^c)&=\#\{i\in[n]:i<_f \pi_i~{\text{and}}~c_i=0\},~\aexc(\pi^c)=\#\{i\in[n]:\pi_i{<_c}i\},\\
%\fix(\pi^c)&=\#\{i\in[n]:\pi_i=i~{\text{and}}~c_i=0\},~\single(\pi^c)=\#\{i\in[n]:\pi_i=i~{\text{and}}~c_i>0\},\\
%\csum(\pi^c)&=\sum_{i=1}^nc_i,~\fexc(\pi^c)=r\cdot\exc_A(\pi^c)+\csum(\pi^c),
%\end{align*}
%where the comparison is with respect to the order $<_c$ of $\Sigma$:
Let $\pi^c\in \z_r \wr \msn$.
Recall that $\exc(\pi^c)=\#\{i\in[n]:i<_f\pi_i\}$. We define
\begin{align*}
\aexc(\pi^c)&=\#\{i\in[n]:\pi_i<_f i\},~\fix(\pi^c)=\#\{i\in[n]:\pi_i=i~{\text{and}}~c_i=0\}.
\end{align*}
%We define
%\begin{align*}
%\bexc(\pi^c)&=\#\{i\in[n]:i<{\pi_i}\},~\aexc(\pi^c)=\#\{i\in[n]:\pi_i{<_f}i\},\\
%\fix(\pi^c)&=\#\{i\in[n]:\pi_i=i~{\text{and}}~c_i=0\},~\single(\pi^c)=\#\{i\in[n]:\pi_i=i~{\text{and}}~c_i>0\}.
%\end{align*}
%%where the comparison is with respect to the order $<_c$ of $\Sigma$:
%Let $\aexc(\pi^c)=\#\{i\in[n]:\pi_i<_f i\}$ be the number of {\it anti-excedances} of $\pi^c$.
Consider the following {\it multivariate colored Eulerian polynomials}:
$$A_{n,r}(x,y,p,q)=\sum_{\pi^c\in \z_r \wr \msn}x^{\exc(\pi^c)}y^{\aexc(\pi^c)}p^{\fix(\pi^c)}q^{\cyc(\pi^c)}.$$
Clearly, $A_{n,1}(x,1,p,q)=A_n(x,p,q),A_{n,r}(x,1,1,1)=A_{n,r}(x)$ and $A_{n,r}(x,1,0,q)=d_{n,r}(x,q)$.

\begin{lemma}\label{derangment-grammar001}
If $G_{10}=\{I\rightarrow qI\left((r-1)x+p\right),x\rightarrow rxy,y\rightarrow rxy,p\rightarrow rxy\}$,
then
\begin{equation}\label{derangment-grammar022}
D_{G_{10}}^n(I)=I\sum_{\pi^c\in \z_r \wr \msn}x^{\exc(\pi^c)}y^{\aexc(\pi^c)}p^{\fix(\pi^c)}q^{\cyc(\pi^c)}.
\end{equation}
\end{lemma}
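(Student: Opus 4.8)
The plan is to prove Lemma~\ref{derangment-grammar001} by the grammatical-labeling technique already used for Lemmas~\ref{lemma001exc} and~\ref{lemmaBn} and for Theorem~\ref{thm002}. First I would fix a colored permutation $\pi^c\in\z_r\wr\msn$, written in its standard cycle form, and attach the following labels: a superscript $I$ right after $\pi^c$; a subscript $q$ at the end of every cycle; a superscript $p$ right after each colored fixed point, i.e.\ after each entry $i$ with $\pi_i=i$ and $c_i=0$; a superscript $x$ after each entry realizing an excedance (an index $i$ with $i<_f\pi_i$ in the order~\eqref{order}); and a superscript $y$ after each entry realizing an anti-excedance ($\pi_i<_f i$). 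Since every index of $[n]$ is of exactly one of these three types, the weight of $\pi^c$ (the product of all its labels) equals $I\,x^{\exc(\pi^c)}y^{\aexc(\pi^c)}p^{\fix(\pi^c)}q^{\cyc(\pi^c)}$, so~\eqref{derangment-grammar022} becomes the assertion that $D_{G_{10}}$ maps the multiset of weights of $\z_r\wr\ms_{n-1}$ onto that of $\z_r\wr\msn$.

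Next I would settle the base case $n=1$: the group $\z_r\wr\ms_1$ consists of the singletons $(1^c)$, $0\le c\le r-1$, where $1^0$ is a fixed point of weight $qIp$ and, since $1<_f 1^{[c]}$ for $c\ge1$, each $1^c$ with $c\ge1$ is an excedance of weight $qIx$, so the total weight is $qI\bigl((r-1)x+p\bigr)=D_{G_{10}}(I)$. For the inductive step I would describe how every element of $\z_r\wr\msn$ arises uniquely from an element $\sigma^c\in\z_r\wr\ms_{n-1}$ by inserting the letter $n$ carrying one of its $r$ colors, $n^{c_n}$. Two types of insertion occur. If $n^{c_n}$ forms a new singleton cycle, it is a fixed point when $c_n=0$ and an excedance when $c_n\ge1$, which contributes the extra factor $q\bigl((r-1)x+p\bigr)$ and hence realizes the rule $I\rightarrow qI\bigl((r-1)x+p\bigr)$. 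Otherwise $n^{c_n}$ is inserted immediately after some existing entry $\sigma(i)$ in a cycle; because $n$ has the largest absolute value, we always have $\sigma(i)<_f n^{c_n}$ and $n^{c_n}>_f\pi(\sigma(i))$, so after the insertion the place of $\sigma(i)$ becomes an excedance and the place of $n^{c_n}$ an anti-excedance. Thus whatever the former label of $\sigma(i)$ — $p$, $x$ or $y$ (the case $x$ includes a singleton cycle carrying a nonzero color) — it is replaced by $xy$, and summing over the $r$ colors of $n$ gives exactly the rules $p\rightarrow rxy$, $x\rightarrow rxy$, $y\rightarrow rxy$. Finally, since each $\sigma^c$ produces in this way $r+r(n-1)=rn$ distinct colored permutations of order $n$ and $\lvert\z_r\wr\msn\rvert=rn\,\lvert\z_r\wr\ms_{n-1}\rvert$, the construction is a bijection, and~\eqref{derangment-grammar022} follows by induction.

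The part that will need care, rather than ingenuity, is the color bookkeeping in the second type of insertion: one must check that "inserting $n^{c_n}$ right after $\sigma(i)$" is compatible with the defining relation $\pi(\overline{m})=\overline{\pi(m)}$ of the wreath product, in particular when the insertion is performed on the negative copy of a cycle, and that exactly one old label (that of $\sigma(i)$) and one new label (that of $n$) are affected. By contrast, the preservation of the standard cycle form is automatic, since $n$ can neither become the smallest element of a cycle nor change which cycle is written first. With these verifications in place the argument is a direct case analysis entirely parallel to that of Theorem~\ref{thm002}.
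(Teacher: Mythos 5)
Your proof is correct and takes essentially the same route as the paper's: the identical grammatical labeling of $\pi^c\in\z_r\wr\msn$ in standard cycle form, the same base case $n=1$, and the same induction in which $n^{c_j}$ is inserted either as a new singleton cycle (realizing $I\rightarrow qI((r-1)x+p)$) or right after an existing entry, turning that entry's old label into $xy$ and realizing $x,y,p\rightarrow rxy$ after summing over the $r$ colors. The only addition is your explicit count of $rn$ insertions per element of $\z_r\wr\ms_{n-1}$, which the paper leaves implicit.
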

\begin{proof}
We now introduce a grammatical labeling of $\pi^c\in \z_r \wr \msn$ as follows:
\begin{itemize}
  \item [\rm ($L_1$)]If $i<_f\pi_i$, then we label $\pi_i^{c_i}$ by a subscript label $x$;
 \item [\rm ($L_2$)]If $\pi_i<_f i$, then we label $\pi_i^{c_i}$ by a subscript label $y$;
\item [\rm ($L_3$)]If $\pi_i=i$ and $c_i=0$, then we label $i$ by a subscript label $p$;
\item [\rm ($L_4$)]Put a subscript label $I$ right after $\pi^c$, and put a superscript label $q$ right after each cycle.
\end{itemize}
Note that the weight of $\pi^c$ is given by $w(\pi^c)=Ix^{\exc(\pi^c)}y^{\aexc(\pi^c)}p^{\fix(\pi^c)}q^{\cyc(\pi^c)}$.
For $n=1$, we have $$\z_r \wr \ms_1=\{(1_p)^q_I,(\overline{1}_x)^q_I,(\overline{\overline{1}}_x)^q_I,\ldots,({1^{[r-1]}}_x)^q_I\}.$$
Note that $D_{G_{10}}(I)=qI((r-1)x+p)$.
Then the sum of weights of the elements in $\z_r \wr \ms_1$ is given by $D_{G_{10}}(I)$.
Hence the result holds for $n=1$.
We proceed by induction on $n$.
Suppose we get all labeled permutations in $\pi^c\in \z_r \wr \ms_{n-1}$, where $n\geqslant 2$. Let
$\widehat{{\pi^c}}$ be obtained from $\pi^c\in \z_r \wr \ms_{n-1}$ by inserting $n^{c_j}$, where $0\leqslant c_j\leqslant r-1$ is a nonnegative integer.
When the inserted $n^{c_j}$ forms a new cycle, the insertion corresponds to the substitution rule $I\rightarrow qI((r-1)x+p)$ since we have $r$ choices for $c_j$.
For the other cases, the changes of labeling are illustrated as follows:
$$\cdots(\cdots{\pi_{i}^{c_{i}}}_x\pi_{i+1}^{c_{i+1}}\cdots)\cdots\mapsto \cdots(\cdots{\pi_{i}^{c_{i}}}_x{n^{c_j}}_y\pi_{i+1}^{c_{i+1}}\cdots)\cdots;$$
$$\cdots(\cdots{\pi_{i}^{c_{i}}}_y\pi_{i+1}^{c_{i+1}}\cdots)\cdots\mapsto \cdots(\cdots{\pi_{i}^{c_{i}}}_x{n^{c_j}}_y\pi_{i+1}^{c_{i+1}}\cdots)\cdots;$$
$$\cdots({i}~_p)\cdots\mapsto \cdots({i}~_x{n^{c_j}}_y)\cdots.$$
In each case, the insertion of $n^{c_j}$ corresponds to one substitution rule in $G_{10}$. By induction,
it is routine to check that the action of $D_{G_{10}}$
on the set of weights of colored permutations in $\z_r \wr \ms_{n-1}$ gives the set of weights of colored permutations in $\z_r \wr \ms_{n}$.
This completes the proof.
\end{proof}

As a generalization of~\eqref{AnrxAn}, we now conclude the following result.
\begin{theorem}\label{thm32}
One has
\begin{equation*}
A_{n,r}(x,y,p,q)=\sum_{\pi\in\msn}(rx)^{\exc(\pi)}(ry)^{\drop(\pi)}((r-1)x+p)^{\fix(\pi)}q^{\cyc(\pi)}.
\end{equation*}
Equivalently,
\begin{equation}\label{Anrxypq}
A_{n,r}(x,y,p,q)=(ry)^nA_n\left(\frac{x}{y},\frac{(r-1)x+p}{ry},q\right).
\end{equation}
\end{theorem}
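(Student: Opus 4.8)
The plan is to follow the same change-of-grammar strategy used to prove~\eqref{Bxy-EGF} and~\eqref{Bnxqy}. By Lemma~\ref{derangment-grammar001} we already know that $D_{G_{10}}^n(I)=I\,A_{n,r}(x,y,p,q)$ for the grammar $G_{10}=\{I\rightarrow qI((r-1)x+p),\ x\rightarrow rxy,\ y\rightarrow rxy,\ p\rightarrow rxy\}$, so it suffices to re-expand $D_{G_{10}}^n(I)$ in terms of the statistics $(\exc,\drop,\fix,\cyc)$ on $\msn$ by way of Lemma~\ref{lemma001exc}.

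First I would introduce the substitution $A=(r-1)x+p$, $B=rx$, $C=ry$ and compute the action of $D_{G_{10}}$ on these new letters. One gets $D_{G_{10}}(I)=qIA$ and, using $D_{G_{10}}(x)=D_{G_{10}}(y)=D_{G_{10}}(p)=rxy$,
$$D_{G_{10}}(A)=(r-1)rxy+rxy=r^{2}xy=BC,\qquad D_{G_{10}}(B)=D_{G_{10}}(C)=r^{2}xy=BC.$$
Hence, setting $G_{11}=\{I\rightarrow qIA,\ A\rightarrow BC,\ B\rightarrow BC,\ C\rightarrow BC\}$, the operator $D_{G_{10}}$ agrees with $D_{G_{11}}$ on the subring generated by $I,A,B,C$, and since this subring contains $I$ and is stable under $D_{G_{10}}$, we get $D_{G_{10}}^n(I)=D_{G_{11}}^n(I)$.

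Next I would observe that $G_{11}$ is precisely the grammar of Lemma~\ref{lemma001exc} after the relabelling $p\mapsto A$, $x\mapsto B$, $y\mapsto C$ (the rule $I\to Ipq$ becomes $I\to IAq=qIA$, and $q$ is untouched). Therefore that lemma yields
$$D_{G_{11}}^n(I)=I\sum_{\pi\in\msn}B^{\exc(\pi)}C^{\drop(\pi)}A^{\fix(\pi)}q^{\cyc(\pi)}.$$
Substituting $A=(r-1)x+p$, $B=rx$, $C=ry$ back in and comparing with $D_{G_{10}}^n(I)=I\,A_{n,r}(x,y,p,q)$ gives the first displayed identity of the theorem. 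For the equivalent form~\eqref{Anrxypq}, I would use $\exc(\pi)+\drop(\pi)+\fix(\pi)=n$ to pull a factor $(ry)^n$ out of every summand, writing $(rx)^{\exc(\pi)}(ry)^{\drop(\pi)}((r-1)x+p)^{\fix(\pi)}=(ry)^{n}(x/y)^{\exc(\pi)}\bigl(((r-1)x+p)/(ry)\bigr)^{\fix(\pi)}$; summing over $\pi\in\msn$ and invoking the definition of $A_n(x,p,q)$ finishes the proof.

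The only step requiring genuine care is the grammar computation in the second paragraph: the reduction to Lemma~\ref{lemma001exc} works exactly because $(r-1)+1=r$ forces all three of $D_{G_{10}}(A)$, $D_{G_{10}}(B)$, $D_{G_{10}}(C)$ to collapse to the single monomial $BC=r^{2}xy$. Everything else — the induction hidden inside Lemma~\ref{lemma001exc} and the final normalization — is routine bookkeeping, identical in spirit to the proofs of Theorems~\ref{mainthm004} and~\ref{thm002}.
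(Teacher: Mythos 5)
Your proposal is correct and follows essentially the same route as the paper: the same change of grammar $a=(r-1)x+p$, $b=rx$, $c=ry$ applied to the grammar $G_{10}$ of Lemma~\ref{derangment-grammar001}, followed by an appeal to Lemma~\ref{lemma001exc} and back-substitution, with the normalization $\exc(\pi)+\drop(\pi)+\fix(\pi)=n$ giving the equivalent form~\eqref{Anrxypq}. The only difference is that you spell out the verification $D_{G_{10}}(A)=D_{G_{10}}(B)=D_{G_{10}}(C)=r^{2}xy=BC$ and the stability of the subring generated by $I,A,B,C$, which the paper leaves implicit.
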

\begin{proof}
Let $G_{10}$ be the grammar given in Lemma~\ref{derangment-grammar001}.
Setting $a=(r-1)x+p,b=rx$ and $c=ry$, we get
$D_{G_{10}}(I)=qIa,~D_{G_{10}}(a)=bc,~D_{G_{10}}(b)=bc,~D_{G_{10}}(c)=bc$.
Let $G_{11}=\{I\rightarrow qIa, a\rightarrow bc, b\rightarrow bc, c\rightarrow bc\}$.
It follows from Lemma~\ref{lemma001exc} that
\begin{equation}\label{DG08}
D_{G_{11}}^n(I)=I\sum_{\pi\in\msn}b^{\exc(\pi)}c^{\drop(\pi)}a^{\fix(\pi)}q^{\cyc(\pi)}.
\end{equation}
Then upon taking $a=(r-1)x+p,b=rx$ and $c=ry$ in~\eqref{DG08}, we immediately get the desired result. This completes the proof.
\end{proof}

From~\eqref{Anrxypq}, we see that $A_{n,r}(x,1,x,q)=r^nA_n\left(x,x,q\right)$.
Combining this result with Theorem~\ref{cor-anxp}, we get the following corollary.
\begin{corollary}
Let $q\in[0,1]$ be a given real number. Then the polynomials $A_{n,r}(x,1,x,q)$ are bi-$\gamma$-positive for $n\geqslant 1$.
\end{corollary}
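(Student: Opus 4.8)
The plan is to obtain the corollary as a direct consequence of the specialization~\eqref{Anrxypq} together with Theorem~\ref{cor-anxp}. First I would set $y=1$ and $p=x$ in~\eqref{Anrxypq}; since $\frac{(r-1)x+x}{r\cdot 1}=x$, the right-hand side collapses and one gets $A_{n,r}(x,1,x,q)=r^nA_n(x,x,q)$. Next, using the identity $a_n(x,p,q)=A_n(x,xp,q)$ recorded just before Theorem~\ref{cor-anxp}, I would take $p=1$ to rewrite this as $A_{n,r}(x,1,x,q)=r^na_n(x,1,q)$, so the problem reduces to the bi-$\gamma$-positivity of $a_n(x,1,q)$.

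That bi-$\gamma$-positivity is precisely the first assertion of Theorem~\ref{cor-anxp} in the range $q\in[0,1]$, so the only remaining point is that the scalar prefactor $r^n$ does not destroy it. For this I would invoke the linearity of the symmetric decomposition in Proposition~\ref{prop01}: if $(a(x),b(x))$ denotes the symmetric decomposition of $a_n(x,1,q)$, then $(r^na(x),r^nb(x))$ is the symmetric decomposition of $r^na_n(x,1,q)$, and multiplying the $\gamma$-expansions of $a(x)$ and $b(x)$ by the nonnegative constant $r^n$ keeps every $\gamma$-coefficient nonnegative. By Definition~\ref{def01} this says exactly that $A_{n,r}(x,1,x,q)$ is bi-$\gamma$-positive, for every $n\geqslant 1$, and in fact exhibits its bi-$\gamma$-coefficients as $r^n$ times those of $a_n(x,1,q)$.

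I do not anticipate a genuine obstacle here: the argument is a one-line substitution into~\eqref{Anrxypq} followed by an application of a theorem already established in Section~\ref{section005}. The only step worth an explicit remark is the invariance of bi-$\gamma$-positivity (and of the bi-$\gamma$-coefficients, up to the factor $r^n$) under multiplication by a positive constant, which is immediate from the linearity of both the symmetric decomposition and the $\gamma$-expansion.
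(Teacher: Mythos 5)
Your proposal is correct and follows exactly the paper's route: the paper likewise specializes \eqref{Anrxypq} at $y=1$, $p=x$ to get $A_{n,r}(x,1,x,q)=r^nA_n(x,x,q)=r^na_n(x,1,q)$ and then cites the bi-$\gamma$-positivity of $a_n(x,1,q)$ from Theorem~\ref{cor-anxp}. Your extra remark that the positive scalar $r^n$ preserves bi-$\gamma$-positivity is a detail the paper leaves implicit but is entirely accurate.
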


In the rest part of this subsection, we shall give a further generalization of~\eqref{AnrxAn}.
As usual, set $[0]_p=0$.
For any positive integer $n$, let $$[n]_p=1+p+\cdots+p^{n-1}.$$ Let $\pi^c\in \z_r \wr \msn$.
A {\it singleton} of $\pi^c$ is an entry $\pi_i^{c_i}$ such that $\pi_i=i$ and $c_i>0$.
We define
\begin{align*}
\exc_B(\pi^c)&=\#\{i\in[n]:i<_f\pi_i~\text{and}~\pi_i\neq i\},\\
\single(\pi^c)&=\#\{i\in[n]:\pi_i=i~{\text{and}}~c_i>0\}.
\end{align*}
It is clear that $\exc(\pi^c)=\exc_B(\pi^c)+\single(\pi^c)$.
Let $\csum(\pi^c)=\sum_{i=1}^nc_i$.
Consider the following {\it multivariate colored Eulerian polynomials}:
$$A_{n,r}(x,y,s,t,p,q)=\sum_{\pi^c\in \z_r \wr \msn}x^{\exc_B(\pi^c)}y^{\aexc(\pi^c)}s^{\single(\pi^c)}t^{\fix(\pi^c)}p^{\csum(\pi^c)}q^{\cyc(\pi^c)}.$$

As a refinement of the grammatical labeling given in the proof of Lemma~\ref{derangment-grammar001}, we give another grammatical labeling of $\pi^c\in \z_r \wr \msn$ as follows:
\begin{itemize}
  \item [\rm ($L_1$)]If $i<_f\pi_i$ and $\pi_i\neq i$, then we label $\pi_i^{c_i}$ by a subscript label $x$;
 \item [\rm ($L_2$)]If $\pi_i<_f i$, then we label $\pi_i^{c_i}$ by a subscript label $y$;
\item [\rm ($L_3$)]If $\pi_i=i$ and $c_i=0$, then we put a subscript label $t$ just before $i$;
\item [\rm ($L_4$)]If $\pi_i=i$ and $c_i>0$, then we put a subscript label $s$ just before $\pi_i^{c_i}$;
 \item [\rm ($L_5$)] Put a subscript label $p^{c_i}$ right after each element $\pi_i^{c_i}$ of $\pi^c$;
\item [\rm ($L_6$)]Put a subscript label $I$ right after $\pi^c$, and put a superscript label $q$ right after each cycle.
\end{itemize}
Then the weight of $\pi^c$ is given by $$w(\pi^c)=Ix^{\exc_B(\pi^c)}y^{\aexc(\pi^c)}s^{\single(\pi^c)}t^{\fix(\pi^c)}p^{\csum(\pi^c)}q^{\cyc(\pi^c)}.$$
For $n=1$, we have $\z_r \wr \ms_1=\{(_t1_{p^0})^q_I,(_s\overline{1}_{p})^q_I,(_s\overline{\overline{1}}_{p^2})^q_I,\ldots,(_s{1^{[r-1]}}_{p^{r-1}})^q_I\}$.
By induction, it is routine to check the following result and
we omit the proof of it for simplicity.
\begin{lemma}\label{grammar0034}
If $G_{12}=\{I\rightarrow qI\left(t+sp[r-1]_p\right),x\rightarrow [r]_pxy,y\rightarrow [r]_pxy,t\rightarrow [r]_pxy,s\rightarrow [r]_pxy\}$,
then
\begin{equation}\label{grammar034}
D_{G_{12}}^n(I)=I\sum_{\pi^c\in \z_r \wr \msn}x^{\exc_B(\pi^c)}y^{\aexc(\pi^c)}s^{\single(\pi^c)}t^{\fix(\pi^c)}p^{\csum(\pi^c)}q^{\cyc(\pi^c)}.
\end{equation}
\end{lemma}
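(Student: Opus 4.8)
The plan is to prove Lemma~\ref{grammar0034} by induction on $n$, following the same scheme as the proofs of Lemmas~\ref{lemma001exc},~\ref{lemmaBn} and~\ref{derangment-grammar001} and using the grammatical labeling of $\pi^c\in\z_r\wr\msn$ introduced just above. For a labeled $\pi^c$ write $w(\pi^c)=Ix^{\exc_B(\pi^c)}y^{\aexc(\pi^c)}s^{\single(\pi^c)}t^{\fix(\pi^c)}p^{\csum(\pi^c)}q^{\cyc(\pi^c)}$; the aim is to establish that $D_{G_{12}}^n(I)=I\sum_{\pi^c\in\z_r\wr\msn}w(\pi^c)/I$, i.e.\ the displayed identity. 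First I would check the base case $n=1$: the set $\z_r\wr\ms_1$ consists of the fixed point $1$ (color $0$, hence label $t$ and color weight $p^0=1$) together with the $r-1$ singletons (color $c$ with $1\leqslant c\leqslant r-1$, hence label $s$ and color weight $p^c$), each forming a single cycle, so the total weight equals $Iq\bigl(t+(p+p^2+\cdots+p^{r-1})s\bigr)=Iq\bigl(t+sp[r-1]_p\bigr)=D_{G_{12}}(I)$.

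For the inductive step I would invoke the standard fact that every $\pi^c\in\z_r\wr\msn$ is obtained in exactly one way from a unique $\tau\in\z_r\wr\ms_{n-1}$ by inserting the letter $n^{c_j}$ for a unique color $c_j\in\{0,1,\ldots,r-1\}$, either as a new singleton cycle or immediately after some entry of a cycle of $\tau$ written in canonical form, and that, conversely, each such insertion produces an element of $\z_r\wr\msn$. It then remains to record how $w$ transforms under each insertion and to match the transformation with a production of $G_{12}$. If $n^{c_j}$ forms a new cycle, a new cycle is created (factor $q$) carrying a fixed point when $c_j=0$ (label $t$, color weight $1$) or a singleton when $c_j>0$ (label $s$, color weight $p^{c_j}$); summing over $c_j$ yields the factor $q(t+sp[r-1]_p)$, which is precisely the rule $I\rightarrow qI(t+sp[r-1]_p)$. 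If instead $n^{c_j}$ is inserted immediately after an entry $\alpha$ of an existing cycle, then, because $n^{c_j}$ lies $<_f$-above every letter of $\tau$, the image of $\alpha$ becomes $n^{c_j}$ (which differs from $\alpha$ and has absolute value $n$), so $\alpha$ now contributes to $\exc_B$ and is relabeled $x$, the inserted $n^{c_j}$ becomes an anti-excedance labeled $y$ with color weight $p^{c_j}$, and every other entry keeps its status; summing over the $r$ values of $c_j$ produces the factor $[r]_pxy$. Since this holds regardless of the former label of $\alpha$ (one of $x,y,t,s$), it covers uniformly the four rules $x\rightarrow[r]_pxy$, $y\rightarrow[r]_pxy$, $t\rightarrow[r]_pxy$, $s\rightarrow[r]_pxy$. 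Hence $D_{G_{12}}$ carries $I\sum_{\tau\in\z_r\wr\ms_{n-1}}w(\tau)/I$ to $I\sum_{\pi^c\in\z_r\wr\msn}w(\pi^c)/I$, completing the induction.

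The step I would be most careful about is verifying that inserting $n^{c_j}$ immediately after an entry $\alpha$ alters the statistic of no entry other than $\alpha$ and $n^{c_j}$ themselves, and in particular that $\alpha$ becomes an $\exc_B$-contribution labeled $x$ (rather than, say, a singleton) while $n^{c_j}$ becomes an anti-excedance labeled $y$. Both facts rest on the single observation that $n$ with any color lies $<_f$-above every letter of $\tau$ in the order~\eqref{order}: this makes the image of $\alpha$ equal to $n^{c_j}$, which has absolute value $n$ and is therefore neither $\alpha$ nor a fixed value, forcing $\alpha$ into $\exc_B$; it makes $n^{c_j}$ map to a strictly $<_f$-smaller letter, forcing $n^{c_j}$ to be an anti-excedance; and it leaves untouched all $<_f$-comparisons among the letters of $\tau$, so the remaining excedances, anti-excedances, fixed points and singletons are unchanged. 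Apart from this, the only bookkeeping is the elementary identity $p+p^2+\cdots+p^{r-1}=p[r-1]_p$ needed in the new-cycle case; with these points in hand the rest of the verification is routine, consistent with the authors' decision to omit the proof.
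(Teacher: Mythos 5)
Your proof is correct and is exactly the routine induction the paper has in mind: the paper omits the proof of Lemma~\ref{grammar0034}, stating only the grammatical labeling and that the verification follows "along the same lines" as Lemmas~\ref{lemma001exc},~\ref{lemmaBn} and~\ref{derangment-grammar001}, and your base case plus insertion-of-$n^{c_j}$ analysis (new cycle giving $I\rightarrow qI(t+sp[r-1]_p)$, insertion after an entry giving the uniform rule $\cdot\rightarrow[r]_pxy$) is precisely that argument. The only cosmetic remark is that in the paper's convention the new color $c_j$ is attached to the position preceding the inserted $n$ (so the factor $p^{c_j}$ travels with the new $x$-label rather than with the $y$-labeled entry $n$), but since the product of the two affected labels is $xyp^{c_j}$ times the old color weight either way, this does not affect the weight count.
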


%\begin{lemma}\label{grammar035}
%\begin{equation*}
%D_{G_10}^n(I)=I\sum_{\pi^c\in \z_r \wr \msn}x^{\exc_B(\pi^c)}y^{\aexc(\pi^c)}s^{\single(\pi^c)}t^{\csum(\pi^c)}p^{\fix(\pi^c)}q^{\cyc(\pi^c)}.
%\end{equation*}
%\end{lemma}
We can now present the seventh main result of this paper.
\begin{theorem}
We have
\begin{equation*}
A_{n,r}(x,y,s,t,p,q)=[r]_p^ny^nA_n\left(\frac{x}{y},\frac{t+sp[r-1]_p}{[r]_py},q\right).
\end{equation*}
In particular,
\begin{equation*}\label{Anr1x1tpq}
A_{n,r}(x,1,x,t,p,q)=\sum_{\pi^c\in \z_r \wr \msn}x^{\exc(\pi^c)}t^{\fix(\pi^c)}p^{\csum(\pi^c)}q^{\cyc(\pi^c)}=[r]_p^nA_n\left(x,\frac{t+xp[r-1]_p}{[r]_p},q\right).
\end{equation*}
\end{theorem}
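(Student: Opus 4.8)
The plan is to follow the pattern used for~\eqref{Bxy-EGF} and for Theorem~\ref{thm32}: start from the context-free grammar $G_{12}$ provided by Lemma~\ref{grammar0034}, perform a change of grammar that collapses the five letters $I,x,y,t,s$ down to the three letters recording excedances, drops and fixed points, and then apply Lemma~\ref{lemma001exc}.

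Concretely, first I would introduce the auxiliary letters $A=t+sp[r-1]_p$, $B=[r]_px$ and $C=[r]_py$, and compute the action of $D_{G_{12}}$ on them. Since $D_{G_{12}}(t)=D_{G_{12}}(s)=[r]_pxy$ while $p,q,r$ are constants, one gets $D_{G_{12}}(A)=[r]_pxy\,(1+p[r-1]_p)$; the only algebraic input required is the elementary identity $1+p[r-1]_p=[r]_p$, which rewrites this as $[r]_p^2xy=BC$. Likewise $D_{G_{12}}(B)=D_{G_{12}}(C)=[r]_p^2xy=BC$, and $D_{G_{12}}(I)=qIA$. Hence the change of grammar lands on $G_{13}=\{I\rightarrow qIA,\ A\rightarrow BC,\ B\rightarrow BC,\ C\rightarrow BC\}$, which is precisely the grammar of Lemma~\ref{lemma001exc} under the dictionary $p\mapsto A$, $x\mapsto B$, $y\mapsto C$. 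This yields
$$D_{G_{13}}^n(I)=I\sum_{\pi\in\msn}B^{\exc(\pi)}C^{\drop(\pi)}A^{\fix(\pi)}q^{\cyc(\pi)}.$$

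Next I would substitute $A,B,C$ back in terms of $x,y,s,t,p$ and combine with Lemma~\ref{grammar0034} to obtain
$$A_{n,r}(x,y,s,t,p,q)=\sum_{\pi\in\msn}([r]_px)^{\exc(\pi)}([r]_py)^{\drop(\pi)}(t+sp[r-1]_p)^{\fix(\pi)}q^{\cyc(\pi)}.$$
Then, using $\exc(\pi)+\drop(\pi)+\fix(\pi)=n$ to pull out the factor $([r]_py)^n=[r]_p^ny^n$ and recalling $A_n(x,p,q)=\sum_{\pi\in\msn}x^{\exc(\pi)}p^{\fix(\pi)}q^{\cyc(\pi)}$, the sum becomes $[r]_p^ny^nA_n\left(\frac{x}{y},\frac{t+sp[r-1]_p}{[r]_py},q\right)$, which is the assertion. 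The ``in particular'' case follows immediately by setting $y=1$ and $s=x$, since $\exc(\pi^c)=\exc_B(\pi^c)+\single(\pi^c)$ merges the $x$- and $s$-weights into a single $x^{\exc(\pi^c)}$.

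There is no serious obstacle here once Lemma~\ref{grammar0034} is granted; the one point demanding a moment's care is verifying that the substitution $A=t+sp[r-1]_p$ is compatible with $G_{12}$, i.e.\ that $D_{G_{12}}(A)$ equals $BC$ exactly rather than some other multiple of $xy$. This is where the identity $1+p[r-1]_p=[r]_p$ enters, and it is the exact analogue of the computations $D_{G_3}(t+sp)=BC$ and $D_{G_{10}}((r-1)x+p)=bc$ appearing in the proofs of Theorem~\ref{mainthm004} and Theorem~\ref{thm32}.
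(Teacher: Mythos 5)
Your proposal is correct and matches the paper's own proof essentially verbatim: the paper also sets $a_1=t+sp[r-1]_p$, $a_2=[r]_px$, $a_3=[r]_py$, verifies $D_{G_{12}}(a_i)=a_2a_3$ (which is exactly your identity $1+p[r-1]_p=[r]_p$), and then invokes Lemma~\ref{lemma001exc} before substituting back. The only cosmetic difference is notation ($A,B,C$ versus $a_1,a_2,a_3$), and your handling of the ``in particular'' case via $\exc(\pi^c)=\exc_B(\pi^c)+\single(\pi^c)$ is the intended one.
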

\begin{proof}
Let $G_{12}$ be the grammar given in Lemma~\ref{grammar0034}.
Setting $a_1=t+sp[r-1]_p,~a_2=[r]_px,~a_3=[r]_py$, we get
$$D_{G_{12}}(a_1)=a_2a_3,~D_{G_{12}}(a_2)=a_2a_3,~D_{G_{12}}(a_3)=a_2a_3.$$
Let $G_{13}=\{I\rightarrow qIa_1, a_1\rightarrow a_2a_3, a_2\rightarrow a_2a_3, a_3\rightarrow a_2a_3\}$.
By Lemma~\ref{lemma001exc}, we get
$$D_{G_{13}}^n(I)=I\sum_{\pi\in\msn}a_1^{\fix(\pi)}a_2^{\exc(\pi)}a_3^{\drop(\pi)}q^{\cyc(\pi)}=Ia_3^nA_n\left(\frac{a_2}{a_3},\frac{a_1}{a_3},q\right).$$
Then upon taking $a_1=t+sp[r-1]_p,~a_2=[r]_px$ and $a_3=[r]_py$ in the above expression, we immediately get the desired result. This completes the proof.
\end{proof}
\begin{corollary}
Let $\wexc(\pi^c)=\exc(\pi^c)+\fix(\pi^c)$.
We have $$\sum_{\pi^c\in \z_r \wr \msn}x^{\wexc(\pi^c)}p^{\csum(\pi^c)}q^{\cyc(\pi^c)}=[r]_p^n\sum_{\pi\in\msn}x^{\wexc(\pi)}q^{\cyc(\pi)}.$$
\end{corollary}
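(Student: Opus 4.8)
The plan is to deduce this directly from the seventh main result (the theorem immediately preceding the corollary) by a single specialization. First I would take its ``in particular'' identity
\[
A_{n,r}(x,1,x,t,p,q)=\sum_{\pi^c\in \z_r \wr \msn}x^{\exc(\pi^c)}t^{\fix(\pi^c)}p^{\csum(\pi^c)}q^{\cyc(\pi^c)}=[r]_p^nA_n\left(x,\frac{t+xp[r-1]_p}{[r]_p},q\right)
\]
and put $t=x$. Since $\wexc(\pi^c)=\exc(\pi^c)+\fix(\pi^c)$ by definition, the middle sum then collapses to $\sum_{\pi^c}x^{\wexc(\pi^c)}p^{\csum(\pi^c)}q^{\cyc(\pi^c)}$, which is exactly the left-hand side of the asserted identity.

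Next I would simplify the argument of $A_n$ on the right. With $t=x$ we have $\dfrac{t+xp[r-1]_p}{[r]_p}=\dfrac{x\bigl(1+p[r-1]_p\bigr)}{[r]_p}$, and since $1+p[r-1]_p=1+p+p^2+\cdots+p^{r-1}=[r]_p$ (valid also for $r=1$, where $[0]_p=0$), this quotient is simply $x$. Hence the right-hand side reduces to $[r]_p^nA_n(x,x,q)$. Then I would identify $A_n(x,x,q)$ with the weak-excedance $q$-Eulerian polynomial: recall from the discussion preceding Theorem~\ref{cor-anxp} that $a_n(x,p,q)=\sum_{\pi\in\msn}x^{\wexc(\pi)}p^{\fix(\pi)}q^{\cyc(\pi)}=A_n(x,xp,q)$, so taking $p=1$ gives $A_n(x,x,q)=\sum_{\pi\in\msn}x^{\wexc(\pi)}q^{\cyc(\pi)}$. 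Stringing the three displays together yields the claimed formula.

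There is essentially no serious obstacle here; the proof is a rewriting of results already established. The only points requiring a moment's care are the elementary $q$-integer identity $1+p[r-1]_p=[r]_p$ and the bookkeeping observation that, on the colored side, the statistic $\wexc=\exc+\fix$ is obtained by sending both the $\exc$-variable and the $\fix$-variable to $x$ (equivalently, setting $y=1$ and $s=t=x$ in $A_{n,r}(x,y,s,t,p,q)$, which merges $\exc_B$, $\single$ and $\fix$ into a single $x$-weight). One could alternatively give a self-contained argument by specializing the grammar $G_{12}$ of Lemma~\ref{grammar0034} at $y=1$ and $s=t=x$ and then invoking Lemma~\ref{lemma001exc}, but routing the proof through the preceding theorem is the shortest path.
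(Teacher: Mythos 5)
Your proof is correct and follows the route the paper intends: the corollary is stated without explicit proof as an immediate specialization of the preceding theorem, and setting $t=x$ in the ``in particular'' identity, simplifying via $1+p[r-1]_p=[r]_p$, and recognizing $A_n(x,x,q)=\sum_{\pi\in\msn}x^{\wexc(\pi)}q^{\cyc(\pi)}$ is exactly that derivation. All three steps check out, including the edge case $r=1$.
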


\subsection{The second kind of multivariate colored Eulerian polynomials}
\hspace*{\parindent}

Let $\pi^c=\pi_1^{c_1}\pi_2^{c_2}\cdots\pi_n^{c_n}\in \z_r \wr \msn$.
Following~\cite{Athanasiadis14,Bagno04,Zeng16}, we define
\begin{align*}
\exc_A(\pi^c)&=\#\{i\in[n]:i<_c{\pi_i}~{\text{and}}~c_i=0\},~\aexc_A(\pi^c)=\#\{i\in[n]:\pi_i{<_c}i\},\\
\fix(\pi^c)&=\#\{i\in[n]:\pi_i=i~{\text{and}}~c_i=0\},~\single(\pi^c)=\#\{i\in[n]:\pi_i=i~{\text{and}}~c_i>0\},\\
\csum(\pi^c)&=\sum_{i=1}^nc_i,~\fexc(\pi^c)=r\cdot\exc_A(\pi^c)+\csum(\pi^c),
\end{align*}
where the comparison is with respect to the order $<_c$ of $\Sigma$:
$$1^{[r-1]}<_c2^{[r-1]}<_c\cdots<_cn^{[r-1]}<_c\cdots<_c\overline{1}<_c\overline{2}<_c\cdots<_c\overline{n}<_c1<_c2<_c\cdots<_cn.$$
%We say that an entry $\pi_i^{c_i}$ is an {\it anti-excedance} of $\pi^c$ if $\pi_i<i$.
%
%Let $\cyc(\pi^c)$ be the number of cycles of $\pi^c$.
%More information about the statistics over $\z_r \wr \msn$, and references can be found in~\cite{Athanasiadis14,Athanasiadis20,Zeng16}.
Consider the following polynomials
$$A_{n}^{(r)}(x,y,s,t,p,q)=\sum_{\pi^c\in \z_r \wr \msn}x^{\exc_A(\pi^c)}y^{\aexc_A(\pi^c)}s^{\single(\pi^c)}t^{\fix(\pi^c)}p^{\csum(\pi^c)}q^{\cyc(\pi^c)}.$$
For $1\leqslant i\leqslant n$, we introduce a grammatical labeling of $\pi^c$ as follows:
\begin{itemize}
 \item [\rm ($L_1$)] Put a subscript label $p^{c_i}$ right after each element $\pi_i^{c_i}$ of $\pi^c$;
  \item [\rm ($L_2$)]If $\pi_i=i$ and $c_i=0$, then put a superscript label $t$ right after $i$;
    \item [\rm ($L_3$)] If $\pi_i=i$ and $c_i>0$, then put a superscript label $s$ right after $\pi_i^{c_i}$;
 \item [\rm ($L_4$)]If $i<_c\pi_i$ and $c_i=0$, then we label $\pi_i$ by $x$;
  \item [\rm ($L_5$)]If $\pi_i<_ci$, then we label $\pi_i^{c_i}$ by $y$;
\item [\rm ($L_6$)]Put a subscript label $I$ right after $\pi^c$ and put a superscript label $q$ right after each cycle.
\end{itemize}
In particular, the grammatical labeling of elements in $\z_r \wr \ms_1$ are illustrated as follows:
$$\z_r \wr \ms_1=\{(1_{p^0}^t)_I^q,{(\overline{1}_{p}^s})_I^q,{({\overline{\overline{1}}}^s_{p^2}})_I^q,\ldots,({{1^{[r-1]}}^s_{p^{r-1}}})_I^q\}.$$
We now provide an example to illustrate the above grammatical labeling.
\begin{example}
Let $\pi^c=(1,4,\overline{5},{2})(\overline{\overline{3}})\in \z_3 \wr \ms_5$. The grammatical labeling of $\pi^c$ is given below
$$(1_{p^0}^x4_{p^0}^y\overline{5}_{p}^y{2}_{p^0}^y)^q(\overline{\overline{3}}^s_{p^2})^q_I.$$
Note that $c_i=0,1$ or $2$.
If we insert $6^{c_i}$ into $\pi^c$ as a new cycle, we get the following permutations:
$$(1_{p^0}^x4_{p^0}^y\overline{5}_{p}^y{2}_{p^0}^y)^q(\overline{\overline{3}}^s_{p^2})^q(6_{p_0}^t)^q_I,
~(1_{p^0}^x4_{p^0}^y\overline{5}_{p}^y{2}_{p^0}^y)^q(\overline{\overline{3}}^s_{p^2})^q(\overline{6}_{p}^s)^q_I,~
(1_{p^0}^x4_{p^0}^y\overline{5}_{p}^y{2}_{p^0}^y)^q(\overline{\overline{3}}^s_{p^2})^q(\overline{\overline{6}}_{p^2}^{s})^q_I.$$
If we insert $6^{c_i}$ right after the element $1$, we get the following permutations:
$$(1_{p^0}^x6_{p^0}^y4_{p^0}^y\overline{5}_{p}^y{2}_{p^0}^y)^q(\overline{\overline{3}}^s_{p^2})^q_I,
~(1_{p^0}^y\overline{6}_{p}^y4_{p^0}^y\overline{5}_{p}^y{2}_{p^0}^y)^q(\overline{\overline{3}}^s_{p^2})^q_I,
~(1_{p^0}^y\overline{\overline{6}}_{p^2}^y4_{p^0}^y\overline{5}_{p}^y{2}_{p^0}^y)^q(\overline{\overline{3}}^s_{p^2})^q_I.$$
If we insert $6^{c_i}$ right after the element $4$, we get the following permutations:
$$(1_{p^0}^x4_{p^0}^x6_{p^0}^y\overline{5}_{p}^y{2}_{p^0}^y)^q(\overline{\overline{3}}^s_{p^2})^q_I,~
(1_{p^0}^x4_{p^0}^y\overline{6}_{p}^y\overline{5}_{p}^y{2}_{p^0}^y)^q(\overline{\overline{3}}^s_{p^2})^q_I,
~(1_{p^0}^x4_{p^0}^y\overline{\overline{6}}_{p^2}^y\overline{5}_{p}^y{2}_{p^0}^y)^q(\overline{\overline{3}}^s_{p^2})^q_I.$$
\end{example}

Along the same lines as in the proof of Lemma~\ref{derangment-grammar001}, it is routine to check the eighth main result of this paper and we omit the proof of it for simplicity.
\begin{lemma}\label{derangment-grammar01G7}
If $G_{14}=\{I\rightarrow qI\left(t+sp[r-1]_p\right),~t\rightarrow xy+p[r-1]_py^2,~s\rightarrow xy+p[r-1]_py^2,~x\rightarrow xy+p[r-1]_py^2,~y\rightarrow xy+p[r-1]_py^2\}$,
then
\begin{equation*}\label{derangment-grammar022}
D_{G_{14}}^n(I)=I\sum_{\pi^c\in \z_r \wr \msn}x^{\exc_A(\pi^c)}y^{\aexc_A(\pi^c)}s^{\single(\pi^c)}t^{\fix(\pi^c)}p^{\csum(\pi^c)}q^{\cyc(\pi^c)}.
\end{equation*}
\end{lemma}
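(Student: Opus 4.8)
The plan is to prove the identity by induction on $n$, following verbatim the template of the proof of Lemma~\ref{derangment-grammar001}, using the grammatical labeling of $\pi^c\in\z_r\wr\msn$ that is described immediately above the statement. Under that labeling each letter of the standard cycle form of $\pi^c$ carries a subscript $p^{c_i}$ recording the color of the value at the relevant position, together with exactly one of the superscripts $x,y,t,s$ according to whether that position is a zero-colored type-$A$ excedance, a type-$A$ anti-excedance, a fixed point, or a singleton; each cycle is closed with a superscript $q$ and the whole word with a trailing $I$. Hence the weight of $\pi^c$ equals $Ix^{\exc_A(\pi^c)}y^{\aexc_A(\pi^c)}s^{\single(\pi^c)}t^{\fix(\pi^c)}p^{\csum(\pi^c)}q^{\cyc(\pi^c)}$, so it suffices to show that $D_{G_{14}}$ sends the sum of the weights of the labeled elements of $\z_r\wr\ms_{n-1}$ to the sum of the weights of the labeled elements of $\z_r\wr\msn$.

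For the base case $n=1$ the set $\z_r\wr\ms_1$ is exactly the list displayed just above the lemma, whose $r$ elements have weights $Iqt,\,Iqsp,\,Iqsp^2,\ldots,Iqsp^{r-1}$; their sum is $Iq\bigl(t+s(p+p^2+\cdots+p^{r-1})\bigr)=Iq(t+sp[r-1]_p)=D_{G_{14}}(I)$. For the inductive step, every element of $\z_r\wr\msn$ arises from a unique element of $\z_r\wr\ms_{n-1}$ by inserting the letter $n^{c}$ for some color $c\in[0,r-1]$, either as a singleton cycle or immediately after one of the $n-1$ existing letters, and I would treat these two families separately. If $n^{c}$ forms a new cycle, then that cycle is closed by a superscript $q$ and carries a fixed-point label $t$ when $c=0$ (contributing $qt$) and a singleton label $s$ with subscript $p^{c}$ when $c\geqslant 1$ (contributing $qsp^{c}$); summing over all $r$ colors gives the factor $q(t+sp[r-1]_p)$, the replacement prescribed by the rule $I\to qI(t+sp[r-1]_p)$. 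If instead $n^{c}$ is inserted immediately after a letter carrying one of the superscripts $\zeta\in\{x,y,t,s\}$, then — exactly as in the worked example preceding the lemma — inserting the uncolored $n^{0}$ makes that position a zero-colored type-$A$ excedance and makes $n^{0}$ a type-$A$ anti-excedance, replacing $\zeta$ by the monomial $xy$; whereas inserting $n^{c}$ with $c\geqslant 1$ makes both the former position and $n^{c}$ type-$A$ anti-excedances, the new letter picking up the color subscript $p^{c}$, so that $\zeta$ is replaced by $p^{c}y^{2}$. Summing over $c\in[0,r-1]$ replaces $\zeta$ by $xy+(p+p^2+\cdots+p^{r-1})y^2=xy+p[r-1]_py^2$, the common right-hand side of the four rules $t,s,x,y\to xy+p[r-1]_py^2$. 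Since each insertion of $n^{c}$ corresponds to precisely one monomial of one substitution rule of $G_{14}$, and distinct insertions give distinct elements of $\z_r\wr\msn$, the action of $D_{G_{14}}$ reproduces the full weight sum over $\z_r\wr\msn$, which closes the induction.

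The step that requires the most care is the local analysis of the two letters affected by an insertion: one has to verify, straight from the definitions of $\exc_A$, $\aexc_A$, $\fix$, $\single$ with respect to the order $<_c$ (under which the uncolored $n$ is the maximum and every colored $n^{c}$ lies below all uncolored letters), that inserting $n^{c}$ leaves the type of every other position unchanged and alters the superscripts of exactly the inserted letter and its predecessor in the way described above, with the color subscripts redistributing so as to produce precisely the monomials $xy$ (for $c=0$) and $p^{c}y^{2}$ (for $c\geqslant 1$). Once that bookkeeping is unwound the argument is a routine transcription of the proofs of Lemmas~\ref{derangment-grammar001} and~\ref{lemmaBn}, so I expect no genuine obstacle beyond keeping the color subscripts and the order $<_c$ straight; this is why the paper states the result as a lemma to be checked along the same lines and omits the details.
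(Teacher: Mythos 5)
Your proposal is correct and is exactly the argument the paper has in mind: the paper omits this proof, stating only that it goes "along the same lines as" Lemma~\ref{derangment-grammar001}, and your induction via the displayed grammatical labeling — with the new-cycle insertions accounting for the rule $I\rightarrow qI(t+sp[r-1]_p)$ and the insertion of $n^{c}$ after an existing letter accounting for $\zeta\rightarrow xy+p[r-1]_py^{2}$ according as $c=0$ or $c\geqslant 1$ — is precisely that omitted verification. The one delicate point, that the inserted letter $n^{c}$ is always a type-$A$ anti-excedance while its predecessor becomes an excedance exactly when $c=0$, is handled correctly.
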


\begin{theorem}\label{thm004}
One has
\begin{equation*}\label{Bnxqy03}
A_{n}^{(r)}(x,y,s,t,p,q)=\sum_{\pi\in\msn}(x+p[r-1]_py)^{\exc(\pi)}([r]_py)^{\drop(\pi)}(t+sp[r-1]_p)^{\fix(\pi)}q^{\cyc(\sigma)}.
\end{equation*}
Equivalently,
\begin{equation}\label{Bnxqy04}
A_{n}^{(r)}(x,y,s,t,p,q)=[r]_p^ny^nA_n\left(\frac{x+p[r-1]_py}{[r]_py},\frac{t+sp[r-1]_p}{[r]_py},q\right).
\end{equation}
\end{theorem}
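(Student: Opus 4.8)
The plan is to mimic, almost verbatim, the change-of-grammar argument used in the proof of~\eqref{Bnxqy} and in Theorem~\ref{thm32}. The starting point is Lemma~\ref{derangment-grammar01G7}, which already does the combinatorial work: it shows $D_{G_{14}}^n(I)=I\cdot A_{n}^{(r)}(x,y,s,t,p,q)$ for the grammar
$$G_{14}=\{I\rightarrow qI(t+sp[r-1]_p),~t\rightarrow xy+p[r-1]_py^2,~s\rightarrow xy+p[r-1]_py^2,~x\rightarrow xy+p[r-1]_py^2,~y\rightarrow xy+p[r-1]_py^2\}.$$
The key observation is that the four non-$I$ letters all map to the same element $w:=xy+p[r-1]_py^2=y(x+p[r-1]_py)$ under $D_{G_{14}}$.

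Next I would introduce the new letters $a_1=t+sp[r-1]_p$, $a_2=x+p[r-1]_py$ and $a_3=[r]_py$, and verify that the grammar collapses to the model grammar of Lemma~\ref{lemma001exc}. The one arithmetic fact needed is the telescoping identity $1+p[r-1]_p=[r]_p$. Using it together with $w=y\,a_2=(a_3/[r]_p)\,a_2$, one computes $D_{G_{14}}(a_1)=(1+p[r-1]_p)w=[r]_p w=a_2a_3$, and likewise $D_{G_{14}}(a_2)=[r]_p w=a_2a_3$ and $D_{G_{14}}(a_3)=[r]_p w=a_2a_3$, while $D_{G_{14}}(I)=qIa_1$. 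Thus with $G_{15}=\{I\rightarrow qIa_1,~a_1\rightarrow a_2a_3,~a_2\rightarrow a_2a_3,~a_3\rightarrow a_2a_3\}$, Lemma~\ref{lemma001exc} (after the renaming $p\mapsto a_1$, $x\mapsto a_2$, $y\mapsto a_3$) gives
$$D_{G_{15}}^n(I)=I\sum_{\pi\in\msn}a_1^{\fix(\pi)}a_2^{\exc(\pi)}a_3^{\drop(\pi)}q^{\cyc(\pi)}.$$

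Substituting $a_1=t+sp[r-1]_p$, $a_2=x+p[r-1]_py$, $a_3=[r]_py$ back into this expansion and combining with Lemma~\ref{derangment-grammar01G7} yields the first displayed identity of Theorem~\ref{thm004}. For the equivalent quotient form, I would invoke $\exc(\pi)+\drop(\pi)+\fix(\pi)=n$ to factor $([r]_py)^n$ out of every term: the drop-factor then becomes $1$, the excedance-factor becomes $\bigl((x+p[r-1]_py)/([r]_py)\bigr)^{\exc(\pi)}$, and the fixed-point-factor becomes $\bigl((t+sp[r-1]_p)/([r]_py)\bigr)^{\fix(\pi)}$, so the sum is exactly $A_n\!\left(\frac{x+p[r-1]_py}{[r]_py},\frac{t+sp[r-1]_p}{[r]_py},q\right)$ by the definition of the $(p,q)$-Eulerian polynomials. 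There is no real obstacle here; the only thing to be careful about is the bookkeeping identity $1+p[r-1]_p=[r]_p$ that makes the change of grammar close up, after which everything reduces to the substitution argument already carried out for Theorems~\ref{mainthm004} and~\ref{thm32}.
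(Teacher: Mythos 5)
Your proposal is correct and follows essentially the same route as the paper: the paper's proof performs exactly this change of grammar on $G_{14}$, setting $u=t+sp[r-1]_p$, $v=x+p[r-1]_py$, $w=[r]_py$ (your $a_1,a_2,a_3$) and invoking Lemma~\ref{lemma001exc}. Your explicit verification of the closure identity $1+p[r-1]_p=[r]_p$ and of the factoring step via $\exc(\pi)+\drop(\pi)+\fix(\pi)=n$ only makes the argument more complete than the paper's terse version.
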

\begin{proof}
Let $G_{14}$ be the grammar given in Lemma~\ref{derangment-grammar01G7}.
Consider a change of the grammar $G_{11}$. Setting $u=t+sp[r-1]_p,v=x+p[r-1]_py$ and $w=[r]_py$, we get
$$D_{G_{14}}(I)=qIu,~D_{G_{11}}(u)=vw,~D_{G_{11}}(v)=vw,~D_{G_{11}}(w)=vw.$$
Let $G_{15}=\{I\rightarrow qIu, u\rightarrow vw, v\rightarrow vw, w\rightarrow vw\}$.
It follows from Lemma~\ref{lemma001exc} that
\begin{equation}\label{DG8}
D_{G_{15}}^n(I)=I\sum_{\pi\in\msn}v^{\exc(\pi)}w^{\drop(\pi)}u^{\fix(\pi)}q^{\cyc(\pi)}.
\end{equation}
Then upon taking $u=t+sp[r-1]_p,v=x+p[r-1]_py$ and $w=[r]_py$ in~\eqref{DG8}, we immediately get the desired result. This completes the proof.
\end{proof}
%It follows from~\eqref{excpi} that $\exc(\pi^c)=\exc_A(\pi^c)+\fix(\pi^c)$. Hence
%\begin{equation}
%\sum_{\pi^c\in\z_r \wr \msn}x^{\exc(\pi^c)}q^{\cyc(\pi^c)}=A_{n,r}(x,1,1,x,1,1)=r^nA_n\left(x,\frac{1+(r-1)x}{r},q\right).
%\end{equation}
%Thus
%$$\sum_{\pi^c\in\z_r \wr \msn}x^{\exc(\pi^c)}q^{\cyc(\pi^c)}=\sum_{\pi\in\msn}(x+r-1)^{\we(\pi)}r^{n-\we(\pi)}q^{\cyc(\pi)}.$$
Note that
$$A_{n}^{(r)}(x^r,1,1,1,x,q)=\sum_{\pi^c\in\z_r \wr \msn}x^{\fexc(\pi^c)}q^{\cyc(\pi^c)},$$
$$A_{n}^{(r)}(x^r,1,s,0,x,q)=\sum_{\pi^c\in \D_{n,r}}x^{\fexc(\pi^c)}s^{\single(\pi^c)}q^{\cyc(\pi^c)}.$$
By using~\eqref{Bnxqy04}, we get
\begin{equation}\label{Anrxq01}
A_{n}^{(r)}(x^r,1,1,1,x,q)=[r]_x^nA_n(x,1,q),
\end{equation}
\begin{equation}\label{Anrxq}
A_{n}^{(r)}(x^r,1,s,0,x,q)=[r]_x^nA_n\left(x,\frac{sx[r-1]_x}{[r]_x},q\right).
\end{equation}
It is well known that if $f(x)$ and $g(x)$ are both symmetric unimodal polynomials with nonnegative coefficients, then so is $f(x)g(x)$.
Therefore, combining~\eqref{Anrxq} and Proposition~\ref{Zeng12},
we get the following result, which is a generalization of~\cite[eq.~(2.5)]{Zeng16}.
\begin{corollary}
For $n\geqslant 1$, one has
$$\sum_{\pi^c\in \D_{n,r}}x^{\fexc(\pi^c)}s^{\single(\pi^c)}q^{\cyc(\pi^c)}=\sum_{i=0}^n\binom{n}{i}(qsx[r-1]_x)^i[r]_x^{n-i}d_{n-i}(x,q),$$
and so the $\single$ and $\cyc$ $(s,q)$-flag derangement polynomials are symmetric unimodal when $q>0$ and $s>0$.
In particular, one has
\begin{equation}\label{Dn2}
\sum_{\pi^c\in \D_{n,2}}x^{\fexc(\pi^c)}s^{\single(\pi^c)}q^{\cyc(\pi^c)}=\sum_{i=0}^n\binom{n}{i}(qsx)^i(1+x)^{n-i}d_{n-i}(x,q).
\end{equation}
Furthermore,
$$\sum_{\pi^c\in \D_{n,2}}x^{\fexc(\pi^c)}s^{\single(\pi^c)}q^{\cyc(\pi^c)}=\sum_{k=1}^{n}\left(\sum_{i+j=k}\binom{n}{i}(qs)^i\sum_{\pi\in \md_{n-i,j}}q^{\cyc(\pi)}\right)x^{k}(1+x)^{2n-2k}.$$
\end{corollary}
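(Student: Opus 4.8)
The plan is to derive the whole statement from the two facts recorded just before it: the combinatorial identity $A_{n}^{(r)}(x^r,1,s,0,x,q)=\sum_{\pi^c\in \D_{n,r}}x^{\fexc(\pi^c)}s^{\single(\pi^c)}q^{\cyc(\pi^c)}$ and the evaluation~\eqref{Anrxq}, which reads $A_{n}^{(r)}(x^r,1,s,0,x,q)=[r]_x^nA_n(x,\,sx[r-1]_x/[r]_x,\,q)$. Thus the task reduces to expanding the right-hand side of~\eqref{Anrxq} in two ways. For the first displayed formula I would unfold the definition $A_n(x,p,q)=\sum_{\pi\in\msn}x^{\exc(\pi)}p^{\fix(\pi)}q^{\cyc(\pi)}$ with $p=sx[r-1]_x/[r]_x$ and absorb the prefactor $[r]_x^n$, getting $\sum_{\pi\in\msn}x^{\exc(\pi)}(sx[r-1]_x)^{\fix(\pi)}[r]_x^{\,n-\fix(\pi)}q^{\cyc(\pi)}$. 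Grouping the permutations of $[n]$ by their fixed-point set --- choose an $i$-element fixed-point set in $\binom{n}{i}$ ways; those $i$ fixed points add nothing to $\exc$ and, being singleton cycles, contribute $q^i$, hence total weight $(qsx[r-1]_x)^i$; and the remaining $n-i$ letters range over all derangements, weighted by $d_{n-i}(x,q)=\sum_{\pi\in\md_{n-i}}x^{\exc(\pi)}q^{\cyc(\pi)}$ --- collapses the sum to $\sum_{i=0}^n\binom{n}{i}(qsx[r-1]_x)^i[r]_x^{\,n-i}d_{n-i}(x,q)$, which is the first claim. Specializing $r=2$, where $[r-1]_x=1$ and $[r]_x=1+x$, gives~\eqref{Dn2} at once.

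For the symmetric unimodality I would fix $q,s>0$ and check that each summand $(qsx[r-1]_x)^i[r]_x^{\,n-i}d_{n-i}(x,q)$ is a product of symmetric unimodal polynomials with nonnegative coefficients: $x[r-1]_x$ and $[r]_x$ have all coefficients equal to $1$, hence are symmetric and unimodal, and $d_{n-i}(x,q)$ is $\gamma$-positive, hence symmetric and unimodal, by Proposition~\ref{Zeng12} (the degenerate values $d_0=1$ and $d_1=0$ causing no trouble). By the product fact recalled just above the statement, each summand is symmetric and unimodal with nonnegative coefficients. A degree count then shows the factors $(x[r-1]_x)^i$, $[r]_x^{\,n-i}$ and $d_{n-i}(x,q)$ have centres of symmetry $ir/2$, $(n-i)(r-1)/2$ and $(n-i)/2$, which add up to $nr/2$ independently of $i$; so all summands are symmetric about the common point $nr/2$, and a sum of symmetric unimodal polynomials with nonnegative coefficients sharing a centre of symmetry is itself symmetric and unimodal.

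For the final identity I would substitute into~\eqref{Dn2} the $\gamma$-expansion $d_{n-i}(x,q)=\sum_{j}\bigl(\sum_{\pi\in\md_{n-i,j}}q^{\cyc(\pi)}\bigr)x^j(1+x)^{n-i-2j}$ of Proposition~\ref{Zeng12}. Multiplying through by $(qsx)^i(1+x)^{n-i}$ turns the general term into $\binom{n}{i}(qs)^i\bigl(\sum_{\pi\in\md_{n-i,j}}q^{\cyc(\pi)}\bigr)x^{i+j}(1+x)^{2n-2i-2j}$, and reindexing by $k=i+j$ (which ranges over $1,\dots,n$) gathers the coefficient of $x^k(1+x)^{2n-2k}$ into $\sum_{i+j=k}\binom{n}{i}(qs)^i\sum_{\pi\in\md_{n-i,j}}q^{\cyc(\pi)}$, exactly as stated.

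None of the steps is deep. The two points that demand care are the cycle bookkeeping in the fixed-point grouping --- a fixed point is its own $1$-cycle, so $q^{\cyc(\pi)}$ splits as $q^i$ times the cycle weight of the induced derangement on the non-fixed letters --- and the centre-of-symmetry count that makes the unimodality conclusion go through; those are the parts I would write out in full detail.
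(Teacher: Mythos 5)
Your proposal is correct and follows essentially the same route as the paper, which simply cites the combination of~\eqref{Anrxq} with Proposition~\ref{Zeng12} and the closure of symmetric unimodal nonnegative polynomials under products; you have merely written out the details (the fixed-point-set grouping, the common centre of symmetry $nr/2$, and the reindexing $k=i+j$) that the paper leaves implicit. The two points you flag for care — the splitting $q^{\cyc(\pi)}=q^i\,q^{\cyc(\text{derangement part})}$ and the centre-of-symmetry count — are exactly the right ones, and both check out.
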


From~\eqref{Dn2}, we see that
$$\sum_{\pi^c\in \D_{n,2}}x^{\fexc(\pi^c)}(1+x)^{\single(\pi^c)}=(1+x)^n\sum_{\pi\in\msn}x^{\we(\pi)}=x(1+x)^nA_n(x).$$
It is well known that (see~\cite{Bagno04,Brenti00,Zeng02} for details)
\begin{equation}\label{Anx11}
A_n(x,1,-1)=-(x-1)^{n-1},
\end{equation}
\begin{equation}\label{Anx12}
A_n(x,0,-1)=-x[n-1]_x.
\end{equation}
Combining~\eqref{Anrxq01} and~\eqref{Anx11}, one can easily find that
$$\sum_{\pi^c\in\z_r \wr \msn}x^{\fexc(\pi^c)}(-1)^{\cyc(\pi^c)}=-\frac{(x^r-1)^n}{x-1},$$
which was obtained by Bagno and Garber~\cite[Theorem~1.1]{Bagno04}.
Combining~\eqref{Anrxq} and~\eqref{Anx12}, we get
$$A_{n}^{(r)}(x^r,1,0,0,x,-1)=\sum_{\substack{\pi^c\in \D_{n,r}\\ \single(\pi^c)=0}}x^{\fexc(\pi^c)}q^{\cyc(\pi^c)}=-x[n-1]_x[r]_x^n.$$
%%%%%%%%%%%%%%%%%%%%%%%%%%%%%%%%%%%%%%%%%%%%
\section{Concluding remarks}
%%%%%%%%%%%%%%%%%%%%%%%%%%%%%%%%%%%%%%%%%%%%
%%%%%%%%%%%%%%%%%%%%%%%%%%%%%%%%%%%%%%%%%%%%%%%%%%%%%%%%%%%%%%%%%%%%%%%%%%%%%%%%%%
%%%%%%%%%%%%%%%%%%%%%%%%%%%%%%%%%%%%%%%%%%%%%%%%%%%%%%%%%%%%%%%%%%%%%%%%%%%%%%%%%%
%%%%%%%%%%%%%%%%%%%%%%%%%%%%%%%%%%%%%%%%%%%%%%%%%%
This paper gives a systematic study of excedance-type polynomials, and
a sufficient condition for a polynomial to be alternatingly increasing is also established.
%\begin{equation*}
%\sum_{\pi\in\mc_n}2^{n-\fix(\pi)}x^{\exc(\pi)}=\sum_{\pi\in\msn}(4x)^{\lpk(\pi)}.
%\end{equation*}
As pointed out by Athanasiadis~\cite{Athanasiadis17,Athanasiadis20}, it is still a challenging problem to prove
the symmetric decompositions of some combinatorial polynomials by using group actions.
It would be interesting to give a proof of~\eqref{Ankx-decom} or Theorem~\ref{dnrx-gamma} by introducing some modified Foata-Strehl group actions.


\begin{thebibliography}{14}
%
%\bibitem {A} T. Aoki, \textit{Calcul exponentiel des op\'erateurs
%microdifferentiels d'ordre infini.} I, Ann. Inst. Fourier (Grenoble)
%\textbf{33} (1983), 227--250.
%
%\bibitem {B} R. Brown, \textit{On a conjecture of Dirichlet},
%Amer. Math. Soc., Providence, RI, 1993.
%
%\bibitem {D} R. A. DeVore, \textit{Approximation of functions},
%Proc. Sympos. Appl. Math., vol. 36,
%Amer. Math. Soc., Providence, RI, 1986, pp. 34--56.

\bibitem{Adin2001}
R.M. Adin, F. Brenti, Y. Roichman, \textit{Descent numbers and major indices for the hyperoctahedral group},
Adv. in Appl. Math., \textbf{27} (2001), 210--224.
\bibitem{Arnold}
V.I. Arnold, \textit{The calculus of snakes and the combinatorics of Bernoulli, Euler and Springer numbers of Coxeter groups}, Russian Math. Surveys, \textbf{47} (1) (1992), 1--51.


\bibitem{Athanasiadis14}
C.A. Athanasiadis, \textit{Edgewise Subdivisions, local $h$-polynomials, and excedances in the
wreath product $Z_r\wr\msn$}, SIAM J. Discrete Math., \textbf{28} (2014), 1479--1492.

\bibitem{Athanasiadis17}
C.A. Athanasiadis, \textit{Gamma-positivity in combinatorics and geometry}, S\'em. Lothar. Combin., \textbf{77} (2018), Article B77i.
%
\bibitem{Athanasiadis20}
C.A. Athanasiadis, \textit{Binomial Eulerian polynomials for colored permutations}, J. Combin. Theory Ser. A, \textbf{173} (2020), 105214.


%\bibitem{Aval13}
%J.-C. Aval, A. Boussicault, P. Nadeau, \textit{Tree-like tableaux}, Electron. J. Combin., \textbf{20}(4) (2013) Paper 34, 24 pp.

\bibitem{Bagno04}
E. Bagno, D. Garber, \textit{On the excedance number of colored permutation groups}, S\'em. Lothar. Combin., \textbf{53} (2004/2006), Article B53f.


%\bibitem{Barbero14}
%G.J.F. Barbero, J. Salas, E.J.S. Villase\~{n}or, \textit{Bivariate generating functions for a class of linear recurrences: general structure}, J. Combin. Theory Ser. A, \textbf{125} (2014), 146--165.
%\bibitem{Bagno07}
%E. Bagno and R. Biagioli, \textit{Colored descent representations for complex reflection groups}, Israel J. Math., \textbf{160} (2007), 317--347.

\bibitem{Beck2010}
M. Beck, A. Stapledon, \textit{On the log-concavity of Hilbert series of Veronese subrings and Ehrhart series}, Math. Z., \textbf{264} (2010), 195--207.
%
%\bibitem{Beck2013}
%M. Beck and B. Braun, \textit{Euler-Mahonian statistics via polyhedral geometry}, Adv. Math., \textbf{244} (2013), 925--954
%
%\bibitem{Beck2015}
%M. Beck and S. Robins, \textit{Computing the continuous discretely}, second ed., Springer Science Business Media, LLC, 2015.

\bibitem{Beck2019}
M. Beck, K. Jochemko and E. McCullough, \textit{$ h^\ast$-polynomials of zonotopes}, Trans. Amer. Math. Soc., \textbf{371} (2019), 2021--2042.

\bibitem{Bona08}
M. B\'ona, \textit{Real zeros and normal distribution for statistics on Stirling permutations defined by
Gessel and Stanley}, SIAM J. Discrete Math., \textbf{23} 2008/09, 401--406.

%\bibitem{Borowiec16}
%A. Borowiec, W. Mlotkowski, \textit{New Eulerian numbers of type $D$}, Electron. J. Combin., \textbf{23} (2016), \#P1.38.
%%\bibitem{Branden04}
%P. Br\"and\'{e}n, \textit{Counterexamples to the Neggers-Stanley conjecture}, Electron. Res. Announc. Amer. Math. Soc., \textbf{10} (2004), 155--158.


%\bibitem{Branden06}
%P. Br\"and\'{e}n, \textit{On linear transformations preserving the P\'olya frequency property},
%Trans. Amer. Math. Soc., \textbf{358} (2006), 3697--3716.

%\bibitem{Branden06}
%P. Br\"and\'{e}n, \textit{On linear transformations preserving the P¨®lya frequency property}, Trans. Am. Math. Soc., \textbf{358} (2006), 3697--3716.

\bibitem{Branden08}
P. Br\"and\'{e}n, \textit{Actions on permutations and unimodality of descent polynomials}, European J. Combin., \textbf{29} (2008), 514--531.




%\bibitem{Branden11}
%P. Br\"and\'{e}n, \textit{Iterated sequences and the geometry of zeros}, J. Reine Angew. Math., \textbf{658} (2011), 115--131.

\bibitem{Branden18}
P. Br\"and\'{e}n and L. Solus, \textit{Symmetric decompositions and real-rootedness}, Int Math. Res Notices, rnz059 (2019), https://doi.org/10.1093/imrn/rnz059.


\bibitem{Brenti90}
F. Brenti, \textit{Unimodal polynomials arising from symmetric functions}, Proc. Amer. Math. Soc., \textbf{108} (1990), 1133--1141.

%\bibitem{Brenti9401}
%F. Brenti, \textit{Log-concave and unimodal sequences in algebra, combinatorics and geometry: an update}, Contemporary Math., \textbf{178} (1994), 71--84.

\bibitem{Brenti94}
F. Brenti, \textit{$q$-Eulerian polynomials arising from Coxeter groups}, European J. Combin.,
\textbf{15} (1994), 417--441.

\bibitem{Brenti00}
F. Brenti, \textit{A class of $q$-symmetric functions arising from plethysm},
J. Combin. Theory Ser. A, \textbf{91} (2000), 137--170.

%\bibitem{Carlitz65}
%L. Carlitz, The coefficients in an asymptotic expansion, Proc. Amer. Math. Soc., 16 (1965), 248--252.

%\bibitem{Carlitz78}
%L. Carlitz and V.E. Hoggatt, \textit{Generalized Eulerian numbers and polynomials}, Fibonacci Quart., \textbf{16}(2) (1978), 138--146.


\bibitem{Chao19}
T.-W. Chao, J. Ma, S.-M. Ma, Y.-N. Yeh, \textit{$1/k$-Eulerian polynomials and $k$-inversion sequences}, Electr. J. Combin., \textbf{26}(3):P3.35, 2019.


\bibitem{Chen93}
W.Y.C. Chen, \textit{Context-free grammars, differential operators and formal
power series}, Theoret. Comput. Sci., \textbf{117} (1993), 113--129.
%
\bibitem{Chen09}
W.Y.C. Chen, R.L. Tang and A.F.Y. Zhao, \textit{Derangement polynomials and excedances of type $B$},
Electron. J. Combin., \textbf{16} (2) (2009), Research Paper 15.

%\bibitem{Chen16}
%W.Y.C. Chen, A.J.X. Guo, P.L. Guo, H.H.Y. Huang and T.Y.H. Liu, \textit{$\rm{s}$-inversion sequences and $P$-partitions of type $B$},
%SIAM J. Discrete Math., \textbf{30} (2016), 1632--1643.


\bibitem{Chen17}
W.Y.C. Chen, A.M. Fu, \textit{Context-free grammars for permutations and increasing trees}, Adv. in Appl. Math., \textbf{82} (2017), 58--82.

%\bibitem{Chen19}
%W.Y.C. Chen, H.R.L.Yang, \textit{A context-free grammar for the Ramanujan-Shor polynomials}, Adv. in Appl. Math., https://doi.org/10.1016/j.aam.2019.04.005.

%\bibitem{Chow03}
%C.-O. Chow, \textit{On the Eulerian polynomials of type $D$}, European J. Combin., \textbf{24} (2003), 391--408.


\bibitem{Chow08}
C.-O. Chow, \textit{On certain combinatorial expansions of the Eulerian polynomials}, Adv. in Appl. Math., \textbf{41} (2008), 133--157.

\bibitem{Chow09}
C.-O. Chow, \textit{On derangement polynomials of type $B$}, II, J. Combin. Theory Ser. A, \textbf{116} (2009),
816--830.

\bibitem{Chow10}
C.-O. Chow and T. Mansour, \textit{Counting derangements, involutions and unimodal elements in the
wreath product $C_r\wr \msn$}, Israel J. Math., \textbf{179} (2010), 425--448.

%\bibitem{Chow14}
%C.-O. Chow, S.-M. Ma, \textit{Counting signed permutations by their alternating runs}, Discrete Math., \textbf{323} (2014), 49--57.
%
%\bibitem{Chow10}
%C.-O. Chow and T. Mansour, \textit{Counting derangements, involutions and unimodal elements in the
%wreath product $C_r\wr \msn$}, Israel J. Math., \textbf{179} (2010), 425--448.

%\bibitem{Diaconis14}
%P. Diaconis, S.N. Evans, R. Graham, \textit{Unseparated pairs and fixed points in random permutations}, Adv. in Appl. Math., \textbf{61} (2014), 102--124.

%\bibitem{Taylor03}
%M.A. Eisenstein-Taylor, Polytopes, \textit{permutation shapes and bin packing}, Adv. Appl. Math., \textbf{30} (2003), 96--109.

%\bibitem{Duh18}
%G.-H. Duh, Y.-C. Roger Lin, S.-M. Ma, Y.-N. Yeh, \textit{Some statistics on Stirling permutations and Stirling derangements}, Discrete Math., \textbf{341} (2018), 2478--2484.


%\bibitem{Dumont96}
%D. Dumont, \textit{Grammaires de William Chen et d\'erivations dans les arbres et
%arborescences}, S\'em. Lothar. Combin., \textbf{37}, Art. B37a (1996), 1--21.

%\bibitem{Ehrhart62}
%E. Ehrhart, \textit{Sur les poly\`{e}dres rationnels homoth\'{e}tiques \`{a} $n$ dimensions}, C. R. Acad. Sci. Paris, \textbf{254} (1962), 616--618.
%\bibitem{Foata11}
%D. Foata, G.-N Han, \textit{The decrease value theorem with an application
%to permutation statistics}, Adv. in Appl. Math., \textbf{46} (2011), 296--311.


\bibitem{Foata70}
D. Foata, M. P. Sch\"utzenberger, \textit{Th\'eorie g\'eometrique des polyn\^omes eul\'eriens}, Lecture Notes in Math., vol. \textbf{138}, Springer, Berlin, 1970.
%
%\bibitem{Foata73}
%D. Foata, V. Strehl, \textit{Euler numbers and variations of permutations}, in Colloquio Internazionale
%sulle Teorie Combinatorie (Roma, 1973), Tomo I, Atti dei Convegni Lincei, No. \textbf{17}, Accad. Naz.
%Lincei, Rome, 1976, pp. 119--131.

\bibitem{Foata11}
D. Foata and G.-N. Han, \textit{The decrease value theorem with an application to permutation statistics},
Adv. Appl. Math., \textbf{46} (2011), 296--311.
%%
%\bibitem{Gasharov98}
%V. Gasharov, \textit{On the Neggers-Stanley conjecture and the Eulerian polynomials}, J. Combin. Theory Ser. A,  \textbf{82} (2) (1998), 134--146.
%
%\bibitem{Haglund99}
%J. Haglund, K. Ono, D.G. Wagner, \textit{ Theorems and conjectures involving rook polynomials with only real zeros}, In: Ahlgren S.D., Andrews G.E., Ono K. (eds) Topics in Number Theory. Mathematics and Its Applications, vol \textbf{467}. (1999), Springer, Boston, MA.


%\bibitem{Fu18}
%A.M. Fu, \textit{A context-free grammar for peaks and double descents of permutations}, Adv. in Appl. Math., \textbf{100} (2018), 179--196.

\bibitem{Gal05}
S.R. Gal, \textit{Real root conjecture fails for five and higher-dimensional spheres}, Discrete Comput. Geom., \textbf{34} (2005), 269--284.

\bibitem{Gessel20}
I. Gessel, Y. Zhuang, \textit{Plethystic formulas for permutation enumeration}, Adv. Math, \textbf{375}(2) (2020), 107370.

%\bibitem{Gessel78}
%I. Gessel and R.P. Stanley, \textit{Stirling polynomials}, J. Combin. Theory Ser.
%A, \textbf{24} (1978), 25--33.
%
\bibitem{Haglund12}
J. Haglund, M. Visontai, \textit{Stable multivariate Eulerian polynomials and generalized Stirling permutations}, European J. Combin., \textbf{33} (2012), 477--487.

\bibitem{Zeng2020}
B. Han, J. Mao, J. Zeng, \textit{Eulerian polynomials and excedance statistics}, Adv. in Appl. Math., \textbf{121} (2020), 102092.

%
\bibitem{Hwang20}
H.-K. Hwang, H.-H. Chern, G.-H. Duh, \textit{An asymptotic distribution theory for Eulerian
recurrences with applications}, Adv. in Appl. Math., \textbf{112} (2020), 101960.

%\bibitem{Hyatt16}
%M. Hyatt, \textit{Recurrences for Eulerian polynomials of type $B$ and type $D$}, Ann. Comb., \textbf{20} (2016), 869--881.
\bibitem{Sieg19}
M. Juhnke-Kubitzke, S. Murai, R. Sieg, \textit{Local $h$-Vectors of quasi-geometric and barycentric subdivisions}, Discrete. Comput. Geom, \textbf{61} (2019), 364--37.

%\bibitem{Gessel18}
%I. Gessel, Y. Zhuang, \textit{Shuffle-compatible permutation statistics}, Adv. Math, \textbf{332} (2018), 85--141.
%
%\bibitem{Janson11}
%S. Janson, M. Kuba and A. Panholzer, \textit{Generalized Stirling permutations, families of increasing trees and urn models},
%J. Combin. Theory Ser. A, \textbf{118} (2011), 94--114.
\bibitem{Zeng02}
G. Ksavrelof, J. Zeng, \textit{Two involutions for signed excedance numbers}, S\'{e}m. Lothar. Combin., \textbf{49} Art. B49e, 8pp, 2002/04.

%\bibitem{Lin1501}
%Z. Lin, \textit{On the descent polynomial of signed multipermutations}, Proc. Amer. Math. Soc., \textbf{143} (9) (2015), 3671--3685.

\bibitem{Lin15}
Z. Lin, J. Zeng, \textit{The $\gamma$-positivity of basic Eulerian polynomials via group actions}, J. Combin. Theory Ser. A, \textbf{135} (2015), 112--129.

%\bibitem{Linusson12}
%S. Linusson, J. Shareshian and M.L. Wachs, \textit{Rees products and lexicographic shellability},  J. Comb., \textbf{3} (2012), 243--276.

%
%\bibitem{Petersen07}
%T.K. Petersen, Enriched P-partitions and peak algebras, Adv. Math. 209 (2007), 561--610.
%\bibitem{Liu07}
%L.L. Liu, Y. Wang, \textit{A unified approach to polynomial sequences with only real zeros}, Adv. in Appl. Math., \textbf{38} (2007), 542--560.

%\bibitem{Ma121}
%S.-M. Ma, \textit{Derivative polynomials and enumeration of permutations by number of interior and left peaks}, Discrete Math., \textbf{312} (2012), 405--412.
%
%\bibitem{Ma132}
%S.-M. Ma, \textit{Enumeration of permutations by number of alternating runs}, Discrete Math., \textbf{313} (2013), 1816--1822.
%

\bibitem{Ma15}
S.-M. Ma, T. Mansour, \textit{The $1/k$-Eulerian polynomials and $k$-Stirling permutations}, Discrete Math., \textbf{338} (2015), 1468--1472.

%\bibitem{Ma16}
%S.-M. Ma, J. Ma, Y.-N. Yeh, X. Zhu, \textit{The cycle descent statistic on permutations}, Electron. J. Combin.,
%\textbf{23}(4) (2016), P4.20.

%
%\bibitem{Ma17}
%S.-M. Ma, Y.-N. Yeh, \textit{Eulerian polynomials, Stirling permutations
%of the second kind and perfect matchings}, Electron. J. Combin.,
%\textbf{24}(4) (2017), \#P4.27.




\bibitem{Ma19}
S.-M. Ma, J. Ma, Y.-N. Yeh, \textit{$\gamma$-positivity and partial $\gamma$-positivity of descent-type polynomials}, J. Combin. Theory Ser. A, \textbf{167} (2019), 257--293.

\bibitem{Ma2001}
S.-M. Ma, J. Ma, Y.-N. Yeh, \textit{David-Barton type identities and the alternating run polynomials}, Adv. in Appl. Math., \textbf{114} (2020), 101978.

%\bibitem{Ma2002}
%S.-M. Ma, J. Ma, Y.-N. Yeh, \textit{The $1/k$-Eulerian polynomials of type $B$}, Electron. J. Combin., \textbf{27}(3) (2020), \#P3.27.
%
%\bibitem{Ma2003}
%S.-M. Ma, J. Ma, J. Yeh, Y.-N. Yeh, The $\gamma$-positivity of Eulerian polynomials and succession statistics, \arxiv{2002.06930}.

%
%\bibitem{Ma2008}
%S.-M. Ma, Y. Wang, \textit{$q$-Eulerian polynomials and polynomials with only real zeros}, Electron. J. Combin., \textbf{15} (2008), \#R17.

%
%\bibitem{MacMahon20}
%P.A. MacMahon, \textit{Combinatory Analysis}, Vol. I, II (bound in one volume), Dover Phoenix
%Editions, Dover Publications Inc., Mineola, NY, 2004. Reprint of An Introduction to Combinatory
%Analysis (1920) and Combinatory Analysis. Vol. I, II (1915, 1916).

\bibitem{Mongelli13}
P. Mongelli, \textit{Excedances in classical and affine Weyl groups}, J. Combin. Theory Ser. A, \textbf{120} (2013), 1216--1234.

%\bibitem{Petersen06}
%T.K. Petersen, \textit{Descents, peaks, and $P$-partitions}, Ph.D. Dissertation, Brandeis University, 2006.
%
\bibitem{Petersen07}
T.K. Petersen, \textit{Enriched $P$-partitions and peak algebras}, Adv. Math., \textbf{209}(2) (2007), 561--610.

%\bibitem{Petersen15}
%T.K. Petersen, \textit{Eulerian Numbers}. Birkh\"auser/Springer, New York, 2015.

%\bibitem{Roselle68}
%D.P. Roselle, \textit{Permutations by number of rises and successions}, Proc. Amer. Math. Soc., \textbf{19} (1968), 8--16.


\bibitem{Savage12}
C.D. Savage and G. Viswanathan, \textit{The $1/k$-Eulerian polynomials}, Electron J. Combin.,
\textbf{19} (2012), \#P9.

%\bibitem{Savage1202}
%C.D. Savage and Michael J. Schuster, \textit{Ehrhart series of lecture hall polytopes and Eulerian
%polynomials for inversion sequences}, J. Combin. Theory Ser. A, \textbf{119}(4) (2012), 850--870, .

%\bibitem{Savage15}
%C.D. Savage and M. Visontai, \textit{The $s$-Eulerian polynomials have only real roots}, Trans. Amer. Math.
%Soc., \textbf{367} (2015), 763--788.

\bibitem{Schepers13}
J. Schepers and L.V. Langenhoven, \textit{Unimodality questions for integrally closed lattice polytopes}, Ann.
Comb., \textbf{17}(3) (2013), 571--589.
%
%\bibitem{Shareshian09}
%J. Shareshian and M.L. Wachs, \textit{Poset homology of Rees products and $q$-Eulerian polynomials}, Electron. J. Combin., \textbf{16}(2) (2009), R20.

\bibitem{Zeng12}
H. Shin, J. Zeng, \textit{The symmetric and unimodal expansion of Eulerian polynomials via continued fractions}, European J. Combin., \textbf{33} (2012), 111--127.

\bibitem{Zeng16}
H. Shin and J. Zeng, \textit{Symmetric unimodal expansions of excedances in colored permutations}, European
J. Combin., \textbf{52} (2016), 174--196.

\bibitem{Sloane}
N.J.A. Sloane, \textit{The On-Line Encyclopedia of Integer Sequences},
published electronically at http://oeis.org, 2010.

%\bibitem{Simion84}
%R. Simion, \textit{A multi-indexed Sturm sequence of polynomials and unimodality of certain
%combinatorial sequences}, J. Combin. Theory Ser. A, \textbf{36} (1984), no. 1, 15--22,

\bibitem{Solus19}
L. Solus, \textit{Simplices for numeral systems}, Trans. Amer. Math. Soc., \textbf{371} (2019), 2089--2107.


\bibitem{Springer71}
T.A. Springer, \textit{Remarks on a combinatorial problem}, Nieuw Arch. Wisk., \textbf{19} (1971), 30--36.
%\bibitem{Stanley08}
%R.P. Stanley, \textit{Longest alternating subsequences of permutations}, Michigan Math. J., \textbf{57} (2008), 675--687.

%\bibitem{Stanley97}
%R.P. Stanley, \textit{Enumerative Combinatorics}, vol. 2, Cambridge University Press, 1997.


%\bibitem{Stanley92}
%R.P. Stanley, \textit{Subdivisions and local $h$-vectors}, J. Amer. Math. Soc., \textbf{5} (1992), 805--851.

%\bibitem{Stapledon09}
%A. Stapledon, \textit{Inequalities and Ehrhart $\delta$-vectors}, Trans. Amer. Math. Soc., \textbf{361} (2009), 5615--5626.

\bibitem{Steingrimsson}
E. Steingr\'imsson, \textit{Permutation statistics of indexed permutations}, European J. Combin.,
\textbf{15} (1994), 187--205.

%\bibitem{Stembridge92}
%J.R. Stembridge, \textit{J.R. Stembridge, Eulerian numbers, tableaux, and the Betti numbers of a toric variety}, Discrete
%Math., \textbf{99} (1992), 307--320.

%\bibitem{Stembridge94}
%J.R. Stembridge, \textit{Some permutation representations of Weyl groups associated with the cohomology of toric varieties},
%Adv. Math., \textbf{106} (1994), 244--301.


%\bibitem{Sun14}
%H. Sun, Y. Wang, \textit{A group action on derangements}, Electron. J. Combin., \textbf{21}(1) (2014), \#P1.67.


%\bibitem{Wang2005}
%Y. Wang, Y.-N. Yeh, \textit{Proof of a conjecture on unimodality}, Eur. J. Combin., \textbf{26} 2005), 617--627.

%\bibitem{Visontai13}
%M. Visontai, N. Williams, Stable multivariate W-Eulerian polynomials,
%J. Combin. Theory Ser. A 120 (2013), 1929--1945.


%\bibitem{Stembridge}
%J.R. Stembridge, \textit{Some permutation representations of Weyl groups assoiated with the cohomology of tori varieties}, Adv. Math., \textbf{106}(2) (1994), 244--301.
%%\bibitem{Stembridge07}
%
%J.R. Stembridge, \textit{Counterexamples to the poset conjectures of Neggers, Stanley, and Stembridge}, Trans. Amer. Math. Soc., \textbf{359} (3) (2007), 1115--1128.

%\bibitem{Yang}
%A.L.B. Yang and P.B. Zhang, \textit{Brenti's open problem on the real-rootedness of $q$-Eulerian polynomials of type $D$}, SIAM J. Discrete Math., \textbf{31}(2), 918--926.
%

%\bibitem{Zhao11}
%A.F.Y. Zhao, \textit{The combinatorics on permutations and derangements of type $B$}, Ph.D. dissertation, Nankai University, 2011.

%\bibitem{Zhu}
%B.-X.Zhu, \textit{A generalized Eulerian triangle from staircase tableaux and tree-like tableaux}, J. Combin. Theory Ser. A, \textbf{172} (2020), 105206.
%

\bibitem{Zhuang17}
Y. Zhuang, \textit{Eulerian polynomials and descent statistics}, Adv. in Appl. Math., \textbf{90} (2017), 86--144.
\end{thebibliography}
\end{document}